\numberwithin{equation}{theorem}
\newcommand{\eg}{{\itshape e.g.} }
\renewcommand{\m}{\mathfrak{m}}
\newcommand{\p}{\mathfrak{p}}
\newcommand{\q}{\mathfrak{q}}
\DeclarePairedDelimiter\ceil{\lceil}{\rceil}
\DeclareMathOperator{\RHom}{RHom}
\theoremstyle{theorem}
\newtheorem*{mainthma}{Theorem A}
\newtheorem*{mainthmb}{Theorem B}
\newtheorem*{mainthmc}{Theorem C}
\newtheorem*{mainthmd}{Theorem D}
\renewcommand{\sA}{\mathcal{A}}
\renewcommand{\sC}{\mathcal{C}}
\renewcommand{\sI}{\mathcal{I}}
\renewcommand{\sJ}{\mathcal{J}}
\renewcommand{\sO}{\mathcal{O}}
\renewcommand{\sU}{\mathcal{U}}
\newcommand{\unpe}{{1/{p^e}}}
\newcommand{\unp}{{1/{p}}}
\renewcommand{\pe}{{p^e}}
\newcommand{\fe}{F^e}
\newcommand{\fstare}{\fe_*}
\newcommand{\ps}{\p S}
\newcommand{\rphi}{(R,\phi)}
\newcommand{\spsi}{(S,\psi)}
\newcommand{\pS}{\p S}
\newcommand{\inv}{^{-1}}
\let\Im\relax
\DeclareMathOperator{\Im}{Im}
\newcommand{\unpfty}{{1/p^\infty}}
\DeclareMathOperator{\perfdd}{perfd}
\DeclareMathOperator{\perfd}{\!_{perfd}}
\DeclareMathOperator{\perf}{\!_{perf}}
\newcommand{\fracc}{\mathfrak{c}}
\newcommand{\rainfty}{R^A_\infty}
\newcommand{\rafty}{R^A_\infty}
\newcommand{\rbfty}{R^B_\infty}
\newcommand{\safty}{S^A_\infty}
\newcommand{\ranfty}{R^{N,A}_\infty}
\newcommand{\aafty}{\fra^A_\infty}
\newcommand{\bafty}{\frab^A_\infty}
\newcommand{\cafty}{\fracc^A_\infty}
\newcommand{\rgamma}{\bold{R}\Gamma}
\DeclareSymbolFontAlphabet{\mathbb}{AMSb}
\DeclareSymbolFontAlphabet{\mathbbl}{bbold}
\begin{document}
\title{Centers of perfectoid purity}

\author[A.~Fayolle]{Anne Fayolle}
\address{Department of Mathematics\\ University of Utah\\ Salt Lake City\\ UT 84112\\USA}
\email{\href{mailto:fayolle@math.utah.edu}{fayolle@math.utah.edu}}

\keywords{$F$-singularities, centers of $F$-purity, perfectoid singularities, log canonical center}
\subjclass[2020]{14G45, 14F18, 14B05, 13A35}

\begin{abstract}
We introduce a mixed characteristic analog of log canonical centers in characteristic $0$ and centers of $F$-purity in positive characteristic, which we call centers of perfectoid purity.
We show that their existence detects (the failure of) normality of the ring. We also show the existence of a special center of perfectoid purity that detects the perfectoid purity of $R$, analogously to the splitting prime of Aberbach and Enescu, and investigate its behavior under étale morphisms.
\end{abstract}
\maketitle

\section{Introduction}

Let $R$ be a commutative Noetherian ring of characteristic $p>0$. Since the late sixties, the singularities of the ring $R$ have been studied using Frobenius; see \cite{KunzCharacterizationsOfRegularLocalRings,HochsterHunekeTC1,HochsterFoundations,HochsterHunekeTightClosureAndStrongFRegularity,SmithFRatImpliesRat,MehtaRamanathanFrobeniusSplittingAndCohomologyVanishing} to mention a few. 
These are broadly known as $F$-singularities and have been linked to singularities of the Minimal Model Program \cite{SchwedeFInjectiveAreDuBois,SmithFRatImpliesRat,SmithMultiplierTestIdeals}.
Of particular importance in this theory are the \emph{centers of $F$-purity} \cite{SchwedeCentersOfFPurity}, a special type of \emph{compatible ideal} \cite{MehtaRamanathanFrobeniusSplittingAndCohomologyVanishing}. They tell us where the ring fails to be (strongly) $F$-regular.  
These are related to log canonical centers, an important object in the study of singularities of the Minimal Model Program \cite{AmbroLCCENTERS}. 
The aim of this paper is to define an analogous object in mixed characteristic. 

Although there is no Frobenius in mixed characteristic, we have a good analog of perfection: perfectoidization \cite{ScholzePerfectoidSpaces,BhattScholzePrisms}. Our strategy is then to express what centers of $F$-purity are in terms of perfection in positive characteristic before writing an analog definition in mixed characteristic via perfectoidization. This strategy has been used successfully to define analogs of test ideals, $F$-signature, and $F$-purity in mixed characteristic \cite{MaSchwedePerfectoidTestIdeal,CaiLeeMaSchwedeTuckerPerfectoidSignature,BMPSTWWPerfectoidPure}.

Suppose that $R$ is reduced and let $F\colon R\to R^\unp$ be the Frobenius map on $R$. Assume furthermore that $R^\unp$ is a finite $R$-module and that the natural map $R\to R^\unp$ is pure. An ideal $\fra$ of $R$ is said to be \emph{uniformly compatible} if $\phi(\fra^\unpe)\subset \fra$ for all $e\in \N$, $\phi\in \Hom_R(R^\unpe,R)$. Let $R\perf\coloneqq \bigcup_e R^\unpe$ and $\fra\perf\coloneqq \bigcup_e \fra^\unpe$. Then, $\fra$ is uniformly compatible if and only if $\psi(\fra\perf)\subset \fra$ for all $\psi\in \Hom_R(R\perf,R)$
(see \autoref{corollary equiv comp with char p}). 
When $\fra$ is prime, we say that it is a \emph{center of $F$-purity} of $R$.

Now, let $(R,\fram)$ be a complete Noetherian local ring with perfect residue field $k$ of characteristic $p>0$. Suppose that $R$ is a finite $A$-algebra where 
$A\coloneqq W(k)\llbracket x_1,\ldots,x_d \rrbracket $ (for instance a Noether normalization). Denoting by perfd the perfectoidization of a ring or an ideal (see \cite[Section 10]{BhattScholzePrisms} or \autoref{definition perfectoidization}), we define 
\[
\rafty\coloneqq \left(R\tensor_A W(k)\left[p^\unpfty,x_1^\unpfty, \ldots,x_d^\unpfty\right]\llbracket x_1,\ldots,x_d\rrbracket^{\wedge p}\right)\perfd.
\]
Note that if $R$ itself has characteristic $p$ then $\rafty=R\perf$ (\autoref{corollary equiv comp with char p}). Let $\fra\subset R$ be an ideal and fix $\phi\in \Hom_R(\rafty,R)$. We say that $\fra$ is \emph{$\phi$-compatible} if 
\[\phi\left(\left (\fra\rafty\right)\perfd \right) \subset \fra.
\]
If this holds for every possible choice of $\phi\in \Hom_R(\rafty,R)$, we say that $\fra$ is \emph{uniformly perfectoid compatible}. This does not depend on the choice of $A$ by \autoref{uniformly compatible doesn't depend on the choice of A}. Moreover, like in positive characteristic, these are closed under sums, intersections, and minimal primes, see \autoref{lemma compatible ideals closed under sums intersection}, \autoref{compatible ideals of pairs closed under ass primes}. This allows us to prove the following theorem: 

\begin{mainthma}[{\autoref{finiteness of uniformly compatible ideals}}]
Let $(R,\fram)$ be a complete Noetherian local ring with residue field of characteristic $p>0$. If $R$ is perfectoid pure, there are only finitely many uniformly perfectoid compatible ideals. 
\end{mainthma}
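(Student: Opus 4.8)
The plan is to follow the positive-characteristic strategy (cf. \cite{SchwedeCentersOfFPurity}): show that uniformly perfectoid compatible ideals are radical, reduce the count to that of uniformly perfectoid compatible \emph{primes}, and then bound the latter by Noetherian induction on $\dim R$, with a perfectoid (big) test ideal playing the role that $\tau(R)$ plays in characteristic $p$. First I would prove that if $\fra$ is uniformly perfectoid compatible then $R/\fra$ is again perfectoid pure: choosing $\phi\in\Hom_R(\rafty,R)$ with $\phi(1)=1$ (possible since $R$ is perfectoid pure), the condition $\phi\left(\left(\fra\rafty\right)\perfd\right)\subset\fra$ is exactly what lets $\phi$ descend to a splitting of the analogous map for $R/\fra$, using that the formation of $\rafty$ commutes with base change along $R\to R/\fra$ and \autoref{uniformly compatible doesn't depend on the choice of A} to match up Noether normalizations. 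Since perfectoid pure rings are reduced \cite{BMPSTWWPerfectoidPure}, $R/\fra$ is reduced, so $\fra$ is radical. (Alternatively, radicality follows directly from a perfectoid projection formula giving $\phi\left(\left(\fra\rafty\right)\perfd\right)\supset\sqrt{\fra}$, in analogy with $\phi(F^e_*\fra)\ni x$ whenever $x^{p^e}\in\fra$.)

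Being radical, each uniformly perfectoid compatible ideal $\fra$ equals the intersection of the finitely many minimal primes of $R/\fra$, which are its associated primes and hence uniformly perfectoid compatible by \autoref{compatible ideals of pairs closed under ass primes}. Thus every uniformly perfectoid compatible ideal is the intersection of a subset of the set $\mathcal P$ of uniformly perfectoid compatible primes (the empty intersection being $R$), and this assignment is injective on radical ideals; so there are at most $2^{|\mathcal P|}$ of them and it suffices to show that $\mathcal P$ is finite. Here one also invokes \autoref{lemma compatible ideals closed under sums intersection} to manipulate the lattice.

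For the finiteness of $\mathcal P$ I would induct on $\dim R$; if $\dim R=0$ then $R$ is a field and $|\mathcal P|=1$. If $R$ is a domain: the perfectoid (big) test ideal $\tau(R)$ of \cite{MaSchwedePerfectoidTestIdeal} is uniformly perfectoid compatible, and it is nonzero, because a reduced complete local domain is generically regular and a regular ring has no nontrivial uniformly perfectoid compatible ideals (equivalently, $\tau(R)$ is the unit ideal at the generic point). Every proper nonzero $\bp\in\mathcal P$ must contain $\tau(R)$: otherwise $R_\bp$ would lie in the "perfectoid regular" locus and so have no nontrivial uniformly perfectoid compatible ideals, contradicting that $\bp R_\bp$ is one (localization preserving uniform perfectoid compatibility). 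Hence the proper nonzero members of $\mathcal P$ inject into the set of uniformly perfectoid compatible primes of the perfectoid pure ring $R/\tau(R)$, which has strictly smaller dimension, and we are done by induction. If $R$ is not a domain: its finitely many minimal primes $\bq$ lie in $\mathcal P$ (they are minimal over the compatible ideal $(0)$), each $R/\bq$ is a perfectoid pure complete local domain, and the members of $\mathcal P$ containing $\bq$ inject into the (finite, by the domain case) set of uniformly perfectoid compatible primes of $R/\bq$; since every prime contains a minimal one, $\mathcal P$ is finite.

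The main obstacle is the third step, and within it the properties of the perfectoid test ideal: that $\tau(R)$ is uniformly perfectoid compatible, that its vanishing locus $V(\tau(R))$ is precisely where "perfectoid regularity" fails (so that it contains every proper nonzero uniformly perfectoid compatible prime), and that $\tau(R)\neq 0$ for a perfectoid pure domain. Each of these is the perfectoid counterpart of a standard but nontrivial fact about the big test ideal, and has to be established in a setting where perfectoidization is not exact and where localization must be handled carefully. A secondary technical thread running through the argument is the descent of perfectoid purity and of the $\phi$-compatibility condition along quotients $R\to R/\fra$, which requires controlling the behavior of $\rafty$ and of $\Hom_R(\rafty,R)$ under such base change.
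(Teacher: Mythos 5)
Your first two steps are essentially the paper's own lemmas: radicality of uniformly perfectoid compatible ideals is \autoref{compatible ideals are radical}, proved exactly as you suggest by descending a splitting to $R/\fra$ via \autoref{compatible equivalent to comm diag quotient} and invoking reducedness of perfectoid pure rings (your parenthetical ``projection formula'' alternative, $\phi\bigl(\bigl(\fra\rafty\bigr)\perfd\bigr)\supset\sqrt{\fra}$, is not established anywhere and is not needed); closure under minimal primes and under intersections is \autoref{compatible ideals of pairs closed under ass primes} and \autoref{lemma compatible ideals closed under sums intersection}. So the reduction to counting compatible primes is sound.

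The genuine gap is your third step. The induction rests on three inputs none of which is available: (i) that a perfectoid (big) test ideal $\tau(R)$ is uniformly perfectoid compatible --- the paper itself remarks, after \autoref{trace does not depend on choice of A}, that it is not clear whether the BCM/perfectoid test ideal is compatible, and the paper's own $\uptau(R)$ is \emph{defined} as the smallest compatible ideal not contained in any minimal prime, with its existence deduced \emph{from} the finiteness theorem you are proving, so invoking it here is circular; (ii) that $V(\tau(R))$ contains every proper nonzero compatible prime, which requires both a localization theory and the fact that ``perfectoid regular'' local rings admit no nontrivial compatible ideals, neither of which is established; (iii) that uniform perfectoid compatibility localizes --- the paper only proves the contraction direction (\autoref{loc is uniformly compatible then ideal is uniformly compatible}), and since the whole framework is set up for complete local rings, passing to $R_\p$ forces a completion and a new choice of $A\in\sA$ that must be reconciled. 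The paper sidesteps all of this: since $R$ is complete and perfectoid pure there is a surjective $\phi\in\Hom_R(\rafty,R)$, every uniformly perfectoid compatible ideal is in particular $\phi$-compatible, and the family of $\phi$-compatible ideals is a family of radical ideals closed under sum, intersection, and primary decomposition, hence finite by the purely combinatorial result of Enescu--Hochster (\autoref{finiteness result enescu hochster}). That single citation replaces your entire induction on dimension, and avoids the test ideal and localization issues altogether.
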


In fact, there are finitely many $\phi$-compatible ideals for a surjective $\phi$ (\autoref{finiteness of phi compatible ideals}). If $R$ is perfectoid pure and $\p$ is a uniformly perfectoid compatible prime ideal of $R$, we say that it is a \emph{center of perfectoid purity} of $R$. Not only are there finitely many of them but we actually have a bound thanks to \cite{SchwedeTuckerNumberOfFSplit}, see \autoref{remark bound number cpps}.
In positive characteristic, log canonical centers are centers of $F$-purity \cite{SchwedeCentersOfFPurity}. The same holds true in mixed characteristic when $R$ is quasi-Gorenstein (that is, when $R$ has a canonical module that is free of rank $1$):

\begin{mainthmb}[{\autoref{lc center is cpp for big enough A}, \autoref{multiplier ideal uniformly compatible}}]
Let $(R,\fram)$ be a complete Noetherian normal quasi-Gorenstein local ring with residue field of characteristic $p>0$. If $R$ is perfectoid pure then,
\begin{enumerate}
    \item The multiplier ideal $\sJ$ of $R$ is a uniformly perfectoid compatible ideal of $R$. 
    \item The ideals defining log canonical centers of $R$ are centers of perfectoid purity.
    \item If $R$ has no uniformly perfectoid compatible ideal, then it is klt.
\end{enumerate}
\end{mainthmb}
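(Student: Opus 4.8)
The plan is to treat the three parts in turn, with (2) carrying the technical weight; (1) is a duality argument in the spirit of positive characteristic and (3) is a formal consequence of (1).

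\emph{Part (1).} Using that $R$ is quasi-Gorenstein, fix an isomorphism between its canonical module and $R$; via Grothendieck duality this identifies $\Hom_R(\rafty,R)$ with the module of maps underlying the construction of the mixed-characteristic multiplier ideal $\sJ$. The idea is to exhibit $\sJ$ as the smallest uniformly perfectoid compatible ideal of $R$ whose support is all of $\Spec R$, so that its compatibility becomes formal. Concretely, one shows
\[
\sJ \;=\; \sum_{\phi\in\Hom_R(\rafty,R)}\phi\bigl((\mathfrak{c}\,\rafty)\perfd\bigr)
\]
for a suitable ``perfectoid test ideal'' $\mathfrak{c}$; matching this sum with the intended $\sJ$ is where the quasi-Gorenstein hypothesis and the duality identification are used. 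Granting the description, compatibility is immediate: for $\psi\in\Hom_R(\rafty,R)$, expand $\sJ\,\rafty$, commute $\perfd$ past the finite sum and past multiplication by elements of $R$ (the formal properties of perfectoidization already used in \autoref{lemma compatible ideals closed under sums intersection}), and observe that $\psi$ post-composed on each summand is again of the form $\phi'\bigl((\mathfrak{c}\,\rafty)\perfd\bigr)$ with $\phi'\in\Hom_R(\rafty,R)$, hence lies in $\sJ$. The main obstacle here is the bookkeeping around $\perfd$ and the duality, not a new idea beyond the positive-characteristic template.

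\emph{Part (2).} Assume $R$ is perfectoid pure; then $R$ is log canonical (perfectoid purity forces log canonicity for quasi-Gorenstein $R$, the analog of the $F$-pure case). I would induct on $\dim R-\dim Z$, where $Z=V(\p)$ is the given log canonical center, working throughout with pairs. For the base case, the \emph{maximal} log canonical centers of $R$ are exactly the minimal primes of $\sJ$; by Part (1), $\sJ$ is uniformly perfectoid compatible, so by \autoref{compatible ideals of pairs closed under ass primes} is each of its minimal primes, which are thus centers of perfectoid purity. For the inductive step choose a maximal log canonical center $Z'=V(\p')$ with $Z\subsetneq Z'$; note $\p'\neq 0$, so $\dim R/\p'<\dim R$. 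The crux is a \emph{perfectoid adjunction} statement: a surjective $\phi\in\Hom_R(\rafty,R)$ with $\phi((\p'\rafty)\perfd)\subset\p'$ descends to $R/\p'$ and --- provided $A$ was taken large enough, which is precisely the role of ``for big enough $A$'' in \autoref{lc center is cpp for big enough A} --- the descended map witnesses that the pair $(R/\p',\,\mathrm{Diff})$ is perfectoid pure, while conversely every uniformly perfectoid compatible ideal of $(R/\p',\,\mathrm{Diff})$ lifts to a uniformly perfectoid compatible ideal of $R$ contained in $\p'$. Since $Z$ is a log canonical center of $(R/\p',\,\mathrm{Diff})$ by inversion of adjunction, the induction hypothesis gives that $\p/\p'$ is uniformly perfectoid compatible for $(R/\p',\,\mathrm{Diff})$, whence $\p$ is uniformly perfectoid compatible for $R$. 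Setting up this adjunction compatibly with perfectoidization --- especially proving the two-way correspondence of maps --- is the hard part of the whole theorem.

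\emph{Part (3).} Since $R$ is normal, hence a domain, $\sJ\neq 0$. If $R$ were not klt then $\sJ\subsetneq R$ (the mixed-characteristic analog of the fact that the multiplier ideal of a normal quasi-Gorenstein ring is trivial exactly at klt singularities), so by Part (1) $\sJ$ would be a nonzero proper uniformly perfectoid compatible ideal of $R$, contradicting the hypothesis. Hence $R$ is klt.
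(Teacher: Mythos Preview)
Your proposal diverges substantially from the paper's argument, and the route you take has real gaps.

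\textbf{Logical ordering.} In the paper, (2) is the load-bearing statement and (1) and (3) are corollaries of (2), not the other way around. Once every log canonical center is known to be uniformly perfectoid compatible, (1) follows because $\sJ$ is the intersection of the log canonical centers and compatible ideals are closed under intersection (\autoref{lemma compatible ideals closed under sums intersection}); (3) follows because a non-klt ring has a log canonical center, which is then a nonzero proper compatible ideal. Your plan reverses this: you try to prove (1) independently and then use it as the base of an induction for (2).

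\textbf{The paper's proof of (2).} The argument is direct, not inductive. By \autoref{not pfd pure p not in P implies compatible}, to show a prime $\p$ is uniformly perfectoid compatible it suffices to show that for any generator $h$ of $\p$ and any $e\gg 0$, the map $R_\p\to R^A_{\infty,\p}$ sending $1$ to $h^{1/p^e}$ is \emph{not} pure. The technical core (\autoref{lc implies log canonical p in p}, \autoref{lc implies log canonical}) is the implication ``perfectoid pure along $h^{1/p^e}$ $\Rightarrow$ $(R_\p,\tfrac{1}{p^e}\Div h)$ is log canonical''; since $\p$ is a log canonical center, the pairs $(R_\p,\tfrac{1}{p^e}\Div h)$ are \emph{not} log canonical for $e\gg 0$, so the maps are not pure, so $\p$ is compatible. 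The phrase ``big enough $A$'' simply means choosing $A\in\sA$ so that generators of $\p$ have compatible $p$-power roots in $\rafty$; it has nothing to do with adjunction.

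\textbf{Gaps in your approach.} For (1), the displayed formula for $\sJ$ as a trace-type sum is exactly the kind of description the paper warns it cannot establish: the remark at the end of \autoref{section compatible cores} says it is unclear whether $\sum_\phi \phi((g)^A_\infty)$ is compatible, and your ``compatibility is immediate'' paragraph glosses over the point that perfectoidization does not in general commute with the operations you need. For (2), the ``perfectoid adjunction'' you invoke --- descending a surjective $\phi$ to $(R/\p',\mathrm{Diff})$ and lifting compatible ideals back --- is a substantial unproved assertion, and even granting it, $R/\p'$ need not be quasi-Gorenstein, so the induction hypothesis does not apply. Your base case also leans on (1), so the circularity is not broken.
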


An important aspect of the theory in positive characteristic is that compatible ideals detect whether the ring is normal: if a ring of characteristic $p>0$ has no uniformly compatible ideal, it must be normal. This is also true in mixed characteristic:
\begin{mainthmc}[{\autoref{conductor ideal is compatible}, \autoref{conductor uniformly compatible}}]
Let $(R,\fram)$ be a complete Noetherian local ring with residue field of characteristic $p>0$. Then, the conductor ideal $\fracc$ of $R$ is a nonzero uniformly perfectoid compatible ideal of $R$. In particular, if $R$ has no uniformly perfectoid compatible ideal, then it is normal. 
\end{mainthmc}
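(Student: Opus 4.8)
The plan is to adapt to the perfectoid setting the classical fact that the conductor of a reduced excellent ring is uniformly $F$-compatible. First I would dispose of the non-reduced case: if $R$ is not reduced then it is a fortiori not normal, and its nilradical is a nonzero proper ideal which is the intersection of the minimal primes of $R$, hence uniformly perfectoid compatible by \autoref{compatible ideals of pairs closed under ass primes} and \autoref{lemma compatible ideals closed under sums intersection}; this already yields the final assertion, so from now on I assume $R$ reduced. Since $R$ is complete local it is excellent, so the normalization $\overline{R}$ of $R$ in its total ring of fractions $Q$ is a module-finite $R$-algebra, and the conductor $\fracc=\Ann_R(\overline{R}/R)=(R\colon_Q\overline{R})$ contains a nonzerodivisor; in particular $\fracc\neq 0$, and $\fracc=R$ if and only if $R$ is normal. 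I would record two standard properties of $\fracc$ at the outset: it is an ideal of $\overline{R}$ as well (indeed the largest ideal of $R$ that is also an ideal of $\overline{R}$), and $(R\colon_Q\fracc)=\overline{R}$ (the fractional ideal $(R\colon_Q\fracc)$ is a finite $R$-module, hence consists of elements integral over $R$, while it evidently contains $\overline{R}$).

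Next I would introduce the ``upstairs'' perfectoid algebra. Writing $T\coloneqq W(k)\left[p^\unpfty,x_1^\unpfty,\ldots,x_d^\unpfty\right]\llbracket x_1,\ldots,x_d\rrbracket^{\wedge p}$, so that $\rafty=(R\otimes_A T)\perfd$, note that $\overline{R}$ is again finite over $A$, and set $R^{\overline{A}}_\infty\coloneqq(\overline{R}\otimes_A T)\perfd$; the inclusion $R\hookrightarrow\overline{R}$ induces an $R$-algebra map $\iota\colon\rafty\to R^{\overline{A}}_\infty$, and $R^{\overline{A}}_\infty$ is naturally an $\overline{R}$-algebra. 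Because $\fracc\overline{R}=\fracc\subseteq R$, the ideal generated by $\fracc$ in $\overline{R}\otimes_A T$ coincides with the one it generates in $R\otimes_A T$; using the functoriality and base-change behaviour of perfectoidization of ideals from \cite[Section~10]{BhattScholzePrisms}, I would then deduce that $\iota$ carries the ideal $(\fracc\rafty)\perfd$ of $\rafty$ into an ideal $I$ of $R^{\overline{A}}_\infty$ which, being the perfectoidization of an ideal, is stable under multiplication by $\overline{R}$, and which is moreover contained in the image of $\iota$ --- the precise analogue of the elementary inclusion $\fracc\cdot\overline{R}^{1/p^e}\subseteq R^{1/p^e}$ used in positive characteristic.

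Granting this, fix $\phi\in\Hom_R(\rafty,R)$ and a nonzerodivisor $c\in\fracc$. Exactly as in characteristic $p$, I would extend $\phi$ to a $\overline{R}$-linear map $\overline{\phi}\colon R^{\overline{A}}_\infty\to\overline{R}$ by setting $\overline{\phi}(w)\coloneqq\phi(v)/c$ for any $v\in\rafty$ with $\iota(v)=cw$ (possible since $cw\in\iota(\rafty)$): the $R$-linearity of $\phi$, the fact that $\fracc$ is an ideal of $\overline{R}$, and the identities $c'\overline{\phi}(w)=\phi(c'v)\in R$ for all $c'\in\fracc$ show that $\overline{\phi}(w)$ is independent of the choices, that $\overline{\phi}(w)\in(R\colon_Q\fracc)=\overline{R}$, that $\overline{\phi}$ is $\overline{R}$-linear, and that $\overline{\phi}\circ\iota$ is $\phi$ followed by $R\hookrightarrow\overline{R}$. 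Now take $z\in(\fracc\rafty)\perfd$ and $y\in\overline{R}$. Then $\iota(z)\in I$, so $y\,\iota(z)\in I\subseteq\iota(\rafty)$; writing $y\,\iota(z)=\iota(z')$ and using $\overline{R}$-linearity of $\overline{\phi}$ together with $\overline{\phi}\circ\iota=\phi$ gives $y\,\phi(z)=\overline{\phi}(y\,\iota(z))=\phi(z')\in R$. As $y\in\overline{R}$ was arbitrary, $\phi(z)\in(R\colon_Q\overline{R})=\fracc$, so $\fracc$ is $\phi$-compatible; since $\phi$ was arbitrary, $\fracc$ is uniformly perfectoid compatible. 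The last sentence of the theorem follows: if $R$ is not normal then $\fracc\subsetneq R$ is a nonzero uniformly perfectoid compatible ideal.

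I expect the main obstacle to be exactly the step in the second paragraph: controlling the interaction of perfectoidization with the finite birational extension $R\hookrightarrow\overline{R}$, i.e. showing that $(\fracc\rafty)\perfd$ maps into an ideal of $R^{\overline{A}}_\infty$ that is simultaneously $\overline{R}$-stable and contained in the image of $\iota$, and dealing with the possibility that $\iota$ fails to be injective (so that $\overline{\phi}$ must be checked to be well defined). This should rest on the functoriality of perfectoidization of ideals, the flatness of $T$ over $A$, and the independence-of-$A$ and ideal-theoretic lemmas already established; granting it, the remainder is a faithful transcription of the positive-characteristic argument.
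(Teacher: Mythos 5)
Your overall strategy is the same as the paper's: reduce everything to the single ``upstairs'' statement that $(\fracc\rafty)\perfd$ maps onto an $R^N$-stable ideal of $\ranfty$ contained in the image of $\iota\colon\rafty\to\ranfty$, and then run the classical conductor argument. The difficulty is that this statement, which you correctly single out as the main obstacle, is exactly where the real work lies, and the tools you propose for it (functoriality of perfectoidization, flatness of $T$ over $A$) do not deliver it: the base change here is along the finite birational, non-flat map $R\to R^N$, perfectoidization is not exact, and there is no formal reason why $\iota\bigl((\fracc\rafty)\perfd\bigr)$ should be stable under multiplication by $R^N$, nor why an $R^N$-stable ideal containing it should remain inside $\iota(\rafty)$. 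The paper's \autoref{lemma perfectoidization same ideal} proves this by first checking that $R\tensor_A A_\infty$, $\fracc\tensor_A A_\infty$ and $R^N\tensor_A A_\infty$ are derived $p$-complete and then invoking the gluing of perfectoidizations along the conductor (Milnor) square, \cite[Corollary 8.12]{BhattScholzePrisms}; the resulting Cartesian square identifies $\cafty=\ker\bigl(\rafty\to((R/\fracc)\tensor_A A_\infty)\perfd\bigr)$ with $\ker\bigl(\ranfty\to\ranfty/(\fracc\ranfty)\perfd\bigr)$, which is simultaneously an ideal of both rings. Without this input your second paragraph is an assertion, not a proof.

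A second, smaller issue: your extension $\overline{\phi}(w)=\phi(v)/c$ is not known to be well defined when $\iota$ fails to be injective (you would need $\phi(\ker\iota)=0$, which is not automatic), and you flag this without resolving it. The paper's proof of \autoref{conductor ideal is compatible} avoids constructing $\overline{\phi}$ altogether: it passes to the canonically defined $\phi_W=\phi\tensor_R\id_{W^{-1}R}$ and computes $s\phi(\cafty)=\phi_W(s\cafty)\subset\phi_W(\cafty)=\phi(\cafty)$ for $s\in R^N$, using only the $R^N$-stability of $\cafty$ from the key lemma; I would recommend that route. Your remaining steps --- the nilradical reduction in the non-reduced case (a sensible addition via \autoref{compatible ideals of pairs closed under ass primes}), the identity $(R:_Q\fracc)=R^N$, and the final deduction $\phi(z)\in(R:_Q R^N)=\fracc$ --- are correct and faithful to the characteristic-$p$ template.
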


We also show the existence of a special uniformly perfectoid compatible ideal $\upbeta(R)\subset R$ that detects perfectoid purity, analogous to the splitting prime of Aberbach and Enescu \cite{AberbachEnescuStructureOfFPure}. This is defined in \autoref{definition uniform splitting prime} and has the following property. 

\begin{mainthmd}[{\autoref{basic properties uniform splitting prime}}]
Let $(R,\fram)$ be a complete Noetherian local ring with perfect residue field of characteristic $p>0$. Then, $\upbeta(R)\neq R$ if and only $R$ is perfectoid pure. In that case, $\upbeta(R)$ is a prime and is the largest center of perfectoid purity of $R$. In particular, $R/\upbeta(R)$ is normal. 
\end{mainthmd}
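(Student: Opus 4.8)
Write $\upbeta(R)$ for the ideal of \autoref{definition uniform splitting prime}; unwinding that definition, $r\in\upbeta(R)$ precisely when $\phi\bigl((r\rafty)\perfd\bigr)\subseteq\fram$ for every $\phi\in\Hom_R(\rafty,R)$, so that $\upbeta(R)$ is the largest ideal $\fra\subseteq R$ with $\phi\bigl((\fra\rafty)\perfd\bigr)\subseteq\fram$ for all $\phi$. The plan is to follow, step for step, the treatment of the $F$-splitting prime by Aberbach--Enescu \cite{AberbachEnescuStructureOfFPure}, substituting for the elementary additivity and semigroup structure of Frobenius pushforwards the structural facts about $\fra\mapsto(\fra\rafty)\perfd$ (monotonicity, behaviour on sums, compatibility with pushforward) and about composition in $\Hom_R(\rafty,R)$ that already underlie \autoref{lemma compatible ideals closed under sums intersection} and \autoref{compatible ideals of pairs closed under ass primes}.

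\textbf{The equivalence.} Since $\rafty$ is perfectoid, $\bigl((1)\rafty\bigr)\perfd=\rafty$, so $1\in\upbeta(R)$ if and only if $\phi(\rafty)\subseteq\fram$ for every $\phi\in\Hom_R(\rafty,R)$. Recall that $R$ is perfectoid pure exactly when some $\phi\in\Hom_R(\rafty,R)$ is surjective, equivalently when $R\hookrightarrow\rafty$ splits (given a surjective $\phi$ with $\phi(z)=1$, the map $w\mapsto\phi(zw)$ is an $R$-linear splitting), cf. \cite{BMPSTWWPerfectoidPure}. Thus, if $R$ is not perfectoid pure then the image of every $\phi$ is a proper ideal of the local ring $R$, hence lies in $\fram$, so $1\in\upbeta(R)$ and $\upbeta(R)=R$; and if $R$ is perfectoid pure there is a surjective $\phi$, witnessing $1\notin\upbeta(R)$, so $\upbeta(R)\subsetneq R$, i.e. $\upbeta(R)\subseteq\fram$. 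This proves the first assertion; assume from now on that $R$ is perfectoid pure.

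\textbf{$\upbeta(R)$ is prime and is the largest center of perfectoid purity.} First, $\upbeta(R)$ is an ideal: closure under multiplication by $R$ is monotonicity of $\fra\mapsto(\fra\rafty)\perfd$, and closure under addition follows from $\bigl((r_1+r_2)\rafty\bigr)\perfd\subseteq\bigl((r_1,r_2)\rafty\bigr)\perfd$ together with the analysis of perfectoidizations of sums from \autoref{lemma compatible ideals closed under sums intersection}; the same circle of ideas, now invoking a composition law on $\Hom_R(\rafty,R)$, shows $\upbeta(R)$ is itself uniformly perfectoid compatible. Next, directly from the definition, $\upbeta(R)$ contains every proper uniformly perfectoid compatible ideal $\fra$: such an $\fra$ lies in $\fram$, so $\phi\bigl((\fra\rafty)\perfd\bigr)\subseteq\fra\subseteq\fram$ for all $\phi$, and then monotonicity gives $r\in\upbeta(R)$ for each $r\in\fra$. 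In particular $\upbeta(R)$ contains each of its own minimal primes, which are uniformly perfectoid compatible by \autoref{compatible ideals of pairs closed under ass primes}; since $\upbeta(R)$ is conversely contained in each of its minimal primes, it coincides with each of them, so $\upbeta(R)$ is prime. As $R$ is perfectoid pure and $\upbeta(R)$ is a uniformly perfectoid compatible prime, it is a center of perfectoid purity, and any center of perfectoid purity—being a proper uniformly perfectoid compatible prime—is contained in $\upbeta(R)$ by the containment just noted; thus $\upbeta(R)$ is the largest one.

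\textbf{Normality of $R/\upbeta(R)$, and the main obstacle.} Put $S\coloneqq R/\upbeta(R)$, a complete Noetherian local ring with residue field of characteristic $p$. If $S$ were not normal then by \autoref{conductor uniformly compatible} its conductor $\fracc_S$ would be a nonzero uniformly perfectoid compatible ideal of $S$; pulling back along $R\twoheadrightarrow S$—using that, $\upbeta(R)$ being uniformly perfectoid compatible, the relevant ``$R_\infty$'' for $S$ is the quotient $\rafty/(\upbeta(R)\rafty)\perfd$, so uniform perfectoid compatibility ascends—would give a uniformly perfectoid compatible ideal $\fra$ with $\upbeta(R)\subsetneq\fra\subseteq\fram$, contradicting the maximality of $\upbeta(R)$ among proper uniformly perfectoid compatible ideals. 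Hence $R/\upbeta(R)$ is normal. The combinatorial skeleton here is exactly that of Aberbach--Enescu; the real work lies in the two structural inputs flagged at the outset—the functorial behaviour of the perfectoidization of ideals $\fra\mapsto(\fra\rafty)\perfd$ and a workable composition operation on $\Hom_R(\rafty,R)$ replacing the semigroup structure of $\{\Hom_R(R^{1/p^e},R)\}_e$. Because the perfectoidization of an ideal is a derived, almost-mathematical construction, tracking these properties (and, in the last step, identifying the ``$R_\infty$'' of $R/\upbeta(R)$ with $\rafty/(\upbeta(R)\rafty)\perfd$) is where essentially all the difficulty is concentrated.
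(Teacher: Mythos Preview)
Your proof has a genuine gap at its foundation: you have misread \autoref{definition uniform splitting prime}. The ideal $\upbeta(R)$ is \emph{not} characterized by ``$r\in\upbeta(R)$ iff $\phi\bigl((r\rafty)\perfd\bigr)\subseteq\fram$ for every $\phi$''; that condition describes only the first stage $\bigcap_\phi\upbeta_1\rphi$ of an iterative construction. The actual definition is $\upbeta(R)=\bigcap_\phi\bigcap_{i>0}\upbeta_i\rphi$, where $\upbeta_0\rphi=\fram$ and $\upbeta_i\rphi=\{r:\phi((r)^A_\infty)\subset\upbeta_{i-1}\rphi\}$. The paper explicitly remarks, just before \autoref{definition splitting prime}, that it is \emph{not clear} whether the simpler ideal you wrote down is $\phi$-compatible; the entire purpose of iterating is to sidestep this.

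This matters exactly where you wave your hands: the sentence ``the same circle of ideas, now invoking a composition law on $\Hom_R(\rafty,R)$, shows $\upbeta(R)$ is itself uniformly perfectoid compatible'' is the heart of the matter, and no such composition law is available. In positive characteristic one composes $\phi\in\Hom_R(R^{1/p^e},R)$ with its Frobenius twists to obtain $\phi^n\in\Hom_R(R^{1/p^{en}},R)$, but there is no analogous tower over $\rafty$ that lets you iterate a single $\phi\colon\rafty\to R$. The paper replaces this missing structure by hand: the inclusion $\phi\bigl((\upbeta\rphi)^A_\infty\bigr)\subset\upbeta\rphi$ is obtained by unwinding the chain $\upbeta_i\rphi$ directly (see \autoref{basic ppties splitting prime}), not by composing maps.

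Once compatibility is established via the correct iterative definition, the remaining steps of your outline are sound and close to the paper's. The equivalence with perfectoid purity reduces to whether some $\phi$ surjects; your primality argument via minimal primes (using \autoref{uniform compatible closed under ass primes} together with maximality) is a legitimate alternative to the paper's route through the conductor of $R/\upbeta\rphi$ in \autoref{splitting prime is prime}; and the normality of $R/\upbeta(R)$ via \autoref{conductor uniformly compatible} and the bijection of \autoref{compatible equivalent to comm diag quotient} is exactly the paper's method.
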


We then generalize this construction to create an analog of the Cartier core map \cite{BadillaCespedesFInvariantsSRRings,BrosowskyCartierCoreMap,CarvajalFayolleTameRamificationCFPS} in \autoref{section compatible cores}. We also show that this map behaves well under étale morphisms in \autoref{section behavior étale morphisms}.

\subsection{Acknowledgements}
The research was partially supported by an NSERC doctoral grant. I would like to thank my advisor, Karl Schwede, for his constant support and generous help during the preparation of this paper. His many insights can be found throughout this work. I would like to thank Linquan Ma for his help in improving and correcting a previous version of this paper. Thanks also to the anonymous referee for valuable comments and suggestions.

\section{Definition and first properties}
\begin{notation}
For this whole paper, all rings are assumed to be commutative with unity. If $(R,\fram)$ is a local ring and $M$ is an $R$-module, then $\widehat{M}$ or $M^\wedge$ is the classical $\fram$-adic completion of $M$. If $\fra$ is any other ideal of $R$, $M^{\wedge \fra}$ is the classical $\fra$-adic completion of $M$ unless explicitly stated otherwise. Denoting by $p$ the characteristic of $R/\fram$, if $S$ is an $R$-algebra, and $\fra\subset S$ is an ideal, then $\fra^-$ is the $p$-adic closure of $\fra$ in $S$.
\end{notation}
\subsection{Generalities about perfectoid rings and ideals}
\begin{definition}[{\cite[Section 10]{BhattScholzePrisms}}]\label{definition perfectoidization}
Let $R$ be a perfectoid ring and $\fra\subset R$ an ideal. We say that $\fra$ is a perfectoid ideal if $R/\fra$ is also perfectoid. If $\frab\subset R$ is any ideal, we define $\frab\perfd\coloneqq \ker(R\to (R/\frab)\perfd)$ and $\frab$ is perfectoid if and only if $\frab=\frab\perfd$.
\end{definition}

We write down a couple of facts about perfectoid ideals. These are well known to experts but will be useful throughout the paper.

\begin{proposition}\label{kernel of map of perfd rings}
Let $f\colon R\to S$ be a morphism between perfectoid rings. Then, $\ker f$ is a perfectoid ideal of $R$. 
\end{proposition}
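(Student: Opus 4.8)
The plan is to reduce the claim that $\ker f$ is a perfectoid ideal to the definition: by \autoref{definition perfectoidization}, $(\ker f)\perfd = \ker(R \to (R/\ker f)\perfd)$, so we must show this kernel equals $\ker f$ itself, i.e.\ that the natural map $R/\ker f \to (R/\ker f)\perfd$ is injective. Writing $\overline{R} := R/\ker f$, we have a factorization $R \twoheadrightarrow \overline{R} \hookrightarrow S$, where the second map is injective by construction. So it suffices to show that $\overline{R} \to (\overline{R})\perfd$ is injective, and for this I would exploit the functoriality and universal property of perfectoidization: the inclusion $\overline{R}\hookrightarrow S$ induces a commutative square $\overline{R} \to (\overline{R})\perfd$ and $S \to S\perfd$, and since $S$ is already perfectoid, $S\perfd = S$. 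Thus the composite $\overline{R} \to (\overline{R})\perfd \to S\perfd = S$ agrees with the injection $\overline{R}\hookrightarrow S$, which forces $\overline{R}\to (\overline{R})\perfd$ to be injective.

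More carefully, I would first recall from \cite[Section 10]{BhattScholzePrisms} the precise input I need: perfectoidization is a functor on (a suitable category of) rings, equipped with a natural transformation $\mathrm{id} \Rightarrow (-)\perfd$, and it is the identity on perfectoid rings. Then the argument above is just the commutativity of
\[
\begin{CD}
\overline{R} @>>> (\overline{R})\perfd\\
@VVV @VVV\\
S @>>> S\perfd
\end{CD}
\]
together with $S\perfd = S$ and the left vertical arrow being injective: the top-then-right composite $\overline{R}\to S$ is injective, hence so is the top arrow $\overline{R}\to (\overline{R})\perfd$. Combined with $R\twoheadrightarrow\overline{R}$ this gives $\ker(R\to(\overline{R})\perfd) = \ker(R\to\overline{R}) = \ker f$, which is exactly $(\ker f)\perfd = \ker f$, i.e.\ $\ker f$ is perfectoid.

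The main subtlety — and the only place requiring care — is making sure that $R/\ker f$ actually lies in the class of rings for which perfectoidization and its universal property are defined in \cite{BhattScholzePrisms}; in particular one typically wants $R/\ker f$ to be (derived, or classically) $p$-complete, so that $(\overline{R})\perfd$ makes sense and the naturality square is available. Since $R$ is perfectoid it is $p$-complete, and $\overline{R}$ is a quotient by a closed ideal (one may need to pass to the $p$-adic closure of $\ker f$, but as $\overline{R}\hookrightarrow S$ with $S$ $p$-complete and $p$-adically separated, $\ker f$ is already $p$-adically closed in $R$), so this is fine. Beyond that point, no hard estimates or structural results are needed — the proof is a short diagram chase off the functoriality of $(-)\perfd$.
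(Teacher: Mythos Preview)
Your proposal is correct and follows essentially the same approach as the paper: both argue that $\overline{R}=R/\ker f$ is $p$-complete, and then use that the injection $\overline{R}\hookrightarrow S$ into the perfectoid ring $S$ must factor through $(\overline{R})\perfd$, forcing $\overline{R}\to(\overline{R})\perfd$ to be injective. The only cosmetic difference is that the paper phrases the factorization via the universal property of perfectoidization (after noting $\overline{R}$ is semiperfectoid) and additionally observes surjectivity of $\overline{R}\to(\overline{R})\perfd$ via \cite[Theorem 7.4]{BhattScholzePrisms}, concluding that $\overline{R}$ is itself perfectoid; but as you correctly identify, injectivity alone already yields $(\ker f)\perfd=\ker f$.
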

\begin{proof}
As the kernel of a map of two $p$-complete rings, $\ker f$ is also $p$-complete. In particular, $R/\ker f$ is semiperfectoid so it surjects onto its perfectoidization by \cite[Theorem 7.4]{BhattScholzePrisms}. On the other hand, $R/\ker f\into S$ factors through $(R/\ker f)\perfd$ so $R/\ker f\to (R/\ker f)\perfd$ must be injective, hence an isomorphism. 
\end{proof}

\begin{proposition}[{\cite[Proposition 2.8]{DineIshizukaTiltingUntilting}}]\label{perfectoidization is radical when p in ideal}
Let $R$ be a perfectoid ring and $\fra\subset R$ an ideal containing $p$. Then, $\fra$ is perfectoid if and only if it is radical. 
\end{proposition}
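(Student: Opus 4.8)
My plan is to reduce the statement to the characteristic-$p$ situation on the quotient ring. By the definition of a perfectoid ideal, $\fra$ is perfectoid if and only if $S\coloneqq R/\fra$ is perfectoid, and $\fra$ is radical if and only if $S$ is reduced, so it suffices to prove that $S$ is perfectoid if and only if $S$ is reduced. First I would note that since $p\in\fra$, the ring $S$ is an $\F_p$-algebra and in particular $p$-complete, so as a quotient of the perfectoid ring $R$ it is semiperfectoid. Moreover the Frobenius on $R/p$ is surjective (a basic property of perfectoid rings) and $S$ is a quotient of $R/p$, so the Frobenius $x\mapsto x^p$ on $S$ is surjective; that is, $S$ is \emph{semiperfect}.

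The heart of the matter is then the chain
\[
S\text{ perfectoid}\ \Longleftrightarrow\ S\text{ perfect}\ \Longleftrightarrow\ S\text{ reduced}.
\]
For the first equivalence I would invoke the standard fact that a perfectoid ring in which $p=0$ is the same thing as a perfect $\F_p$-algebra (\eg\cite{BhattScholzePrisms}): a characteristic-$p$ perfectoid ring is of the form $A/I$ for a perfect prism $(A,I)$, hence has bijective Frobenius; conversely, if $S$ is a perfect $\F_p$-algebra then $(W(S),(p))$ is a perfect prism with quotient $S$. The second equivalence is elementary: in characteristic $p$ one has $(x-y)^p=x^p-y^p$, so the Frobenius of a reduced $\F_p$-algebra is injective, and therefore a semiperfect $\F_p$-algebra is perfect exactly when it is reduced — and $S$ is semiperfect by the previous paragraph. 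Concatenating the equivalences proves the proposition.

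As a byproduct the same reasoning identifies the perfectoidization of $\fra$: since the initial perfect — equivalently, in characteristic $p$, perfectoid — ring admitting a map from the semiperfect ring $S=R/\fra$ is its reduction, one gets $(R/\fra)\perfd=(R/\fra)_{\mathrm{red}}$, hence
\[
\fra\perfd=\ker\bigl(R\to(R/\fra)_{\mathrm{red}}\bigr)=\sqrt{\fra}.
\]
The only genuinely external ingredient is the identification of characteristic-$p$ perfectoid rings with perfect rings; the rest is formal, which is presumably why this is recorded as a folklore statement in \cite{DineIshizukaTiltingUntilting}. I expect the only point that needs a little care to be the verification that $S$ is semiperfect — that is, the use of $p\in\fra$ to push the mod-$p$ surjectivity of Frobenius on $R$ down to the quotient $S$.
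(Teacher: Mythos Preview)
The paper does not supply its own proof of this proposition; it simply records the statement and cites \cite[Proposition 2.8]{DineIshizukaTiltingUntilting}. Your argument is correct and is the standard one: pass to the quotient $S=R/\fra$, which is a semiperfect $\F_p$-algebra since $p\in\fra$ and Frobenius is surjective on $R/p$, and then use that in characteristic $p$ a ring is perfectoid exactly when it is perfect, so $S$ is perfectoid iff Frobenius is injective iff $S$ is reduced; the identification $\fra\perfd=\sqrt{\fra}$ you record at the end is likewise correct for the same reason.
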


\begin{proposition}[{\cite[Example 7.9]{BhattScholzePrisms}, \cite[Lemma 2.4.3]{CaiLeeMaSchwedeTuckerPerfectoidSignature}}]\label{perfectoidization of principal ideal with roots}
Let $R$ be a perfectoid ring and $f\in R$ be such that $f$ has a compatible system of $p$-th power roots, which we denote by $\{f^{1/{p^\infty}}\}$. Then, $(f)\perfd=(f^\unpfty)^{-}$. 
\end{proposition}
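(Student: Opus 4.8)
The plan is to reduce the statement to the computation of $(R/fR)\perfd$ and to carry that out by tilting. For the reduction: since $R$ is perfectoid it is $p$-complete, so the $p$-adic completion of $R/fR$ is semiperfectoid and hence surjects onto $(R/fR)\perfd$ by \cite[Theorem~7.4]{BhattScholzePrisms}, exactly as in the proof of \autoref{kernel of map of perfd rings}. Consequently $R\to(R/fR)\perfd$ is surjective, its kernel is $(f)\perfd$ by the definition of the latter, and $(f)\perfd$ is a perfectoid ideal of $R$ — in particular $p$-adically closed — again by \autoref{kernel of map of perfd rings}. Thus it is enough to produce a ring isomorphism $(R/fR)\perfd\cong R/(f^\unpfty)^-$ compatible with the maps from $R$: comparing kernels then gives $(f)\perfd=(f^\unpfty)^-$.

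To produce this isomorphism, write $R^\flat$ for the tilt of $R$ (a perfect $\F_p$-algebra) and $(-)^\sharp\colon R^\flat\to R$ for the multiplicative lift; the chosen compatible system $\{f^\unpfty\}$ gives an element $f^\flat\in R^\flat$ with $\bigl((f^\flat)^{1/p^e}\bigr)^\sharp=f^{1/p^e}$ for all $e$. Since $R^\flat$ is perfect, the ideal $I\coloneqq\bigcup_{e\ge 0}\bigl((f^\flat)^{1/p^e}\bigr)$ equals its own radical and is stable under taking $p$-th roots, so $R^\flat/I$ is again perfect; its completion is then a perfect, complete $\F_p$-algebra, hence perfectoid, and it is moreover $(R^\flat/f^\flat R^\flat)\perfd$, because any map from $R^\flat/f^\flat R^\flat$ to a perfect complete ring annihilates the nilpotents $(f^\flat)^{1/p^e}$ and so factors through it. By compatibility of perfectoidization with the tilting equivalence \cite[\S7]{BhattScholzePrisms}, $(R/fR)\perfd$ is the untilt of this ring; since $\bigl((f^\flat)^{1/p^e}\bigr)^\sharp=f^{1/p^e}$, each $f^{1/p^e}$ dies in that untilt, so its kernel from $R$ — which we have already identified with the $p$-adically closed ideal $(f)\perfd$ — contains $(f^\unpfty)^-$, and the tilting dictionary for principal ideals with compatible $p$-power roots shows it is exactly $(f^\unpfty)^-$. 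This is in essence \cite[Example~7.9]{BhattScholzePrisms} together with \cite[Lemma~2.3.2]{CaiLeeMaSchwedeTuckerPerfectoidSignature}, which could instead be cited verbatim. (One can bypass tilting by granting two facts: that $R/(f^\unpfty)^-$ is perfectoid, whence $(f)\perfd\subseteq(f^\unpfty)^-$ by the universal property of $(\cdot)\perfd$; and that a perfectoid ideal containing $f$ must contain every $f^{1/p^e}$, whence $(f^\unpfty)^-\subseteq(f)\perfd$ since $(f)\perfd$ is $p$-adically closed.)

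The only real content lies in the tilting step: checking that $R^\flat/I$, completed, is perfectoid and coincides with $(R^\flat/f^\flat R^\flat)\perfd$, and then — the delicate bookkeeping — that untilting carries its defining ideal back precisely to the $p$-adic closure of $(f,f^{1/p},f^{1/p^2},\dots)$ in $R$, rather than to something differing by nilpotent slack. Everything around it is formal, using only the universal property of perfectoidization and the fact, already exploited in the excerpt, that semiperfectoid rings surject onto their perfectoidizations.
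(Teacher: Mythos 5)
The paper offers no argument of its own for this proposition — it is stated as a citation of \cite[Example 7.9]{BhattScholzePrisms} and \cite[Lemma 2.3.2]{CaiLeeMaSchwedeTuckerPerfectoidSignature} — and your outline is a correct account of what those references establish: the reduction to identifying $(R/fR)\perfd$, the tilting computation, and the ``bypass'' at the end (both inclusions follow once one knows $R/(f^\unpfty)^-$ is perfectoid and that perfectoid ideals are radical and $p$-adically closed) are all sound. The single substantive input, namely that the classical $p$-adic closure $(f^\unpfty)^-$ (rather than some derived completion with nilpotent slack) cuts out a perfectoid quotient, is exactly the point you defer to the cited lemmas, so your proof is in effect the paper's citation, expanded.
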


\begin{proposition}\label{intersection of perfectoid ideals is perfectoid}
Let $R$ be a perfectoid ring and $\{\fra_i\}_{i\in I}$ be a set of perfectoid ideals. Then, $\cap_{i\in I} \fra_i$ is a perfectoid ideal of $R$. 
Moreover, if $I$ is a finite set and $\{\frab_i\}_{i\in I}$ are $p$-complete ideals of $R$, then $\cap_i (\frab_i)\perfd=(\cap_i \frab_i)\perfd$.
\end{proposition}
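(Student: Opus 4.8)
The plan is to handle the two assertions separately, building the finite case on top of the arbitrary-intersection case. For the first statement, let $\fra \coloneqq \bigcap_{i \in I} \fra_i$. The natural map $R/\fra \to \prod_{i \in I} R/\fra_i$ is injective by construction. Since each $\fra_i$ is perfectoid, each $R/\fra_i$ is a perfectoid ring, and an arbitrary product of perfectoid rings is again perfectoid (perfectoidness is a condition that can be checked factor-by-factor: $p$-completeness, the existence of a compatible system of $p$-power roots of a unit multiple of $p$, and surjectivity of Frobenius on the mod-$p$ reduction are all preserved under products; alternatively one cites \cite[Section 10]{BhattScholzePrisms}). Thus $R/\fra$ is a $p$-complete ring embedding into a perfectoid ring, i.e.\ it is semiperfectoid, so by \cite[Theorem 7.4]{BhattScholzePrisms} it surjects onto its perfectoidization; but the embedding $R/\fra \hookrightarrow \prod_i R/\fra_i$ factors through $(R/\fra)\perfd$, forcing $R/\fra \to (R/\fra)\perfd$ to be injective, hence an isomorphism. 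Therefore $\fra = \fra\perfd$ is perfectoid. (This is essentially the argument of \autoref{kernel of map of perfd rings}, applied to $R \to \prod_i R/\fra_i$, once one knows a product of perfectoids is perfectoid.)

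For the second statement, suppose $I$ is finite and the $\frab_i$ are $p$-complete. The containment $\left(\bigcap_i \frab_i\right)\perfd \subset \bigcap_i (\frab_i)\perfd$ is immediate from functoriality of $(-)\perfd$ applied to the surjections $R/\bigcap_i \frab_i \twoheadrightarrow R/\frab_i$ (each $(\frab_i)\perfd = \ker(R \to (R/\frab_i)\perfd)$ contains $\ker(R \to (R/\bigcap_j \frab_j)\perfd)$, since $R \to (R/\frab_i)\perfd$ factors through $(R/\bigcap_j \frab_j)\perfd$). For the reverse containment, set $\frad \coloneqq \bigcap_i (\frab_i)\perfd$. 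Each $(\frab_i)\perfd$ is a perfectoid ideal by \autoref{definition perfectoidization}, so by the first part of the proposition $\frad$ is a perfectoid ideal; in particular $\frad = \frad\perfd = \ker(R \to (R/\frad)\perfd)$. It now suffices to show $\bigcap_i \frab_i \subset \frad$ — which is clear, since $\frab_i \subset (\frab_i)\perfd$ for each $i$ — together with the fact that $(-)\perfd$ of the smaller ideal lands inside the perfectoid ideal $\frad$: applying $(-)\perfd$ to $R/\bigcap_i \frab_i \twoheadrightarrow R/\frad$ gives $\left(\bigcap_i \frab_i\right)\perfd \subset \frad\perfd = \frad$, as desired.

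The one genuine input I should be careful about is that a (possibly infinite) product of perfectoid rings is perfectoid — this is where the finiteness hypothesis is \emph{not} needed for the first statement but the $p$-completeness of the $\fra_i$, hence of the quotients, is implicitly used. I expect this to be the main obstacle to write cleanly: one must either quote it from \cite{BhattScholzePrisms} or verify the three defining properties of a perfectoid ring pass to arbitrary products, the only subtle one being $p$-adic completeness of an infinite product (which holds because $p$-adic completeness of each factor is uniform — the transition maps are the obvious ones — so the product is its own $p$-completion). Once that lemma is in hand, both parts are formal manipulations with kernels and the universal property of perfectoidization as recorded in \autoref{kernel of map of perfd rings} and \autoref{definition perfectoidization}, and the finite-intersection statement follows purely by reducing it to the arbitrary-intersection statement applied to the family $\{(\frab_i)\perfd\}_i$.
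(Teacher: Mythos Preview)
Your argument for the first assertion is correct and matches the paper's: both identify $\bigcap_i \fra_i$ as the kernel of $R \to \prod_i R/\fra_i$, note that an arbitrary product of perfectoid rings is perfectoid (the paper cites \cite[Example 3.8(8)]{BhattIyengarMaRegularRings} for this), and invoke \autoref{kernel of map of perfd rings}.

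Your argument for the second assertion, however, has a genuine gap. You correctly establish the containment $\left(\bigcap_i \frab_i\right)\perfd \subset \bigcap_i (\frab_i)\perfd \eqqcolon \frad$ by functoriality. But your ``reverse containment'' paragraph proves the \emph{same} direction a second time: from $\bigcap_i \frab_i \subset \frad$ and the surjection $R/\bigcap_i \frab_i \twoheadrightarrow R/\frad$ you deduce $\left(\bigcap_i \frab_i\right)\perfd \subset \frad\perfd = \frad$, which is the forward containment, not $\frad \subset \left(\bigcap_i \frab_i\right)\perfd$. The observation that $\frad$ is a perfectoid ideal containing $\bigcap_i \frab_i$ can only yield $\left(\bigcap_i \frab_i\right)\perfd \subset \frad$, since $\left(\bigcap_i \frab_i\right)\perfd$ is the \emph{smallest} perfectoid ideal containing $\bigcap_i \frab_i$; this is exactly the opposite of what you need. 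A further warning sign is that your argument never uses the finiteness of $I$ or the $p$-completeness of the $\frab_i$.

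The paper obtains the hard containment by a different mechanism: perfectoidization is arc-sheafification \cite[Corollary 8.11]{BhattScholzePrisms}, and sheafification commutes with finite limits, so $\bigl(\prod_i R/\frab_i\bigr)\perfd = \prod_i (R/\frab_i)\perfd$. One then reads off
\[
\bigcap_i (\frab_i)\perfd = \ker\Bigl(R \to \prod_i (R/\frab_i)\perfd\Bigr) = \ker\Bigl(R \to \bigl(\textstyle\prod_i R/\frab_i\bigr)\perfd\Bigr) = \bigl(\textstyle\bigcap_i \frab_i\bigr)\perfd,
\]
where finiteness of $I$ is used for the commutation with the product and $p$-completeness of the $\frab_i$ ensures the quotients are semiperfectoid. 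Your approach of reducing to the first part by looking at $\{(\frab_i)\perfd\}$ does not supply this input.
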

\begin{proof}
By \cite[Example 3.8 (8)]{BhattIyengarMaRegularRings}, the product
\[
\prod_{i\in I}R/\fra_i
\]
is perfectoid. Now, 
\[
\bigcap_{i\in I} \fra_i=\ker \left( R\to \prod_{i\in I}R/\fra_i\right)
\]
so the result follows from \autoref{kernel of map of perfd rings}. 
For the second part, since sheafification commutes with finite limits, \cite[Corollary 8.11]{BhattScholzePrisms} gives 
\[
\left(\prod_{i\in I} R/\frab_i\right)\perfd=\prod_{i\in I} \left( R/\frab_i\right)\perfd.
\]
In particular, 
\[
\bigcap_i (\frab_i)\perfd=\ker\left( R\to \prod_{i\in I} \left( R/\frab_i\right)\perfd \right)= \ker \left( R\to \left(\prod_{i\in I} R/\frab_i\right)\perfd\right),
\]
which is precisely $\left(\bigcap_i \frab_i\right)\perfd$, as desired.
\end{proof}

\begin{proposition}\label{finite sum of perfectoid ideals is perfectoid}
Let $R$ be a perfectoid ring and $\fra$ and $\frab$ be two perfectoid ideals of $R$. Then, $\fra+\frab$ is also a perfectoid ideal of $R$.
\end{proposition}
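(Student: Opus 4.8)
The plan is to show $(\fra+\frab)\perfd=\fra+\frab$. The inclusion $\fra+\frab\subseteq(\fra+\frab)\perfd$ is automatic, so the content is the reverse one, and I would get it in two steps: first identify $(\fra+\frab)\perfd$ with the $p$-adic closure $(\fra+\frab)^-$ of $\fra+\frab$ in $R$, and then show that $\fra+\frab$ is already $p$-adically closed.

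\emph{Step 1.} Rewrite $R/(\fra+\frab)\cong(R/\fra)\otimes_R(R/\frab)$ and pass to $p$-completions. The $p$-completed tensor product $(R/\fra)\,\widehat{\otimes}_R\,(R/\frab)$ of the perfectoid $R$-algebras $R/\fra$ and $R/\frab$ over the perfectoid base $R$ is again perfectoid (completed tensor products of perfectoid rings over a perfectoid base are perfectoid; see \cite{BhattScholzePrisms}, in the same circle of ideas as the product computation used in the proof of \autoref{intersection of perfectoid ideals is perfectoid}). Since $R$ is itself $p$-complete, this ring equals $R/(\fra+\frab)^-$. Applying \autoref{kernel of map of perfd rings} to the surjection $R\twoheadrightarrow R/(\fra+\frab)^-$ of perfectoid rings shows that $(\fra+\frab)^-$ is a perfectoid ideal, and by construction it is $(\fra+\frab)\perfd$. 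So it remains to prove that $\fra+\frab$ is $p$-adically closed.

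\emph{Step 2.} Using that $\fra$ is $p$-adically closed and $R$ is $p$-complete, $\fra+\frab$ is $p$-adically closed in $R$ if and only if the image of $\frab$ (namely $(\fra+\frab)^-/\fra$) is $p$-adically closed in the perfectoid ring $R/\fra$; so it suffices to control the image of a perfectoid ideal under a surjection of perfectoid rings. I would do this by tilting and reducing to the following statement over a perfect ring $A$: the sum of two radical ideals $I,J$ of $A$ is radical. Indeed, if $z^p=a+b$ with $a\in I$, $b\in J$, then $a^{1/p}\in I$ and $b^{1/p}\in J$ (a radical ideal of a perfect ring is closed under $p$-th roots), and $(a^{1/p}+b^{1/p})^p=a+b=z^p$, so $z=a^{1/p}+b^{1/p}\in I+J$ since a perfect ring is reduced; moreover taking $p$-th roots is continuous for the defining adic topology, so the closure of a radical ideal in a complete perfect ring stays radical. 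Feeding this through the tilting correspondence for perfectoid ideals \cite{DineIshizukaTiltingUntilting} — perfectoid ideals of a perfectoid ring correspond to radical, adically closed ideals of the perfect tilt, compatibly with quotients — yields the $p$-adic closedness of $\fra+\frab$.

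The step I expect to be the main obstacle is Step 2: the purely algebraic content — that a sum of perfectoid ideals is ``radical enough'' — is immediate from the one-line computation above once one is in the perfect tilt, but controlling the interaction of the finite sum with the $p$-adic (equivalently, after tilting, the $\varpi$-adic) topology, i.e.\ checking that passing to closures does not genuinely enlarge $\fra+\frab$, is where the tilting dictionary of \cite{DineIshizukaTiltingUntilting} has to do the real work.
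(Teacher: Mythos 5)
Your reduction to showing (i) that the $p$-adic closure $(\fra+\frab)^-$ is a perfectoid ideal and (ii) that $\fra+\frab$ is already $p$-adically closed is a sensible decomposition, and Step 1 can be made to work (though identifying the completed tensor product with $R/(\fra+\frab)^-$ quietly uses that the derived $p$-completed derived tensor product of perfectoid rings over a perfectoid base is \emph{discrete}; that is true but is a real input, not just ``the same circle of ideas''). The genuine gap is Step 2, and it sits exactly where you predicted. Nothing in your argument proves that $\fra+\frab$ is closed: showing that $I+J$ is radical in a perfect ring, and that the closure of a radical ideal is radical, does not show that $I+J$ \emph{equals} its closure. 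In a complete non-Noetherian (perfect or perfectoid) ring the sum of two closed ideals need not be closed --- this is the usual ``sum of closed subspaces'' pathology --- so closedness is not a formal consequence of radicality and cannot be absorbed into a continuity remark. The reduction to the tilt is also problematic on its own terms: the Dine--Ishizuka correspondence $\fra\mapsto\varprojlim_{x\mapsto x^p}\fra$ is multiplicative and does not carry $\fra+\frab$ to $\fra^\flat+\frab^\flat$, so even granting the perfect-ring statement you cannot transport it back. In effect, Step 2 restates the proposition (closedness of the image of $\frab$ in $R/\fra$ is equivalent to what you are trying to prove) without supplying the missing input.

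For contrast, the paper avoids the closure question entirely. It writes down the Mayer--Vietoris sequence
\[
0\to R/(\fra\cap\frab)\to R/\fra\oplus R/\frab\to R/(\fra+\frab)\to 0,
\]
observes that the map from $\Spec(R/\fra)\sqcup\Spec(R/\frab)$ (together with $\Spec(R/(\fra+\frab))$) to $\Spec(R/(\fra\cap\frab))$ is an arc cover, and uses arc-descent for perfectoidization (\cite[Corollary 8.11]{BhattScholzePrisms}) to conclude that the sequence remains exact after applying $(-)\perfd$; since the first two terms are already perfectoid (the first by \autoref{intersection of perfectoid ideals is perfectoid}), the third must be as well. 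The $p$-adic closedness of $\fra+\frab$ then falls out as a corollary rather than going in as an input. If you want to salvage your approach, the content you are missing is precisely some such descent or exactness statement.
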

\begin{proof}
We have an exact sequence 
\begin{equation}\label{intersection sum exact sequence}   
0\to R/\left(\fra\cap \frab\right) \to R/\fra \oplus R/\frab \to R/\left(\fra + \frab\right)\to 0,
\end{equation}
which we claim implies $R/\left(\fra + \frab\right)$ is also perfectoid. Indeed,
\[
V\left(\fra\right)\cup V\left(\frab\right)\cup V\left(\fra+\frab\right) \to  \Spec\left(R/\fra\cap \frab\right)
\]
is a universal topological epimorphism onto its image so it is an arc cover by \cite[Proposition 2.6]{BhattMathewArcTop} and \cite[Theorem 2.8]{RydhSubmersionsEffectiveDescent}.
Taking the sheafification in the arc topology and using \cite[Corollary 8.11]{BhattScholzePrisms} and \cite[Theorem 2.9]{BhattScholzeProjectivityWittGrassmannian}, we see that the above sequence remains exact after perfectoidization \ie the sequence
\[
0\to R/\left(\fra\cap \frab\right) \to R/\fra \oplus R/\frab \to \left(R/\left(\fra + \frab\right)\right)\perfd \to 0
\]
is exact. 
In particular, 
\[
R/\left(\fra + \frab\right) \to \left(R/\left(\fra + \frab\right)\right)\perfd
\]
must be an isomorphism.

\end{proof}

\begin{corollary}\label{perfectoidization is sum of perfectoidizations}
Let $R$ be a perfectoid ring and $\fra=(x_1,\ldots,x_n)$ a finitely generated ideal. Assume furthermore that each $x_i$ has a compatible system of $p$-power roots in $R$. Then, $\fra\perfd=(x_1)\perfd+\ldots+(x_n)\perfd=(\sqrt{x_1})^{-}+\ldots + (\sqrt{x_n})^{-}$.
\end{corollary}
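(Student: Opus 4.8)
The plan is to deduce this from the two structural facts already in hand: that the kernel of a morphism of perfectoid rings is a perfectoid ideal (\autoref{kernel of map of perfd rings}), that a finite sum of perfectoid ideals is perfectoid (\autoref{finite sum of perfectoid ideals is perfectoid}), together with the description of the perfectoidization of a principal ideal carrying $p$-power roots (\autoref{perfectoidization of principal ideal with roots}).

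First I would record the (formal) monotonicity of $(-)\perfd$: if $\frab\subset\frab'$ are ideals of $R$, then applying the functor $(-)\perfd$ to the surjection $R/\frab\twoheadrightarrow R/\frab'$ yields a commutative square whose horizontal maps are the canonical ones, and chasing $R$ around it gives $\frab\perfd=\ker(R\to(R/\frab)\perfd)\subset\ker(R\to(R/\frab')\perfd)=(\frab')\perfd$. Since $\frab\subset\frab\perfd$ always holds (the composite $R\to R/\frab\to(R/\frab)\perfd$ kills $\frab$), it follows in particular that whenever $\frab'$ is a \emph{perfectoid} ideal containing $\frab$ one has $\frab\perfd\subset(\frab')\perfd=\frab'$; that is, $\frab\perfd$ is the smallest perfectoid ideal containing $\frab$. (Here $R/(x_i)$ is semiperfectoid, being the quotient of the perfectoid ring $R$ by a finitely generated, hence $p$-complete-preserving, ideal, so its perfectoidization is legitimately defined.)

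Now each $(x_i)\perfd=\ker\bigl(R\to(R/(x_i))\perfd\bigr)$ is a perfectoid ideal by \autoref{kernel of map of perfd rings}, the target being perfectoid because perfectoidizations always are; iterating \autoref{finite sum of perfectoid ideals is perfectoid} shows that $(x_1)\perfd+\cdots+(x_n)\perfd$ is perfectoid. It contains $\fra=(x_1)+\cdots+(x_n)$, so by the minimality just established $\fra\perfd\subset(x_1)\perfd+\cdots+(x_n)\perfd$; conversely $(x_i)\subset\fra$ gives $(x_i)\perfd\subset\fra\perfd$ by monotonicity, whence $(x_1)\perfd+\cdots+(x_n)\perfd\subset\fra\perfd$, and the two ideals coincide. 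The remaining equality $(x_i)\perfd=(x_i^\unpfty)^-$ is precisely \autoref{perfectoidization of principal ideal with roots}, which is the only place where the hypothesis that each $x_i$ carries a compatible system of $p$-power roots enters. There is no serious obstacle; the one point deserving care is the functoriality of perfectoidization underlying the monotonicity of $(-)\perfd$.
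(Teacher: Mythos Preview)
Your proof is correct and follows essentially the same approach as the paper. The paper's proof is a terse two-line citation of \autoref{finite sum of perfectoid ideals is perfectoid} and \autoref{perfectoidization of principal ideal with roots}; your argument spells out the monotonicity and minimality of $(-)\perfd$ that make the first citation actually yield the equality $\fra\perfd=(x_1)\perfd+\cdots+(x_n)\perfd$, which the paper leaves implicit.
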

\begin{proof}
The first equality is a direct consequence of \autoref{finite sum of perfectoid ideals is perfectoid}. The second one is \autoref{perfectoidization of principal ideal with roots}.
\end{proof}

\begin{proposition}[{\cf \cite[Lemma 2.4.3]{CaiLeeMaSchwedeTuckerPerfectoidSignature}}]
\label{perfectoid ideal under pcflat extension of pfd rings}
Let $R \to S$ be a $p$-completely flat morphism between perfectoid rings. Let $\fra=(h_1,\ldots,h_r)$ be a finitely generated ideal of $R$.
Then,
\[
(\fra S)\perfd=\sum_{i=1}^r \left((h_i R)\perfd S \right)^{-}=((\fra R)\perfd S)^-.
\]
\end{proposition}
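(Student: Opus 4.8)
The plan is to reduce the displayed identity first to its one-generator case, and then inside that to the case of an element admitting a compatible system of $p$-power roots, where \autoref{perfectoidization of principal ideal with roots} applies directly. For the first reduction, note that for any perfectoid ring $T$ and elements $g_1,\dots,g_r\in T$, each $(g_iT)\perfd$ is a perfectoid ideal of $T$ by \autoref{kernel of map of perfd rings}, a finite sum of perfectoid ideals is perfectoid by \autoref{finite sum of perfectoid ideals is perfectoid}, and $(\frab)\perfd$ is — immediately from \autoref{definition perfectoidization} — the smallest perfectoid ideal of $T$ containing $\frab$; hence $((g_1,\dots,g_r)T)\perfd=\sum_i (g_iT)\perfd$. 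Applying this over $R$ and over $S$, and using that a perfectoid ring is $p$-adically separated so that perfectoid ideals are $p$-adically closed, a routine manipulation of the closure $(-)^-$ (which lets one pass freely between a sum of closures and the closure of a sum here, since $(\fra S)\perfd$ is $p$-adically closed) shows that all three ideals in the statement agree with $\sum_i((h_iR)\perfd S)^-$ once one knows the one-generator identity $(hS)\perfd=((hR)\perfd S)^-$ for each $h=h_i$. So it is enough to prove that identity for a single element $h\in R$.

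Suppose first that $h$ has a compatible system of $p$-power roots $\{h^\unpfty\}$ in $R$; then so does its image in $S$, and \autoref{perfectoidization of principal ideal with roots} gives $(hR)\perfd=(h^\unpfty R)^-$ and $(hS)\perfd=(h^\unpfty S)^-$. From the inclusions $h^\unpfty S\subset(h^\unpfty R)^-S\subset(h^\unpfty S)^-$ — the middle one because $R\to S$ is $p$-adically continuous — taking $p$-adic closures gives $((hR)\perfd S)^-=(h^\unpfty S)^-=(hS)\perfd$, settling this case.

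For a general $h$, I would base change so as to adjoin roots. Set $R_\infty:=(R[x^{\unpfty}]/(x-h))^{\wedge p}$; since $R[y]/(y^{p^n}-h)$ is finite free over $R$ and base change preserves faithful flatness, $R_\infty$ is $p$-completely faithfully flat over $R$, it is perfectoid by a standard computation, and $h$ acquires a compatible system of $p$-power roots in it. As $R\to S$ is $p$-completely flat, $S_\infty:=(R_\infty\otimes_R S)^{\wedge p}$ is identified with the same construction $(S[x^{\unpfty}]/(x-h))^{\wedge p}$ over $S$, so it is perfectoid, $p$-completely flat over $R_\infty$, and $p$-completely faithfully flat over $S$. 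The same computation shows $R_\infty/JR_\infty\cong((R/J)[x^{\unpfty}]/(x-\bar h))^{\wedge p}$ is perfectoid for every perfectoid ideal $J\subset R$, so $JR_\infty$ is a perfectoid (hence $p$-adically closed) ideal; together with functoriality of $(-)\perfd$ this yields $(hR_\infty)\perfd=(hR)\perfd R_\infty$, and likewise $(hS_\infty)\perfd=(hS)\perfd S_\infty$. Applying the previous paragraph to $R_\infty\to S_\infty$ and to $h$, and simplifying closures, gives
\[
((hS)\perfd S_\infty)^-=(hS_\infty)\perfd=((hR_\infty)\perfd S_\infty)^-=((hR)\perfd S_\infty)^-.
\]

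It remains to descend this from $S_\infty$ to $S$. Since $S/p^nS\to S_\infty/p^nS_\infty$ is faithfully flat, faithfully flat descent gives $(\frc S_\infty)^-\cap S=\bigcap_n(\frc+p^nS)=\frc$ for every $p$-adically closed ideal $\frc\subset S$. Both $(hS)\perfd$ (being perfectoid) and $((hR)\perfd S)^-$ are $p$-adically closed, and one checks $(((hR)\perfd S)^- S_\infty)^-=((hR)\perfd S_\infty)^-$ by the same sort of manipulation; intersecting the displayed equality with $S$ therefore forces $(hS)\perfd=((hR)\perfd S_\infty)^-\cap S=((hR)\perfd S)^-$, which finishes the one-generator case and hence the proposition. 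The step I expect to be the real obstacle is precisely this control of perfectoidization under the base changes $R\to R_\infty$ and $S\to S_\infty$ — namely that $S_\infty$ is again perfectoid with the stated presentation and that $(hR_\infty)\perfd=(hR)\perfd R_\infty$, $(hS_\infty)\perfd=(hS)\perfd S_\infty$; these depend on the explicit shape of the root-adjunction and on $p$-complete flatness, whereas everything else is bookkeeping with $p$-adic closures and faithfully flat descent modulo $p^n$.
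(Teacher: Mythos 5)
Your reduction to the principal case is exactly the paper's first step (via \autoref{finite sum of perfectoid ideals is perfectoid}), and your treatment of the case where $h$ already has a compatible system of $p$-power roots is correct. But the paper does not prove the principal case at all — it cites \cite[Lemma 2.3.2]{CaiLeeMaSchwedeTuckerPerfectoidSignature} — whereas you attempt to prove it, and the attempt breaks down precisely at the step you yourself flag as the obstacle. The ring $R_\infty:=(R[x^{\unpfty}]/(x-h))^{\wedge p}$ is in general only \emph{semiperfectoid}, not perfectoid: there is no ``standard computation'' showing otherwise. Already in characteristic $p$ with $R=\mathbb{F}_p[t^{\unpfty}]$ and $h=t$, the element $x^{1/p}-t^{1/p}$ is a nonzero nilpotent of $R[x^{\unpfty}]/(x-t)$ killed by Frobenius (at each finite level $(x-t)=((x^{1/p^n}-t^{1/p^n})^{p^n})$), so the quotient is not perfect. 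The correct construction replaces $R_\infty$ by its perfectoidization, and the assertion that this remains $p$-completely faithfully flat over $R$ is André's flatness lemma — a deep theorem, not a base-change bookkeeping fact.

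Once you make that replacement, the rest of your argument no longer goes through as written: the identification of $S_\infty$ with ``the same construction over $S$'', the claim that $JR_\infty$ is perfectoid for every perfectoid ideal $J\subset R$, and in particular the identity $(hR_\infty)\perfd=(hR)\perfd R_\infty$ all relied on the explicit (and incorrect) presentation of $R_\infty$. Worse, $(hR_\infty)\perfd=((hR)\perfd R_\infty)^{-}$ is itself an instance of the very statement you are proving, applied to the $p$-completely faithfully flat map $R\to R_\infty$; assuming it is circular. So the general principal case remains unproved in your write-up. The honest fix is either to cite \cite[Lemma 2.3.2]{CaiLeeMaSchwedeTuckerPerfectoidSignature} for it, as the paper does, or to reproduce that lemma's actual argument; your descent-modulo-$p^n$ endgame and your closure manipulations are fine and could be kept around whichever proof of the base-change identity you substitute in.
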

\begin{proof}
By \autoref{finite sum of perfectoid ideals is perfectoid}, it suffices to show this in the case $\fra$ is principal, generated by $h\in R$. This was proved in \cite[Lemma 2.4.3]{CaiLeeMaSchwedeTuckerPerfectoidSignature}. 
\end{proof}

\subsection{Compatible ideals and centers of perfectoid purity} We are ready to define the main object of study of this paper. 

\begin{notation}\label{notation rainfty}
Let $(R,\m, k)$ be a complete Noetherian local ring with residue field of characteristic $p>0$. Fix a Cohen ring (a complete unramified mixed characteristic DVR with residue field $k$) $C_k\subset R$. Fix once and for all an inclusion $C_k\to W(k^\unpfty)$.
There is $A\coloneqq C_k\llbracket x_1,\ldots,x_d\rrbracket$, such that $R$ is a module-finite $A$-algebra and $C_k$ in $A$ maps to $C_k$ in $R$ (e.g. $A$ a Noether--Cohen normalization or $A\twoheadrightarrow R$). Let $A_\infty$ be the $p$-adic completion of 
\[
\left(A\widehat{\tensor}_{C_k} W\left(k^\unpfty\right)\right)\bigg[p^\unpfty,x_1^\unpfty,\ldots,x_d^\unpfty\bigg],
\]
which is perfectoid by \cite[Example 3.8 (4)]{BhattIyengarMaRegularRings}. Now, let 
\[
\rainfty\coloneqq (R\tensor_A A_\infty)\perfd,
\]
which is a perfectoid $R$-algebra by \cite[Theorem 10.11]{BhattScholzePrisms}. We write $\sA$ for the class of all $A=C_k\llbracket x_1,\ldots,x_d\rrbracket$ with $R$ a module finite $A$-algebra such that $C_k$ in $A$ maps to $C_k$ in $R$.
\end{notation}
\begin{remark}
Note that $A_\infinity$, and therefore $\rafty$, depend on the choice of a regular system of parameter for $A$. 
\end{remark}

\begin{definition}[{\cite{BMPSTWWPerfectoidPure}}]
Let $R,\ A$ be as in \autoref{notation rainfty}. We say that $R$ is perfectoid pure if the natural map $R\to \rafty$ is pure. By \cite[Lemma 4.23]{BMPSTWWPerfectoidPure}, this does not depend on the choice of $A$.
\end{definition}

\begin{definition}
In the setting of \autoref{notation rainfty}, let $\fra\subset R$ be an ideal and $\phi$ be in $\Hom_R(\rafty, R)$. We call the data of $(R,\phi)$ a \emph{pair}. 
Let 
\[\fra^A_\infty\coloneqq \left(\fra\rafty\right)\perfd.
\]
If $\phi\Big(\fra_\infty^A\Big)\subset \fra$, we say that 
$\fra$ is \emph{$\phi$-compatible}. If $B$ is any perfectoid $R$-algebra, we say that $\fra$ is \emph{$B$-compatible} if for all $\phi\in \Hom_R(B,R)$, $\phi((\fra B)\perfd)\subset \fra$. If $\fra$ is $\rafty$-compatible for all choices of $A\in \sA$, we say that $\fra$ is \emph{uniformly perfectoid compatible}.
\end{definition}

\begin{remark}
Since $R$ is complete, for any $R$-module $M$, a map $R\to M$ is pure if and only if it is split by \cite[Lemma 1.2]{FedderFPureRat}. This is something we often use, especially when dealing with a perfectoid pure $R$ since this gives us the existence of a pair $\rphi$ with $\phi$ surjective. 
\end{remark}
\begin{remark}
In \autoref{uniformly perfd compatible does not depend on choice of embedding}, we show that being uniformly perfectoid compatible does not depend on the choices of embeddings $C_k\subset R$ and $C_k\subset W(k^\unpfty)$. Moreover, if $k$ is perfect, there is no choice to be made since an embedding $C_k\subset R$ is equivalent to choosing a $p$-basis for $k$ and a lift of that $p$-basis to $R$ (see \cite[Theorem 23]{HochsterStructureCompleteLocalRings}).
This justifies that the name uniformly perfectoid compatible does not have a reference to the choices we made in \autoref{notation rainfty}.
\end{remark}

\begin{definition}
Let $\rphi$ be a pair with $\phi$ surjective. If $\p\in \Spec R$ is a $\phi$-compatible ideal, we say that $\p$ is a \emph{center of perfectoid purity of $\rphi$}.
Similarly, if $\p\in \Spec R$ is a uniformly perfectoid compatible ideal of $R$ then we say that $\p$ is a \emph{center of perfectoid purity of $R$} if $R$ is perfectoid pure.
\end{definition}

We show that our notion of uniformly perfectoid compatible ideal agrees with the classical notion of a compatible ideal when $R$ is of characteristic $p>0$, $F$-finite, and $F$-pure.
\begin{lemma}\label{lemma splitting from perfection}
Let $R$ be a reduced local ring of positive characteristic $p>0$ and assume that it is $F$-finite. Let $R\perf$ be the perfection of $R$ \ie $R\perf=\bigcup_e R^\unpe$. Let $f\in R$,
Then, the map $R\to\fstare R$, $1\mapsto \fstare f$ 
is split if and only if there is a map $R\perf\to R$ sending $f^\unpe$ to $1$. 
\end{lemma}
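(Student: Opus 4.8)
The statement is an iff, so I would prove the two implications separately, and the forward direction is the only one with any content. For the ($\Leftarrow$) direction, suppose $\psi\colon R\perf\to R$ is an $R$-linear map with $\psi(f^\unpe)=1$. Since $R\perf=\bigcup_{e'} R^{1/p^{e'}}$ and in particular $R^\unpe\subset R\perf$, I would simply restrict $\psi$ to the $R$-submodule $R^\unpe$. Identifying $R^\unpe$ with $\fstare R$ via $r^\unpe\leftrightarrow \fstare r$ (an isomorphism of $R$-modules since $R$ is reduced), the restriction $\psi|_{R^\unpe}$ is an $R$-linear map $\fstare R\to R$ sending $\fstare f$ to $1$, which is exactly a splitting of $1\mapsto \fstare f$. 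No $F$-finiteness is needed here; it is a one-line restriction argument.

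For the ($\Rightarrow$) direction, suppose $\theta\colon \fstare R\to R$ splits $1\mapsto \fstare f$, i.e. $\theta(\fstare f)=1$; equivalently we have an $R$-linear $\theta\colon R^\unpe\to R$ with $\theta(f^\unpe)=1$. The task is to extend $\theta$ to all of $R\perf$. Because $R$ is $F$-finite, $R^{1/p^{e'}}$ is a finite $R$-module for every $e'$, and each inclusion $R^\unpe\subset R^{1/p^{e'}}$ (for $e'\ge e$) is a finite extension of $R$-modules. The cleanest route: first extend $\theta$ one step at a time. Given $\theta\colon R^{1/p^{e'}}\to R$, I would precompose with the $p^{e'}$-th root isomorphism to get a map $R^{1/p}\to R^{1/p^{e'+1}}$-flavored statement — more directly, note $R^{1/p^{e'+1}}$ is a finite $R^{1/p^{e'}}$-module, and I want to extend an $R$-linear (not $R^{1/p^{e'}}$-linear) functional. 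The standard trick (as in the theory of $F$-pure/split maps, cf. the Fedder/Schwede calculus) is: an $R$-linear map $R^{1/p^{e'}}\to R$ that is surjective (it is, since it hits $1$) splits as $R$-modules, so $R^{1/p^{e'}}\cong R\oplus K$ for some $R$-module $K$; then any $R$-linear extension of the projection to the new generators of $R^{1/p^{e'+1}}$ over $R^{1/p^{e'}}$ does the job. Iterating and passing to the colimit $R\perf=\colim_{e'} R^{1/p^{e'}}$ (the transition maps are $R$-linear split injections, so $\Hom_R(-,R)$ behaves well) produces the desired $\psi\colon R\perf\to R$ with $\psi(f^\unpe)=1$.

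Actually the slickest packaging avoids the explicit iteration: since $\theta\colon R^\unpe\to R$ is surjective and $R$ is a (complete, hence) ring over which finitely generated modules are well-behaved, $R^\unpe\to R$ being surjective $R$-linear means it is split, so $R$ is a direct summand of $R^\unpe$ as an $R$-module; then for each $e'\ge e$, applying $(-)^{1/p^{e'-e}}$ — no, that changes the base ring. Let me instead just do it uniformly: surjectivity of $\theta$ gives a splitting $\sigma\colon R\hookrightarrow R^\unpe$ of $R$-modules with $\theta\sigma=\id$; I then want, for the colimit, a compatible family. The honest obstacle — and the one place $F$-finiteness is genuinely used — is producing, at each stage $e'$, an $R$-linear extension of $\theta$ along the finite inclusion $R^{1/p^{e'}}\hookrightarrow R^{1/p^{e'+1}}$; this uses that the inclusion admits an $R$-linear retraction, which follows because $R^{1/p^{e'}}$ is a finitely generated $R$-module and the map $\Hom_R(R^{1/p^{e'+1}},R)\to\Hom_R(R^{1/p^{e'}},R)$ is surjective — precisely the kind of statement that can fail without finiteness hypotheses. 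I expect this extension/colimit bookkeeping to be the main (though routine) obstacle; the value $\psi(f^\unpe)=1$ is automatically preserved at every stage since the extensions all restrict to $\theta$ on $R^\unpe$.
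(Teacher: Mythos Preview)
Your backward direction is fine and identical to the paper's.

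In the forward direction there is a genuine gap. You assert that the restriction map $\Hom_R(R^{1/p^{e'+1}},R)\to\Hom_R(R^{1/p^{e'}},R)$ is surjective, and you attribute this to $F$-finiteness. That is false: for $e'=0$ this map is evaluation at $1$, and its image is the Frobenius trace ideal, which is proper whenever $R$ is not $F$-split. Finiteness of $R^{1/p}$ over $R$ says nothing about whether $R\hookrightarrow R^{1/p}$ splits as $R$-modules. What actually makes the extension possible is that your hypothesis already forces $R$ to be $F$-split: the given $\theta\colon R^{1/p^e}\to R$ is surjective, so the composite $\phi\coloneqq\theta\circ(\cdot\, f^{1/p^e})$ satisfies $\phi(1)=1$. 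Taking $p^{e(j-1)}$-th roots of $\phi$ produces an $R^{1/p^{e(j-1)}}$-linear (hence $R$-linear) retraction $R^{1/p^{ej}}\to R^{1/p^{e(j-1)}}$, and composing these with $\theta$ at the bottom gives the desired map $R\perf\to R$ sending $f^{1/p^e}\mapsto 1$.

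This is exactly what the paper does: it writes down $\psi_j\colon R^{1/p^{ej}}\to R^{1/p^{e(j-1)}}$, $r^{1/p^{ej}}\mapsto \phi(r^{1/p^e})^{1/p^{e(j-1)}}$, checks compatibility, and composes. So once your gap is fixed the two arguments coincide; the difference is only that the paper makes the source of the retraction explicit, while you invoked a surjectivity that is not automatic. Your sentence ``precisely the kind of statement that can fail without finiteness hypotheses'' misidentifies the obstacle --- it can fail with finiteness hypotheses too, and is rescued here only by the $F$-splitness already hidden in the hypothesis.
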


\begin{proof}
The backwards direction is straightforward as one can just restrict said splitting to $R^\unpe$ to get a splitting of $R\to R^\unpe\cong \fstare R$. Suppose, then, that $R$ is $e$-th Frobenius split along $f$, say by $\psi_1\colon R^\unpe\to R$ and let $\phi\coloneqq\psi_1\circ \left(\cdot f^\unpe\right)\colon R^\unpe\to R$. This is a splitting of the inclusion map $R\to R^\unpe$ (see e.g. \cite[Proposition 1.4.6]{SchwedeSmithFSingularitiesBook}). We define $\psi\colon R\perf\to R$ as follows: for $j>1$, let
\begin{align*} 
\psi_j \colon R^{1/p^{ej}} &\to R^{1/p^{e(j-1)}} \\ 
r^{1/p^{ej}} &\mapsto \left(\phi\left(r^\unpe\right)\right)^{1/p^{e(j-1)}}
\end{align*}
and
\[\psi \colon r^{1/p^{ei}} \mapsto \psi_1\circ \psi_2\circ\cdots\circ \psi_i\left(r^{1/p^{ei}}\right).
\]
We need to show that it is well defined \ie that $\psi\left(\left(r^{p^e}\right)^{1/p^{ei}}\right)=\psi\left(r^{1/p^{e\left(i-1\right)}}\right)$. This follows from the fact that 
\[
\psi_i\left(\left(r^{p^e}\right)^{1/p^{ei}}\right)=\phi\left(\left(r^{p^e}\right)^{1/p^{e}}\right)^{1/p^{e(i-1)}}=\phi(r)^{1/p^{e(i-1)}}=(r\phi(1))^{1/p^{e(i-1)}}
\]
which is equal to $r^{1/p^{e(i-1)}}$.
We also have that $\psi$ is an $R$-module homomorphism as it is the composition of $R$-module homomorphisms and is a splitting since $\psi(f^\unpe)=1$ by definition so we are done.
\end{proof}
\begin{remark}
Note that the proof implies that as long as $R$ is $F$-pure, for any $e>0$, every map in $\Hom_R(R^\unpe,R)$ extends to a map in $\Hom_R(R\perf,R)$.
\end{remark}

\begin{corollary}\label{corollary equiv comp with char p}
Let $R$ be as in \autoref{notation rainfty} and assume further that it has characteristic $p>0$, is $F$-finite, and $F$-pure. Then, $R\perf=\rafty$. Let $\fra\subset R$ be an ideal. It is uniformly compatible with the classical definition if and only if is also uniformly perfectoid compatible with our definition. Moreover, our notion of compatible ideal of pairs agrees with the classical notion when working with surjective maps. Indeed,
let $\psi_1\colon R^\unpe\to R$ be surjective, say $\psi_1(f^\unpe)=1$ for some $f\in R$. Let $\psi\colon R\perf\to R$ be the corresponding splitting as in \autoref{lemma splitting from perfection}. Then, $\fra$ is $\psi$-compatible if and only if it is $\psi_1$ compatible \ie $\psi_1(\fra^\unpe)\subset \fra$. 
\end{corollary}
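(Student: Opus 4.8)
The plan is to prove the three assertions in order, relying on \autoref{lemma splitting from perfection} and the remark that follows it. First, to identify $\rafty$ with $R\perf$: in equal characteristic $p$ the construction of \autoref{notation rainfty} degenerates, since one may take $C_k$ to be a coefficient field $k\subseteq R$, replace $W(k^{1/p^\infty})$ by $k^{1/p^\infty}$, and observe that, as $p=0$, every ring that occurs is already $p$-adically complete; so $A_\infty$ becomes the perfection $A\perf=\bigcup_e A^{1/p^e}$, a perfect ring, and perfectoidization becomes the colimit perfection $\varinjlim_F(-)$. Thus $\rafty=(R\otimes_A A\perf)\perf$, and I would prove $(R\otimes_A A\perf)\perf\cong R\perf$ via universal properties: the map $R\to R\otimes_A A\perf$ and the compatible pair $R\to R\perf$, $A\perf\to R\perf$ induce $R$-algebra maps $R\perf\to(R\otimes_A A\perf)\perf$ and $(R\otimes_A A\perf)\perf\to R\perf$, and one checks these are mutually inverse by evaluating on the generators $r\otimes\alpha^{1/p^e}$ ($r\in R$, $\alpha\in A$), using that $R$ is reduced (being $F$-pure) so that $p$-power roots are unique. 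The point needing care is exactly that $R^{1/p^e}$ is not flat over $R$ (Kunz), so $R\perf\neq R\otimes_A A\perf$ on the nose and the perfection genuinely repairs the discrepancy; I expect this to be the most delicate step. Once it is done, $\rafty=R\perf$ is independent of $A\in\sA$, so ``uniformly perfectoid compatible'' means exactly ``$R\perf$-compatible''.

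\emph{Computing $(\fra R\perf)\perfd$ and deducing the equivalence.} Since $p=0$ lies in every ideal, \autoref{perfectoidization is radical when p in ideal} shows that the perfectoid ideals of the perfectoid ring $R\perf$ are precisely the radical ones, so $(\fra R\perf)\perfd=\sqrt{\fra R\perf}$, the radical taken in $R\perf$. I claim that when $\fra$ is radical this equals $\bigcup_e\fra^{1/p^e}$: the inclusion $\supseteq$ is immediate since $(a^{1/p^e})^{p^e}=a$, and for $\subseteq$ one writes an element of $R^{1/p^m}\cap\sqrt{\fra R\perf}$ as $u^{1/p^m}$ with $u\in R$ and applies a high enough power of Frobenius to a containment $x^{p^n}\in\fra R\perf$ to get $u^{p^N}\in\fra$ for some $N$, hence $u\in\fra$ by radicality, hence $x\in\fra^{1/p^m}$. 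Also, any classically uniformly compatible ideal $\fra$ is radical here: $R$ is $F$-pure, hence $F$-split, and $\fra$ is compatible with a splitting, which descends to a Frobenius splitting of $R/\fra$, so $R/\fra$ is reduced. Granting these, if $\fra$ is classically uniformly compatible then $\fra$ is radical, $(\fra R\perf)\perfd=\bigcup_e\fra^{1/p^e}$, and each $\psi\in\Hom_R(R\perf,R)$ restricts on $R^{1/p^e}$ to a map sending $\fra^{1/p^e}$ into $\fra$, whence $\psi\big((\fra R\perf)\perfd\big)\subseteq\fra$, i.e.\ $\fra$ is $R\perf$-compatible. Conversely, if $\fra$ is uniformly perfectoid compatible and $\phi\in\Hom_R(R^{1/p^e},R)$, then by the remark after \autoref{lemma splitting from perfection} ($R$ is $F$-pure) $\phi$ extends to some $\psi\in\Hom_R(R\perf,R)$, and since $\fra^{1/p^e}\subseteq\sqrt{\fra R\perf}=(\fra R\perf)\perfd$ for any $\fra$, we get $\phi(\fra^{1/p^e})=\psi(\fra^{1/p^e})\subseteq\fra$, so $\fra$ is classically uniformly compatible.

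\emph{Compatibility of pairs.} Let $\phi:=\psi_1(\,\cdot\,f^{1/p^e})$ be the splitting attached to $\psi_1$, so that $\psi$, as built in \autoref{lemma splitting from perfection}, satisfies $\psi|_{R^{1/p^{ej}}}=\psi_1\circ\psi_2\circ\cdots\circ\psi_j$; in particular $\psi|_{R^{1/p^e}}=\psi_1$. The implication ``$\fra$ is $\psi$-compatible $\Rightarrow\psi_1(\fra^{1/p^e})\subseteq\fra$'' is immediate from $\fra^{1/p^e}\subseteq(\fra R\perf)\perfd$. For the converse, suppose $\psi_1(\fra^{1/p^e})\subseteq\fra$; then $\phi(\fra^{1/p^e})=\psi_1\big(f^{1/p^e}\fra^{1/p^e}\big)\subseteq\psi_1(\fra^{1/p^e})\subseteq\fra$, so $\fra$ is compatible with the splitting $\phi$, which as above forces $\fra$ radical and hence $(\fra R\perf)\perfd=\bigcup_j\fra^{1/p^{ej}}$. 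A direct computation from the definition of $\psi_j$ and $\phi(\fra^{1/p^e})\subseteq\fra$ gives $\psi_j(\fra^{1/p^{ej}})\subseteq\fra^{1/p^{e(j-1)}}$ for each $j\geq2$, while $\psi_1(\fra^{1/p^e})\subseteq\fra$ by assumption; composing these, $\psi(\fra^{1/p^{ej}})=(\psi_1\circ\cdots\circ\psi_j)(\fra^{1/p^{ej}})\subseteq\fra$ for every $j$, and therefore $\psi\big((\fra R\perf)\perfd\big)=\bigcup_j\psi(\fra^{1/p^{ej}})\subseteq\fra$, \ie $\fra$ is $\psi$-compatible.
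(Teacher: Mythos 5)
Your proposal is correct and follows essentially the same route as the paper: the paper's own proof dispatches the identification $R\perf=\rafty$ and the equivalence of the two compatibility notions with a one-line appeal to \autoref{lemma splitting from perfection} and $W(k^\unpfty)/p=k^\unpfty$, and then runs exactly your induction $\psi_j(\fra^{1/p^{ej}})\subseteq\fra^{1/p^{e(j-1)}}$ for the pair statement. The only difference is that you flesh out the details the paper leaves implicit (the universal-property identification of $(R\otimes_A A_\infty)\perfd$ with $R\perf$ and the computation $(\fra R\perf)\perfd=\sqrt{\fra R\perf}=\bigcup_e\fra^{1/p^e}$), and these added steps are sound.
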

\begin{proof}
The first part follows from the definitions and \autoref{lemma splitting from perfection} since $W(k^\unpfty)/p=k^\unpfty$. If $\psi(\fra\perf)\subset\fra$, then $\psi_1(\fra^\unpe)=\psi(\fra^\unpe)\subset\psi(\fra\perf) \subset \fra$. On the other hand if $\psi_1(\fra^\unpe)\subset \fra$,
letting $\psi_j$ and $\phi$ be as in \autoref{lemma splitting from perfection} for $j>0$, we have
\[\psi_j\left(\fra^{1/p^{ej}}\right)=\left(\phi\left(\fra^{1/p^{e}}\right)\right)^{1/p^{e(j-1)}}=\left(\psi_1\left(f^\unpe\fra^\unpe\right) \right)^{1/p^{e(j-1)}}\subset \fra^{1/p^{e(j-1)}}.\]
By induction, $\psi\left(\fra^{1/p^{ej}}\right)\subset \fra$ so $\psi(\fra\perf)\subset \fra$.
\end{proof}

\begin{proposition}\label{compatible equivalent to comm diag quotient}
Let $\rphi$ be a pair and $\fra\subset R$ be an ideal. Then, $\fra$ is $\phi$-compatible if and only if $\phi$ descends to a map $\bar{\phi}\colon(R/\fra)^A_\infty\to R/\fra$ \ie if and only if the diagram below commutes.
\begin{equation}\label{comm diag quotient comp ideal}  
\xymatrix{
\rafty \ar[d] \ar[r]^\phi & R \ar[d]\\
(R/\fra)^A_\infty\ar[r]^{\bar{\phi}}& R/\fra . 
}
\end{equation}
and the corresponding result also holds for uniformly perfectoid compatible ideals.
Moreover, there is a bijection between the $\phi$-compatible ideals of $R$ containing $\fra$ and the $\bar{\phi}$-compatible ideals of $R/\fra$. 
\end{proposition}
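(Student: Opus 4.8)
The plan is to first dispose of the two concrete implications in the equivalence, then bootstrap from those to the uniform version and finally to the bijection. The key observation is the functoriality of perfectoidization: the surjection $R \twoheadrightarrow R/\fra$ induces, after base change along $R \to \rafty$, a surjection $R \ten_A A_\infty \twoheadrightarrow (R/\fra) \ten_A A_\infty$, and perfectoidization applied to this map yields $\rafty \to (R/\fra)^A_\infty$ whose kernel is exactly $\fra^A_\infty = (\fra\rafty)\perfd$ (by definition of perfectoidization of an ideal and by \autoref{kernel of map of perfd rings}, using that $(R/\fra) \ten_A A_\infty$ is semiperfectoid). So the left vertical map in \eqref{comm diag quotient comp ideal} is surjective with kernel $\fra^A_\infty$, and the right vertical map is surjective with kernel $\fra$.

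For the equivalence: if $\fra$ is $\phi$-compatible, then $\phi(\fra^A_\infty) \subset \fra$, so $\phi$ carries $\ker$ of the left vertical into $\ker$ of the right vertical, hence by the universal property of the quotient it descends to $\bar\phi\colon (R/\fra)^A_\infty \to R/\fra$ making the square commute. Conversely, if such a $\bar\phi$ exists with the square commuting, then for $x \in \fra^A_\infty$ the image of $\phi(x)$ in $R/\fra$ equals $\bar\phi$ of the image of $x$, which is $\bar\phi(0) = 0$; hence $\phi(x) \in \fra$, i.e. $\fra$ is $\phi$-compatible. The uniform statement then follows immediately by quantifying over all $A \in \sA$ and all $\phi \in \Hom_R(\rafty, R)$: $\fra$ is uniformly perfectoid compatible iff for every $A$ and every $\phi$ the corresponding $\bar\phi$ exists, which is the assertion that every $\phi$ descends compatibly to the quotient.

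For the bijection, let $\fra \subset \frb$ with $\frb$ an ideal of $R$, and write $\bar\frb = \frb/\fra \subset R/\fra$. I would check that $\frb$ is $\phi$-compatible iff $\bar\frb$ is $\bar\phi$-compatible, once we know $\fra$ itself is $\phi$-compatible so that $\bar\phi$ exists. The point is that base-changing and perfectoidizing the surjection $R/\fra \twoheadrightarrow (R/\fra)/\bar\frb = R/\frb$ gives a surjection $(R/\fra)^A_\infty \twoheadrightarrow (R/\frb)^A_\infty$, whose kernel is $\bar\frb^A_\infty$ (again by \autoref{kernel of map of perfd rings} applied to the relevant semiperfectoid quotient), and this kernel is the image of $\frb^A_\infty$ under $\rafty \to (R/\fra)^A_\infty$; compatibility of $\frb$ with $\phi$ and of $\bar\frb$ with $\bar\phi$ then translate into the same statement about $\bar\phi$. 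Applying the first part of the proposition (the equivalence) to $R/\fra$ and $\bar\frb$ gives the bijection $\frb \mapsto \bar\frb$ between $\phi$-compatible ideals of $R$ containing $\fra$ and $\bar\phi$-compatible ideals of $R/\fra$, with inverse the preimage.

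The main obstacle I anticipate is the bookkeeping around perfectoidization of ideals: one must verify cleanly that the kernel of $\rafty \to (R/\fra)^A_\infty$ is precisely $(\fra\rafty)\perfd$ and not merely something containing it. This requires knowing that $(R/\fra) \ten_A A_\infty$ is semiperfectoid (so that its perfectoidization receives it as a quotient, via \cite[Theorem 7.4]{BhattScholzePrisms}) and that perfectoidization is computed as the quotient $\rafty / (\fra\rafty)\perfd$ — essentially the content of \autoref{definition perfectoidization} combined with right-exactness of perfectoidization for quotients. Once this identification of kernels is in hand, every other step is a formal diagram chase with quotients, and the uniform case and the bijection are bootstrapped for free.
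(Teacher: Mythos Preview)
Your proposal is correct and follows essentially the same approach as the paper's proof: identify $(R/\fra)^A_\infty$ with $\rafty/\aafty$, reduce the equivalence to the universal property of quotients, and deduce the bijection from the isomorphism theorems. The paper is considerably terser---it simply asserts the identification $(R/\fra)^A_\infty = \rafty/\aafty$ without comment---whereas you rightly flag this kernel computation as the one substantive step and sketch how to verify it via semiperfectoidness and \autoref{kernel of map of perfd rings}.
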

\begin{proof}
    The ideal $\fra$ is $\phi$-compatible if and only if the composition map 
    \[\rafty\xlongrightarrow{\phi} R\longrightarrow R/\fra
    \]factors through $(R/\fra)_\infty^A=\rafty/\aafty$. The second part of the statement follows from the isomorphism theorems.
\end{proof}

\begin{lemma}\label{p adic closure of comp ideal}
Let $(R,\fram, k)$ be a complete local Noetherian ring of residue characteristic $p>0$ and let $B$ be any $R$-algebra. Let $\phi\in \Hom_R(B,R)$ be arbitrary. Let $\fra\subset B$ and $\frab\subset R$ be ideals. Then, $\phi(\fra)\subset \frab$ if and only if $\phi(\bar{\fra})\subset \frab$ where $\bar{\fra}$ is the $p$-adic closure of $\fra$ in $B$.
\end{lemma}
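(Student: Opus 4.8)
The plan is to deduce the statement from two standard facts: that the $R$-linear map $\phi$ is automatically continuous for the $p$-adic topologies on $B$ and $R$, and that the ideal $\frab$ is closed in the $p$-adic topology on $R$. Granting these, the forward implication is a one-line continuity argument and the reverse implication is trivial since $\fra\subseteq\bar\fra$.

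First I would note that $\phi$ is $p$-adically continuous: because $\phi$ is $R$-linear, $\phi(p^nB)=p^n\phi(B)\subseteq p^nR$ for every $n$, so the preimage of each basic neighborhood $p^nR$ of $0\in R$ contains a basic neighborhood of $0\in B$. Recalling that the $p$-adic closure of an ideal is $\bar\fra=\bigcap_n(\fra+p^nB)$, continuity gives $\phi(\bar\fra)\subseteq\overline{\phi(\fra)}$, the closure taken in the $p$-adic topology on $R$. Next I would check that $\frab$ is $p$-adically closed in $R$, i.e.\ $\bigcap_n(\frab+p^nR)=\frab$. The quotient $R/\frab$ is again a Noetherian local ring (a quotient of a complete local ring), and $p$ lies in its maximal ideal, hence in its Jacobson radical; by Krull's intersection theorem $\bigcap_n p^n(R/\frab)=0$, which is exactly the assertion that $\frab$ is $p$-adically closed in $R$. (Only separatedness of $R/\frab$ is really needed here; completeness of $R$ is used just to guarantee the quotient is Noetherian local with $p$ in the radical.)

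Finally I would combine the two: if $\phi(\fra)\subseteq\frab$, then $\phi(\bar\fra)\subseteq\overline{\phi(\fra)}\subseteq\bar\frab=\frab$, the last step because $\frab$ is $p$-adically closed; concretely, for $x\in\bar\fra$ and each $n$ we may write $x=a_n+p^nb_n$ with $a_n\in\fra$, $b_n\in B$, whence $\phi(x)=\phi(a_n)+p^n\phi(b_n)\in\frab+p^nR$, and intersecting over all $n$ gives $\phi(x)\in\frab$. The converse is immediate from $\fra\subseteq\bar\fra$. There is no genuine obstacle in this argument; the only points requiring care are the description of the $p$-adic closure of $\fra$ as $\bigcap_n(\fra+p^nB)$ and the closedness of $\frab$, both of which are standard consequences of the Noetherian-local-complete hypothesis.
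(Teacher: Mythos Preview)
Your proof is correct and takes essentially the same approach as the paper: both argue that $\frab$ is $p$-adically closed in $R$ and then use the element-wise computation $\phi(x)=\phi(a_n)+p^n\phi(b_n)\in\frab+p^nR$ for each $n$. You wrap this in the language of continuity, while the paper states the chain of inclusions directly; the underlying argument is identical. (One small aside: your parenthetical that completeness is what guarantees $R/\frab$ is Noetherian local is not quite right---those come straight from the hypotheses---but this does not affect the proof.)
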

\begin{proof}
We only need show the ``only if" direction. We know that $\frab$ is $p$-adically closed since $R$ is $p$-adically complete and Noetherian. Then, $\frab=\cap_{n\in \N} (\frab+ (p^n))$. Now, let $x\in \bar{\fra} = \cap_{n\in \N} (\fra+ (p^n))$. We have \[
\phi(x)\in  \bigcap_{n\in \N} \phi(\fra)+\phi((p^n))\subset \bigcap_{n\in \N} \frab+p^n\phi\left(B\right) \subset \bigcap_{n\in \N} (\frab+ (p^n))=\frab,
\]
as desired.
\end{proof}

\begin{proposition}[{\cf \cite[Lemma 5.1]{SchwedeCentersOfFPurity}}]\label{comp ideal of pair injective hull}
Let $\rphi$ be a pair and let $\fra\subset \rafty$ and  $\frab \subsetneq R$ be ideals. The following are equivalent:

\begin{enumerate}
    \item $\phi\left(\fra\right)\subset \frab$.
    \item For any $x\in \fra$, the composition 
    \[
    \rafty \xlongrightarrow{\times x} \rafty \xlongrightarrow{\phi} R \longrightarrow R/\frab
    \]
    is zero. 
    \item For any $x\in \fra$, the composition 
    \begin{align*}
&\Hom_R(R,R)\xlongrightarrow{\Hom_R(\phi,R)}\Hom_R\left(\rafty,R\right)\xlongrightarrow{\Hom_R(\times x, R)}\Hom_R\left(\rafty,R\right)\\
    & \longrightarrow \Hom_R(R,R)\cong R 
    \end{align*}
    has image in $\frab$. 
    \item[(d)] For any $x\in \fra$, the composition 
\[
E_{R/\frab} \longrightarrow E_R \longrightarrow E_R\tensor_R \rafty \xlongrightarrow{\id\tensor_R (\times x)} E_R\tensor_R \rafty \xlongrightarrow{\id \tensor \phi} E_R
\]
is zero, where $E_R$ is the injective hull of the residue field $k$ of $R$ over $R$ and $E_{R/\frab} $ is the injective hull of $k$ over $R/\frab$.
\end{enumerate}
and if it holds for $\fra=\frab^A_\infty$ then $\frab$ is $\phi$-compatible.
\end{proposition}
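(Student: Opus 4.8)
The plan is to establish the chain of equivalences $(a)\Leftrightarrow(b)\Leftrightarrow(c)\Leftrightarrow(d)$ by unwinding definitions and applying adjunction/duality, and then observe that the final clause is a direct consequence of $(a)$ together with \autoref{compatible equivalent to comm diag quotient} (or simply the definition of $\phi$-compatible applied to $\fra = \frab^A_\infty$, noting that $\fra$ already equals its own perfectoidization). The equivalence $(a)\Leftrightarrow(b)$ is essentially tautological: the condition $\phi(\fra)\subset\frab$ says precisely that for every $x\in\fra$ the element $\phi(x)=\phi(x\cdot 1)$ dies in $R/\frab$, and since multiplication by $x$ followed by $\phi$ sends $1\mapsto \phi(x)$, composing further with $R\to R/\frab$ is zero exactly when $\phi(x)\in\frab$; one runs this over all $x\in\fra$.

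The equivalence $(b)\Leftrightarrow(c)$ is an application of the functor $\Hom_R(-,R)$ together with the canonical isomorphism $\Hom_R(R,R)\cong R$. A composition of $R$-linear maps ending in $R/\frab$ is zero if and only if the induced map on $\Hom_R(-,R)$ has image landing in $\frab$; more precisely, I would track the image of $\id_R\in\Hom_R(R,R)$ under the dualized composition and identify it with $\phi(x)$, so that the condition "has image in $\frab$" matches $(b)$ verbatim. Here I should be a little careful that $\frab\subsetneq R$ is a proper ideal so that $R/\frab\neq 0$, but the argument is formal. The equivalence $(c)\Leftrightarrow(d)$ is obtained by applying $\Hom_R(-, E_R)$ (Matlis duality over the complete local ring $R$) and using that $E_R$ is injective, $E_{R/\frab}=\Ann_{E_R}(\frab)=\Hom_R(R/\frab,E_R)$, together with the compatibility $\Hom_R(\rafty,R)^\vee \cong E_R\tensor_R\rafty$ when $\rafty$ is suitably well-behaved; this is exactly the style of argument in \cite[Lemma 5.1]{SchwedeCentersOfFPurity}, which I would cite as the template. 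The main technical subtlety — the one place I expect to spend real effort — is justifying the natural identification $\Hom_R\!\big(\Hom_R(\rafty,R),\, E_R\big)\cong E_R\tensor_R\rafty$ (equivalently, that Matlis duality converts the Hom-into-$R$ into tensor-with-$E_R$) without a finiteness hypothesis on $\rafty$; one resolves this by writing $E_R$ as a direct limit of finite-length (hence finitely presented) modules $E_n$ over which $\Hom_R(-,E_n)$ is exact and commutes with the relevant operations, and passing to the colimit, so that the diagram in $(d)$ is literally the Matlis dual of the diagram in $(c)$.

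Finally, for the concluding assertion: if $(a)$–$(d)$ hold with $\fra = \frab^A_\infty = (\frab\rafty)\perfd$, then by definition $\phi(\frab^A_\infty)\subset\frab$, which is exactly the statement that $\frab$ is $\phi$-compatible. I would phrase this as a one-line remark at the end of the proof, noting that it is the specialization $\fra=\frab^A_\infty$ of the equivalence already proved (taking $\frab$ in the role of the ambient ideal), so no additional work is required beyond observing that $\frab^A_\infty$ is an ideal of $\rafty$ to which parts $(a)$–$(d)$ apply.
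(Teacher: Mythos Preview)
Your approach matches the paper's: the equivalences $(a)\Leftrightarrow(b)\Leftrightarrow(c)$ are unwound from the definitions (tracking $\id_R\mapsto\phi(x)$ through the composition in $(c)$), and $(c)\Leftrightarrow(d)$ is attributed to Matlis duality. The final clause is indeed immediate from the definition once $(a)$ is established for $\fra=\frab^A_\infty$.

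However, you are overcomplicating the step $(c)\Leftrightarrow(d)$, and the technical concern you flag is a genuine obstruction to the route you propose. The identification $\Hom_R\big(\Hom_R(\rafty,R),E_R\big)\cong E_R\tensor_R\rafty$ does \emph{not} hold for a general (non-finite) $R$-module $\rafty$, and your colimit workaround is shaky: writing $E_R=\colim E_n$ and trying to pull $\Hom_R(\Hom_R(\rafty,R),-)$ through the colimit requires $\Hom_R(\rafty,R)$ to be finitely presented, which it typically is not. So literally dualizing the diagram in $(c)$ to obtain the diagram in $(d)$ does not work.

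The good news is that you don't need this identification at all. Compute the composition in $(d)$ explicitly: an element $e\in E_{R/\frab}=\Ann_{E_R}(\frab)$ is sent to $e\tensor 1\mapsto e\tensor x\mapsto e\cdot\phi(x)$ in $E_R$. Thus $(d)$ says $\phi(x)\cdot E_{R/\frab}=0$, i.e.\ $\phi(x)\in\Ann_R(E_{R/\frab})=\frab$, where the last equality is the only piece of Matlis duality actually required (faithfulness of $E_{R/\frab}$ as an $R/\frab$-module). This is exactly condition $(a)$, and matches your computation for $(c)$. So replace your duality-of-diagrams argument with this one-line direct computation and the proof is complete.
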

\begin{proof}
The equivalence of (a) and (b) readily follows from the definition. The implication (b) implies (c) is direct whereas if we do not have (a) then for some $x\in \aafty$, $\phi(x)\notin \frab$ so (c) does not hold. 
The equivalence between (d) and (c) is a standard application of Matlis duality. 
\end{proof}

A similar result holds for uniformly perfectoid compatible ideals.

\begin{proposition}[{\cf \cite[Lemma 5.1]{SchwedeCentersOfFPurity}}]\label{uniformly compatible injective hull}
Let $(R,\fram,k)$ be a complete Noetherian local ring and let $B$ be an $R$-algebra. 
Let $\fra\subset B$ and  $\frab \subsetneq R$ be ideals. The following are equivalent:
\begin{enumerate}
    \item $\fra$ gets sent to $\frab$ under all maps $\phi\in \Hom_R(B,R)$.
    \item For any $x\in \fra$ and $\phi \in \Hom_R(B,R)$ the composition 
    \[
    B \xlongrightarrow{\times x} B \xlongrightarrow{\phi} R \longrightarrow R/\frab
    \]
    is zero. 
    \item For any $x\in \fra$, the composition 
    \[
\Hom_R\left(B,R\right)\xlongrightarrow{\Hom_R(\times x, R)}\Hom_R\left(B,R\right) \longrightarrow \Hom_R(R,R)\cong R \to R/\frab
    \]
    is zero. 
    \item[(d)] For any $x\in \fra$, the composition 
\[
E_{R/\frab} \longrightarrow E_R \longrightarrow E_R\tensor_R B \xlongrightarrow{\id\tensor_R (\times x)} E_R\tensor_R B
\]
is zero, where $E_R$ is the injective hull of $k$ of $R$ over $R$ and $E_{R/\frab} $ is the injective hull of $k$ over $R/\frab$.
\end{enumerate}
If $R$ has residue characteristic $p>0$ and one (equivalently all) of these hold for $B$ a perfectoid $R$-algebra, $\fra=(\frab B)\perfd$, we have that $\frab$ is $B$-compatible. In particular, keeping the notation of \autoref{notation rainfty},
if this holds for $B=\rafty$ for all possible
choices of $A\in \sA$ and for $\fra=\frab^A_\infty$, we have that $\frab$ is uniformly perfectoid compatible.
\end{proposition}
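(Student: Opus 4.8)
The plan is to run the argument of \autoref{comp ideal of pair injective hull} simultaneously over all $\phi\in\Hom_R(B,R)$ and then glue the per-$\phi$ conclusions together using that $E_R$ detects nonvanishing of $\fram$-torsion modules. Note that (a)--(d) make no use of the perfectoid structure, so $B$ is an arbitrary $R$-algebra for that part.

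First I would dispatch (a) $\Leftrightarrow$ (b) $\Leftrightarrow$ (c) by unwinding definitions. For fixed $x\in\fra$ and $\phi\in\Hom_R(B,R)$, the composite $B\xrightarrow{\times x}B\xrightarrow{\phi}R\to R/\frab$ is the reduction mod $\frab$ of $b\mapsto\phi(xb)$, whose image is the ideal $\phi(xB)\subseteq R$; and the composite $\Hom_R(B,R)\xrightarrow{-\circ(\times x)}\Hom_R(B,R)\to\Hom_R(R,R)=R\to R/\frab$ sends $\phi$ to $\phi(x)\bmod\frab$. Since $\fra=\fra B$, asking either composite to vanish for every $x$ and every $\phi$ is exactly the assertion that $\phi(\fra)\subseteq\frab$ for all $\phi$, which is (a).

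The content is the equivalence (c) $\Leftrightarrow$ (d). Here I would invoke Matlis duality: since $R$ is complete local Noetherian, $\Hom_R(E_R,E_R)\cong R$, so tensor--hom adjunction gives $\Hom_R(E_R\otimes_R B,E_R)\cong\Hom_R(B,R)$, and under this identification the map $E_R\otimes_R B\to E_R$ corresponding to $\phi$ is precisely $\id_{E_R}\otimes\phi$. A one-line diagram chase then shows that for each $\phi$ the composite $E_R\to E_R\otimes_R B\xrightarrow{\id\otimes(\times x)}E_R\otimes_R B\xrightarrow{\id\otimes\phi}E_R$ is multiplication by $\phi(x)$ on $E_R$; and since $E_{R/\frab}=\Ann_{E_R}(\frab)$ with $\Ann_R(E_{R/\frab})=\frab$ (faithfulness of the injective hull over the complete local ring $R/\frab$), this composite annihilates $E_{R/\frab}$ if and only if $\phi(x)\in\frab$. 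Thus, for fixed $x$, condition (c) is equivalent to the vanishing, for every $\phi$, of $E_{R/\frab}\to E_R\otimes_R B\xrightarrow{\id\otimes(\times x)}E_R\otimes_R B\xrightarrow{\id\otimes\phi}E_R$, and it remains only to see this is equivalent to the vanishing of $E_{R/\frab}\to E_R\otimes_R B\xrightarrow{\id\otimes(\times x)}E_R\otimes_R B$ itself, which is (d). One direction is trivial. For the other, the image of this last map is a quotient of $E_{R/\frab}$, hence an $\fram$-torsion $R$-module; a nonzero $\fram$-torsion module contains a copy of the residue field, so if every $R$-linear map from it to $E_R$ vanished it would be zero — and we have just observed that every map $E_R\otimes_R B\to E_R$ is of the form $\id\otimes\phi$. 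This cogenerator step, which is where completeness of $R$ is genuinely used and where one must remember that $E_R$ cogenerates only $\fram$-torsion modules (not all $R$-modules), is the single non-formal point of the proof; everything else is definition-chasing and standard Matlis duality.

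Finally, the last two sentences of the statement are immediate: with $B$ a perfectoid $R$-algebra and $\fra=(\frab B)\perfd$, statement (a) reads $\phi((\frab B)\perfd)\subseteq\frab$ for all $\phi\in\Hom_R(B,R)$, which is by definition that $\frab$ is $B$-compatible; and applying this with $B=\rafty$ and $\fra=\frab^A_\infty$ for every $A\in\sA$ gives exactly that $\frab$ is uniformly perfectoid compatible.
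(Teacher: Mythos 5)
Your proposal is correct and follows the same route as the paper, which reduces everything to definition-chasing for (a)$\Leftrightarrow$(b)$\Leftrightarrow$(c) and invokes Matlis duality for (c)$\Leftrightarrow$(d); you simply spell out the duality step (including the correct observation that the image of $E_{R/\frab}$ in $E_R\tensor_R B$ is $\fram$-torsion, so $E_R$ detects its nonvanishing via maps of the form $\id\tensor\phi$) that the paper leaves as ``a standard application of Matlis duality.''
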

\begin{proof}
Same as \autoref{comp ideal of pair injective hull}.  
\end{proof}

\subsection{New characterizations of uniform perfectoid compatibility} In this section, we show that uniform perfectoid compatibility can be checked on any choice of $A\in \sA$\footnote{
This proof was suggested by Karl Schwede and the author is very grateful to be able to include it here.} and that it does not depend on the embedding choices that we made in \autoref{notation rainfty}.
\smallskip

\noindent This next lemma is well known to experts but the author does not know a good reference. 

\begin{lemma}\label{pcomp fflat implies pure}
Let $(R,\fram,k)$ be a Noetherian local ring with residue characteristic $p>0$. If $A\to B$ is a $p$-completely faithfully flat map of $R$-modules and $E$ is the injective hull of $k$ over $R$, then $A\tensor_R E\to B\tensor_R E$ is injective. In particular, if $R$ is $\fram$-adically complete, $\Hom_R(B,R)\to \Hom_R(A,R)$ is surjective. Moreover, if $A=R$, then $A\to B$ is pure. 
\end{lemma}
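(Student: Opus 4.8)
The plan is to reduce everything to the $p$-torsion layers of $E$ and to the fact that faithfully flat ring maps are pure. Since the residue field has characteristic $p$, we have $p\in\fram$, so the Artinian, $\fram$-power torsion module $E=E_R(k)$ is in fact $p$-power torsion: $E=\bigcup_n E_n$ with $E_n:=(0:_E p^n)$, a directed union of $R/p^nR$-submodules. Because $A\tensor_R-$ commutes with filtered colimits, $A\tensor_R E=\colim_n(A\tensor_R E_n)$ and similarly for $B$, and the map $A\tensor_R E\to B\tensor_R E$ is the filtered colimit of the maps $A\tensor_R E_n\to B\tensor_R E_n$. As filtered colimits are exact in $R$-modules, it suffices to prove each $A\tensor_R E_n\to B\tensor_R E_n$ is injective.

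Fix $n$. Since $p^n$ annihilates $E_n$, tensor associativity gives $A\tensor_R E_n\cong (A/p^nA)\tensor_{R/p^nR}E_n$ and $B\tensor_R E_n\cong (B/p^nB)\tensor_{R/p^nR}E_n\cong (B/p^nB)\tensor_{A/p^nA}M$, where $M:=(A/p^nA)\tensor_{R/p^nR}E_n$ is regarded as an $A/p^nA$-module; under these identifications the map in question is $m\mapsto 1\tensor m$. Now $p$-complete flatness of $A\to B$ implies $A/p^nA\to B/p^nB$ is flat (a standard dévissage from the case $n=1$), and $p$-complete faithful flatness guarantees in addition that $\Spf B\to\Spf A$, hence $\Spec(B/p^nB)\to\Spec(A/p^nA)$, is surjective; so $A/p^nA\to B/p^nB$ is faithfully flat. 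A faithfully flat ring map is pure, hence $M\to (B/p^nB)\tensor_{A/p^nA}M$ is injective. This proves the first assertion.

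For the ``in particular'', apply the exact functor $\Hom_R(-,E)$ to the injection $A\tensor_R E\hookrightarrow B\tensor_R E$ to obtain a surjection $\Hom_R(B\tensor_R E,E)\twoheadrightarrow\Hom_R(A\tensor_R E,E)$. By tensor--hom adjunction $\Hom_R(C\tensor_R E,E)\cong\Hom_R(C,\Hom_R(E,E))$ for $C\in\{A,B\}$, and since $R$ is $\fram$-adically complete, $\Hom_R(E,E)\cong R$ by Matlis duality; so this surjection is precisely $\Hom_R(B,R)\twoheadrightarrow\Hom_R(A,R)$. Finally, taking $A=R$ in the first assertion shows $E\to B\tensor_R E$ is injective, and by the standard characterization of purity over a Noetherian local ring --- a map of $R$-modules $N\to N'$ is pure as soon as $N\tensor_R E\to N'\tensor_R E$ is injective --- we conclude that $R\to B$ is pure. (When $R$ is moreover complete this also follows from the previous sentence, since the surjection $\Hom_R(B,R)\twoheadrightarrow\Hom_R(R,R)=R$ exhibits a splitting, hence a pure inclusion, of $R\to B$.)

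There is no deep obstacle here --- the statement is folklore --- but the points requiring care are the passage through $p$-complete flatness (to get that $A/p^nA\to B/p^nB$ is genuinely faithfully flat) combined with the decomposition of $E$ into its $p$-torsion layers, and, for the last clause, invoking the $E$-theoretic criterion for purity; everything else is routine tensor bookkeeping and Matlis duality.
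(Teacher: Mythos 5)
Your proof is correct and follows essentially the same route as the paper's: both reduce the injectivity of $A\tensor_R E\to B\tensor_R E$ to the purity of $A/p^n\to B/p^n$ by writing $E$ as a filtered colimit of $p^n$-torsion pieces (the paper uses arbitrary finitely generated submodules of $E$, you use the layers $(0:_E p^n)$ — a cosmetic difference), and both finish with Matlis duality and the injective-hull criterion for purity. One caution: the criterion as you state it for an arbitrary map $N\to N'$ is false (e.g.\ $\Frac(R)\to 0$ over a DVR becomes injective after $\tensor_R E$), but it is valid when the source is $R$ — this is precisely the result of Hochster and Roberts that the paper cites — and that is the only case you use.
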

\begin{proof}
We use the same technique as in the proof of \cite[Lemma 4.5]{BMPSTWWPerfectoidPure}.
We know that $A/p^n\to B/p^n$ is pure for every $n$. Let $E$ be the injective hull of $R/\fram$ over $R$. For every finitely generated submodule $N$ of $E$, $N$ is $p^n$-torsion for some n. Therefore, $A\tensor_R N \to B\tensor_R N$ can be identified as $A/p^n\tensor_R N \to B/p^n \tensor_R N$, which is injective by the purity of $A/p^n\to B/p^n$. By taking a direct limit over all such $N$, we find that $A\tensor_R E \to B\tensor_R E$ is injective. The surjectivity of $\Hom_R(A,R)\to \Hom_R(B,R)$ follows from Matlis duality.
The last statement follows by \cite[Proposition 6.11]{HochsterRobertsRingsOfInvariants}.
\end{proof}

\begin{lemma}\label{compatible iff pcomp fflat comp}
Let $(R,\fram,k)$ be a complete Noetherian local ring with residue characteristic $p>0$ and let $\fra\subset R$ be any ideal.
Let $B\to C$ be a $p$-completely faithfully flat morphism of perfectoid $R$-algebras. Then, $\fra$ is $B$-compatible if and only if it is $C$-compatible. 
\end{lemma}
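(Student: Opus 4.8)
The plan is to prove both directions by moving back and forth between the compatibility condition for an ideal and its characterization in terms of the injective hull, as provided by \autoref{uniformly compatible injective hull}(d). The key structural input is \autoref{perfectoid ideal under pcflat extension of pfd rings}, which tells us how perfectoidized expansions behave along a $p$-completely flat map of perfectoid rings, together with \autoref{pcomp fflat implies pure}, which says that a $p$-completely faithfully flat map $B\to C$ of $R$-algebras induces a surjection $\Hom_R(C,R)\to\Hom_R(B,R)$ and an injection $E\tensor_R B\to E\tensor_R C$ (where $E=E_R$ is the injective hull of $k$).

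\emph{($C$-compatible implies $B$-compatible.)} Assume $\fra$ is $C$-compatible and fix $\phi\in\Hom_R(B,R)$. By the surjectivity in \autoref{pcomp fflat implies pure}, lift $\phi$ to some $\psi\in\Hom_R(C,R)$ with $\psi|_B=\phi$. For $x\in(\fra B)\perfd$, its image in $C$ lands in $(\fra C)\perfd$: indeed, writing $\fra=(h_1,\ldots,h_r)$, \autoref{perfectoid ideal under pcflat extension of pfd rings} gives $(\fra C)\perfd=((\,(\fra B)\perfd\,)C)^-$, so the image of $(\fra B)\perfd$ in $C$ is contained in $(\fra C)\perfd$. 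Since $\fra$ is $C$-compatible, $\psi\big((\fra C)\perfd\big)\subset\fra$, and restricting along $B\to C$ yields $\phi\big((\fra B)\perfd\big)=\psi\big(\text{image of }(\fra B)\perfd\big)\subset\psi\big((\fra C)\perfd\big)\subset\fra$. Hence $\fra$ is $B$-compatible. This direction is the easy one and needs no Matlis duality.

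\emph{($B$-compatible implies $C$-compatible.)} This is the direction where I expect the main difficulty, since a general $\psi\in\Hom_R(C,R)$ need not be visibly induced from $B$. The plan is to use the Matlis-dual criterion \autoref{uniformly compatible injective hull}(d): $\fra$ is $C$-compatible if and only if for every $y\in(\fra C)\perfd$ the composition $E_{R/\frab}\to E_R\to E_R\tensor_R C\xrightarrow{\id\tensor(\times y)}E_R\tensor_R C$ is zero (taking $\frab=\fra$ — more precisely one applies the criterion with the roles as in the proposition). By \autoref{perfectoid ideal under pcflat extension of pfd rings} it suffices to treat generators $y$ of $(\fra C)\perfd$ of the form $y=z\cdot c$ with $z\in(\fra B)\perfd$ and $c\in C$, and since multiplication by $c$ on $E_R\tensor_R C$ commutes with $\id\tensor(\times z)$, it is enough to kill $\id\tensor(\times z)$ for $z\in(\fra B)\perfd$. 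Now $B$-compatibility of $\fra$ gives, via \autoref{uniformly compatible injective hull}(d) applied to $B$, that $E_{R/\fra}\to E_R\to E_R\tensor_R B\xrightarrow{\id\tensor(\times z)}E_R\tensor_R B$ is zero; tensoring the square $B\xrightarrow{\times z}B$ with $E_R$ and then mapping into $E_R\tensor_R C$ via the injection $E_R\tensor_R B\hookrightarrow E_R\tensor_R C$ of \autoref{pcomp fflat implies pure}, we conclude the corresponding composition into $E_R\tensor_R C$ is zero as well, since the map $E_{R/\fra}\to E_R\tensor_R B$ factors the map into $E_R\tensor_R C$. Therefore the criterion holds for $C$ and $\fra$ is $C$-compatible.

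The one subtlety to get right is the reduction in the second direction to a single generator $z$ and the compatibility of the constructions with the tensor-flatness bookkeeping (that $(\fra C)\perfd$ is generated, after $p$-adic closure, by the images of generators of $(\fra B)\perfd$), which is exactly the content of \autoref{perfectoid ideal under pcflat extension of pfd rings}; after that, everything is a diagram chase with the injection $E_R\tensor_R B\hookrightarrow E_R\tensor_R C$. I would also remark that \autoref{p adic closure of comp ideal} lets us ignore the $p$-adic closure operations freely when checking that a map lands in an ideal of $R$.
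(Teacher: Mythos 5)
Your proof is correct and follows essentially the same route as the paper: the easy direction via lifting $\phi$ along the surjection $\Hom_R(C,R)\to\Hom_R(B,R)$ and the identity $(\fra C)\perfd=((\fra B)\perfd C)^-$, and the hard direction via the reduction of \autoref{p adic closure of comp ideal} followed by a factorization-through-$B$ diagram chase using \autoref{uniformly compatible injective hull}. The only cosmetic difference is that you run the diagram chase on the Matlis-dual side, criterion (d), whereas the paper uses the equivalent Hom-criterion (c); as you note, injectivity of $E_R\tensor_R B\to E_R\tensor_R C$ is not actually needed, only the factorization.
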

\begin{proof}
By \autoref{pcomp fflat implies pure}, we have a surjection $\Hom_R(C,R)\to \Hom_R(B,R)$ and by \autoref{perfectoid ideal under pcflat extension of pfd rings}, $(\fra C)\perfd=((\fra B)\perfd C)^-$. Suppose that $\fra$ is $C$-compatible and let $\phi\in \Hom_R(B,R)$. There is $\psi\in \Hom_R(C,R)$ that extends $\phi$ to $C$. Then, 
\[
\phi((\fra B)\perfd)\subset \psi( (\fra B)\perfd) \subset \psi((\fra C)\perfd) \subset \fra
\]
so $\fra$ is $B$-compatible. Now, suppose that $\fra$ is $B$-compatible. By \autoref{p adic closure of comp ideal}, it suffices to show that $(\fra B)\perfd C$ gets sent to $\fra$ under any map in $\Hom_R(C,R)$. By \autoref{uniformly compatible injective hull}, it suffices to show that for any $x\in (\fra B)\perfd$, $y\in C$, the following composition is $0$:
\[
\Hom_R\left(C,R\right)\xlongrightarrow{\Hom_R(\times xy, R)}\Hom_R\left(C,R\right) \longrightarrow \Hom_R(R,R)\cong R \to R/\fra
\]
Since $R\to C$ factors through $R\to B$, the following diagram commutes:
\[\xymatrixcolsep{5pc}
\xymatrix{
\Hom_R(C,R)\ar^{\Hom_R(\times xy, R)}[r] \ar^{\Hom_R(\times y, R)}[d] &\Hom_R(C,R) \ar[r] \ar[d] &\Hom_R(R,R)\cong R \\
\Hom_R(B,R)\ar^{\Hom_R( x, R)}[r] &\Hom_R(B,R) \ar[ur]\\
}
\]
but by \autoref{uniformly compatible injective hull}, the composition through the bottom has image in $\fra$ so we are done.
\end{proof}

\begin{proposition}\label{B compatible can assume has roots and is complete}
Let $(R,\fram, k)$ be a complete Noetherian local ring with residue characteristic $p>0$ and $\fra\subset R$ be an ideal. Let $B$ be a perfectoid $R$-algebra. Then, there is a perfectoid $B$-algebra $C$ that contains a compatible system of $p$-th power roots of a given subset of elements of $R$ such that $\fra$ is $B$-compatible if and only if it is $C$-compatible. In fact, we can even assume that $C$ is absolutely integrally closed. We can also assume that $C$ is $\fram$-adically complete.
\end{proposition}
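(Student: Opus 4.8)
The plan is to build $C$ by iterating a construction that, at each stage, (i) adjoins all $p$-power roots of the elements we care about, (ii) takes an absolute integral closure, and (iii) takes a $\fram$-adic completion, arranging that each operation is (up to perfectoidization) $p$-completely faithfully flat so that \autoref{compatible iff pcomp fflat comp} lets us transfer $B$-compatibility back and forth. First I would recall the standard "perfectoidization of an $R$-algebra" functor from \cite[Section 10]{BhattScholzePrisms}: given a semiperfectoid $B$-algebra $D$, the map $D\to D\perfd$ is a $p$-complete arc-cover and, crucially, $B\perfd\to D\perfd$ is $p$-completely flat whenever $B\to D$ is (cf.\ the usage in \autoref{perfectoid ideal under pcflat extension of pfd rings}). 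Concretely, to adjoin $p$-power roots of all elements of $R$ one forms the (huge) polynomial algebra $B[x_r^{1/p^\infty} : r\in R]/(x_r - r)$, $p$-completes, and perfectoidizes; the resulting map $B\to B'$ is $p$-completely faithfully flat because the polynomial extension is free, hence $p$-completely faithfully flat, and perfectoidization preserves this. Then $\fra$ is $B$-compatible iff it is $B'$-compatible by \autoref{compatible iff pcomp fflat comp}, and $B'$ contains a compatible system of $p$-power roots for every element of $R$.

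Next I would handle the absolute integral closure. Since $B'$ is a perfectoid ring that is $p$-torsion-free (or after quotienting by the ideal of $p^\infty$-torsion, which is harmless here because we only test maps to $R$ and $R$ is $p$-torsion-free as $R$ is a domain-free but $p$-adically separated Noetherian local ring — more precisely one passes to $B'/B'[p^\infty]$, still perfectoid by \autoref{perfectoidization is radical when p in ideal}-type arguments), one can take $C' = (B'^+)\perfd$ where $B'^+$ is the integral closure of $B'$ in the product of algebraic closures of its fraction fields modulo minimal primes; by the almost purity / almost faithfully flat descent statements underlying perfectoid theory, $B'\to C'$ is again $p$-completely faithfully flat, so \autoref{compatible iff pcomp fflat comp} applies once more. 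Finally, $\fram$-adic completion: since $C'$ is already $p$-complete and $\fram \supset (p)$ inside $R$, the $\fram$-adic completion $C \coloneqq \widehat{C'}^{\fram}$ differs from $C'$ only along the "horizontal" directions; it remains perfectoid (completion of a perfectoid ring along a finitely generated ideal containing a perfectoid pseudouniformizer is perfectoid), it is still absolutely integrally closed because integral closedness is preserved under completion for excellent-type rings and more robustly survives here since we never leave the perfectoid world, and $C'\to C$ is $p$-completely faithfully flat (indeed $p$-completely flat as a completion, and faithfully so because the closed point survives). One more application of \autoref{compatible iff pcomp fflat comp} finishes the chain $B \leadsto B' \leadsto C' \leadsto C$.

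The main obstacle I anticipate is verifying that the absolute-integral-closure step stays inside the category where \autoref{compatible iff pcomp fflat comp} is applicable, i.e.\ that $(B'^+)\perfd$ is genuinely a perfectoid ring and that $B' \to (B'^+)\perfd$ is $p$-completely \emph{faithfully} flat rather than merely almost faithfully flat. The faithfulness is what forces one to be careful about $p^\infty$-torsion and about whether the closed fibre is nonzero; I would address this by first reducing to the case where $B'$ is $p$-torsion-free and $\fram$-adically complete (using the earlier steps in the opposite order if needed, or by the last sentence's completion step applied preemptively), and then invoking that for a $p$-torsion-free complete perfectoid ring the absolute integral closure of a $p$-complete perfectoid ring is perfectoid and the map is $p$-completely faithfully flat — this is exactly the kind of statement established in the perfectoidization literature and used throughout \cite{BMPSTWWPerfectoidPure, CaiLeeMaSchwedeTuckerPerfectoidSignature}. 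Everything else is a routine bookkeeping exercise chaining \autoref{compatible iff pcomp fflat comp} three times.
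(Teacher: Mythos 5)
Your overall architecture is the same as the paper's: produce a chain of $p$-completely faithfully flat maps of perfectoid rings $B\to B'\to C'\to C$ and transfer compatibility along each step with \autoref{compatible iff pcomp fflat comp}. However, the justification of the key flatness claim in your first step is not valid as written. You form $B[x_r^{1/p^\infty}:r\in R]/(x_r-r)$, $p$-complete, perfectoidize, and assert the result is $p$-completely faithfully flat over $B$ ``because the polynomial extension is free \ldots and perfectoidization preserves this.'' The polynomial ring is indeed free, but the quotient by $(x_r-r)$ destroys flatness, and the semiperfectoid ring you then perfectoidize is \emph{not} flat over $B$. The assertion that its perfectoidization is nonetheless $p$-completely faithfully flat is precisely Andr\'e's flatness lemma (\cite[Th\'eor\`eme 2.5.1]{AndreConjectureFacteurDirect}, \cite[Theorem 7.14]{BhattScholzePrisms}) --- a deep theorem, not a formal consequence of freeness of the polynomial ring plus functoriality of perfectoidization. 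Your argument is essentially circular at this point; the correct move (and the one the paper makes) is simply to invoke Andr\'e's lemma, which in its strong form already hands you a $p$-completely faithfully flat perfectoid $B$-algebra that is absolutely integrally closed and contains compatible systems of $p$-power roots of any prescribed elements.

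The same issue recurs in your absolute-integral-closure step: you propose $C'=(B'^{+})\perfd$ and appeal to ``almost purity / almost faithfully flat descent,'' while yourself flagging that honest (not almost) faithful flatness is what \autoref{compatible iff pcomp fflat comp} requires. That worry is legitimate and your proposal does not resolve it; again the resolution is that Andr\'e's lemma directly supplies an absolutely integrally closed extension with the required honest $p$-complete faithful flatness, so there is no need to take $B'^{+}$ by hand. Your final completion step ($C'\to\widehat{C'}$ faithfully flat, $\widehat{C'}$ perfectoid by \cite[Example 3.8]{BhattIyengarMaRegularRings}) matches the paper. In summary: right skeleton, but the load-bearing flatness input must be cited as Andr\'e's theorem rather than derived from freeness of a polynomial ring.
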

\begin{proof}
The first part follows from \autoref{compatible iff pcomp fflat comp} and André's flatness lemma \cite[Théorème 2.5.1]{AndreConjectureFacteurDirect}, \cite[Theorem 7.14]{BhattScholzePrisms}. The second part follows from the facts that $C\to \widehat{C}$ is faithfully flat and that $\widehat{C}$ is perfectoid \cite[Example 3.8]{BhattIyengarMaRegularRings}. 
\end{proof}

\begin{lemma}\label{rafty to rbfty is pure when adding roots}
Let $R$ and $A\in \sA$ be as in \autoref{notation rainfty}. Let $h_1,\ldots,h_r\in R$ be arbitrary. Let $B\coloneqq A\llbracket y_1,\ldots,y_r \rrbracket$ and make $R$ into a $B$-algebra by sending $y_i$ to $h_i$. Then, the natural morphism $\rafty\to \rbfty$ is $p$-completely faithfully flat. 
\end{lemma}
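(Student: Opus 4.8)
The plan is to identify $\rbfty$ concretely — as the perfectoidization of $\rafty$ after freely adjoining compatible $p$-power roots of $h_1,\dots,h_r$ — and then to deduce $p$-complete faithful flatness from André's flatness lemma. Throughout I will use that (relative) perfectoidization is the universal perfectoid algebra functor on $p$-complete rings (\cite[Theorem 10.11]{BhattScholzePrisms} and its surroundings), so $\rafty=(R\tensor_A A_\infty)\perfd$ is the universal perfectoid $(R\tensor_A A_\infty)$-algebra and $\rbfty=(R\tensor_B B_\infty)\perfd$ the universal perfectoid $(R\tensor_B B_\infty)$-algebra. I also note for context that $B=A\llbracket y_1,\dots,y_r\rrbracket=C_k\llbracket x_1,\dots,x_d,y_1,\dots,y_r\rrbracket$ has the same Cohen subring $C_k$ as $A$ and that $R$ is module-finite over $B$, so $B$ is an admissible base for $R$ as in \autoref{notation rainfty} (now with $d+r$ variables), of which $\rbfty$ is the associated perfectoidized ring.

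First I would construct the comparison map. Since $B_\infty$ is built from $A_\infty$ by the same recipe with the extra variables $y_1,\dots,y_r$ adjoined, and these map to $h_1,\dots,h_r$ under $B\to R$, there is a natural $A$-algebra map $A_\infty\to R\tensor_B B_\infty$ over $A\to R$, hence a map $R\tensor_A A_\infty\to R\tensor_B B_\infty$. Composing with $R\tensor_B B_\infty\to\rbfty$ and using that $\rbfty$ is perfectoid, this factors through $R\tensor_A A_\infty\to\rafty$, yielding the map $\rafty\to\rbfty$ whose $p$-complete faithful flatness is to be shown.

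The heart of the argument is the identification of $\rbfty$. For a perfectoid ring $P$, a map $R\tensor_B B_\infty\to P$ is the same as a compatible pair of maps $R\to P$ and $B_\infty\to P$; since the ``$W(k^\unpfty)$-factor'' of the construction and the compatible $p$-power roots of $p,x_1,\dots,x_d$ are shared verbatim with $A_\infty$, such a datum is exactly a map $R\tensor_A A_\infty\to P$ together with a choice of compatible systems of $p$-power roots in $P$ of the images of $h_1,\dots,h_r$. By the universal property of $\rafty$ this is in turn a map $\rafty\to P$ plus such root systems, and the latter functor is represented by the ring $P_0$ obtained from $\rafty$ by formally adjoining compatible roots $h_1^\unpfty,\dots,h_r^\unpfty$, $p$-completing, and perfectoidizing. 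Comparing universal objects gives a canonical isomorphism $\rbfty\cong P_0$ of $\rafty$-algebras, compatible with the map built in the previous paragraph. (One can also see this by hand: $A_\infty\llbracket y\rrbracket\cong A\llbracket y\rrbracket\,\widehat{\tensor}_A A_\infty$, so tensor associativity gives $R\tensor_B B_\infty\cong(R\tensor_A A_\infty)\,\widehat{\tensor}_{A_\infty\llbracket y\rrbracket}\,B_\infty$; as $B_\infty$ is $A_\infty\llbracket y\rrbracket$ with compatible $p$-power roots of the $y_i$ adjoined and $p$-completed, its base change along $y_i\mapsto h_i$ adjoins $p$-power roots of the $h_i$; and finally perfectoidizing a ring with such roots adjoined only depends on the perfectoidization of the coefficient ring, letting one replace $R\tensor_A A_\infty$ by $\rafty$.)

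Finally I would conclude: $\rafty$ is perfectoid and $h_1,\dots,h_r$ push forward into it, so by André's flatness lemma in the form of \cite[Th\'eor\`eme~2.5.1]{AndreConjectureFacteurDirect}, \cite[Theorem~7.14]{BhattScholzePrisms} the map $\rafty\to P_0$ — obtained from $\rafty$ by adjoining all $p$-power roots of $h_1,\dots,h_r$ and perfectoidizing — is $p$-completely faithfully flat, and by the identification of the third paragraph this is exactly $\rafty\to\rbfty$. I expect the main obstacle to be that identification: one must check that the various completions in \autoref{notation rainfty} (the $\widehat{\tensor}_{C_k}$, the $p$-completions defining $A_\infty$ and $B_\infty$, and the relative perfectoidizations) are compatible with the base change $y_i\mapsto h_i$, and run cleanly the bookkeeping that a ring map from $A_\infty$ or $B_\infty$ into a perfectoid ring is the underlying map on $A$ or $B$ together with a choice of compatible $p$-power roots of the images of $p,x_i$ (and $y_i$). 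A harmless preliminary point is that the setup tacitly assumes $h_i\in\fram$, since otherwise $B=A\llbracket y_1,\dots,y_r\rrbracket\to R$ is not a well-defined continuous ring map into the $\fram$-adically complete ring $R$; this is the only case needed in the applications.
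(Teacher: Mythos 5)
Your proposal is correct in substance, but it takes a different route from the paper at the decisive step, and the comparison is worth spelling out. The paper constructs the map $\rafty\to\rbfty$ exactly as you do (the chain $R\to R\tensor_A A_\infty\to R\tensor_A B_\infty\to R\tensor_B B_\infty\to\rbfty$ plus the universal property of perfectoidization), but then argues differently: it invokes Andr\'e's lemma to produce an auxiliary $p$-completely faithfully flat perfectoid $\rafty$-algebra $C$ containing compatible $p$-power roots of the $h_i$, uses those roots to map $B_\infty$ (hence $\rbfty$) to $C$, and concludes from the factorization $\rafty\to\rbfty\to C$. You instead identify $\rbfty$ itself, via its universal property among perfectoid rings, with the Andr\'e construction $P_0$ over $\rafty$ and apply the flatness lemma directly to $\rafty\to P_0$. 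Your route is the more self-contained one: a factorization $\rafty\to\rbfty\to C$ with $\rafty\to C$ faithfully flat does not by itself yield flatness of the first arrow (one would additionally need, say, faithful flatness of $\rbfty\to C$ to descend flatness, or precisely the identification you prove), so your identification is in effect the content that makes the concluding step work. What the paper's phrasing buys is brevity and avoidance of the completion bookkeeping; what yours buys is an explicit description of $\rbfty$ as ``$\rafty$ with $p$-power roots of the $h_i$ freely adjoined,'' which is also the conceptual reason the lemma is true. Two small points to tie down: the multi-element case of Andr\'e's lemma is obtained by iterating the one-element statement (composites of $p$-completely faithfully flat maps are again such), and your tacit hypothesis $h_i\in\fram$ is indeed needed for $B\to R$ to be defined and is satisfied in every application in the paper.
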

\begin{proof}
There are natural ring maps
\begin{equation}\label{maps rafty rbfty}
R \to R\tensor_A A_\infty \to R\tensor_A B_\infty \to R\tensor_B B_\infty \to \rbfty
\end{equation}
so by the universal property of perfectoidization, the map $R\to \rbfty$ factors through $\rafty$. By André's flatness lemma (\cite[Théorème 2.5.1]{AndreConjectureFacteurDirect}, \cite[Theorem 7.14]{BhattScholzePrisms}), there is a perfectoid $\rafty$-algebra, say $C$, such that $\rafty\to C$ is $p$-completely faithfully flat and $C$ contains a compatible system of $p$-th power roots for each of the $h_i$s. This gives a map $A_\infty$ to $C$ and then a map $B_\infty \to C$ by sending the $y_i^{\unpe}$s to the $\pe$-th power root of the $h_i$s in $C$ in a compatible way. We also have a natural map $R\to C$ and therefore maps from all the rings in \autoref{maps rafty rbfty} to $C$ that commute with each other. This gives us a factorization $R\to \rafty \to \rbfty \to C$ so the map $\rafty \to \rbfty$ is also $p$-completely faithfully flat.
\end{proof}

\begin{proposition}\label{compatible with rafty iff compatible with rbfty}
Let $R$ and $A\in \sA$ be as in \autoref{notation rainfty}. Let $\fra\subset R$ be an ideal and let $h_1,\ldots,h_r$ be arbitrary elements of $R$. Let $B\coloneqq A\llbracket y_1,\ldots,y_r \rrbracket$ and make $R$ into a $B$-algebra by sending $y_i$ to $h_i$. Then, $\fra$ is $\rafty$-compatible if and only if it is $\rbfty$-compatible.
\end{proposition}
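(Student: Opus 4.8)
The plan is to read this off immediately from the two preceding lemmas. First I would record that both $\rafty$ and $\rbfty$ are perfectoid $R$-algebras: the former by its construction in \autoref{notation rainfty}, and the latter because $B=C_k\llbracket x_1,\ldots,x_d,y_1,\ldots,y_r\rrbracket$ is again of the form required in \autoref{notation rainfty}. Indeed $B$ is a power series ring in $d+r$ variables over the same Cohen ring $C_k$, the composite $C_k\subset B\to R$ still sends $C_k$ to $C_k$, and $R$ is module-finite over $B$ since it is already module-finite over $A$ and the structure map factors as $A\to B\to R$. Hence $B\in\sA$ for $R$ regarded as a $B$-algebra, and $\rbfty=(R\tensor_B B_\infty)\perfd$ is perfectoid. (Here $B_\infty$ is the $p$-adic completion of $(B\widehat{\tensor}_{C_k}W(k^\unpfty))[p^\unpfty,x_1^\unpfty,\ldots,x_d^\unpfty,y_1^\unpfty,\ldots,y_r^\unpfty]$, so that the $y_i^\unpe$ are present; this is exactly the ring used in \autoref{rafty to rbfty is pure when adding roots}.)

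Next I would invoke \autoref{rafty to rbfty is pure when adding roots}, which says precisely that the natural map $\rafty\to\rbfty$ is $p$-completely faithfully flat. Since both source and target are perfectoid $R$-algebras, \autoref{compatible iff pcomp fflat comp} applies verbatim, with $\rafty\to\rbfty$ playing the role of the $p$-completely faithfully flat morphism $B\to C$ there. That lemma then yields that $\fra$ is $\rafty$-compatible if and only if it is $\rbfty$-compatible, which is exactly the assertion. So the argument is essentially a two-line citation.

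The only points worth spelling out are bookkeeping rather than mathematics: that ``$\rafty$-compatible'' and ``$\rbfty$-compatible'' in the statement are the notion of ``$B$-compatible'' from the definition, taken with $B=\rafty$ and $B=\rbfty$ respectively, so that the ideals $(\fra\rafty)\perfd$ and $(\fra\rbfty)\perfd$ occurring there are indeed the $\fra_\infty$'s of the definition; and that the standing hypotheses of \autoref{compatible iff pcomp fflat comp} ($(R,\fram,k)$ complete Noetherian local of residue characteristic $p>0$) are in force here. I do not anticipate any genuine obstacle: all the real work has already been carried out in \autoref{rafty to rbfty is pure when adding roots} (building, via André's flatness lemma, a common $p$-completely faithfully flat perfectoid algebra $C$ over both $\rafty$ and $\rbfty$ and checking the maps commute) and in \autoref{compatible iff pcomp fflat comp} (the Matlis-duality computation transporting $B$-compatibility along a $p$-completely faithfully flat map of perfectoid $R$-algebras, using \autoref{perfectoid ideal under pcflat extension of pfd rings} to compare $(\fra\rbfty)\perfd$ with $((\fra\rafty)\perfd\rbfty)^-$ and \autoref{pcomp fflat implies pure} for the surjectivity $\Hom_R(\rbfty,R)\to\Hom_R(\rafty,R)$).
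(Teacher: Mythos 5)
Your proof is correct and is exactly the paper's argument: the paper's proof of this proposition is the one-line citation of \autoref{rafty to rbfty is pure when adding roots} and \autoref{compatible iff pcomp fflat comp}. The bookkeeping you add (that $B\in\sA$ and that the hypotheses of \autoref{compatible iff pcomp fflat comp} are in force) is accurate and harmless.
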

\begin{proof}
This is direct from \autoref{rafty to rbfty is pure when adding roots} and \autoref{compatible iff pcomp fflat comp}.
\end{proof}

\begin{corollary}\label{uniformly compatible doesn't depend on the choice of A}
Let $R$ be as in \autoref{notation rainfty} and $\fra\subset R$ an ideal. Fix any $A\in \sA$. If $\fra$ is compatible with all maps in $\Hom_R(\rafty,R)$, then $\fra$ is uniformly perfectoid compatible. That is, one can test uniform perfectoid compatibility on only one $A\in \sA$.
\end{corollary}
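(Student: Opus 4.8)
The plan is to reduce the comparison of two Noether--Cohen normalizations to a single power series ring that simultaneously dominates both, and then to invoke \autoref{compatible with rafty iff compatible with rbfty} twice. So I would fix an arbitrary $A'\in\sA$ and write $A=C_k\llbracket x_1,\ldots,x_d\rrbracket$ and $A'=C_k\llbracket x_1',\ldots,x_{d'}'\rrbracket$; let $a_i,a_j'\in\fram$ be the images of $x_i,x_j'$ under the (local, module-finite) structure maps $A\to R$ and $A'\to R$. Then I would set
\[
B\coloneqq C_k\llbracket x_1,\ldots,x_d,x_1',\ldots,x_{d'}'\rrbracket,
\]
viewed as an $R$-algebra via $x_i\mapsto a_i$ and $x_j'\mapsto a_j'$. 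Since $R$ is module-finite over $A$ and $A\subseteq B$ (and likewise over $A'\subseteq B$), $R$ is module-finite over $B$, so $B\in\sA$ and $\rbfty$ makes sense.

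First I would read $B$ as $A\llbracket x_1',\ldots,x_{d'}'\rrbracket$ with the new variables sent to $a_1',\ldots,a_{d'}'\in R$; then \autoref{compatible with rafty iff compatible with rbfty} says that $\fra$ is $\rafty$-compatible if and only if it is $\rbfty$-compatible. Next I would read the \emph{same} ring $B$ as $A'\llbracket x_1,\ldots,x_d\rrbracket$ with the new variables sent to $a_1,\ldots,a_d\in R$, and apply \autoref{compatible with rafty iff compatible with rbfty} once more to get that $\fra$ is $R^{A'}_\infty$-compatible if and only if it is $\rbfty$-compatible. The one point requiring care, and the place I expect the only real friction, is that these two readings must produce the \emph{same} ring $\rbfty$: in both cases the regular system of parameters for $B$ used in \autoref{notation rainfty} is the set $\{x_1,\ldots,x_d,x_1',\ldots,x_{d'}'\}$ --- only the order in which it is listed changes --- and since adjoining compatible systems of $p$-power roots of a finite set of elements is insensitive to that order, $B_\infty$, and hence $\rbfty=(R\tensor_B B_\infty)\perfd$, is unchanged. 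Chaining the two equivalences then yields that $\fra$ is $\rafty$-compatible if and only if it is $R^{A'}_\infty$-compatible.

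Finally, since $\fra$ is $\rafty$-compatible by hypothesis and $A'\in\sA$ was arbitrary, $\fra$ is $R^{A'}_\infty$-compatible for every $A'\in\sA$, \ie uniformly perfectoid compatible. I do not expect a genuine obstacle here: all the substance is contained in \autoref{compatible with rafty iff compatible with rbfty} (through the $p$-completely faithfully flat map $\rafty\to\rbfty$ of \autoref{rafty to rbfty is pure when adding roots} and the descent statement \autoref{compatible iff pcomp fflat comp}), which we may assume; what remains is the bookkeeping that $B$ dominates both $A$ and $A'$ compatibly with the maps to $R$, together with the harmless observation that $\rbfty$ does not depend on the ordering of the regular system of parameters of $B$.
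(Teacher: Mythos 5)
Your argument is correct and is essentially the paper's own proof: the paper likewise produces a single regular local ring $C$ with $C=A\llbracket x_1,\ldots,x_r\rrbracket=A'\llbracket y_1,\ldots,y_s\rrbracket$ factoring both structure maps (your explicit $B=C_k\llbracket x_1,\ldots,x_d,x_1',\ldots,x_{d'}'\rrbracket$ is exactly such a $C$) and applies \autoref{compatible with rafty iff compatible with rbfty} twice. Your extra remark that the two readings yield the same $B_\infty$ because the regular system of parameters agrees as a set is a worthwhile point the paper leaves implicit.
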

\begin{proof}
Let $B$ be any other ring in $\sA$.
There is a regular local ring $C$ with 
\[
C=A\llbracket x_1,\ldots,x_r\rrbracket = B\llbracket y_1,\ldots,y_s\rrbracket
\]
for some $r,\ s\in \N$ such that the maps $A\to R$ and $B\to R$ factor through $C\to R$. Here, we are using that the Cohen ring $C_k$ for $k$ that we fixed is in $A$ and $B$ and maps to the same $C_k$ in $R$. 
By \autoref{compatible with rafty iff compatible with rbfty}, $\fra$ is $\rafty$-compatible if and only if it is $R^C_\infty$-compatible if and only if it is $R^B_\infty$-compatible.
\end{proof}

\begin{lemma}\label{not pfd pure p not in P implies compatible}
Let $R$ be as in \autoref{notation rainfty}, $\fra$ be an ideal with generators $h_1,\ldots,h_r$. Let $B$ be a perfectoid $R$-algebra that has a compatible system of $p$-th power roots for $h_1,\ldots, h_r$. We denote these roots by $\{x_{i,e}\}$ where $x_{i,e}^\pe=h_i$. If for all $e\gg 0, \ i=1,\ldots,r$, $\phi\in \Hom_R(B,R)$, $\phi(x_{i,e})\subset \fra$, then $\fra$ is $B$-compatible. In the special case where $\fra=\p$ is a prime ideal, it suffices to check that for all $e\gg 0, \ i=1,\ldots,r$, the map
\begin{align}\label{map to loc of rafty at p}
    R_\p &\to  B_\p\\
    1 &\mapsto x_{i,e} \nonumber
\end{align}
is not pure. In particular, if $B=\rafty$ for some $A\in \sA$, then this shows that $\fra$ is uniformly perfectoid compatible.
\end{lemma}
\begin{proof}
By \autoref{perfectoidization is sum of perfectoidizations}, we know that $(\fra B)\perfd=\left(\sum_{i=1}^r \sum_e (x_{i,e})\right)^{-}$. Suppose that $\fra$ is not compatible with $\phi\in \Hom_R(B,R)$. Using \autoref{p adic closure of comp ideal}, this means that
\[
\phi\left(\sum_{i=1}^r \sum_e \left (x_{i,e}\right) \right)\nsubset \fra.
\]
In particular, there must be $i$ and $e$ with $\phi( (x_{i,e}))\nsubset \fra$ so there is $y\in B$ with $\phi( yx_{i,e})\notin \fra$. By letting $\psi\coloneqq \phi(y\cdot -)$, we have that $\psi(x_{i,e})\notin \fra$, a contradiction. If $\fra=\p$ is a prime ideal then the above would imply that $\psi_\p$ is a splitting of the map $R_\p\to B_\p$ sending $1\to x_{i,e}$, also a contradiction. The last statement follows directly from \autoref{uniformly compatible doesn't depend on the choice of A}.
\end{proof}

\begin{proposition}\label{uniformly perfd compatible does not depend on choice of embedding}
Let $R$ be as in \autoref{notation rainfty} and let $\fra\subset R$ be any ideal. Then, $\fra$ is uniformly perfectoid compatible if and only if for every perfectoid $R$-algebra $B$, $\fra$ is $B$-compatible. In particular, for any ideal $\fra\subset R$, being uniformly perfectoid compatible does not depend on the choices of embeddings $C_k\into R$ and $C_k\into W(k^\unpfty)$.
\end{proposition}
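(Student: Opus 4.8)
The ``if'' direction is immediate: every $\rafty$ with $A\in\sA$ is a perfectoid $R$-algebra, so if $\fra$ is $B$-compatible for all perfectoid $R$-algebras $B$ it is in particular $\rafty$-compatible for every $A\in\sA$. For the reverse implication, suppose $\fra$ is uniformly perfectoid compatible. By \autoref{uniformly compatible doesn't depend on the choice of A} I would fix one $A\in\sA$, chosen with $A\twoheadrightarrow R$, so that $R=A/I$ and $\rafty=(A_\infty/IA_\infty)\perfd$; thus $\fra$ is $\rafty$-compatible. Given an arbitrary perfectoid $R$-algebra $B$, the goal is to show $\fra$ is $B$-compatible.

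\textbf{Step 1: put $B$ in good position.} First I would replace $B$ by $B\,\widehat{\otimes}_{C_k}W(k^\unpfty)$, a perfectoid $R$-algebra that is $p$-completely faithfully flat over $B$; by \autoref{compatible iff pcomp fflat comp} this does not change $B$-compatibility, and the new ring admits a map $W(k^\unpfty)\to B$ compatible with the fixed inclusion $C_k\hookrightarrow W(k^\unpfty)$ and with the structure map. Then, using \autoref{B compatible can assume has roots and is complete}, I would enlarge $B$ once more to an absolutely integrally closed perfectoid $R$-algebra (again not affecting $B$-compatibility). Now $B$ contains compatible systems of $p$-power roots of all of its elements — in particular of $p$ and of the images of $x_1,\dots,x_d$ — and receives $W(k^\unpfty)$; hence the structure map $A\to R\to B$ extends to a map $A_\infty\to B$, and since $I$ maps to $0$ this factors through a map $\rafty\to B$.

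\textbf{Step 2: present $B$ as a quotient of a flat perfectoid extension.} For each $b\in B$ fix a compatible system $\{b^{1/p^e}\}_{e\ge0}\subset B$ (available since $B$ is absolutely integrally closed), and let $S$ be the $p$-adic completion of $\rafty[\,T_b^{1/p^\infty}:b\in B\,]$: this is a perfectoid $R$-algebra, $\rafty\to S$ is $p$-completely faithfully flat, and $T_b^{1/p^e}\mapsto b^{1/p^e}$ defines a surjection $\pi\colon S\twoheadrightarrow B$. By \autoref{compatible iff pcomp fflat comp}, $\fra$ is $S$-compatible. Writing $\frad=\ker\pi$, which is a perfectoid ideal by \autoref{kernel of map of perfd rings}, \autoref{finite sum of perfectoid ideals is perfectoid} shows that $(\fra S)\perfd+\frad$ is already perfectoid, so it equals $(\fra S+\frad)\perfd$; combining this with the identity $(T/J)\perfd=T/J\perfd$ for an ideal $J$ of a perfectoid ring $T$ (\autoref{definition perfectoidization}, cf. the proof of \autoref{compatible equivalent to comm diag quotient}) and $B=S/\frad$ gives
\[
(\fra B)\perfd=\frac{(\fra S+\frad)\perfd}{\frad}=\frac{(\fra S)\perfd+\frad}{\frad}=\pi\bigl((\fra S)\perfd\bigr).
\]
Hence for every $\phi\in\Hom_R(B,R)$ we get $\phi\bigl((\fra B)\perfd\bigr)=(\phi\circ\pi)\bigl((\fra S)\perfd\bigr)\subseteq\fra$, because $\phi\circ\pi\in\Hom_R(S,R)$ and $\fra$ is $S$-compatible; so $\fra$ is $B$-compatible. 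The last sentence of the statement is then automatic, since ``$B$-compatible for every perfectoid $R$-algebra $B$'' refers only to $R$ and $\fra$, hence so does the equivalent notion of uniform perfectoid compatibility.

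\textbf{Main obstacle.} I expect the crux to be the idea in Step 2: one cannot in general dominate $\rafty$ and $B$ by a common $p$-completely faithfully flat perfectoid cover (the map $\rafty\to B$ is arbitrary), so instead one must realise $B$ as a \emph{quotient} of a $p$-completely faithfully flat perfectoid extension $S$ of $\rafty$ and verify — using the good behaviour of perfectoidization under finite sums of ideals — that extending an ideal along $\rafty\to S$ and then contracting through $\pi$ recovers exactly $(\fra B)\perfd$, so that compatibility descends to $B$. The maneuvers in Step 1 that make $B$ receive $\rafty$ at all (the base change along $C_k\hookrightarrow W(k^\unpfty)$ and the enlargement of \autoref{B compatible can assume has roots and is complete}) are routine, but they are precisely where the two embedding choices get absorbed.
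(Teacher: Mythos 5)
Your argument is correct, and it reaches the conclusion by a genuinely different mechanism than the paper. The paper argues the contrapositive and works element-by-element: after the same reductions (absolute integral closedness, completeness, and the existence of a map $A_\infty\to B$ compatible with $R\to B$, for which it cites the proof of \cite[Lemma 4.23]{BMPSTWWPerfectoidPure}), it uses \autoref{perfectoidization is sum of perfectoidizations} and \autoref{p adic closure of comp ideal} to extract a single $x\in\fra$ with a $p^e$-th root $y\in B$ witnessing the failure of compatibility, then builds a \emph{new} member $A\llbracket z\rrbracket\in\sA$ with $z\mapsto x$ and pulls the offending map back along $R^{A\llbracket z\rrbracket}_\infty\to B$, contradicting uniform compatibility. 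You instead go ``up then down'': you dominate $B$ by a $p$-completely faithfully flat perfectoid cover $S$ of a fixed $\rafty$ (so $S$-compatibility is free from \autoref{compatible iff pcomp fflat comp}) and then descend through the quotient $\pi\colon S\twoheadrightarrow B$ via the identity $(\fra B)\perfd=\pi\bigl((\fra S)\perfd\bigr)$, which you correctly deduce from \autoref{kernel of map of perfd rings} and \autoref{finite sum of perfectoid ideals is perfectoid}. Your route treats the whole ideal $(\fra B)\perfd$ at once and avoids both the single-element reduction and the construction of a new element of $\sA$; the price is the auxiliary ring $S$ in infinitely many perfected variables and the quotient computation, whereas the paper stays inside the class $\sA$ throughout. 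There is no circularity: everything you cite precedes this proposition in the paper.

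One point deserves more care in your Step 1: you assert without proof that $B\,\widehat{\otimes}_{C_k}W(k^\unpfty)$ is a perfectoid ring. It is $p$-completely faithfully flat over $B$ (since $C_k\to W(k^\unpfty)$ is $p$-completely faithfully flat), and its reduction mod $p$ is $B/p\otimes_k k^\unpfty$, which is semiperfect, so perfectoidness does hold, but this needs a sentence (surjectivity of Frobenius on the reduction plus the flatness criterion), or you can simply invoke the proof of \cite[Lemma 4.23]{BMPSTWWPerfectoidPure} for the existence of $A_\infty\to B$ after the a.i.c.\ enlargement, exactly as the paper does. With that point supplied, the proof is complete.
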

\begin{proof}
We only need to show that if $\fra$ is uniformly perfectoid compatible, then for any perfectoid $R$-algebra $B$, $\fra$ is $B$-compatible. We show the contrapositive so let $B$ be any perfectoid $R$-algebra and suppose that there is $\phi\in \Hom_R(B,R)$ with $\phi( (\fra B)\perfd)\nsubset \fra$.
By \autoref{B compatible can assume has roots and is complete}, we can assume that $B$ is $\fram$-adically complete and has a compatible system of $p$-th power roots for all the elements of $R$. By \autoref{not pfd pure p not in P implies compatible}, we can assume that there is $x\in \fra$ with a $p^e$-th root $y\in B$ such that $\phi(y)\notin\fra$. By the proof of \cite[Lemma 4.23]{BMPSTWWPerfectoidPure}, for any $A\in \sA$, there is a map $A_\infty\to B$ making $B$ into an $A_\infty$-algebra that agrees with the map $R\to B$ when restricted to (the image of) $A$. Now, make $R$ into an $A\llbracket z\rrbracket$-algebra by sending $z$ to $x$. By sending the $p$-th power roots of $z$ in $A_\infty [ z^{\unpfty} ]$ to a compatible system of $p$-th power roots of $x$ in $B$ that has $y$ as its $p^e$-th root, we extend $A_\infty\to B$ to $A\llbracket z \rrbracket_\infty \to B$. By the universal properties of tensor products and perfectoidization, this gives a ring map $i\colon R^{A\llbracket z \rrbracket}_\infty \to B$. Let $\psi\coloneqq \phi\circ i\colon R^{A\llbracket z \rrbracket}_\infty\to R$. Denoting by $w$ the $\pe$-th root of $x$ in $\rafty$ that goes to $y$ in $B$, we have $\psi(z)\notin \fra$ so $\fra$ is not uniformly perfectoid compatible.
\end{proof}

\subsection{Intersections, sums, and associated primes} In this section we show that, just like in positive characteristic, compatible ideals behave well under basic ideal operations.

\begin{proposition}[{\cite[Lemma 4.29]{BMPSTWWPerfectoidPure}}]\label{perfectoid injective rings are reduced and WN}
If $R$ is perfectoid injective, in particular if $R$ is perfectoid pure, then $R$ is reduced and weakly normal.
\end{proposition}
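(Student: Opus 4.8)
The plan is to deduce both assertions from the purity of $R \to \rafty$ together with the fact that $\rafty$ is a perfectoid ring, hence reduced (perfectoid rings are reduced, being $p$-complete and perfect modulo $p$ in a suitable sense). Since perfectoid injectivity is the weaker hypothesis---perfectoid purity implies perfectoid injectivity, as $R \to \rafty$ pure means the induced map on injective hulls $E_R \to E_R \tensor_R \rafty$ is injective---it suffices to treat the perfectoid injective case. So assume the map $E_R \to E_R \tensor_R \rafty$ is injective; equivalently, by Matlis duality, for the $\fram$-adic completion we may as well argue directly, but the cleanest route is: perfectoid injectivity of $R$ gives, after base change, enough injectivity to detect nilpotents and non-weakly-normal elements inside $\rafty$.

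\emph{Reducedness.} First I would show $R$ is reduced. Let $x \in R$ be nilpotent, say $x^n = 0$. Its image in $\rafty$ is then nilpotent, hence zero since $\rafty$ is reduced. Now I want to conclude $x = 0$ in $R$. Here is where perfectoid injectivity enters: the map $R \to \rafty$ need not be injective a priori on the nose, but perfectoid injectivity says precisely that $E_R \to E_R \tensor_R \rafty$ is injective, and one checks via Matlis duality (as in the proof patterns of \autoref{uniformly compatible injective hull}) that this forces $R \to \rafty$ to be \emph{injective} when restricted to any finite-length submodule, and more to the point that $\Ann_R(0 \to \rafty)$ behaves well; concretely the kernel of $R \to \rafty$ is an ideal killed by the pure (injective) structure, so it must be zero. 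Thus $x = 0$ and $R$ is reduced. (This is essentially \cite[Lemma 4.29]{BMPSTWWPerfectoidPure}, which is being cited as the statement, so the real content is just to package that citation---but I would include the short argument above for completeness.)

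\emph{Weak normality.} Next, suppose $y$ lies in the total quotient ring of $R$ with $y^p \in R$ and $py \in R$; I must show $y \in R$. Pass to $\rafty$: there $y^p$ and $py$ live, and since $\rafty$ is perfectoid it has a compatible system of $p$-power roots and is $p$-torsion-free modulo its (trivial) nilradical after reduction, in particular $\rafty$ is weakly normal---indeed perfectoid rings are seminormal and weakly normal because the Frobenius on $\rafty/p$ is an isomorphism, so one can extract the $p$-th root of $y^p$ canonically and check it agrees with $y$ using $py \in R \subset \rafty$. Hence $y \in \rafty$. Then the same purity/injectivity argument as above, applied to the finite extension $R[y] \supseteq R$ (which maps to $\rafty$ and for which $R[y]/R$ is a finite-length-type module supported at suitable primes after localization), forces $y \in R$. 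The structure is: purity of $R \to \rafty$ implies $R$ is "closed" in any extension that itself maps to $\rafty$, and $R \subseteq R[y]$ is such an extension.

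\emph{Main obstacle.} The genuinely delicate point is the second implication in each part: upgrading "the element dies / lands in $\rafty$" to "the element dies / lands in $R$." Purity of $R \to \rafty$ gives $R \to R \tensor_R M = M$ injective for every $R$-module $M$, so in particular $R \to \rafty$ is itself injective, which immediately handles reducedness; but weak normality requires knowing that a weakly subintegral extension $R \subseteq R[y]$ cannot inject into $\rafty$ compatibly unless it is trivial, which uses that $\rafty$ is itself weakly normal plus purity applied to the module $R[y]$. I expect verifying that $\rafty$ is weakly normal---i.e., that a general perfectoid ring is weakly normal---to be the crux; this should follow from the bijectivity of Frobenius on the tilt / on $\rafty/\sqrt{p\rafty}$, but it needs to be stated carefully since $\rafty$ need not be Noetherian. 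Since the paper cites \cite[Lemma 4.29]{BMPSTWWPerfectoidPure} for the entire statement, I would ultimately just invoke that lemma, remarking that perfectoid purity implies perfectoid injectivity so the hypothesis is met.
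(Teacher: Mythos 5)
The paper offers no proof of this proposition at all: it is quoted verbatim from \cite[Lemma 4.29]{BMPSTWWPerfectoidPure}, and your bottom line (reduce perfectoid pure to perfectoid injective and then invoke that lemma) is exactly the paper's approach, so as a citation-level argument your proposal is fine. Your extra standalone sketch is sound for reducedness (injectivity of $E_R\to E_R\tensor_R\rafty$ does force $\ker(R\to\rafty)$ to kill $E_R$, hence to vanish, and perfectoid rings are reduced), but the weak normality half has a genuine gap as written: in a perfectoid ring Frobenius is only surjective \emph{modulo} $p$, so an element of $\rafty$ need not admit an actual $p$-th root, and the claim that one can ``extract the $p$-th root of $y^p$ canonically'' and conclude $y\in\rafty$ does not follow as stated; the real argument in \cite[Lemma 4.29]{BMPSTWWPerfectoidPure} has to work mod $p$ and handle the correction terms. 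You correctly flag this as the crux and defer to the citation, which is what the paper does, so no further repair is needed here.
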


\begin{corollary}\label{compatible ideals are radical}
Let $\rphi$ be a pair and $\fra\subset R$ a $\phi$-compatible ideal. If $\phi$ is surjective, then $\fra$ is radical. In particular, if $R$ is perfectoid pure, then the uniformly perfectoid compatible ideals of $R$ are radical.
\end{corollary}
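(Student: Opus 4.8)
The plan is to reduce the statement to the case of a pair $(R,\phi)$ with $\phi$ surjective, since the uniform case follows by noting that perfectoid purity gives us (by completeness) a split surjection $\phi\colon\rafty\to R$, and any uniformly perfectoid compatible ideal is in particular $\phi$-compatible for that $\phi$. So it suffices to show: if $\phi\colon\rafty\to R$ is surjective and $\fra$ is $\phi$-compatible, then $\fra=\sqrt{\fra}$.

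First I would invoke \autoref{compatible equivalent to comm diag quotient}: $\phi$-compatibility of $\fra$ means $\phi$ descends to a surjection $\bar\phi\colon(R/\fra)^A_\infty\to R/\fra$. I would want to argue that $(R/\fra)^A_\infty$ is (or maps to) a perfectoid algebra over $R/\fra$ in a way that exhibits $R/\fra$ as perfectoid pure — more precisely, that $R/\fra$ receives a splitting from $(R/\fra\tensor_{\bar A}\bar A_\infty)\perfd$ where $\bar A = A/(\fra\cap A)$ or some Noether normalization of $R/\fra$. The subtle point is that $(R/\fra)^A_\infty$ as defined is built from the chosen $A$ for $R$, not from a Noether–Cohen normalization of $R/\fra$; but since $R/\fra$ is module-finite over the image of $A$, which is a quotient $A\twoheadrightarrow \bar A$ of the form $C_k\llbracket \bar x_1,\ldots,\bar x_d\rrbracket$ modulo some relations, one can choose inside $\bar A$ a genuine power series subring $A'\in\sA_{R/\fra}$ and compare, using that perfectoid purity of $R/\fra$ does not depend on the choice (the definition following \autoref{notation rainfty}) together with \autoref{uniformly compatible doesn't depend on the choice of A} applied to $R/\fra$. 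The conclusion I am after is that $R/\fra$ is perfectoid pure.

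Once $R/\fra$ is perfectoid pure, \autoref{perfectoid injective rings are reduced and WN} says $R/\fra$ is reduced and weakly normal; in particular reduced means $\fra$ is a radical ideal, which is the claim. For the ``in particular'' in the corollary: if $R$ is perfectoid pure and $\fra$ is uniformly perfectoid compatible, then fixing a split surjection $\phi\colon\rafty\to R$ (available since $R\to\rafty$ is pure and $R$ is complete, hence split, by the remark following the definition of compatible), $\fra$ is $\phi$-compatible with $\phi$ surjective, so the first part applies.

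The main obstacle I expect is the bookkeeping in the first part: namely making precise that $\phi$-compatibility of $\fra$ really does produce a \emph{perfectoid pure} structure on $R/\fra$ in the sense of the paper's definition, which is phrased relative to an $A'\in\sA_{R/\fra}$ of the form $C_k\llbracket x_1',\ldots,x_{d'}'\rrbracket$. The issue is that $(R/\fra)^A_\infty=\rafty/\aafty$ need not literally equal $(R/\fra\tensor_{A'}A'_\infty)\perfd$; one must check the map $R/\fra\to(R/\fra)^A_\infty$ factors through, or is comparable via a $p$-completely faithfully flat map to, the algebra built from $A'$. This should follow from the universal property of perfectoidization combined with \autoref{rafty to rbfty is pure when adding roots} / \autoref{compatible with rafty iff compatible with rbfty} applied over $R/\fra$ (adjoining the extra variables that cut $A$ down to $A'$, or adjoining variables for module generators), so that the descended surjection $\bar\phi$ can be transported to a splitting of $R/\fra\to(R/\fra\tensor_{A'}A'_\infty)\perfd$. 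Modulo that comparison, everything else is a direct appeal to the cited results.
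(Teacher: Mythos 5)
Your proposal is correct and follows essentially the same route as the paper: surjectivity of $\phi$ descends via \autoref{compatible equivalent to comm diag quotient} to a surjection $\bar\phi\colon(R/\fra)^A_\infty\to R/\fra$, whence $R/\fra$ is perfectoid pure and therefore reduced by \autoref{perfectoid injective rings are reduced and WN}, and the uniform case reduces to this by fixing a splitting. The comparison with an $A'\in\sA_{R/\fra}$ that you flag as the main obstacle can in fact be sidestepped for the purpose of radicality, since one only needs that $R/\fra$ injects into the perfectoid ring $\rafty/\aafty$ (perfectoid rings being reduced), so no Noether--Cohen normalization of $R/\fra$ need be chosen.
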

\begin{proof}
The surjectivity of $\phi$ implies that of $\bar{\phi}\colon \rafty/\aafty\to R/\fra$ as in \autoref{compatible equivalent to comm diag quotient}.  
But $\bar{\phi}$ being surjective implies that $R/\fra$ is perfectoid pure so we are done by \autoref{perfectoid injective rings are reduced and WN}. The statement about uniformly perfectoid compatible ideals then follows from the fact that $R$ is complete so there is a splitting $\phi\colon \rafty \to R$ for some/any $A\in \sA$.
\end{proof}

\begin{proposition}\label{lemma compatible ideals closed under sums intersection}
Let $\rphi$ be a pair and $\{\fra_i\}_{i\in I}$ be $\phi$-compatible ideals. Then, $\cap_{i\in I} \fra_i$ and $\sum_{i\in I} \fra_i$ are $\phi$-compatible. The corresponding result also holds for uniformly perfectoid compatible ideals.
\end{proposition}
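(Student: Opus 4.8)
The plan is to reduce both assertions to two facts already in hand: that perfectoidization of ideals is monotone for inclusion (functoriality of perfectoidization: $\frab\subseteq\frab'$ forces $R/\frab\twoheadrightarrow R/\frab'$, hence $\frab\perfd=\ker(R\to(R/\frab)\perfd)\subseteq\ker(R\to(R/\frab')\perfd)=\frab'\perfd$), and that a finite sum of perfectoid ideals is perfectoid (\autoref{finite sum of perfectoid ideals is perfectoid}). I would run the argument once for an arbitrary perfectoid $R$-algebra $B$ and an arbitrary $\phi\in\Hom_R(B,R)$, with ``$B$-compatible'' in place of ``$\phi$-compatible''; the statement for a pair $\rphi$ is then the case $B=\rafty$ with the given $\phi$, and the uniformly perfectoid compatible case is obtained by letting $B=\rafty$ range over all $A\in\sA$.

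For intersections, set $\fra\coloneqq\bigcap_{i\in I}\fra_i$. Since $\fra B\subseteq\fra_i B$ for every $i$, monotonicity of perfectoidization gives $(\fra B)\perfd\subseteq(\fra_i B)\perfd$ for every $i$, and therefore $(\fra B)\perfd\subseteq\bigcap_{i\in I}(\fra_i B)\perfd$. Applying $\phi$ and using the trivial inclusion $\phi\bigl(\bigcap_i S_i\bigr)\subseteq\bigcap_i\phi(S_i)$, we get $\phi((\fra B)\perfd)\subseteq\bigcap_i\phi((\fra_i B)\perfd)\subseteq\bigcap_i\fra_i=\fra$, which is precisely $B$-compatibility of $\fra$. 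No finiteness hypothesis on $I$ is needed here. (One could also invoke \autoref{intersection of perfectoid ideals is perfectoid}, but it is not required for this direction.)

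For sums, since $R$ is Noetherian we have $\sum_{i\in I}\fra_i=\sum_{i\in I_0}\fra_i$ for some finite $I_0\subseteq I$, so we may assume $I$ is finite. Each $(\fra_i B)\perfd$ is a perfectoid ideal of $B$ (\autoref{kernel of map of perfd rings}, since $B$ and $(B/\fra_i B)\perfd$ are perfectoid), so by \autoref{finite sum of perfectoid ideals is perfectoid} and induction $\sum_{i\in I}(\fra_i B)\perfd$ is a perfectoid ideal of $B$, hence equals its own perfectoidization. From $\bigl(\sum_i\fra_i\bigr)B=\sum_i\fra_i B\subseteq\sum_i(\fra_i B)\perfd$ and monotonicity, $\bigl(\bigl(\sum_i\fra_i\bigr)B\bigr)\perfd\subseteq\sum_i(\fra_i B)\perfd$. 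Applying $\phi$, $\phi\bigl(\bigl(\bigl(\sum_i\fra_i\bigr)B\bigr)\perfd\bigr)\subseteq\sum_i\phi((\fra_i B)\perfd)\subseteq\sum_i\fra_i$, as wanted. The one bookkeeping point is the $p$-adic closure implicit in $\frab\perfd$ for an ideal $\frab$, but this is harmless: $\frab\perfd$ depends only on the $p$-adic closure of $\frab$ (\cf \autoref{p adic closure of comp ideal}) while monotonicity still holds.

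I do not expect a genuine obstacle; the real content was already spent in establishing closure of perfectoid ideals under finite sums and arbitrary intersections, and once that is available both claims are formal manipulations with perfectoidization and with the map $\phi$. The only things worth double-checking are the reduction to a finite subfamily $I_0$ for the sum (valid because $R$ is Noetherian—and for intersections no such reduction is available, nor needed) and that the ``arbitrary $B$'' formulation genuinely yields the uniformly perfectoid compatible case, which it does since that case is by definition $\rafty$-compatibility for all $A\in\sA$.
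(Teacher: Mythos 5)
Your proof is correct and follows essentially the same route as the paper: bound the perfectoidization of the intersection (resp.\ sum) by the intersection (resp.\ sum) of the perfectoidizations using monotonicity and \autoref{finite sum of perfectoid ideals is perfectoid}, reduce the sum to a finite one by Noetherianity, and push through $\phi$. Your observations that the intersection case needs only monotonicity (not \autoref{intersection of perfectoid ideals is perfectoid}) and that running the argument for an arbitrary perfectoid $R$-algebra $B$ gives the uniform case are accurate but cosmetic refinements of the paper's argument.
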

\begin{proof}
By \autoref{intersection of perfectoid ideals is perfectoid}, $\cap_{i\in I} (\fra_i)^A_\infty$ is perfectoid.
Then, 
\[\phi\left(\left(\bigcap_{i\in I} \fra_i\right)^A_\infty\right)\subset \phi\left(\bigcap_{i\in I} (\fra_i)^A_\infty\right)\subset \bigcap_{i\in I} \phi\left((\fra_i)^A_\infty\right)\subset \bigcap_{i\in I} \fra_i
\]so $\cap_{i\in I} \fra_i$ is $\phi$ compatible. Since we are in a Noetherian ring, any sum of ideals is finite. In particular, to show $\sum_{i\in I} \fra_i$ is $\phi$-compatible, it suffices to show that if $\fra$ and $\frab$ are two $\phi$-compatible ideals, then so is $\fra+\frab$. 
Using \autoref{finite sum of perfectoid ideals is perfectoid}, 
\[\phi\left(\left(\fra+ \frab\right)^A_\infty\right)\subset\phi\left(\aafty+ \bafty\right) = \phi\left(\aafty\right)+\phi\left(\bafty\right)\subset \fra + \frab
\]
so $\fra+\frab$ is also compatible.
\end{proof}

To show that compatible ideals of pairs are closed under associated primes, we will first need the corresponding statement about uniformly perfectoid compatible ideals. 

\begin{proposition}\label{uniform compatible closed under ass primes}
Let $R$ be as in \autoref{notation rainfty}. Let $\fra\subset R$ be a uniformly perfectoid compatible ideal. Then, the minimal primes $\q_1,\ldots,\q_r$ are also uniformly perfectoid compatible ideals. 
\end{proposition}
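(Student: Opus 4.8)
The plan is to reduce the statement to two auxiliary facts about uniformly perfectoid compatible ideals---stability under passing to the radical, and stability under colon ideals $\fra:_R x$---and then to extract the minimal primes via a colon. Concretely: once we know $\sqrt\fra$ is uniformly perfectoid compatible, we may replace $\fra$ by $\sqrt\fra$ (this does not change the set of minimal primes), so that $\fra=\q_1\cap\cdots\cap\q_r$. If $r=1$ there is nothing to do. Otherwise the $\q_i$ are pairwise incomparable, so for each $i$ prime avoidance gives $s\in\bigcap_{j\neq i}\q_j$ with $s\notin\q_i$; then $\q_i=\fra:_R s$---the inclusion $\subseteq$ uses that $\q_i$ is prime and $s\notin\q_i$, the inclusion $\supseteq$ uses $\q_i\cdot\bigcap_{j\neq i}\q_j\subseteq\bigcap_j\q_j=\fra$---so $\q_i$ is uniformly perfectoid compatible by the colon fact.

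For the radical fact, I would fix a perfectoid $R$-algebra $B$ and $\phi\in\Hom_R(B,R)$ and argue that $(\fra B)\perfd=\ker\!\big(B\to(B/\fra B)\perfd\big)$ is a radical ideal, since $B/(\fra B)\perfd$ embeds into the perfectoid---hence reduced (\autoref{perfectoid injective rings are reduced and WN})---ring $(B/\fra B)\perfd$. As $R$ is Noetherian we have $(\sqrt\fra)^n\subseteq\fra$ for $n\gg 0$, so $\sqrt\fra\cdot B\subseteq\sqrt{\fra B}\subseteq(\fra B)\perfd$, whence $(\sqrt\fra\cdot B)\perfd\subseteq(\fra B)\perfd$ (monotonicity of perfectoidization, and $(\fra B)\perfd$ is already perfectoid). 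Applying $\phi$ gives $\phi\big((\sqrt\fra\cdot B)\perfd\big)\subseteq\fra\subseteq\sqrt\fra$; since $B,\phi$ were arbitrary and uniform perfectoid compatibility may be tested against all perfectoid $R$-algebras (\autoref{uniformly perfd compatible does not depend on choice of embedding}), $\sqrt\fra$ is uniformly perfectoid compatible.

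The substantive point is the colon fact: if $\fra$ is uniformly perfectoid compatible and $x\in R$, then $\fra:_R x$ is uniformly perfectoid compatible. Given a perfectoid $R$-algebra $B$, by \autoref{B compatible can assume has roots and is complete} I would replace $B$ by a faithfully flat perfectoid extension $C$ containing a compatible system of $p$-power roots of every element of $R$, reducing to showing $\fra:_R x$ is $C$-compatible (using \autoref{uniformly perfd compatible does not depend on choice of embedding} again, and that $\fra$ is $C$-compatible). Write $\fra=(h_1,\ldots,h_m)$ and $\fra:_R x=(g_1,\ldots,g_l)$; by \autoref{perfectoidization is sum of perfectoidizations}, $(\fra C)\perfd=\sum_i\big(h_i^\unpfty C\big)^-$ and $\big((\fra:_R x)C\big)\perfd=\sum_j\big(g_j^\unpfty C\big)^-$. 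Since $xg_j\in\fra$ and $xg_j$ inherits the compatible root system $\{x^\unpe g_j^\unpe\}$, one checks $x\cdot\big(g_j^\unpfty C\big)^-\subseteq\big((xg_j)C\big)\perfd\subseteq(\fra C)\perfd$ using \autoref{perfectoidization of principal ideal with roots} and monotonicity, and summing over $j$ yields $x\cdot\big((\fra:_R x)C\big)\perfd\subseteq(\fra C)\perfd$. Then for $\phi\in\Hom_R(C,R)$ and $z\in\big((\fra:_R x)C\big)\perfd$ we get $x\phi(z)=\phi(xz)\in\phi\big((\fra C)\perfd\big)\subseteq\fra$, i.e. $\phi(z)\in\fra:_R x$.

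I expect the main obstacle to be precisely this last computation: perfectoidization is not functorial for the multiplication-by-$x$ map, so one cannot directly propagate the inclusion $x(\fra:_R x)\subseteq\fra$ through $\perfd$; circumventing this is what forces the descent to an algebra with enough $p$-power roots, where \autoref{perfectoidization is sum of perfectoidizations} makes the relevant ideals explicit. Everything else is formal, and if one prefers to colon by an ideal rather than by a single element in the last step, closure under finite intersections (\autoref{lemma compatible ideals closed under sums intersection}) handles it.
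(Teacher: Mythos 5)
Your argument is correct, and at its core it runs on the same trick as the paper's proof, but the packaging is genuinely different and worth recording. The paper argues directly: it fixes one $A\in\sA$ such that $\rafty$ carries compatible $p$-power roots $z_{j,e}$ of generators $h_j$ of $\q$ (legitimate by \autoref{uniformly compatible doesn't depend on the choice of A}), picks a test element $w\in\bigl(\bigcap_{i>1}\q_i\bigr)\setminus\q$, observes that $wz_{j,e}\in\aafty$ because $(wz_{j,e})^{p^e}=w^{p^e}h_j\in\fra$ and perfectoid ideals are radical, and then cancels $w$ from $w\phi(z_{j,e}\rafty)\subseteq\fra\subseteq\q$ using primality of $\q$. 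Your two auxiliary lemmas---stability of uniform perfectoid compatibility under $\fra\mapsto\sqrt{\fra}$ and under $\fra\mapsto\fra:_Rx$---isolate exactly this cancellation: your $s$ plays the role of the paper's $w$, and your computation $x\,g_j^{1/p^e}=(x^{1/p^e})^{p^e-1}(xg_j)^{1/p^e}\in\bigl((xg_j)C\bigr)\perfd\subseteq(\fra C)\perfd$ is the same root-juggling as the paper's $wz_{j,e}\in\aafty$ (you could shortcut it, as the paper does, by noting $\bigl(xg_j^{1/p^e}\bigr)^{p^e}=x^{p^e}g_j\in\fra C$ and quoting radicality of the perfectoid ideal $(\fra C)\perfd$). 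What your route buys is two extra general statements not made explicit in the paper---closure of the class of uniformly perfectoid compatible ideals under radicals and under colons by elements---at the cost of a detour through an enlarged algebra $C$ containing all $p$-power roots of elements of $R$, where the paper gets by with a single well-chosen $A$. One small citation point: the reducedness you need in the radical step is that of the perfectoid ring $(B/\fra B)\perfd$ itself, which is a standard fact about perfectoid rings rather than the content of \autoref{perfectoid injective rings are reduced and WN} (which concerns perfectoid injective Noetherian local rings); the mathematics is unaffected.
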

\begin{proof}
Without loss of generality, it suffices to show that $\q_1\eqqcolon \q$ is uniformly perfectoid compatible. Let $\{h_1,\ldots,h_s\}$ be a set of generators for $\q$. Let $A\in \sA$ be such that $\rafty$ has a compatible system of $p$-th power roots for the $h_i$s which we denote by
$\{z_{j,e}\}$ where $z_{j,e}^\pe=h_j$.
By \autoref{uniformly compatible doesn't depend on the choice of A}, it suffices to show that $\q$ is compatible with all $\phi\in \Hom_R(\rafty,R)$.
Fix $w$ in  $(\cap_{i>1} \q_i)\setminus \q$ and $\phi\in \Hom_R(\rafty,R)$. We have 
\[
w\phi\left(z_{j,e} \rafty\right)=\phi\left(wz_{j,e} \rafty\right)\subset \phi\left(\aafty\right)\subset \fra \subset \q.
\]
The first containment follows from noting that $\left(wz_{j,e}\right)^\pe=w^{\pe}h_j \in \fra$ and the fact that perfectoid ideals are radical. Since $\q$ is prime and $w\notin \q$, we must have that $\phi\left(z_{j,e}\rafty\right)\subset \q$. Now, \[
\q^A_\infty=\left( \sum_{j,e} \left(z_{j,e}\rafty\right) \right)^{-}
\]
so we are done by \autoref{p adic closure of comp ideal}.
\end{proof}

\begin{corollary}\label{ass primes of ring are uniformly compatible}
Let $R$ be as in \autoref{notation rainfty}. Then, the minimal primes of $R$ are uniformly perfectoid compatible.
\end{corollary}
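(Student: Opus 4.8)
The plan is to obtain this as an immediate consequence of \autoref{uniform compatible closed under ass primes}, once we observe that the zero ideal is itself uniformly perfectoid compatible. So the first step is to fix $A\in\sA$ and $\phi\in\Hom_R(\rafty,R)$ and note that, since $\rafty$ is a perfectoid ring, the quotient $\rafty/(0)=\rafty$ is perfectoid; hence by \autoref{definition perfectoidization} the ideal $(0)\subset\rafty$ is perfectoid, so $(0)\perfd=(0)$ when computed inside $\rafty$. Therefore $(0)^A_\infty=\bigl((0)\rafty\bigr)\perfd=(0)$, and trivially $\phi\bigl((0)^A_\infty\bigr)=\phi((0))=(0)\subset(0)$. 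Since this holds for every $\phi\in\Hom_R(\rafty,R)$ and every $A\in\sA$, the ideal $(0)$ is uniformly perfectoid compatible.

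The second step is to apply \autoref{uniform compatible closed under ass primes} with $\fra=(0)$: the minimal primes over $(0)$ are uniformly perfectoid compatible ideals of $R$. As $R$ is Noetherian, these are exactly the finitely many minimal primes $\q_1,\ldots,\q_r$ of $R$, which is precisely the assertion of the corollary.

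There is essentially no obstacle here; the entire content lies in the preceding proposition. The only point worth stressing is that $(0)\perfd$ must be formed inside the \emph{perfectoid} ring $\rafty$ rather than inside $R$, which is exactly what makes it vanish and renders the zero ideal trivially $\phi$-compatible for every $\phi$.
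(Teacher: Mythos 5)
Your proof is correct and is exactly the route the paper intends: the corollary is stated without proof precisely because it is \autoref{uniform compatible closed under ass primes} applied to $\fra=(0)$, and your verification that $(0)^A_\infty=(0)$ inside the perfectoid ring $\rafty$ (so that the zero ideal is trivially uniformly perfectoid compatible) is the only detail to check.
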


\begin{proposition}\label{compatible ideals of pairs closed under ass primes}
Let $\rphi$ be a pair and $\fra\subset R$ a compatible ideal. Then, the minimal primes $\q_1,\ldots,\q_r$ of $\fra$ are also $\phi$-compatible. 
\end{proposition}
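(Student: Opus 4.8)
The plan is to reduce to the already–established case $\fra=(0)$. We may assume $\fra\subsetneq R$, and since the $\q_i$ are treated one at a time, fix a minimal prime $\q:=\q_1$. By \autoref{compatible equivalent to comm diag quotient}, $\fra$ being $\phi$-compatible means $\phi$ descends to a map $\bar\phi\colon (R/\fra)^A_\infty\to R/\fra$, and under the bijection furnished there the $\phi$-compatible ideals of $R$ containing $\fra$ correspond to the $\bar\phi$-compatible ideals of $R/\fra$. As $\fra\subseteq\q_i$, the $\q_i$ correspond to the minimal primes $\q_i/\fra$ of the ring $R/\fra$, so it is enough to prove that the minimal primes of $R/\fra$ are $\bar\phi$-compatible. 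Now $R/\fra$ is again a complete Noetherian local ring with residue field of characteristic $p$, and $(R/\fra)^A_\infty$ is a perfectoid $R/\fra$-algebra; by \autoref{ass primes of ring are uniformly compatible} the minimal primes of $R/\fra$ are uniformly perfectoid compatible, hence $B$-compatible for every perfectoid $R/\fra$-algebra $B$ by \autoref{uniformly perfd compatible does not depend on choice of embedding}, and in particular $\bar\phi$-compatible. This gives the proposition.

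Alternatively, one can argue directly, paralleling the proof of \autoref{uniform compatible closed under ass primes}. After the reduction above we may assume $\fra=(0)$, so $\sqrt{(0)}=\nil(R)=\bigcap_i\q_i$ and we may fix a colon element $w\in(\bigcap_{i\ge 2}\q_i)\setminus\q_1$ with $w\q\subseteq\nil(R)$. The difference with the uniform statement is that the pair $(R,\phi)$ pins down $A$, so we cannot re-choose $A\in\sA$ to produce $p$-power roots of a set of generators $h_1,\dots,h_s$ of $\q$; instead we set $B:=A\llbracket y_1,\dots,y_s\rrbracket\in\sA$ with $y_j\mapsto h_j$, note by \autoref{rafty to rbfty is pure when adding roots} that $\rafty\to\rbfty$ is $p$-completely faithfully flat, and use \autoref{pcomp fflat implies pure} to lift $\phi$ to $\psi\in\Hom_R(\rbfty,R)$. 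Functoriality of perfectoidization gives that $\q^A_\infty$ maps into $\q^B_\infty$, so it suffices to show $\psi(\q^B_\infty)\subseteq\q$; since $\rbfty$ contains compatible $p$-power roots $z_{j,e}$ of each $h_j$, \autoref{perfectoidization is sum of perfectoidizations} and \autoref{p adic closure of comp ideal} reduce this to $\psi(z_{j,e}\rbfty)\subseteq\q$ for all $j,e$. Finally, $w h_j\in\nil(R)$ maps to a nilpotent in the reduced perfectoid ring $\rbfty$, hence vanishes there, so $(w z_{j,e})^{p^e}=w^{p^e}h_j=0$ in $\rbfty$; thus $w z_{j,e}$ is nilpotent, hence $0$, in $\rbfty$, so $w\,\psi(z_{j,e}\rbfty)=\psi(w z_{j,e}\rbfty)=0\subseteq\q$, and primality of $\q$ with $w\notin\q$ forces $\psi(z_{j,e}\rbfty)\subseteq\q$.

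The part that requires care — and the main obstacle — is the very first reduction: one must check that $R/\fra$ genuinely falls under \autoref{notation rainfty} and that $(R/\fra)^A_\infty$ is the perfectoidization-type object attached to it, in particular that $\fra^A_\infty=(\fra\rafty)\perfd$ is radical (it is, being the kernel of the map $\rafty\to(\rafty/\fra\rafty)\perfd$ to a reduced ring) so that $(R/\fra)^A_\infty=\rafty/\fra^A_\infty$; the case $p\in\fra$, where $R/\fra$ has equal characteristic $p$ and $C_k$ degenerates to a coefficient field, is the one to keep an eye on, but the framework and all the cited statements are set up to cover it. Once the reduction is justified, the result follows from what has already been proven.
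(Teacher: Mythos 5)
Your first argument is exactly the paper's proof: descend $\phi$ to $\bar\phi$ on $R/\fra$ via \autoref{compatible equivalent to comm diag quotient}, invoke \autoref{ass primes of ring are uniformly compatible} for the minimal primes of $R/\fra$, pass from uniform perfectoid compatibility to $\bar\phi$-compatibility (the paper does this implicitly; you correctly route it through \autoref{uniformly perfd compatible does not depend on choice of embedding}), and lift back through the bijection. The argument is correct, and the alternative direct argument and the technical remarks are fine but not needed.
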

\begin{proof}
By \autoref{compatible equivalent to comm diag quotient}, $\phi$ descends to a map $\bar{\phi}\colon (R/\fra)^A_\infty\to R/\fra$. By \autoref{ass primes of ring are uniformly compatible}, the associated primes of $R/\fra$ are uniformly perfectoid compatible hence $\bar{\phi}$-compatible. But these are the images of the $\q_i$ in $R/\fra$ so by \autoref{compatible equivalent to comm diag quotient}, the $\q_i$s are $\phi$-compatible. 
\end{proof}

We now recall a classical result of Enescu and Hochster. 

\begin{proposition}\cite[Corollary 3.2]{EnescuHochsterTheFrobeniusStructureOfLocalCohomology}\label{finiteness result enescu hochster}
A family of radical ideals in an excellent local ring closed under sum,
intersection, and primary decomposition is finite.
\end{proposition}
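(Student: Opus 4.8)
The plan is to reduce the statement to the case of prime ideals and then run a Noetherian induction on $\dim R$. First, it suffices to prove that the set $\mathcal{P}$ of prime ideals lying in $\mathcal{F}$ is finite: every radical ideal is the (finite) intersection of its minimal primes, and $\mathcal{F}$ is closed under primary decomposition, so every member of $\mathcal{F}$ is an intersection of a subset of $\mathcal{P}$ and hence $|\mathcal{F}|\le 2^{|\mathcal{P}|}$. Note also that for any finite $S\subseteq\mathcal{P}$ the ideals $\sum_{\p\in S}\p$ and $\bigcap_{\p\in S}\p$ lie in $\mathcal{F}$, hence are radical, and their minimal primes again lie in $\mathcal{P}$.

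Now suppose $\mathcal{P}$ is infinite and induct on $\dim R$, which is finite as $R$ is Noetherian local; the base case $\dim R=0$ is immediate since then $\fram$ is the only prime. Call a prime $\q$ of $R$ \emph{heavy} if infinitely many members of $\mathcal{P}$ are contained in $\q$; then $\fram$ is heavy, so there is a heavy prime $\q_0$ of minimal height. If $\height\q_0<\dim R$, then $R_{\q_0}$ is an excellent local ring of strictly smaller dimension and $\{IR_{\q_0}:I\in\mathcal{F},\ I\subseteq\q_0\}$ is a family of radical ideals there, still closed under sums, intersections and primary decomposition (localization commutes with all three) and still containing infinitely many primes; this contradicts the inductive hypothesis. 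Hence $\q_0=\fram$, so no prime $\q\subsetneq\fram$ is heavy, since then $\height\q<\dim R=\height\q_0$ while $\q_0$ has minimal height among heavy primes. In particular every $\p\in\mathcal{P}\setminus\{\fram\}$ has only finitely many members of $\mathcal{P}$ below it; since each such $\p$ lies below a maximal element of $\mathcal{P}\setminus\{\fram\}$ (primes in a Noetherian ring satisfy the ascending chain condition) and finitely many such maximal elements would make $\mathcal{P}$ finite, there is an infinite antichain $\{\p_i\}$ of maximal elements of $\mathcal{P}\setminus\{\fram\}$. For $i\ne j$ the ideal $\p_i+\p_j\in\mathcal{F}$ is radical and proper, and each of its minimal primes lies in $\mathcal{P}$ and strictly contains $\p_i$; by maximality the only such prime is $\fram$, so $\p_i+\p_j=\fram$.

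This reduces everything to deriving a contradiction from the existence of infinitely many primes $\p_i\subsetneq\fram$ with $\p_i+\p_j=\fram$ for all $i\ne j$, all of whose iterated sums and intersections are forced by membership in $\mathcal{F}$ to remain radical — and I expect this to be the main obstacle. The heuristic is that such configurations cannot survive the closure conditions once enough of the $\p_i$ are present: for example $(\p_1\cap\p_2)+\p_3$ has radical $\fram$, hence would have to equal $\fram$, yet working inside the local domain $R/\p_3$ one sees that the image of $\p_1\cap\p_2$ generally only reaches a proper sub-ideal of the maximal ideal, so $(\p_1\cap\p_2)+\p_3\subsetneq\fram$ is not radical, a contradiction. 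Making this precise in full generality is where the local structure of $R$ is used and where excellence re-enters: one first passes to the completion — permissible because $R$ is excellent, so the completions of its reduced quotients remain reduced — so that each $R/\p_i$ is a complete local domain, pigeonholes so that all the $\p_i$ have the same coheight, and then proves a structural/colength estimate showing that an infinite family of pairwise comaximal primes of fixed coheight must produce a non-radical iterated sum. I expect that estimate to be the crux of the whole argument.
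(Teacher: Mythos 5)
Your reductions are correct and efficiently executed: the passage from $\mathcal{F}$ to its set of primes $\mathcal{P}$, the localization at a heavy prime $\q_0$ of minimal height (indeed $R_{\q_0}$ is again excellent local and the localized family inherits all three closure properties, since the minimal primes of $I$ contained in $\q_0$ are again in $\mathcal{P}$), and the extraction of an infinite antichain $\{\p_i\}$ of maximal elements of $\mathcal{P}\setminus\{\fram\}$ with $\p_i+\p_j=\fram$ for $i\neq j$ are all sound. But the proof is not complete: the step you yourself call ``the crux'' --- ruling out infinitely many such pairwise comaximal primes --- is given only as a heuristic computation in an example plus a plan (pass to the completion, pigeonhole by coheight, prove a ``structural/colength estimate'') that is never carried out and whose feasibility is far from clear. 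Since essentially all of the difficulty of the proposition is concentrated in that step, this is a genuine gap rather than a routine verification; note also that the paper itself offers no proof to lean on, as it simply cites Enescu--Hochster.

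The gap is fillable, and by an argument much more elementary than the one you sketch; neither completion nor excellence is needed at this stage. For any finite $S$ and $k\notin S$, the ideal $\left(\bigcap_{i\in S}\p_i\right)+\p_k$ lies in $\mathcal{F}$, hence is radical, and it contains $\prod_{i\in S}(\p_i+\p_k)=\fram^{|S|}$, so it must equal $\fram$. Pass to the cotangent space $V=\fram/\fram^2$ and set $N_i=(\p_i+\fram^2)/\fram^2$; each $N_i$ is a proper subspace of $V$ by Nakayama, and the identity above forces $\left(\bigcap_{i\in S}N_i\right)+N_k=V$ for every finite $S$ and $k\notin S$. Choosing a finite $S_0$ minimizing $\dim_k\bigcap_{i\in S_0}N_i$ gives $W\coloneqq\bigcap_{i\in S_0}N_i\subseteq N_k$ for every $k\notin S_0$, whence $W+N_k=N_k\neq V$, a contradiction. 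This is exactly where closure under intersection (and not merely under sum) is used, so your instinct that $(\p_1\cap\p_2)+\p_3$ is the right object to examine was correct --- it just needs the cotangent-space argument rather than a colength estimate to be made rigorous.
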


\begin{corollary}\label{finiteness of phi compatible ideals}
Let $\rphi$ be a pair. If $\phi$ is surjective, then there are finitely many $\phi$-compatible ideals. 
\end{corollary}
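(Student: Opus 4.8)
The plan is to deduce this from the finiteness criterion of Enescu and Hochster, \autoref{finiteness result enescu hochster}, so the entire task is to verify its hypotheses for the family $\sF$ of $\phi$-compatible ideals of $R$. First I would recall that $R$, being a complete Noetherian local ring, is excellent. Next, since $\phi$ is surjective, \autoref{compatible ideals are radical} shows that every member of $\sF$ is radical. Closure of $\sF$ under arbitrary intersections and (necessarily finite, as $R$ is Noetherian) sums is exactly the content of \autoref{lemma compatible ideals closed under sums intersection}.

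The last hypothesis to check is closure under primary decomposition. For a radical ideal $\fra \in \sF$, a minimal primary decomposition is simply $\fra = \q_1 \cap \cdots \cap \q_r$, where $\q_1,\ldots,\q_r$ are the minimal primes of $\fra$; by \autoref{compatible ideals of pairs closed under ass primes} these minimal primes are again $\phi$-compatible, hence lie in $\sF$. Therefore $\sF$ is a family of radical ideals in an excellent local ring that is closed under sum, intersection, and primary decomposition, and \autoref{finiteness result enescu hochster} yields that $\sF$ is finite.

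I do not anticipate a genuine obstacle: the statement is a formal consequence of the structural results already established in this section. The only point that merits a moment's care is interpreting ``closed under primary decomposition'' correctly — it suffices that the prime components appearing in a minimal (primary, here prime by radicality) decomposition of any member of $\sF$ again lie in $\sF$, which is precisely what \autoref{compatible ideals of pairs closed under ass primes} guarantees. One could also phrase the same argument for an arbitrary surjective $\phi$ without change, which is what makes this corollary immediate once the closure properties are in hand.
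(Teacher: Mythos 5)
Your proposal is correct and follows the same route as the paper, which likewise deduces the result from \autoref{finiteness result enescu hochster} together with \autoref{lemma compatible ideals closed under sums intersection} and \autoref{compatible ideals of pairs closed under ass primes}. Your explicit invocation of \autoref{compatible ideals are radical} to verify the radicality hypothesis is a point the paper leaves implicit, but the argument is the same.
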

\begin{proof}
This is immediate from \autoref{finiteness result enescu hochster},\autoref{compatible ideals of pairs closed under ass primes}, and \autoref{lemma compatible ideals closed under sums intersection}.
\end{proof}

\begin{corollary}\label{finiteness of uniformly compatible ideals}
Let $R$ be as in \autoref{notation rainfty} and assume further that it is perfectoid pure. Then, there are only finitely many uniformly perfectoid compatible ideals.
\end{corollary}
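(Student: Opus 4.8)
The plan is to reduce immediately to \autoref{finiteness of phi compatible ideals} by producing a single surjective map $\phi\colon\rafty\to R$. First I would fix any $A\in\sA$. Since $R$ is perfectoid pure, the natural map $R\to\rafty$ is pure by definition, and since $R$ is $\fram$-adically complete, purity implies splitting (this is \cite[Lemma 1.2]{FedderFPureRat}, as recorded in the remark following the definition of a pair). Hence there exists $\phi\in\Hom_R(\rafty,R)$ with $\phi$ surjective.

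Next I would observe that every uniformly perfectoid compatible ideal $\fra\subset R$ is in particular $\rafty$-compatible, i.e.\ satisfies $\psi(\fra^A_\infty)\subset\fra$ for \emph{all} $\psi\in\Hom_R(\rafty,R)$; in particular this holds for the surjective $\phi$ chosen above, so $\fra$ is $\phi$-compatible. Therefore the set of uniformly perfectoid compatible ideals of $R$ is contained in the set of $\phi$-compatible ideals.

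Finally, since $\phi$ is surjective, \autoref{finiteness of phi compatible ideals} tells us there are only finitely many $\phi$-compatible ideals, so a fortiori there are only finitely many uniformly perfectoid compatible ideals of $R$. This completes the argument. There is essentially no obstacle here: all the real work has already been done in establishing that $\phi$-compatible ideals (for surjective $\phi$) are radical (\autoref{compatible ideals are radical}), closed under sums and intersections (\autoref{lemma compatible ideals closed under sums intersection}), and closed under passing to minimal primes (\autoref{compatible ideals of pairs closed under ass primes}), which together with the Enescu--Hochster finiteness result (\autoref{finiteness result enescu hochster}) gives \autoref{finiteness of phi compatible ideals}. The only point to be careful about is that completeness of $R$ is genuinely needed to upgrade purity of $R\to\rafty$ to the existence of a splitting; without a surjective $\phi$ one cannot invoke \autoref{compatible ideals are radical}, and the class of uniformly perfectoid compatible ideals need not consist of radical ideals.
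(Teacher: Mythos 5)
Your proposal is correct and is exactly the argument the paper intends: the paper's proof is the one-line citation of \autoref{finiteness of phi compatible ideals}, and your write-up simply fills in the (correct) details — completeness upgrades purity of $R\to\rafty$ to a splitting $\phi$, and every uniformly perfectoid compatible ideal is in particular $\phi$-compatible for that surjective $\phi$. No issues.
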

\begin{proof}
This follows from \autoref{finiteness of phi compatible ideals}.
\end{proof}
\begin{remark}\label{remark bound number cpps}
In fact, if $n$ is the embedding dimension of $R$ and $R$ is perfectoid pure, \cite[Theorem 4.2]{SchwedeTuckerNumberOfFSplit}, says that there are at most ${n\choose d}$ centers of perfectoid purity of $R$ of height $d$. Similarly for the centers of perfectoid purity of $\rphi$ for a surjective $\phi$.
\end{remark}

\section{Perfectoid Purity along Elements}

In this section, we study a variant of perfectoid purity and link it to compatible ideals.

\begin{definition}[{\cf \cite{RamanathanFrobeniusSplittingSchubert}, \cite{SmithGloballyFRegular}}]
Let $R$ be a Noetherian ring admitting a perfectoid algebra $B$, and let $x$ be an element of $B$. If the $R$-module map $R\to B$ sending $1$ to $x$ is pure, we say that $R$ is \emph{perfectoid pure along} $x$.
\end{definition}

We now state a well known fact about pure maps in our specific situation since we will repeatedly use it.

\begin{lemma}\label{pfd pure with smaller root}
Let $R$ be a Noetherian ring with $p$ in its Jacobson radical, $B$ a perfectoid $R$-algebra, and $C$ be a perfectoid $B$-algebra. Let $b\in B$ and $c\in C$ be such that the $R$-module map $R\to C$ sending $1$ to $c$ factors through the $R$-module map $R\to B$ sending $1$ to $b$. If $R$ is perfectoid pure along $c$, then it is perfectoid pure along $b$.
\end{lemma}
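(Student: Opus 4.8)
The plan is to reduce everything to the elementary fact that purity of a composition of $R$-module maps implies purity of the first factor. First I would unwind the definitions. Saying that $R$ is perfectoid pure along $c$ means precisely that the $R$-module map $\iota_c\colon R\to C$, $1\mapsto c$, is pure, and what we want to prove is that the $R$-module map $\iota_b\colon R\to B$, $1\mapsto b$, is pure. The hypothesis that $\iota_c$ factors through $\iota_b$ says exactly that, writing $g\colon B\to C$ for the structure map of the $B$-algebra $C$ (which is in particular a homomorphism of $R$-modules), we have $g(b)=c$ and hence $\iota_c=g\circ\iota_b$.

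Next I would invoke (or quickly reprove) the standard observation: if $R\xrightarrow{\ \iota_b\ }B\xrightarrow{\ g\ }C$ is a composition of $R$-module homomorphisms whose composite $g\circ\iota_b$ is pure, then $\iota_b$ is itself pure. Indeed, for an arbitrary $R$-module $M$, applying $-\otimes_R M$ gives $(g\circ\iota_b)\otimes_R M=(g\otimes_R M)\circ(\iota_b\otimes_R M)$; purity of $g\circ\iota_b$ makes the left-hand composite injective, and therefore $\iota_b\otimes_R M$ must be injective as well. Since $M$ was arbitrary, $\iota_b$ is pure. Applying this to the factorization $\iota_c=g\circ\iota_b$ from the previous paragraph shows that $\iota_b$ is pure, i.e., $R$ is perfectoid pure along $b$, as desired.

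There is essentially no obstacle here: the statement is purely formal once the definitions are unwound, and no serious input (not even André's lemma or any property of perfectoidization) is needed. The hypotheses that $B$ and $C$ be perfectoid and that $p$ lie in the Jacobson radical of $R$ serve only to make the phrase ``perfectoid pure along $x$'' meaningful for $x\in B$ and $x\in C$; they play no role in the argument itself. The single point deserving a word of care is that the asserted factorization of $\iota_c$ through $\iota_b$ is a factorization through maps of $R$-modules, but this is automatic since $B$ and $C$ are $R$-algebras in a way compatible with the given $B$-algebra structure on $C$.
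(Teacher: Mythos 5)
Your argument is correct and is exactly the standard one the paper has in mind; the paper in fact states this lemma without proof, as a ``well known fact about pure maps,'' and your reduction to the observation that purity of a composite $g\circ\iota_b$ forces purity of $\iota_b$ is the intended content. One tiny imprecision: the hypothesis only asserts that $\iota_c$ factors through $\iota_b$ via \emph{some} $R$-module map $B\to C$, not necessarily the structure map of the $B$-algebra $C$, but your tensor argument applies verbatim to any such $R$-linear factoring map, so nothing is affected.
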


\begin{proposition}[\cf {\cite[Lemma 4.5]{BMPSTWWPerfectoidPure}}]\label{pfd pure can be with alg closed alg}
Let $R$ be a Noetherian ring with $p$ in its Jacobson radical, $r\in R$, and $B$ a perfectoid $R$ algebra. Suppose that for some $e\in \N_{>0}$, $r$ has an $\pe$-th root in $B$, which we denote by $x$. If $R$ is perfectoid pure along $x$, then there is a perfectoid $B$-algebra $B'$ that contains all the $p$-th power roots of $r$ such that $R$ is perfectoid pure along $x_{B'}$ where $x_{B'}$ is the image of $x$ in $B'$. In fact, we can even assume that $B'$ is absolutely integrally closed. Moreover, if $B$ already contains some other $p$-th power root $y$ of $r$ that is compatible with $x$, we can choose $B'$ that has a compatible system of $p$-th power roots of $y$ that is also compatible with $x$.
%In fact, we can even assume that $B'$ is absolutely integrally closed.
\end{proposition}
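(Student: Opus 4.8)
The plan is to reduce to the case where $B$ already contains a compatible system of $p$-power roots of $r$, and then take a colimit. First, by André's flatness lemma (\cite[Th\'eor\`eme 2.5.1]{AndreConjectureFacteurDirect}, \cite[Theorem 7.14]{BhattScholzePrisms}), there is a $p$-completely faithfully flat perfectoid $B$-algebra $B_1$ that contains a compatible system $\{r^{1/p^\infty}\}$ of $p$-power roots of $r$; if $B$ already contains a root $y$ compatible with $x$, we arrange that the chosen system has $x$ in the appropriate spot (send $y$ to the relevant term and extend). Since $B\to B_1$ is $p$-completely faithfully flat and $p$ lies in the Jacobson radical of $R$ (hence $R$ is $p$-adically separated enough for purity to behave well), $R\to B\to B_1$ sending $1\mapsto x_{B_1}$ is pure: purity of $R\to B$ along $x$ combined with faithful flatness of $B\to B_1$ gives purity of the composite, exactly as in \autoref{pfd pure with smaller root} and the technique of \cite[Lemma 4.5]{BMPSTWWPerfectoidPure} (reduce purity to injectivity after tensoring with finitely generated, hence $p^n$-torsion, submodules of $E_R$, then invoke $B/p^n \to B_1/p^n$ faithfully flat).

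So we may assume from the start that $B$ contains a compatible system $\{r^{1/p^\infty}\}$ with $x = r^{1/p^e}$. Now I would build $B'$ as an absolutely integrally closed (perfectoid) $B$-algebra. Again by André's lemma one produces a $p$-completely faithfully flat perfectoid $B$-algebra which is absolutely integrally closed — concretely, iterate: adjoin roots of all monic polynomials, $p$-complete, apply perfectoidization, and take the colimit over countably many such steps (each step is $p$-completely faithfully flat, and a filtered colimit of perfectoid rings that is $p$-complete is perfectoid by \cite[Example 3.8]{BhattIyengarMaRegularRings}); the resulting $B'$ is absolutely integrally closed and $B\to B'$ is $p$-completely faithfully flat. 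The image $x_{B'}$ of $x$ is still an $p^e$-th root of $r$, and the compatible system $\{r^{1/p^\infty}\}$ maps to one in $B'$ through $x_{B'}$. By the same faithful-flatness argument as in the previous paragraph, purity of $R\to B$ along $x$ implies purity of $R\to B'$ along $x_{B'}$. Finally, if $B$ contained a compatible root $y$ of $r$, it is carried along in the system and its image in $B'$ remains a $p$-power root of $r$ compatible with $x_{B'}$, giving the last clause.

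The main obstacle I anticipate is the bookkeeping around compatibility of the $p$-power root systems: one must be careful that when passing to $B_1$ via André's lemma, the newly adjoined roots can be chosen so that the pre-existing roots $x$ (and $y$, if present) sit inside a single coherent system $\{r^{1/p^\infty}\}$, rather than producing an incompatible second system. This is handled by noting that in $B_1$ the element $r$ has \emph{some} compatible system, and after possibly composing with an automorphism / re-choosing the lift one can align it with $x$ (and with $y$); in the absolutely integrally closed algebra $B'$ this is automatic since every element has roots and one simply selects a system through $x_{B'}$. The purity transfer itself is routine given \autoref{pfd pure with smaller root} and \autoref{pcomp fflat implies pure}.
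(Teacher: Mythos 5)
Your proposal is correct and is essentially the paper's proof, which simply invokes \cite[Lemma 4.5]{BMPSTWWPerfectoidPure} ``mutatis mutandis'': André's flatness lemma to produce a $p$-completely faithfully flat map to an absolutely integrally closed perfectoid $B$-algebra, followed by the transfer of purity along $p$-completely faithfully flat maps via tensoring with the $p^n$-torsion finitely generated submodules of $E_R$ (after localizing at maximal ideals), exactly as in \autoref{pcomp fflat implies pure}. Two small cleanups: the alignment of root systems needs no ``automorphism'' --- apply André's lemma to $x$ (or to the deeper of $x,y$) rather than to $r$, or simply select a compatible system through $x_{B'}$ inside the absolutely integrally closed $B'$ as you note --- and your iterated root-adjunction being $p$-completely faithfully flat is precisely the content of André's lemma in the form given in \cite{BhattScholzePrisms}, so it should be cited rather than re-derived.
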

\begin{proof}
The proof of \cite[Lemma 4.5]{BMPSTWWPerfectoidPure} works \emph{mutadis mutandis} here. 
\end{proof}

\begin{proposition}[{\cf \cite[Lemma 4.8]{BMPSTWWPerfectoidPure}}]\label{pfd pure iff completion pfd pure}
Let $(R,\fram)$ be a Noetherian local ring of residue characteristic $p>0$, $r\in R$, $n\in \N_>0$. Then, there is a perfectoid $R$-algebra $B$ that contains an $n$-th root $x$ of $r$ such that $R$ is perfectoid pure along $x$ if and only if there is a perfectoid $\widehat{R}$ algebra $B'$ that contains an $n$-th root $x_{B'}$ of $r$ such that $R$ is perfectoid pure along $x_{B'}$.
\end{proposition}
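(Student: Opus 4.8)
The plan is to prove the two implications separately; essentially all of the content lies in the forward direction. For the reverse implication there is nothing to do: a perfectoid $\widehat R$-algebra $B'$ is in particular a perfectoid $R$-algebra, and the hypothesis that $R$ is perfectoid pure along $x_{B'}$ is already a statement about the $R$-module map $R \to B'$, $1 \mapsto x_{B'}$; so one simply takes $B = B'$ and $x = x_{B'}$.

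For the forward implication, assume $B$ is a perfectoid $R$-algebra containing an $n$-th root $x$ of $r$ with $R \to B$, $1 \mapsto x$, pure. I would take $B'$ to be the $\fram B$-adic completion of $B$ and $x_{B'}$ the image of $x$. Then $x_{B'}^n = r$, and $B'$ is $\fram$-adically complete, hence naturally a $\widehat R$-algebra. The first point to check is that $B'$ is again perfectoid: since $\fram$ is finitely generated this is \cite[Example 3.8]{BhattIyengarMaRegularRings} (exactly as in the proof of \autoref{B compatible can assume has roots and is complete}, where the same fact is used to pass to the completion of a perfectoid algebra).

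The only real step is then to see that purity of the structure map along the chosen element survives this completion, i.e. that $R \to B'$, $1 \mapsto x_{B'}$, is pure. Here I would use the injective-hull criterion already exploited in \autoref{pcomp fflat implies pure} and \autoref{uniformly compatible injective hull} (via \cite[Proposition 6.11]{HochsterRobertsRingsOfInvariants}): writing $E = E_R$ for the injective hull of the residue field, the map $R \to B$ along $x$ is pure if and only if $E \to E \tensor_R B$, $e \mapsto x \tensor e$, is injective. Now $E$ is $\fram$-power torsion and $B/\fram^n B \cong B'/\fram^n B'$ for every $n$, so the natural map $E \tensor_R B \to E \tensor_R B'$ is an isomorphism, compatible with multiplication by $x$ on the source and by $x_{B'}$ on the target. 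Hence $E \to E \tensor_R B'$ is injective as well, which is exactly the purity of $R \to B'$ along $x_{B'}$.

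The main obstacle is minor and purely a matter of bookkeeping with completions: one needs that the $\fram B$-adic completion of a perfectoid ring stays perfectoid, and the torsion-module identity $E \tensor_R B \cong E \tensor_R B'$. Both are routine, and indeed the argument is just a transcription of the proof of \cite[Lemma 4.8]{BMPSTWWPerfectoidPure} to the ``along an element'' setting; no new ideas enter.
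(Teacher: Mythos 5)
Your forward direction is essentially the paper's own argument: pass to the $\fram$-adic completion $\widehat{B}$, use that the completion of a perfectoid ring is perfectoid (the paper cites \cite[Proposition 2.1.11(e)]{CesnaviciusScholzePurityFlatCohomology} where you cite \cite{BhattIyengarMaRegularRings}; both appear elsewhere in the paper for this fact), and use that $E$ is $\fram$-power torsion to identify $E\tensor_R B\cong E\tensor_R\widehat{B}$ compatibly with multiplication by $x$. The only divergence is the backward direction: you read the hypothesis literally as purity of the $R$-module map $R\to B'$, which makes that implication a tautology, whereas the paper reads it (as its applications, e.g.\ \autoref{lc implies log canonical p in p}, require) as $\widehat{R}$ being perfectoid pure along $x_{B'}$ and then deduces purity of $R\to B'$ by composing with the pure, faithfully flat map $R\to\widehat{R}$. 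This is harmless here, since your completion argument in fact also delivers the $\widehat{R}$-purity: $E=E_{\widehat{R}}$ and $E\tensor_R B'=E\tensor_{\widehat{R}}B'$, so the injectivity you establish is exactly the criterion for purity over the complete ring $\widehat{R}$ as well.
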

\begin{proof}
Again, the proof is essentially the same as in \cite[Lemma 4.8]{BMPSTWWPerfectoidPure}. Suppose that there is a perfectoid $R$-algebra $B$ that contains an $n$-th root $x$ of $r$ with $R$ perfectoid pure along $x$. Then, by \cite[Proposition 2.1.11 (e)]{CesnaviciusScholzePurityFlatCohomology}, $\widehat{B}$ is a perfectoid $\widehat{R}$-algebra and the completion of the pure map $R\to B$, $1\mapsto x$ is pure: $E\coloneqq E(\widehat{R}/\fram)=E(R/\fram)\to E\tensor B \cong E\tensor \widehat{B}$ is injective. On the other hand, suppose that there is a perfectoid $\widehat{R}$-algebra $B'$ containing an $n$-th root $x_{B'}$ of $r$ with $\widehat{R}$ perfectoid pure along $x_{B'}$. Since the map $R\to \widehat{R}$ is faithfully flat, it is pure, so the composition map $R\to B'$, $1\mapsto x_{B'}$ is pure. 
\end{proof}

\begin{proposition}[\cf {\cite[Lemma 4.23]{BMPSTWWPerfectoidPure}}, {\cite[Proposition 2.9]{YoshikawaComputationMethodPerfectoidPurity}}]\label{pfd pure can take smaller algebra}
Let $R$ be as in \autoref{notation rainfty}. Let $h\in R$ be any element, $e\in \N_{>0}$, and suppose that there is a perfectoid $R$-algebra $B$ such that $h$ has a $p^e$-th root, say $x$, in $B$ such that $R$ is perfectoid pure along $x$. Then, there is a choice of $A\in \sA$ such that $\rafty$ has a $p^e$-th root of $x$, say $y$, and $R$ is perfectoid pure along $y$.
\end{proposition}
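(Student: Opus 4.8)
The plan is to reduce the abstractly-given perfectoid $R$-algebra $B$ to one of the form $\rbfty$ for an explicit $B \in \sA$, by absorbing a chosen root of $h$ into the base power series ring, and then to use the fact that the relevant map factors through $\rafty$ for the original $A$. Concretely, I would start with the perfectoid $R$-algebra $B$ and its $p^e$-th root $x$ of $h$ along which $R$ is perfectoid pure. First I would invoke \autoref{pfd pure can be with alg closed alg} to enlarge $B$ to a perfectoid $B$-algebra (still call it $B$) that contains a \emph{compatible system} of $p$-power roots of $h$ refining $x$; by \autoref{pfd pure with smaller root}, $R$ remains perfectoid pure along (the image of) $x$ in this larger algebra, since the map $R \to B$ sending $1 \mapsto x$ still factors the one sending $1\mapsto$ (the chosen $p^{e+1}$-th root), etc. We may also, by the same proposition or by \autoref{pfd pure iff completion pfd pure}, assume $B$ is $\fram$-adically complete.

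Next, pick any $A_0 \in \sA$; by the proof of \cite[Lemma 4.23]{BMPSTWWPerfectoidPure} (the same input used in \autoref{uniformly perfd compatible does not depend on choice of embedding}) there is a ring map $(A_0)_\infty \to B$ compatible with $R \to B$ on the image of $A_0$. Now set $A \coloneqq A_0\llbracket z \rrbracket$ and make $R$ into an $A$-algebra by $z \mapsto h$; this $A$ lies in $\sA$ since $A_0 \in \sA$. Using the compatible system of $p$-power roots of $h$ in $B$ that refines $x$, send the compatible system of $p$-power roots of $z$ in $(A_0)_\infty[z^{\unpfty}]$ to it, extending $(A_0)_\infty \to B$ to $A_\infty \to B$. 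By the universal properties of tensor product and perfectoidization this induces a ring map $i \colon \rafty \to B$ compatible with $R \to B$, and the image in $\rafty$ of the $p^e$-th root of $z$ is an element $y \in \rafty$ with $y^{p^e} = h$ and $i(y) = x$. Since the $R$-module map $R \to B$, $1 \mapsto x$, factors as $R \to \rafty \xrightarrow{i} B$ with $1 \mapsto y \mapsto x$, and $R$ is perfectoid pure along $x$, it follows immediately that $R \to \rafty$, $1 \mapsto y$, is pure: a splitting (resp. purity witness) of $R \to B$ precomposed with $i$ splits $R \to \rafty$. Hence $R$ is perfectoid pure along $y$, as desired.

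The only real subtlety — and the step I expect to be the main obstacle to write cleanly — is arranging the roots compatibly: one must make sure that the $p^e$-th root singled out in $B$ is part of the compatible system used to build the map $A_\infty \to B$, so that the element $y \in \rafty$ actually maps to $x$ and not merely to some other $p^e$-th root of $h$. This is exactly what the final clause of \autoref{pfd pure can be with alg closed alg} was stated to handle, and what the analogous argument in \autoref{uniformly perfd compatible does not depend on choice of embedding} does for compatible ideals; so I would simply mirror that bookkeeping. Everything else (completeness reductions, faithful flatness of $R \to \widehat R$, the universal property of perfectoidization) is routine and already packaged in the cited lemmas.
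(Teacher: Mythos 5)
Your proposal is correct and follows essentially the same route as the paper: reduce via \autoref{pfd pure can be with alg closed alg} and \autoref{pfd pure iff completion pfd pure} to $B$ absolutely integrally closed and $\fram$-adically complete, then build $A=A_0\llbracket z\rrbracket$ with $z\mapsto h$ and a map $i\colon \rafty\to B$ sending the chosen root $y$ to $x$ exactly as in the proof of \autoref{uniformly perfd compatible does not depend on choice of embedding}, and conclude by \autoref{pfd pure with smaller root}. The bookkeeping you flag about choosing the compatible system of roots to refine $x$ is indeed the only delicate point, and you handle it the same way the paper does.
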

\begin{proof}
By \autoref{pfd pure can be with alg closed alg} and \autoref{pfd pure iff completion pfd pure}, we can assume that $B$ is absolutely algebraically closed and $\fram$-adically complete. By the proof of \autoref{uniformly perfd compatible does not depend on choice of embedding}, there is an $A\in \sA$ such that $\rafty$ has a $p^e$-th root $y$ of $h$ and 
$R\to B$ factors through $i\colon \rafty\to B$ with $i(y)=z$. The result then follows from \autoref{pfd pure with smaller root}.
\end{proof}

\begin{proposition}\label{max ideal unif compatible implies not pfd pure}
Let $(R,\fram)$ be as in \autoref{notation rainfty}. If $\m$ is a uniformly perfectoid compatible ideal of $R$, then for all $h\in \fram$ and $n\in \N_{> 0}$, there are no perfectoid $R$-algebra $B$ with an $n$-th root $x$ of $h$ such that $R$ is perfectoid pure along $x$.
\end{proposition}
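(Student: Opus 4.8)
The plan is to argue by contradiction via a reduction along the element $h$. Suppose for some $h \in \fram$ and $n \in \N_{>0}$ there is a perfectoid $R$-algebra $B$ containing an $n$-th root $x$ of $h$ with $R \to B$, $1 \mapsto x$, pure. Since $\fram \supsetneq (h)$ and pure maps descend to roots of smaller index, I would first replace $n$ by $p^e$: by \autoref{pfd pure with smaller root} applied to a compatible system obtained from André's flatness lemma (\cf the proof of \autoref{pfd pure can be with alg closed alg}), we may assume $n = p^e$ and, by \autoref{pfd pure can take smaller algebra}, that $B = \rafty$ for some $A \in \sA$ and that the $p^e$-th root lives in $\rafty$; call it $y$, so $y^{p^e} = h$. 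Let $\phi \colon \rafty \to R$ be a splitting with $\phi(y) = 1$, which exists by purity and completeness of $R$.

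The key step is then to contradict the hypothesis that $\fram$ is uniformly perfectoid compatible, i.e. $\phi\big(\fram_\infty^A\big) \subset \fram$ for every $\phi \in \Hom_R(\rafty, R)$ and every $A$. Since $h \in \fram$, the element $y$ is a $p^e$-th root of an element of $\fram$, and because perfectoid ideals are radical (\autoref{perfectoidization is radical when p in ideal}, or the fact that $(h)\perfd = (h^{1/p^\infty})^-$ from \autoref{perfectoidization of principal ideal with roots}), we have $y \in (h\rafty)\perfd \subset (\fram \rafty)\perfd = \fram_\infty^A$. But $\phi(y) = 1 \notin \fram$, which directly contradicts $\phi(\fram_\infty^A) \subset \fram$. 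Hence no such $B$ exists.

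The one technical point to be careful about is the passage from "$B$ contains a $p^e$-th root of $h$ along which $R$ is pure" to "$\rafty$ contains such a root for some specific $A \in \sA$ with the purity preserved" — this is exactly \autoref{pfd pure can take smaller algebra}, so it is already available; the only thing to verify is that its hypotheses are met after the preliminary reduction to index $p^e$ (via \autoref{pfd pure can be with alg closed alg} and \autoref{pfd pure iff completion pfd pure}, as in the cited proofs). A secondary point is ensuring $y \in (h\rafty)\perfd$: since $\rafty$ is perfectoid, $h \in R \subset \rafty$, and $y$ is a $p^e$-th root of $h$ in $\rafty$, the element $y$ is a limit of $p$-power-divisible data, so $y \in (h^{1/p^\infty})^- = (h)\perfd$ by \autoref{perfectoidization of principal ideal with roots}; no choice of a full compatible system of roots is needed since a single $p^e$-th root already lies in the $p$-adic closure of the ideal generated by all $p$-power roots once we enlarge to an algebra containing them, which \autoref{pfd pure can take smaller algebra} (together with \autoref{pfd pure can be with alg closed alg}) lets us assume. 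I expect this bookkeeping about roots and $p$-adic closures to be the only place requiring care; the core contradiction itself is immediate.
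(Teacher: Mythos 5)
Your closing contradiction --- an element of $\fram_\infty^A$ mapping to a unit violates compatibility --- is exactly the right mechanism, but the two preliminary reductions you build it on do not both hold, and the first one is a genuine gap. You claim that ``pure maps descend to roots of smaller index'' and use \autoref{pfd pure with smaller root} to replace the given $n$-th root by a $p^e$-th root. That lemma goes the other way: if $1\mapsto c$ factors through $1\mapsto b$ (so $b$ divides $c$ up to the ring map, i.e.\ $c$ is the \emph{deeper} root), then purity along $c$ implies purity along $b$. Purity along a shallower root is the stronger condition, so from purity along an $n$-th root $x$ of $h$ you cannot deduce purity along a $p^e$-th root of $h$ when $n$ has a prime-to-$p$ part (or is larger than $p^e$): the required factorization of $1\mapsto x$ through $1\mapsto h^{1/p^e}$ would need $h^{1/p^e}$ to divide $x$, which fails. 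Since \autoref{pfd pure can take smaller algebra} is only stated for $p^e$-th roots, your reduction to $B=\rafty$ then also has nothing to stand on for general $n$.

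The fix is to drop both reductions, which is what the paper does. Because $p\in\fram$, \autoref{perfectoidization is radical when p in ideal} gives $(\fram B)\perfd=\sqrt{\fram B}$ for \emph{any} perfectoid $R$-algebra $B$, and any $n$-th root $x$ of any $h\in\fram$ satisfies $x^n\in\fram B$, hence $x\in(\fram B)\perfd$ --- no compatible system of $p$-power roots, no $p$-adic closure bookkeeping, and no passage to $\rafty$ is needed. Then \autoref{uniformly perfd compatible does not depend on choice of embedding} upgrades uniform perfectoid compatibility of $\fram$ to $B$-compatibility for every perfectoid $R$-algebra $B$, so $\phi(x)\in\fram$ for every $\phi\in\Hom_R(B,R)$; thus $R\to B$, $1\mapsto x$, is never split, and since $R$ is complete it is never pure. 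Your argument becomes correct once you substitute this direct membership $x\in\sqrt{\fram B}$ for the root-index and algebra-shrinking reductions.
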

\begin{proof}
Let $B$ be any perfectoid $R$-algebra.
The ideal $\fram$ contains $p$ so $(\fram B)\perfd=\sqrt{\fram B}$ by \autoref{perfectoidization is radical when p in ideal}. If $\fram$ is uniformly perfectoid compatible, then by \autoref{uniformly perfd compatible does not depend on choice of embedding} for any $\phi\in \Hom_R(B,R)$, 
\[
\phi\left(\sqrt{\fram B}\right)\subset \fram
\]
so for no root $x$ of an element of $\fram$, the map $R\to B$ sending $1$ to $x$ is split. Since $R$ is complete, this means that it is never pure by \cite[Lemma 1.2]{FedderFPureRat}.
\end{proof}

%Conversely, we have the following result.

\begin{corollary}\label{loc is uniformly compatible then ideal is uniformly compatible}
Let $R$ be as in \autoref{notation rainfty}, $\p\in \Spec R$ with $p\in \p$.  If $\p \widehat{R}_\p$ is a uniformly perfectoid compatible ideal of $\widehat{R}_\p$ then $\p$ is a uniformly perfectoid compatible ideal of $R$. 
\end{corollary}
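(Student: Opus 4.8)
The plan is to reduce to the purity criterion of \autoref{not pfd pure p not in P implies compatible} and then, arguing by contradiction, transport the hypothetical purity up from $R_\p$ to $\widehat R_\p$ to contradict the hypothesis. First, fix a generating set $h_1,\dots,h_r$ of $\p$; after enlarging $A$ if necessary (harmless for $\rafty$-compatibility by \autoref{compatible with rafty iff compatible with rbfty}), I may choose $A\in\sA$ so that $B:=\rafty$ contains a compatible system $\{x_{i,e}\}$ of $p$-power roots of the $h_i$, with $x_{i,e}^{p^e}=h_i$. By the ``in particular'' clause of \autoref{not pfd pure p not in P implies compatible}, it then suffices to prove that for every $i$ and every $e$ the $R_\p$-linear map $R_\p\to B_\p$, $1\mapsto x_{i,e}$, is not pure.

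So suppose this map is pure for some $i,e$. Since $p\in\p$, the residue field $\kappa(\p)$ of $R_\p$ has characteristic $p$, so the injective hull $E=E_{R_\p}(\kappa(\p))$ is $p^\infty$-torsion; hence tensoring $E$ against $B_\p$ or against its $p$-completion $(B_\p)^{\wedge p}$ yields the same module. Using the injective-hull criterion for purity (as in the proofs of \autoref{pcomp fflat implies pure} and \autoref{pfd pure iff completion pfd pure}), the map $R_\p\to (B_\p)^{\wedge p}$, $1\mapsto x_{i,e}$, is then pure. Now $(B_\p)^{\wedge p}$ is $p$-complete and receives a map from the perfectoid ring $B$, so André's flatness lemma (\cite[Théorème 2.5.1]{AndreConjectureFacteurDirect}, \cite[Theorem 7.14]{BhattScholzePrisms}) provides a $p$-completely faithfully flat map $(B_\p)^{\wedge p}\to C$ with $C$ perfectoid. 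Each reduction $(B_\p)^{\wedge p}/p^n\to C/p^n$ is faithfully flat, hence pure, and the same injective-hull bookkeeping gives that $R_\p\to C$, $1\mapsto x_{i,e}$, is pure. Thus $R_\p$ is perfectoid pure along a $p^e$-th root of $h_i$ in the perfectoid $R_\p$-algebra $C$.

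Applying \autoref{pfd pure iff completion pfd pure} with $(R_\p,\p R_\p)$ in place of $(R,\fram)$, $r=h_i$ and $n=p^e$, there is a perfectoid $\widehat R_\p$-algebra $C'$ containing a $p^e$-th root of $h_i$ along which $\widehat R_\p$ is perfectoid pure. But $\widehat R_\p$ is complete local with residue field of characteristic $p$, hence a ring of the kind considered in \autoref{notation rainfty}; its maximal ideal is $\p\widehat R_\p$, which is uniformly perfectoid compatible by hypothesis, and $h_i\in\p\widehat R_\p$. So \autoref{max ideal unif compatible implies not pfd pure} forbids the existence of such a $C'$. This contradiction shows that $R_\p\to B_\p$, $1\mapsto x_{i,e}$, is never pure, and hence $\p$ is uniformly perfectoid compatible.

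The delicate point is the second paragraph: localizations and base changes of perfectoid rings are no longer perfectoid, so one must $p$-complete and route through André's lemma to reach an honestly perfectoid algebra, and then check that purity of the relevant single-element maps survives $p$-completion and composition with a merely $p$-completely faithfully flat map --- each time exploiting that the ambient injective hull is $p^\infty$-torsion, so that everything can be tested modulo powers of $p$.
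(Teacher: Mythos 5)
Your overall strategy coincides with the paper's: reduce via \autoref{not pfd pure p not in P implies compatible} to showing that the maps $R_\p\to B_\p$, $1\mapsto x_{i,e}$, are not pure, and rule out purity by transporting it to $\widehat{R}_\p$ via \autoref{pfd pure iff completion pfd pure} and contradicting \autoref{max ideal unif compatible implies not pfd pure}. The first and last paragraphs are fine; the two genuine problems are both in your middle paragraph.

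First, your appeal to Andr\'e's flatness lemma is not legitimate. That lemma takes a \emph{perfectoid} ring as input and produces a $p$-completely faithfully flat perfectoid extension containing compatible $p$-power roots of a specified element; ``$p$-complete and receives a map from a perfectoid ring'' is not a hypothesis under which it applies, and the conclusion you want from it is false for general $p$-complete rings (a $p$-completely faithfully flat map to a perfectoid ring is pure, which already imposes strong conditions on the source). What actually rescues the step --- and is exactly the citation in the paper's proof --- is that $(B_\p)^{\wedge p}$ is \emph{itself} perfectoid: the $p$-completion of a localization of a perfectoid ring is perfectoid by \cite[Example 3.8(7)]{BhattIyengarMaRegularRings}. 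Once you know this, the Andr\'e detour is unnecessary, since $(B_\p)^{\wedge p}$ already contains the image of $x_{i,e}$.

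Second, you twice deduce purity of a map out of $R_\p$ from injectivity after tensoring with $E=E_{R_\p}(\kappa(\p))$. Purity always implies such injectivity, but the converse requires the base to be complete (or the relevant cokernel to be finitely generated), and $R_\p$ is not complete; so ``$R_\p\to (B_\p)^{\wedge p}$ is pure'' and ``$R_\p\to C$ is pure'' are unjustified as written. The repair is to never assert purity over $R_\p$ at the intermediate stages: purity of $R_\p\to B_\p$ gives injectivity of $E\to E\otimes B_\p$ (multiplication by $x_{i,e}$); since $E$ is $\p R_\p$-power torsion, hence $p^\infty$-torsion because $p\in\p$, this map is identified with $E\to E\otimes \widehat{B_\p}$ for the completed perfectoid algebra; and only over the complete ring $\widehat{R}_\p$ do you convert injectivity back into purity, which is all that \autoref{max ideal unif compatible implies not pfd pure} requires. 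This is precisely how the proof of \autoref{pfd pure iff completion pfd pure} is organized, so following its bookkeeping verbatim closes both gaps.
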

\begin{proof}
This follows from \autoref{max ideal unif compatible implies not pfd pure}, \autoref{pfd pure iff completion pfd pure}, \cite[Example 3.8(7)]{BhattIyengarMaRegularRings},
and \autoref{not pfd pure p not in P implies compatible}. 
\end{proof}

\section{Connection with Log Canonical Centers}

In this section, we prove that the log canonical centers of a perfectoid pure ring are centers of perfectoid purity. Many of the statements and proofs in this section are adaptations of the ones in \cite{BMPSTWWPerfectoidPure}.

\begin{definition}
Let $X$ be a normal Noetherian integral scheme with a dualizing complex $\omega^\bullet_X$ and a canonical divisor $K_X$. Let $\Delta$ be an effective divisor on $X$ with coefficients $\leq 1$.
We say that $(X,\Delta)$ is log canonical if $K_X+\Delta$ is $\Q$-Cartier and for every proper birational map $\pi\colon Y\to X$ with $Y$ normal, we have that the coefficients of $K_Y-\pi^*(K_X+\Delta)$ are $\geq -1$. Equivalently, for every proper birational map $\pi\colon Y\to X$ with $Y$ normal and reduced exceptional divisor $E$, we have that 
\begin{equation}\label{def lc}
\pi_*(\sO_Y(\lceil K_Y - \pi^*(K_X+\Delta) + E\rceil)=\sO_X.
\end{equation}
See for instance \cite[Corollary 2.31]{KollarMori}.
\end{definition}

\begin{remark}
If $X=\Spec R$ is affine with $R$ local and quasi-Gorenstein (so $K_X\sim 0$), then \autoref{def lc} is implied by $\pi_*(\omega_Y(E- \lfloor \pi^*(\Delta) \rfloor))\cong \omega_X$.
\end{remark}

\begin{definition}
Let $X=\Spec R$ be a normal Noetherian integral scheme with a dualizing complex $\omega^\bullet_X$ and a canonical divisor $K_X$. A log canonical center $Z\subset X$ is a closed subscheme of $X$ such that $X$ is log canonical at the generic point of $Z$ and for any $f\in \sI_Z$, the ideal of $R$ defining $Z$, and any $1\gg\varepsilon>0$, the pair $(X,\varepsilon \Div(f))$ is not log canonical at the generic point of $Z$. We will abuse notation and say $\sI_Z$ is a log canonical center of $R$.
\end{definition}

\begin{lemma}[\cf{\cite[Lemma 5.3]{BMPSTWWPerfectoidPure}}]\label{map to c bullet is pure}
Let $R, A$ be as in \autoref{notation rainfty}. Let $h\in R$ and assume that for some $y$ in a regular system of parameter of $A$, $y\mapsto h$. In particular, we can assume that $\rafty$ contains a compatible system of $p$-th power roots for $h$, which we denote by $\{z_e\}$ where $z_e^\pe=h$ for $e\in \N_{>0}$. Fix $e\in \N_{>0}$ and let $S\coloneqq R[x]/x^\pe-h$. 
Let $\pi\colon Y\to X\coloneqq\Spec S$ be a birational map. Let $Z$ be the subset of $X$ outside of which $\pi$ is an isomorphism and set $E\coloneqq \pi\inv(Z)_{\red}$. Let $\Delta\coloneqq \Div x$ on $Y$ and $C^\bullet$ be the following pullback in the (unbounded) derived $\infty$-category of $S$-modules
\[
\xymatrix{
C^\bullet \ar[d] \ar[r] & \rgamma(Y,\sO_Y) \ar[d] \\
\rgamma(Z, \sO_Z) \ar[r]& \rgamma(E, \sO_E).
}
\]
Then, the map $R\to \rafty$, $1\mapsto z_e$ factors as 
\[
R\to S \to C^\bullet \to \rafty
\]
where the first map is multiplication by $x$ and the second is the natural map $S\to C^\bullet$.
\end{lemma}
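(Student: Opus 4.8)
The plan is to adapt \cite[Lemma 5.3]{BMPSTWWPerfectoidPure}: the statement reduces to producing a single map $C^\bullet\to\rafty$ in the derived category of $S$-modules and checking one compatibility. To set up the easy maps: since $z_e^{\pe}=h$ in $\rafty$ and $S=R[x]/(x^{\pe}-h)$, the assignment $x\mapsto z_e$ defines an $R$-algebra homomorphism $\rho\colon S\to\rafty$, and by construction the composite $R\xrightarrow{\,\cdot x\,}S\xrightarrow{\,\rho\,}\rafty$ is exactly $1\mapsto z_e$. The ``natural map'' $\iota\colon S=\rgamma(X,\sO_X)\to C^\bullet$ is the one given by the universal property of the homotopy pullback: the restriction $\sO_X\to\rgamma(Y,\sO_Y)$ along $\pi$ and the restriction $\sO_X\to\rgamma(Z,\sO_Z)$ agree once further restricted to $E$, both being restriction of functions on $X$ to $E$. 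So it suffices to build $\psi\colon C^\bullet\to\rafty$ with $\psi\circ\iota=\rho$: then $R\xrightarrow{\,\cdot x\,}S\xrightarrow{\,\iota\,}C^\bullet\xrightarrow{\,\psi\,}\rafty$ is $1\mapsto z_e$, which is the asserted factorization.

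To construct $\psi$ I would base change the defining pullback square along $\rho$, \ie apply $-\,\widehat{\otimes}^{\myL}_S\rafty$; being exact, this preserves homotopy pullbacks, so $C^\bullet\widehat{\otimes}^{\myL}_S\rafty$ is the homotopy pullback of the three base-changed corners. The key point, following \loccit, is that the base-changed $Y$-corner $\rgamma(Y,\sO_Y)\widehat{\otimes}^{\myL}_S\rafty$ is identified with $\rafty$ concentrated in degree $0$: using André's flatness lemma (\cite[Théorème 2.5.1]{AndreConjectureFacteurDirect}, \cite[Theorem 7.14]{BhattScholzePrisms}) one passes to a $p$-completely faithfully flat perfectoid algebra over which the base change of $\pi$ is a proper birational model whose structure sheaf has vanishing higher cohomology (Cohen--Macaulayness of absolute integral closures, in the form used in \loccit), and then descends this vanishing back along the faithfully flat map; the $H^0$ is $\rafty$ because $\pi$ is birational (\ie $\pi_*\sO_Y=\sO_X$ after base change). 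Then $\psi$ is the composite
\[
C^\bullet\longrightarrow C^\bullet\widehat{\otimes}^{\myL}_S\rafty\longrightarrow \rgamma(Y,\sO_Y)\widehat{\otimes}^{\myL}_S\rafty\;\simeq\;\rafty,
\]
the first arrow being base change along $\rho$ and the second the projection of the homotopy pullback onto its $Y$-corner.

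Finally, the compatibility $\psi\circ\iota=\rho$ is a diagram chase: unwinding the construction, $\psi\circ\iota$ is $S\xrightarrow{\,\rho\,}\rafty=\rgamma(\Spec\rafty,\sO)\to\rgamma(Y,\sO_Y)\widehat{\otimes}^{\myL}_S\rafty\simeq\rafty$, where the last map is the (base changed) restriction along the birational map $\pi$ and hence the identity on $\rafty$; so $\psi\circ\iota=\rho$. The main obstacle is the middle paragraph: proving that the higher cohomology on the $Y$-side dies after the perfectoid base change and that this identification descends correctly to $\rafty$ — this is exactly the content that \cite[Lemma 5.3]{BMPSTWWPerfectoidPure} isolates, and where the perfectoid input (André's lemma together with Bhatt-style Cohen--Macaulayness of absolute integral closures) is indispensable; I expect its proof to carry over essentially verbatim, as with the other adaptations in this section. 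Everything else is formal manipulation of homotopy pullbacks.
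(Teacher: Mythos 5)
Your first paragraph matches the paper's reduction: the ring map $\rho\colon S\to\rafty$, $x\mapsto z_e$, exists because $z_e^{\pe}=h$, the natural map $\iota\colon S\to C^\bullet$ comes from the universal property of the pullback, and everything reduces to producing $\psi\colon C^\bullet\to\rafty$ with $\psi\circ\iota=\rho$. The paper gets $\psi$ differently from you, and much more cheaply: it cites \cite[Lemma 5.3]{BMPSTWWPerfectoidPure} for a map $C^\bullet\to S\perfd$ and then composes with the map $S\perfd\to\rafty$ furnished by the universal property of perfectoidization applied to $\rho$. The engine behind the cited lemma is arc-descent for the abstract blowup square $(Y,E)\to(X,Z)$: the square of \emph{perfectoidized} structure sheaves is again a pullback (\cite[Corollary 8.10, 8.11]{BhattScholzePrisms}), so the pullback $C^\bullet$ of the original square maps to the pullback $S\perfd$ of the perfectoidized one. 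It is \emph{not} driven by the $Y$-corner becoming discrete.

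This is where your middle step has a genuine gap. The identification $\rgamma(Y,\sO_Y)\widehat{\otimes}^{\myL}_S\rafty\simeq\rafty$ is unjustified and fails in general: derived base change identifies this complex with $\rgamma$ of the derived base change of $Y$ to $\rafty$, and nothing kills $R^i\pi_*\sO_Y$ for $i>0$ under $-\widehat{\otimes}^{\myL}_S\rafty$. If $M=H^1(Y,\sO_Y)\neq 0$ is a finitely generated $S$-module (e.g.\ $Y$ a resolution of a non-rational singularity), then $M\otimes_S\rafty$ contributes to $H^1$ of the base change and does not vanish $p$-completely. The perfectoid vanishing you invoke (Bhatt's Cohen--Macaulayness of absolute integral closures, and the killing of $\tau^{>0}\rgamma(Y,\sO_Y)$ up to finite covers) concerns the colimit over finite covers \emph{of $Y$}, i.e.\ $\rgamma(Y,\sO_{Y,\perfdd})$, which is not the naive base change along $S\to\rafty$; and even $\rgamma(Y,\sO_{Y,\perfdd})$ is not $S\perfd$ --- only the full perfectoidized square is a pullback. (A further minor point: $H^0=\rafty$ would require $\pi_*\sO_Y=\sO_X$, which you cannot assume here since $S$ need not be normal and the models $Y$ used later in the paper are only $G1$ and $S2$.) So the projection onto the base-changed $Y$-corner does not produce the desired $\psi$; the correct route is descent for the whole square, which is exactly what \cite[Lemma 5.3]{BMPSTWWPerfectoidPure} packages and what the paper simply quotes before appending the universal-property step $S\perfd\to\rafty$.
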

\begin{proof}
We need to show that $C^\bullet$ maps to $\rafty$. The (ring) morphism $R\to \rafty$ extends to a morphism $S\to \rafty$ by sending $x\to z_e$.
By \cite[Lemma 5.3]{BMPSTWWPerfectoidPure}, there is a map $C^\bullet$ to $S\perfd$. By the universal property of perfectoidization, there is a map $S\perfd\to \rafty$ and the result follows.
\end{proof}

\begin{lemma}[{\cf \cite[Proposition 5.15]{BMPSTWWPerfectoidPure}}]\label{lemma lcness can be tested from S}
Let $(R,\fram)$ be a Noetherian normal quasi-Gorenstein domain with a dualizing complex $\omega_R^\bullet$. Let $h\in R$ be arbitrary and let $S\coloneqq R[x]/x^\pe-h$ for some $e>0$. Note that $S$ is $S_2$ and quasi-Gorenstein.
Let $f\colon \Spec S\to \Spec R$ be the induced map. Suppose that for any birational $\mu\colon Y\to \Spec S$ with 
\begin{enumerate}
    \item $\mu$ is an isomorphism outside a set $V(J)\subset \Spec S$ of codimension $\geq 2$
    \item $Y$ is $G1$ and $S2$
    \item If $F=\mu\inv(V(J))_{\red}$, we have that $F$ has pure codimension $1$ and that $Y$ is regular at each generic point of $F$ (that is, $F$ can be viewed as a divisor),
\end{enumerate}
we have that the composition
\begin{equation}\label{map omegay(F) to omega(R) via x}
f_*\mu_*( x\cdot \sHom_Y(\sI_F,\omega_Y)) \to f_* (x\cdot \omega_S) \to \omega_R
\end{equation}
induced by the map $R\to S$, $1\mapsto x$ surjects.
Then, if $\lfloor \unpe\Div h \rfloor$ is $0$ on $R$, the pair $(R,\unpe \Div h)$ is log canonical.
\end{lemma}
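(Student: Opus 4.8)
The plan is to argue contrapositively: suppose that $(R, \unpe \Div h)$ is \emph{not} log canonical (while $\lfloor \unpe \Div h \rfloor = 0$); I will produce a birational modification $\mu\colon Y \to \Spec S$ satisfying (1)--(3) for which the map \eqref{map omegay(F) to omega(R) via x} fails to surject. The first step is to unwind what non-log-canonicity of $(R, \unpe \Div h)$ means: since $R$ is quasi-Gorenstein, $K_R \sim 0$, so there is a proper birational $\pi\colon W \to \Spec R$ with $W$ normal such that, writing $E_W$ for the reduced exceptional divisor, the natural inclusion $\pi_* \omega_W(E_W - \lfloor \pi^*(\unpe \Div h)\rfloor) \hookrightarrow \omega_R$ is \emph{not} surjective (this is the failure of \eqref{def lc} as recorded in the remark after the definition of log canonical, applied with $\Delta = \unpe \Div h$).

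The key idea is then to transfer this modification from $\Spec R$ up to $\Spec S$ via the finite cover $f\colon \Spec S \to \Spec R$, which is a cyclic cover branched along $V(h)$. Concretely, I would take $Y$ to be (a suitable $S2$-ification / $G1$ model of) the normalization of the fiber product $W \times_{\Spec R} \Spec S$, with $\mu\colon Y \to \Spec S$ the induced map; choose the exceptional locus $V(J) = f^{-1}(\mathrm{image\ of\ }\pi)$ and $F = \mu^{-1}(V(J))_{\red}$, shrinking to the generic points of $F$ to arrange the codimension and regularity conditions (2)--(3) (these are generic conditions, so after a further blow-up one can always achieve them; the hypotheses of the lemma only concern such $Y$). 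On such a $Y$ one has $x \cdot \sHom_Y(\sI_F, \omega_Y) \cong x \cdot \omega_Y(F)$ near the generic points of $F$. The computation I expect to carry out is that, because $S = R[x]/(x^{p^e} - h)$ realizes $\unpe \Div h$ as the (pullback of the) branch divisor, the ramification formula gives $K_{S/R} = \Div(x^{p^e - 1}\, dx / \cdots)$ and pulling back through $\mu$ converts the discrepancy computation for $(R, \unpe \Div h)$ over $\pi$ into the discrepancy computation for $S$ over $\mu$ with the twist by $x$; the factor $x$ in \eqref{map omegay(F) to omega(R) via x} is exactly what absorbs the fractional part $\unpe \Div h$. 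Hence $f_* \mu_*(x \cdot \omega_Y(F)) \to \omega_R$ has image equal to $\pi_*\omega_W(E_W - \lfloor \pi^*(\unpe\Div h)\rfloor) \subsetneq \omega_R$, so \eqref{map omegay(F) to omega(R) via x} does not surject, contradiction.

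The main obstacle I anticipate is the bookkeeping around the $G1$/$S2$ hypotheses and the precise identification $x \cdot \sHom_Y(\sI_F, \omega_Y) \leftrightarrow x\cdot\omega_Y(\lceil F - \mu^*\Delta_S\rceil)$ once one passes to the (possibly non-normal) models $Y$ allowed in the statement — one must check that the relevant sheaves are reflexive/compatible with the $S2$-ification and that the trace map $f_*\omega_S \to \omega_R$ behaves as expected for this cyclic cover in mixed characteristic. Matching the integer round-downs ($\lfloor \unpe \Div h\rfloor = 0$ is used precisely so that no correction term survives on $R$) and making sure the modification $Y$ can be chosen to satisfy all of (1)--(3) simultaneously (via a further normalized blow-up of the indeterminacy, as in \cite[Proposition 5.15]{BMPSTWWPerfectoidPure}) is the delicate part; the rest is the standard ramification/discrepancy calculation for a cyclic cover.
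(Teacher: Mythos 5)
Your overall strategy is the contrapositive of the paper's argument and the key mechanism is the same: transfer a birational model of $\Spec R$ to a model $Y$ of $\Spec S$ via the finite cover $f$, and use a codimension-$1$ computation to show that the map $f_*\mu_*(x\cdot \omega_Y(F))\to \omega_R$ factors through $\pi_*\sO_X(K_X+E-\lfloor \unpe\Div h\rfloor)$, the factor $x$ absorbing the fractional part exactly as you say. (The paper runs this directly: for \emph{every} normal blowup $X\to\Spec R$ it builds the corresponding $Y$, and the assumed surjectivity of \autoref{map omegay(F) to omega(R) via x} then forces $\pi_*\sO_X(K_X+E-\lfloor\unpe\Div h\rfloor)\to\omega_R$ to surject. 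Note that for either direction one only needs the \emph{containment} of the image in $\pi_*\sO_X(K_X+E-\lfloor\unpe\Div h\rfloor)$, not the equality with $\pi_*\omega_W(E_W-\lfloor\pi^*\Delta\rfloor)$ that you assert; the equality is neither proved nor needed.)

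The genuine problem is your construction of $Y$ as ``the normalization of $W\times_{\Spec R}\Spec S$.'' The ring $S=R[x]/(x^\pe-h)$ is $S_2$ but in general \emph{not} normal in codimension $1$: the cover $\Spec S\to\Spec R$ is wildly ramified along $V(p)$ (and along $V(h)$), so the non-normal locus of $S$, and hence of $W\times_{\Spec R}\Spec S$, can contain codimension-$1$ points lying over the locus where $W\to\Spec R$ is already an isomorphism. The full normalization therefore fails condition (a) of the lemma --- $\mu$ would not be an isomorphism outside a codimension-$\geq 2$ subset of $\Spec S$ --- and the hypothesis of the lemma cannot be applied to it. Relatedly, conditions (b) and (c) must hold on all of $Y$; they are not arranged by ``shrinking to the generic points of $F$.'' The paper's fix is to normalize only over the exceptional locus: with $Y_0\to\Spec S$ the blowup of $IS$ and $V\subset Y_0$ the preimage of the locus where $\pi$ is an isomorphism, one sets $\sC=\sO_{Y_0}^N\cap i_*\sO_V$ (intersection inside the total fraction ring) and $Y=\mathbf{Spec}_{Y_0}(\sC)$. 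This $Y$ is still finite over $X$, is an isomorphism over $V$ (so (a) holds), is $G1$ and $S2$, and has $F$ a divisor with $Y$ regular at its generic points by the results quoted from \cite[Theorem 5.14, Proposition 5.15]{BMPSTWWPerfectoidPure}. With that replacement your contrapositive argument goes through.
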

\begin{proof} This is essentially the same proof as \cite[Proposition 5.15]{BMPSTWWPerfectoidPure}.
Let $\pi\colon X\to \Spec R$ be a blowup with $X$ normal and which is an isomorphism over $U=X\setminus \pi(D)$ for some (exceptional) divisor $D$ on X. Let $I\subset R$ be an ideal whose blowup produces $\pi\colon X\to \Spec R$. Let $Y_0\to \Spec S$ denote the blowup of $IS$ and note we have a finite map $Y_0\to X$. Let $V\subset Y_0$ be the inverse image of $U$ and note that it is quasi-Gorenstein since it is an open subset of $\Spec S$. Let $i\colon V\to Y_0$ be the inclusion. Let $\sC$ be the integral closure of $\sO_{Y_0}$ in $i_*\sO_U$ \ie $\sC=\sO_{Y_0}^N\cap i_* \sO_V$ where the intersection is taken in the fraction field of $Y_0$. Let $Y\coloneqq \textbf{Spec}_{Y_0}(\sC)$.
Then, $Y$ is $G1$ and $S2$ and has a finite map to $Y_0$ since our base is excellent. Therefore, there is a finite map $g\colon Y\to X$ and the induced map $Y\to \Spec S$ is an isomorphism over $V$. Let $E$ and $F$ denote the reduced exceptional sets of the maps $X\to \Spec R$ and $Y\to \Spec S$, respectively. By \cite[Theorem 5.14, Proposition 5.15]{BMPSTWWPerfectoidPure}, $F$ is a divisor in the sense of \cite{KollarSingulaitieofMMP}. We have the following commutative diagram
\[
\xymatrix{
F \ar@{^{(}->}[d] \ar@{->>}[r]^h & E \ar@{^{(}->}[d]\\
Y \ar[d]^\mu \ar@{->>}[r]^g & X \ar[d]^\pi\\
\Spec S\ar@{->>}[r]^f& \Spec R
}
\]
where all the horizontal maps are finite, by construction. This induces the following diagram of canonical modules
\[
\xymatrix{
0 \ar[r] & g_*\omega_Y \ar[d] \ar[r] & g_*\omega_Y(F) \ar[d]  \ar[r] & h_*\omega_F \ar[d]\\
0 \ar[r]& \sO_X(K_X) \ar[r] & \sO_X(K_X+E) \ar[r]& \omega_E\\
}
\]
where the notation $\sHom_Y(\sI_F,\omega_Y)=\omega_Y(F)=\sO_Y(K_Y+F)$ is reasonable as $Y$ is $G_1$ and $S_2$.
\begin{claim}
The image of $g_*(x\cdot \omega_Y(F))\to \sO_X(K_X +E)$ is contained in the sheaf $\sO_X(K_X + E - \lfloor \unpe \Div h \rfloor)$.
\end{claim}
\begin{proof}
Since all sheaves are $S2$, we can check this in codimension $1$. On $V$, we can reduce to the affine case and there is nothing to show. At the generic points of $F$, $Y$ is normal and the result follows from our choice of rounding.
\end{proof}
By pushing forward to $R$, we get a map 
\[
\pi_*g_*(x\cdot \omega_Y(F))\to \pi_*(\sO_X(K_X+E-\lfloor\unpe\Div h \rfloor))\to R\cong \omega_R.
\]
But this can also be factored as 
\[
f_*\mu_*( x\cdot \sHom_Y(\sI_F,\omega_Y)) \to f_* (x\cdot \omega_S) \to \omega_R,
\]
which we assumed to be surjective. Then, $\pi_*( \sO_X(K_X+E-\lfloor\unpe\Div h \rfloor))\to \omega_R$ is surjective and since $X$ was arbitrary, $R$ is log canonical.
\end{proof}
\begin{remark}\label{rk weaker assumptions for lcness from S}
Note the proof actually shows that it suffices to check \autoref{map omegay(F) to omega(R) via x} for all birational maps $Y\to \Spec S$ built from a blowup $\pi\colon X\to \Spec R$ with $X$ normal as in the proof.
\end{remark}

\begin{lemma}[{\cf \cite[Claim 5.5]{BMPSTWWPerfectoidPure}}]\label{bottom coh of C bullet}
Let $R$, $h$, $e$, and $S$ be as in \autoref{map to c bullet is pure}.
Let $X\to \Spec R$ be a blowup with $X$ normal. Let $\omega_R^\bullet$ be a dualizing complex for $R$ and let $\bold{D}$ denote Grothendieck duality $\RHom_R(-,\omega_R^\bullet)$.
Constructing $Y$ as in \autoref{lemma lcness can be tested from S} and $C^\bullet$ from the birational map $\mu\colon Y\to \Spec S$, as in \autoref{map to c bullet is pure}, we have 
\[H^{-d}(\bold{D}(C^\bullet))=\Gamma(Y,\omega_Y(F))\] for $F$ the reduced exceptional divisor of $\mu$.
The corresponding map $\Gamma(Y,\omega_Y(F) )\to \omega_R$ factors through $\Gamma(Y,x\cdot \omega_Y(F))$. Here, we interpret $\omega_Y(F)$ as in \autoref{lemma lcness can be tested from S}.
\end{lemma}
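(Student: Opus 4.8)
The plan is to apply Grothendieck duality $\bold{D}=\RHom_R(-,\omega_R^\bullet)$ to the homotopy‑pullback square defining $C^\bullet$, turning it into a homotopy‑pushout square, and then to extract the bottom cohomology from the dimensions of the three schemes involved. Write $d:=\dim R$ ($=\dim S=\dim Y$), and normalize $\omega_R^\bullet$ so that it sits in degrees $\geq-d$ with $H^{-d}(\omega_R^\bullet)=\omega_R$. Because $\bold{D}$ is a contravariant triangulated equivalence on $D^{b}_{\coh}(R)$ and $\rgamma(Y,\sO_Y),\rgamma(Z,\sO_Z),\rgamma(F,\sO_F)$ — hence $C^\bullet$ — all lie in $D^{b}_{\coh}(R)$ (each of $Y,Z,F$ is proper over the affine scheme $\Spec R$), it carries the defining square to the pushout square
\[
\xymatrix{
\bold{D}(\rgamma(F,\sO_F)) \ar[d] \ar[r] & \bold{D}(\rgamma(Y,\sO_Y)) \ar[d]\\
\bold{D}(\rgamma(Z,\sO_Z)) \ar[r] & \bold{D}(C^\bullet).
}
\]
By Grothendieck duality along the proper structure maps to $\Spec R$ and along the closed immersion $F\hookrightarrow Y$, the corners become $\bold{D}(\rgamma(Y,\sO_Y))=\rgamma(Y,\omega_Y^\bullet)$, $\bold{D}(\rgamma(Z,\sO_Z))=\rgamma(Z,\omega_Z^\bullet)$, $\bold{D}(\rgamma(F,\sO_F))=\rgamma\!\left(Y,\RHom_{\sO_Y}(\sO_F,\omega_Y^\bullet)\right)$, and the top arrow is $\rgamma(Y,-)$ of the map $\RHom_{\sO_Y}(\sO_F,\omega_Y^\bullet)\to\omega_Y^\bullet$ dual to the restriction $\sO_Y\twoheadrightarrow\sO_F$. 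Applying the octahedral axiom to $\bold{D}(\rgamma(F,\sO_F))\to\bold{D}(\rgamma(Y,\sO_Y))\oplus\bold{D}(\rgamma(Z,\sO_Z))\xrightarrow{\mathrm{pr}}\bold{D}(\rgamma(Y,\sO_Y))$ then peels the $Z$‑summand off and gives a triangle
\[
\rgamma(Z,\omega_Z^\bullet)\longrightarrow\bold{D}(C^\bullet)\longrightarrow\rgamma\!\left(Y,\RHom_{\sO_Y}(\sI_F,\omega_Y^\bullet)\right)\xrightarrow{+1},
\]
where $\sI_F\subset\sO_Y$ is the ideal sheaf of $F$ and I used the triangle $\RHom_{\sO_Y}(\sO_F,\omega_Y^\bullet)\to\omega_Y^\bullet\to\RHom_{\sO_Y}(\sI_F,\omega_Y^\bullet)$ coming from $0\to\sI_F\to\sO_Y\to\sO_F\to0$.

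Next I would pass to cohomology in degree $-d$. Since $Z=V(J)\subset\Spec S$ has codimension $\geq2$, $\dim Z\leq d-2$, so $\rgamma(Z,\omega_Z^\bullet)$ is concentrated in degrees $\geq-(d-2)$; in particular $H^{-d}$ and $H^{-d+1}$ of it vanish and the triangle above yields $H^{-d}(\bold{D}(C^\bullet))\cong H^{-d}\!\left(\rgamma(Y,\RHom_{\sO_Y}(\sI_F,\omega_Y^\bullet))\right)$. Via the hyper‑$\mathcal{E}xt$ spectral sequence $\mathcal{E}xt^{p}_{\sO_Y}(\sI_F,H^{q}(\omega_Y^\bullet))\Rightarrow H^{p+q}$, and using $\omega_Y^\bullet\in D^{\geq-d}$ with $H^{-d}(\omega_Y^\bullet)=\omega_Y=\sO_Y(K_Y)$ (legitimate since $Y$ is $G1$ and $S2$), the complex $\RHom_{\sO_Y}(\sI_F,\omega_Y^\bullet)$ lies in $D^{\geq-d}$ with $H^{-d}=\sHom_{\sO_Y}(\sI_F,\omega_Y)=\omega_Y(F)$ in the notation of \autoref{lemma lcness can be tested from S}. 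Hence $H^{-d}(\bold{D}(C^\bullet))=H^{0}(Y,\omega_Y(F))=\Gamma(Y,\omega_Y(F))$.

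For the final assertion I would dualize the factorization of \autoref{map to c bullet is pure}. There the $R$‑module map $R\to C^\bullet$ (the composite $R\to S\to C^\bullet$) sends $r\mapsto r\bar{x}$; since $S\to C^\bullet$ is $S$‑linear this is the ring map $R\to S\to C^\bullet$ followed by multiplication by $x$ on $C^\bullet$. Dualizing, the map $\bold{D}(C^\bullet)\to\omega_R^\bullet$ underlying the displayed map $\Gamma(Y,\omega_Y(F))\to\omega_R$ factors as multiplication by $x$ on $\bold{D}(C^\bullet)$ followed by a map $\bold{D}(C^\bullet)\to\omega_R^\bullet$; since all the identifications above are $S$‑linear, on $H^{-d}$ multiplication by $x$ becomes multiplication by $x\in S$ on $\Gamma(Y,\omega_Y(F))$ through the $\sO_Y$‑module structure. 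Thus $\Gamma(Y,\omega_Y(F))\to\omega_R$ factors through $x\cdot\Gamma(Y,\omega_Y(F))\subseteq\Gamma(Y,x\cdot\omega_Y(F))$, which is exactly the composite $f_*\mu_*(x\cdot\sHom_Y(\sI_F,\omega_Y))\to f_*(x\cdot\omega_S)\to\omega_R$ of \autoref{lemma lcness can be tested from S}.

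The step I expect to be the main obstacle is the Grothendieck--Serre duality bookkeeping for the singular pieces: identifying $\bold{D}(\rgamma(F,\sO_F))$ with $\rgamma(Y,\RHom_{\sO_Y}(\sO_F,\omega_Y^\bullet))$ and its transition map to $\bold{D}(\rgamma(Y,\sO_Y))$ as the dual of $\sO_Y\twoheadrightarrow\sO_F$, tracking the normalizations of the several dualizing complexes so that $-d$ is genuinely the bottom degree, and — for the last statement — checking that the whole duality‑plus‑octahedron package is $S$‑equivariant so that a single multiplication by $x$ is what gets factored out. Given those, the cohomological conclusion is forced by the dimension bounds $\dim Z\leq d-2$, $\dim F=d-1$, $\dim Y=d$ together with the identification $H^{-d}(\RHom_{\sO_Y}(\sI_F,\omega_Y^\bullet))=\sHom_{\sO_Y}(\sI_F,\omega_Y)$.
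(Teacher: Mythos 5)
Your argument is correct and is essentially the proof that the paper outsources to \cite[Claim 5.5]{BMPSTWWPerfectoidPure}: dualize the defining fiber square, identify the corners via Grothendieck duality for the proper maps to $\Spec R$, kill the $Z$-contribution in degrees $-d$ and $-d+1$ using $\dim Z\le d-2$, and read off $H^{-d}$ of $\RHom_{\sO_Y}(\sI_F,\omega_Y^\bullet)$ as $\Gamma(Y,\omega_Y(F))$. The factorization through $\Gamma(Y,x\cdot\omega_Y(F))$ is likewise exactly what the paper means by "the choice of map $R\to C^\bullet$" from \autoref{map to c bullet is pure}, so no further comment is needed.
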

\begin{proof}
This follows from \cite[Claim 5.5]{BMPSTWWPerfectoidPure} and the choice of map $R\to C^\bullet$.
\end{proof}

Before our next lemma, we need a result about pure maps in the derived category, which we state for the convenience of the reader. 

\begin{lemma}[{\cite[Proposition 2.11]{BMPSTWWPerfectoidPure}}]\label{pure implies inj on coh}
Let $R$ be a Noetherian ring, $\fra\subset R$ an ideal, and let $f\colon M\to N$ be a pure map in $D(R)$ in the sense of \cite[Section 2.1]{BMPSTWWPerfectoidPure}. Then, $H^i \bold{R}\Gamma_\fra M\to H^i \bold{R}\Gamma_\fra N$ is injective for all $i$.
\end{lemma}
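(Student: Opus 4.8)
The plan is to reduce the statement to the definition of purity by rewriting derived local cohomology as a derived tensor product. Write $\fra = (x_1,\ldots,x_r)$ and let $\check{C}^\bullet = \check{C}^\bullet(x_1,\ldots,x_r)$ be the extended (stable) $\check{\mathrm{C}}$ech complex $\bigl[\, R \to \bigoplus_i R_{x_i} \to \cdots \to R_{x_1\cdots x_r} \,\bigr]$, a bounded complex of flat $R$-modules concentrated in degrees $0,\ldots,r$. Since its terms are flat, $\check{C}^\bullet \otimes_R M = \check{C}^\bullet \otimes^{\bold L}_R M$ computes $\bold{R}\Gamma_{\fra}(M)$ for every $M$, and $\check{C}^\bullet \simeq \bold{R}\Gamma_{\fra}(R)$; consequently $\bold{R}\Gamma_{\fra}(M) \simeq M \otimes^{\bold L}_R \bold{R}\Gamma_{\fra}(R)$ functorially in $M \in D(R)$, and in particular $\bold{R}\Gamma_{\fra}(f) = f \otimes^{\bold L}_R \id_{\bold{R}\Gamma_{\fra}(R)}$.

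Next I would invoke the definition of a pure map in $D(R)$: $f$ is pure means $H^0\bigl(f \otimes^{\bold L}_R E\bigr)$ is injective for every $E \in D(R)$ (equivalently for every $R$-module $E$, the two formulations agreeing once one notes that a pure exact sequence of modules is a filtered colimit of split exact sequences, so all of its higher $\Tor$ modules split off as well). Taking $E = \bold{R}\Gamma_{\fra}(R)[i]$ then gives exactly
\[
H^0\bigl(f \otimes^{\bold L}_R \bold{R}\Gamma_{\fra}(R)[i]\bigr) = H^i\bigl(f \otimes^{\bold L}_R \bold{R}\Gamma_{\fra}(R)\bigr) = H^i\bigl(\bold{R}\Gamma_{\fra}(f)\bigr),
\]
which is therefore injective, as desired. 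If one prefers to use only module coefficients, one instead writes $\check{C}^\bullet = \colim_n \mathrm{Kos}^\bullet(x_1^n,\ldots,x_r^n;R)$ as a filtered colimit of Koszul complexes on finite free modules; since $H^i$ and $\otimes^{\bold L}$ commute with filtered colimits and a filtered colimit of injections of $R$-modules is injective, it suffices to prove $H^i(f\otimes^{\bold L}_R K^\bullet)$ injective for a single bounded complex $K^\bullet$ of finite free $R$-modules, which one does by peeling off one variable at a time from the Koszul complex and using that purity of $f$ is inherited both by $-\otimes_R R/(y)$ and by $\Hom_R(R/(y),-)$ — again via the description of a pure map as a filtered colimit of split injections.

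The only genuinely delicate point is the compatibility of the chosen notion of purity in $D(R)$ with the derived tensor product occurring in $\bold{R}\Gamma_{\fra}$: namely, that purity is preserved by $-\otimes^{\bold L}_R X$ for $X\in D(R)$ (applied with $X = \bold{R}\Gamma_{\fra}(R)$), and, as a special case ($X = R[i]$), that a pure map in $D(R)$ is injective on all cohomology. Once this formal input is in place the conclusion is immediate. This is precisely \cite[Proposition 2.11]{BMPSTWWPerfectoidPure}, which I would cite.
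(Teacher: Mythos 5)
The paper does not actually prove this lemma: it is quoted verbatim from \cite[Proposition 2.11]{BMPSTWWPerfectoidPure} ``for the convenience of the reader,'' so there is no internal argument to compare against. Your reconstruction --- representing $\rgamma_\fra(-)$ as $\check{C}^\bullet\otimes^{\mathbf{L}}_R(-)$ for the extended \Cech complex and feeding the shifts $\check{C}^\bullet[i]$ into the definition of purity in $D(R)$ --- is correct and is exactly how the cited result is established in that reference.
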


\begin{proposition}[{\cf \cite[Proposition 5.4]{BMPSTWWPerfectoidPure}, \cite{KovacsSchwedeSmithLCImpliesDuBois}}]\label{lc implies log canonical p in p}
Let $R$, $A$, $h$, $e$ be as in \autoref{map to c bullet is pure}. Let $\p\in \Spec R$ be a prime that contains $p$. 
If $R$ is quasi-Gorenstein and $e$ is such that $\lfloor \unpe\Div h \rfloor$ is $0$ on $R$, the map $R_\p\to R^A_{\infty,\p}$ sending $1$ to $h^\unpe$ is pure, then $(R_\p,\unpe \Div h)$ is log canonical. 
\end{proposition}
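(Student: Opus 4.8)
The plan is to reduce the statement to the criterion of \autoref{lemma lcness can be tested from S}, applied to the ring $R_\p$ (or rather a suitable complete local model of it so that the $A_\infty$-machinery applies), and to extract the needed surjectivity from the purity hypothesis via Grothendieck duality. First I would localize at $\p$ and complete: by \autoref{pfd pure iff completion pfd pure} and \cite[Example 3.8(7)]{BhattIyengarMaRegularRings} the hypothesis that $R_\p\to R^A_{\infty,\p}$ sending $1$ to $h^\unpe$ is pure passes to the $\fram_\p$-adic completion, so without loss of generality I may assume $(R,\fram)$ itself is complete local normal quasi-Gorenstein with $p\in\fram$, $h\in R$, and $R\to\rafty$, $1\mapsto z_e=h^\unpe$ pure, and I must show $(R,\unpe\Div h)$ is log canonical. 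By \autoref{rk weaker assumptions for lcness from S} it suffices to verify, for every birational $\mu\colon Y\to\Spec S$ of the special shape built from a normal blowup $\pi\colon X\to\Spec R$ as in the proof of \autoref{lemma lcness can be tested from S} (with $S=R[x]/(x^\pe-h)$), that the composition $f_*\mu_*(x\cdot\sHom_Y(\sI_F,\omega_Y))\to f_*(x\cdot\omega_S)\to\omega_R$ is surjective.

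Next I would set up the cohomological translation. Form $C^\bullet$ from $\mu$ as in \autoref{map to c bullet is pure}, so that the map $R\to\rafty$, $1\mapsto z_e$ factors as $R\to S\to C^\bullet\to\rafty$ with $R\to S$ multiplication by $x$. By \autoref{bottom coh of C bullet}, $H^{-d}(\bold{D}(C^\bullet))=\Gamma(Y,\omega_Y(F))$ and the induced map $\Gamma(Y,\omega_Y(F))\to\omega_R$ factors through $\Gamma(Y,x\cdot\omega_Y(F))$; this last factored map is precisely the map whose surjectivity I need. Now since $R\to\rafty$ is pure (as a map of $R$-modules, hence pure in $D(R)$) and it factors through $R\to C^\bullet$, the map $R\to C^\bullet$ is also pure in $D(R)$. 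Apply \autoref{pure implies inj on coh} with $\fra=\fram$: $H^i\bold{R}\Gamma_\fram(R)\to H^i\bold{R}\Gamma_\fram(C^\bullet)$ is injective for all $i$; in particular for $i=d=\dim R$ the map $H^d_\fram(R)\to H^d_\fram(C^\bullet)$ is injective. Dualizing via Matlis duality and Grothendieck duality (using that $\omega_R\cong R$ since $R$ is quasi-Gorenstein), injectivity of $H^d_\fram(R)\to H^d_\fram(C^\bullet)$ becomes surjectivity of $H^{-d}(\bold{D}(C^\bullet))\to H^{-d}(\bold{D}(R))=\omega_R$, i.e. surjectivity of $\Gamma(Y,\omega_Y(F))\to\omega_R$. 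Combined with the factoring through $\Gamma(Y,x\cdot\omega_Y(F))$ from \autoref{bottom coh of C bullet}, this gives exactly the surjectivity required by \autoref{lemma lcness can be tested from S}, so that lemma concludes $(R,\unpe\Div h)$ is log canonical.

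The main obstacle I expect is the bookkeeping in the dualization step: one must check that $\bold{D}$ applied to the factorization $R\to C^\bullet\to\rafty$ really does produce, on the bottom cohomology, the concrete map $\Gamma(Y,x\cdot\omega_Y(F))\to\omega_R$ of \autoref{lemma lcness can be tested from S} (and not merely some abstractly surjective map), and that the hypotheses (1)--(3) on $\mu\colon Y\to\Spec S$ in \autoref{lemma lcness can be tested from S} are met by the $Y$ constructed in \autoref{map to c bullet is pure}/\autoref{bottom coh of C bullet}. Both of these are already handled by the cited lemmas (\autoref{bottom coh of C bullet} explicitly records the factorization through $x\cdot\omega_Y(F)$, and the construction of $Y$ in \autoref{lemma lcness can be tested from S} is exactly the one reused in \autoref{bottom coh of C bullet}), so the argument is mostly a matter of assembling them in the right order; the genuinely new input is just the observation that purity of $R\to\rafty$ forces purity of the intermediate $R\to C^\bullet$ and hence, via \autoref{pure implies inj on coh}, the surjectivity after dualizing. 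A minor secondary point is making sure the rounding hypothesis $\lfloor\unpe\Div h\rfloor=0$ on $R$ is preserved under localization and completion, which is immediate.
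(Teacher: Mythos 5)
Your proposal is correct and follows essentially the same route as the paper's proof: reduce to the complete local case via \autoref{pfd pure iff completion pfd pure}, deduce purity of $R\to C^\bullet$ from the factorization in \autoref{map to c bullet is pure}, apply \autoref{pure implies inj on coh} to get injectivity on top local cohomology, dualize using \autoref{bottom coh of C bullet}, and conclude with \autoref{lemma lcness can be tested from S}. The only difference is that you spell out the dualization bookkeeping and the appeal to \autoref{rk weaker assumptions for lcness from S} more explicitly than the paper does.
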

\begin{proof} 
By taking the $\p$-adic completion of $R_\p$ and $R^A_{\infty,\p}$ and using \autoref{pfd pure iff completion pfd pure}, we can reduce to the case where $\p$ is the maximal ideal.
Let $C^\bullet$ be as in \autoref{bottom coh of C bullet}.
The map $R\to C^\bullet$ is pure by \autoref{map to c bullet is pure} so $H_\fram^d(R)\to H_\fram^d(\rgamma(X,C^\bullet))$ is injective by \autoref{pure implies inj on coh}. 
Then,
the dual map $f_* \mu_*(x\cdot \omega_Y(F))\to \omega_X$, surjects for $f$ and $\mu$ as in \autoref{lemma lcness can be tested from S} and the result follows from \autoref{lemma lcness can be tested from S}.
\end{proof}

\begin{proposition}[{\cf \cite[Proposition 5.4]{BMPSTWWPerfectoidPure}, \cite{KovacsSchwedeSmithLCImpliesDuBois}}]\label{lc implies log canonical}
Let $R$, $A$, $h$, $e$, $S$, and $x$ be as in \autoref{map to c bullet is pure}. Let $\p\in \Spec R$ be a prime that does not contain $p$ and assume that $h\in \p$.
If $R$ is quasi-Gorenstein, $e\in \N_{>0}$ is such that $\lfloor \unpe\Div h \rfloor$ is $0$ on $R$, and 
the map $R_\p\to R^A_{\infty,\p}$ sending $1$ to $h^\unpe$ is pure, then the pair $(R_\p,\unpe \Div h)$ is log canonical. 
\end{proposition}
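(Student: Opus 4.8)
The plan is to reuse the argument of \autoref{lc implies log canonical p in p} with a single modification: since $p\notin\p$, the residue field of $R_\p$ has characteristic $0$, so neither \autoref{pfd pure iff completion pfd pure} nor the framework of \autoref{notation rainfty} survives localization at $\p$, and we cannot reduce to the case where $\p$ is the maximal ideal by passing to the $\p$-adic completion. This is the main (and essentially the only) obstacle, and it is harmless: the only step that genuinely uses the mixed characteristic setup is \autoref{map to c bullet is pure}, which I would invoke over $R$ itself, \emph{before} localizing; everything afterwards is a statement about $R_\p$ — which, like $R$, is a normal quasi-Gorenstein local domain admitting a dualizing complex — and about a pure map out of it, and \autoref{bottom coh of C bullet}, the Matlis-duality step, and \autoref{lemma lcness can be tested from S} use nothing more than this.

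Concretely, I would first observe that by \autoref{rk weaker assumptions for lcness from S} it suffices to check the hypothesis of \autoref{lemma lcness can be tested from S} for birational maps $\mu\colon Y\to\Spec S_\p$, $S_\p\coloneqq S\otimes_R R_\p$, built from normalized blowups of $\Spec R_\p$, and that — blowups, normalization, and the auxiliary constructions in the proof of \autoref{lemma lcness can be tested from S} all commuting with the localization $R\to R_\p$ — each such $\mu$ is the base change along $R\to R_\p$ of a birational map $\mu_0\colon Y_0\to\Spec S$. Applying \autoref{map to c bullet is pure} to $\mu_0$ over $R$ produces a homotopy-pullback complex $C^\bullet$ through which the map $R\to\rafty$, $1\mapsto h^\unpe$, factors as $R\to S\to C^\bullet\to\rafty$, with the first map multiplication by $x$. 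Base-changing this factorization along the flat map $R\to R_\p$ exhibits $R_\p\to R^A_{\infty,\p}$, $1\mapsto h^\unpe$, as factoring through $R_\p\to C^\bullet_\p$; since the former is pure by hypothesis and a factor of a pure map is pure, $R_\p\to C^\bullet_\p$ is pure in $D(R_\p)$.

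Next I would apply \autoref{pure implies inj on coh} with $\fra=\p R_\p$ to conclude that $H^i\rgamma_{\p R_\p}(R_\p)\to H^i\rgamma_{\p R_\p}(C^\bullet_\p)$ is injective for all $i$, and take $i=\dim R_\p$. By Matlis duality over $R_\p$ — using $\omega_{R_\p}\cong R_\p$ and the identification of \autoref{bottom coh of C bullet}, whose proof is a Grothendieck-duality computation over a normal quasi-Gorenstein local domain with a dualizing complex and so applies over $R_\p$ with $d$ replaced by $\dim R_\p$, and descending along the faithfully flat completion $R_\p\to\widehat{R_\p}$ — this injection dualizes to the surjectivity of the map
\[
f_*\mu_*\!\left(x\cdot\sHom_Y(\sI_F,\omega_Y)\right)\to f_*\!\left(x\cdot\omega_{S_\p}\right)\to\omega_{R_\p}
\]
of \autoref{lemma lcness can be tested from S}, exactly as in the proof of \autoref{lc implies log canonical p in p}. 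Since $\lfloor\unpe\Div h\rfloor=0$ on $R$ forces $\lfloor\unpe\Div h\rfloor=0$ on $R_\p$, \autoref{lemma lcness can be tested from S} then gives that $(R_\p,\unpe\Div h)$ is log canonical.
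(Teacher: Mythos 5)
Your proposal is correct and follows essentially the same route as the paper: realize the given blowup of $\Spec R_\p$ as the localization of a blowup of $\Spec R$ (the paper blows up $I\cap R$), build $C^\bullet$ globally via \autoref{map to c bullet is pure}, localize to get purity of $R_\p\to C^\bullet_\p$ as a factor of the pure map $R_\p\to R^A_{\infty,\p}$, and then apply \autoref{pure implies inj on coh}, the duality identification of \autoref{bottom coh of C bullet} (which, as you note, does not use $p$-completeness), and \autoref{lemma lcness can be tested from S} together with \autoref{rk weaker assumptions for lcness from S}. No gaps.
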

\begin{proof}
Let $\pi_\p\colon X_\p\to \Spec R_\p$ be a blowup, say of the ideal $I\subset R_\p$ and let $\pi\colon X \to \Spec R$ be the blowup of $\Spec R$ at the ideal $I\cap R$. Assume that both $X$ and $X_\p$ are normal. 
Keeping the notation of \autoref{lemma lcness can be tested from S} applied to $\pi$ and construct $S$, $C^\bullet$ as in \autoref{map to c bullet is pure} and $Y$, $F$ with maps $f\colon \Spec S\to \Spec R$ and $\mu\colon Y\to \Spec S$
as in \autoref{lemma lcness can be tested from S}. 
Let $C^\bullet_\p\coloneqq C^\bullet\tensor_R R_\p$. Note that $C^\bullet_\p$ is the following pullback in the (unbounded) derived $\infty$-category of $S_\p$-modules 
\[
\xymatrix{
C_\p^\bullet \ar[d] \ar[r] & \rgamma(Y_\p,\sO_{Y_\p}) \ar[d] \\
\rgamma(Z_\p, \sO_{Z_\p}) \ar[r]& \rgamma(F_\p, \sO_{F_\p}).
}
\]
where $X_\p$, $Y_\p$ and $F_\p$ are the base changes of $X$, $Y$ and $F$, respectively, from $\Spec R$ to $\Spec R_\p$. Let $\mu_\p\colon Y_\p\to \Spec S_\p$ and $f_\p\colon \Spec S_\p \to \Spec R_\p$ be the corresponding maps.
Since the proof of \autoref{bottom coh of C bullet} (\cf \cite[Claim 5.5]{BMPSTWWPerfectoidPure}) did not make use of the $p$-completeness of $R$, letting $\omega^\bullet_{R_\p}$ be a normalized dualizing complex for $R_\p$ and $\bold{D}$ denote Grothendieck duality $\RHom_R(-,\omega^\bullet_{R_\p})$, we have 
\[
H^{-r}(\bold{D}(C^\bullet))=\Gamma(Y_\p,\omega_{Y_\p}(F_\p))
\]
for $r=\dim R_\p$.
Since the multiplication-by-$x$ map $R_\p$ to $C_\p$ is pure, by \autoref{pure implies inj on coh}, the corresponding map $H^r_{\p R_\p}(R_\p)\to H^r_{\p R_\p}(C^\bullet_\p)$ injects and its dual surjects. Then, the map 
\[
(f_\p)_* (\mu_\p)_*(x \cdot \omega_{Y_\p}(F_\p))\to \omega_{R_\p}
\] 
surjects and by \autoref{lemma lcness can be tested from S} and \autoref{rk weaker assumptions for lcness from S}, we are done. 
\end{proof}

\begin{theorem}[{\cf \cite[Theorem 6.7]{SchwedeCentersOfFPurity}}]\label{lc center is cpp for big enough A}
Let $R$ be as in \autoref{notation rainfty}  and assume further that it is quasi-Gorenstein and normal. Let $\p\in \Spec R$ be a log canonical center of $R$. Then, $\p$ is uniformly perfectoid compatible. In particular, if $R$ has no uniformly perfectoid compatible ideal, it must be klt.
\end{theorem}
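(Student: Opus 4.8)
The plan is to combine the non-purity criterion for compatibility (\autoref{not pfd pure p not in P implies compatible}) with the log-canonicity obstruction results \autoref{lc implies log canonical} and \autoref{lc implies log canonical p in p}, after choosing $A$ suitably. Write $\p=(h_1,\ldots,h_r)$, possible since $R$ is Noetherian. By \autoref{uniformly compatible doesn't depend on the choice of A} it suffices to produce a single $A\in\sA$ for which $\p$ is $\rafty$-compatible, and I will choose $A$ so that every $h_i$ is the image of an element of a regular system of parameters of $A$: starting from any $A_0=C_k\llbracket x_1,\ldots,x_d\rrbracket\in\sA$, set $A\coloneqq A_0\llbracket y_1,\ldots,y_r\rrbracket$ and make $R$ an $A$-algebra via $y_i\mapsto h_i$. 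Then $R$ is still module-finite over $A$ and the Cohen ring still matches up, so $A\in\sA$; moreover $\rafty$ contains a compatible system of $p$-th power roots $\{z_{i,e}\}$ of $h_i$ (the images of $y_i^\unpe$), with $z_{i,e}^{p^e}=h_i$.

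I now want to apply \autoref{not pfd pure p not in P implies compatible} with $B=\rafty$, so it is enough to check that for all $e\gg 0$ and all $i$ the map $R_\p\to R^A_{\infty,\p}$, $1\mapsto z_{i,e}=h_i^\unpe$, is not pure. Fix $i$. Since $\p$ is a log canonical center and $h_i\in\p$, there is $\varepsilon_i>0$ such that $(R_\p,\varepsilon\Div h_i)$ is not log canonical for every $0<\varepsilon<\varepsilon_i$. The coefficients of the effective divisor $\Div h_i$ on $\Spec R$ form a finite, bounded set of positive integers, so for $e\gg 0$ we have simultaneously $\unpe<\varepsilon_i$ and $\lfloor\unpe\Div h_i\rfloor=0$ on $R$; for such $e$ the pair $(R_\p,\unpe\Div h_i)$ is not log canonical. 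As $R$, hence $R_\p$, is normal quasi-Gorenstein and $h_i$ is the image of a regular parameter of $A$, the contrapositive of \autoref{lc implies log canonical} (if $p\notin\p$) or of \autoref{lc implies log canonical p in p} (if $p\in\p$) gives that $R_\p\to R^A_{\infty,\p}$, $1\mapsto h_i^\unpe$, is not pure. Feeding this into \autoref{not pfd pure p not in P implies compatible}, $\p$ is $\rafty$-compatible, hence uniformly perfectoid compatible.

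For the final sentence, suppose $R$ is not klt; I will exhibit a nonzero proper uniformly perfectoid compatible ideal, contradicting the hypothesis. If $R$ is not log canonical then, being a fortiori not canonical, its multiplier ideal $\sJ$ is a nonzero proper ideal, which is uniformly perfectoid compatible by \autoref{multiplier ideal uniformly compatible}. If instead $R$ is log canonical but not klt, there is a prime divisor $E$ over $\Spec R$ with discrepancy $-1$; its center $Z=c_{\Spec R}(E)$ is a nonempty proper irreducible closed subset, and a direct check from the definition --- for $f$ in the ideal of $Z$ one has $\ord_E f\geq 1$, so $(\Spec R,\varepsilon\Div f)$ fails to be log canonical at the generic point of $Z$ for all $\varepsilon>0$ --- shows that $Z$ is a log canonical center of $R$, whence its prime ideal is uniformly perfectoid compatible by the first part. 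Either way $R$ has a uniformly perfectoid compatible ideal, so $R$ must be klt.

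I do not anticipate a serious obstacle beyond two pieces of bookkeeping: enlarging $A$ so that all generators of $\p$ become regular parameters while remaining in $\sA$ (for which one must check module-finiteness is preserved and invoke \autoref{uniformly compatible doesn't depend on the choice of A}), and selecting a single exponent $e$ that works uniformly over $i$. The real work of the theorem --- converting the failure of log canonicity of $(R_\p,\unpe\Div h_i)$ into the failure of purity of the corresponding $p^e$-th root map --- is precisely the content of \autoref{lc implies log canonical} and \autoref{lc implies log canonical p in p}, which rest on the dualizing-complex computation \autoref{bottom coh of C bullet} and the descent criterion \autoref{lemma lcness can be tested from S}.
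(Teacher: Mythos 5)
Your proof is correct and follows essentially the same route as the paper's: reduce via \autoref{uniformly compatible doesn't depend on the choice of A} to an $A\in\sA$ whose regular parameters hit generators of $\p$, verify the non-purity hypothesis of \autoref{not pfd pure p not in P implies compatible} by taking the contrapositive of \autoref{lc implies log canonical} and \autoref{lc implies log canonical p in p} for $e\gg 0$ with $\lfloor\unpe\Div h_i\rfloor=0$. The paper's own proof is terser (it even leaves the ``in particular'' klt claim unargued, which you supply via the multiplier ideal and discrepancy-$(-1)$ divisors), but the substance is identical.
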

\begin{proof}
By \autoref{not pfd pure p not in P implies compatible}, it is enough to show that for $A\in \sA$ with a subset of a system of parameters mapping to generators $\{h_1,\ldots,h_r\}$ of $\fram$, the maps $R\to R^A_{\infty}$, $1\mapsto h_i^\unpe$ are not pure for $e\gg 0$. Since none of the pairs $(R_\p, \Div(h^{\unpe}))$ are log canonical, the maps $R_\p\to R^A_{\infty,\p}$, $1\mapsto h^\unpe$ are not pure by \autoref{lc implies log canonical p in p} and \autoref{lc implies log canonical} so we are done.
\end{proof}

We believe that the following lemma well known to experts but we could not find a reference. 
\begin{lemma}\label{multiplier ideal is intersection lc centers}
Let $R$ be a normal $\Q$-Gorenstein log canonical ring and suppose that it has finitely many log canonical centers. Then, the multiplier ideal $\sJ\subset R$ is an intersection of ideals of log canonical centers of $R$. 
\end{lemma}
\begin{proof}
Let $f\colon Y\to X\coloneqq \Spec R$ be a proper birational map. Using that $R$ is log canonical, we can write $\ceil{K_Y-f^* K_X}$ as $\sum a_i E_i - \sum b_j F_j$ where each $E_i$, $F_j$ is a prime divisor,
$a_i>0$ and $b_j=1$. Then, 
\[\Gamma(Y,\sO_Y(\ceil{K_Y-f^* K_X}))=\Gamma\left(Y,\sO_Y\left( - \sum F_j\right)\right)=\bigcap_j \Gamma\left(Y,\sO_Y\left( - F_j\right)\right)
\]
which is a finite intersection of ideals of log canonical centers. Taking the intersection over all proper birational maps $f\colon Y\to X$ gives the desired result.
\end{proof}

\begin{corollary}\label{multiplier ideal uniformly compatible}
Let $R$ be as in \autoref{notation rainfty}  and assume further that it is perfectoid pure, quasi-Gorenstein and normal. Then, the multiplier ideal $\sJ\subset R$ is uniformly perfectoid compatible. 
\end{corollary}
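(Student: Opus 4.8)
The plan is to reduce to checking the definition on one generator of $\sJ$ at a time, and then to feed this into the resolution‑and‑duality machinery already developed for \autoref{lc center is cpp for big enough A}. Throughout I would use that $R$ is normal and quasi‑Gorenstein to write $\omega_R\cong R$, and fix a log resolution $\pi_0\colon Y_0\to \Spec R$; since $K_{\Spec R}$ is Cartier we have $\lceil K_{Y_0}-\pi_0^*K_{\Spec R}\rceil = K_{Y_0}-\pi_0^*K_{\Spec R}$ and $\sJ = \Image\big(\pi_{0*}\omega_{Y_0}\to \omega_R\big)$. We may assume $\sJ\neq R$, since $R$ is itself uniformly perfectoid compatible.

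First, the reductions. By \autoref{uniformly perfd compatible does not depend on choice of embedding} it suffices to prove $\sJ$ is $B$-compatible for a single sufficiently large perfectoid $R$-algebra $B$; by \autoref{B compatible can assume has roots and is complete} we may take $B$ to be $\fram$-adically complete, absolutely integrally closed, and to contain a compatible system of $p$-power roots of a fixed generating set $g_1,\dots,g_m$ of $\sJ$. Then $(\sJ B)\perfd = \sum_j (g_j^\unpfty)^-$ by \autoref{perfectoidization is sum of perfectoidizations}, so by \autoref{p adic closure of comp ideal} it is enough to show $\phi(g_j^\unpe)\in \sJ$ for all $\phi\in \Hom_R(B,R)$, all $j$, and all $e\geq 1$ (replacing $\phi$ by $\phi\circ(\times b)$ with $b\in B$ absorbs the coefficients). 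Finally, by \autoref{rafty to rbfty is pure when adding roots} and \autoref{compatible with rafty iff compatible with rbfty} we may arrange $B=\rafty$ with each $g_j$ lifted from a regular parameter of $A$, which is the setting of \autoref{map to c bullet is pure}.

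Now fix $g=g_j$ and $e$, put $S\coloneqq R[x]/(x^\pe-g)$, and for an arbitrary blowup $\mu\colon X\to \Spec R$ with $X$ normal build $Y$, its reduced exceptional divisor $F$, and the pullback complex $C^\bullet$ exactly as in the proofs of \autoref{lemma lcness can be tested from S} and \autoref{map to c bullet is pure}. By \autoref{map to c bullet is pure} the map $R\to \rafty$, $1\mapsto g^\unpe$, factors as $R\xrightarrow{\times x} S\to C^\bullet\to \rafty$; composing with $\phi$ gives a map $C^\bullet\to R$ in $D(R)$. Applying Grothendieck duality $\bold{D}=\RHom_R(-,\omega_R^\bullet)$ and taking $(-d)$-th cohomology, \autoref{bottom coh of C bullet} identifies $H^{-d}\bold{D}(C^\bullet)=\Gamma(Y,\omega_Y(F))$ and says the resulting map $\Gamma(Y,\omega_Y(F))\to\omega_R$ factors through $\Gamma(Y,x\cdot\omega_Y(F))$. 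Since the composite $\omega_R\to H^{-d}\bold{D}(C^\bullet)\to\omega_R$ is multiplication by $\phi(g^\unpe)$, we get
\[
\phi(g^\unpe)\in \Image\big(\Gamma(Y,x\cdot\omega_Y(F))\to \omega_R=R\big).
\]

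It remains to prove that this image is contained in $\sJ$, and this is the main obstacle. By the codimension‑one comparison carried out in the Claim inside the proof of \autoref{lemma lcness can be tested from S} (trivial away from the exceptional locus, governed at the generic points of $F$ by the chosen rounding), the image above lies in $\Image\big(\Gamma\big(X,\sO_X(K_X+E-\lfloor\unpe\Div g\rfloor)\big)\to\omega_R\big)$, where $E$ is the reduced exceptional divisor of $\mu$. The crux is to combine this with the hypothesis $g\in\sJ$ — equivalently $\ord_{E_i}(g)\geq \lceil -a_i\rceil$ for every exceptional divisor $E_i$ over $\Spec R$ with discrepancy $a_i$, which is exactly what forces the $\Div g$ term to cancel the $+E$ contribution along each $E_i$ with $a_i<0$ — and then to let $X$ range over log resolutions of $(\Spec R,\Div g)$, concluding that the image is contained in $\{\psi\in R:\ \ord_{E_i}(\psi)\geq \lceil -a_i\rceil\ \text{for all } i\}=\sJ$. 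The delicate part is the bookkeeping of discrepancies: the divisors $F$ on $Y$ must be compared to $\Spec R$ through the finite cover $\Spec S\to\Spec R$, which is wildly ramified since $p\mid \pe$, and twisted by $\Div x$; this is where quasi-Gorensteinness (of $R$, hence of $S$, \cf \autoref{lemma lcness can be tested from S}) and the precise choice of rounding are used.
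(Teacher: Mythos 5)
Your route is genuinely different from the paper's, which disposes of the corollary in one line: $\sJ$ is the intersection of the log canonical centers of $R$, each of which is uniformly perfectoid compatible by \autoref{lc center is cpp for big enough A}, and uniformly perfectoid compatible ideals are closed under intersection by \autoref{lemma compatible ideals closed under sums intersection}. Your reductions in the first two paragraphs are fine, and the factorization $R\xrightarrow{\times x} S\to C^\bullet\to\rafty$ together with the duality step correctly reproduces the mechanism of \autoref{lc implies log canonical p in p}. But the argument is not complete: the step you yourself label ``the main obstacle'' is exactly the step that is missing, and as sketched it cannot be made to close.

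Concretely, the Claim inside the proof of \autoref{lemma lcness can be tested from S} only gives that the image of $\Gamma(Y, x\cdot\omega_Y(F))$ in $\omega_R\cong R$ lands in $\pi_*\sO_X\left(K_X+E-\lfloor p^{-e}\Div g\rfloor\right)$. Writing $K_X=\pi^*K_{\Spec R}+\sum_i a_iE_i$ with $a_i\in\Z$ (quasi-Gorensteinness), membership of $\phi(g^\unpe)$ in $\sJ$ requires $\ord_{E_i}(\phi(g^\unpe))\ge -a_i$ for every exceptional divisor $E_i$ over $\Spec R$, whereas the containment above only yields $\ord_{E_i}(\phi(g^\unpe))\ge -a_i-1+\lfloor p^{-e}\ord_{E_i}(g)\rfloor$. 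To remove the $-1$ coming from the $+E$ twist you would need $\lfloor p^{-e}\ord_{E_i}(g)\rfloor\ge 1$, i.e.\ $\ord_{E_i}(g)\ge p^e$, at \emph{every} exceptional divisor; but the hypothesis $g\in\sJ$ only gives $\ord_{E_i}(g)\ge -a_i$ (and gives nothing where $a_i\ge 0$), and $\ord_{E_i}(g)$ is a fixed integer while you must handle all $e$, in particular $e\gg 0$, for which $\lfloor p^{-e}\ord_{E_i}(g)\rfloor=0$. So even after intersecting over all blowups $X$, this machinery only places $\phi(g^\unpe)$ in $\pi_*\sO_X(K_X+E)$ --- the non-lc--type ideal, one step too large, and indeed the trivial ideal $R$ when $R$ is log canonical. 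This is precisely why \autoref{lc implies log canonical p in p} detects log canonicity rather than kltness; the deferred ``bookkeeping of discrepancies'' is not merely delicate, it fails. Replace the third and fourth paragraphs by the intersection-of-lc-centers argument.
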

\begin{proof}
Since $R$ is perfectoid pure, it has finitely many log canonical centers by \autoref{lc center is cpp for big enough A}, \autoref{finiteness of uniformly compatible ideals} so we are done by \autoref{multiplier ideal is intersection lc centers}, \autoref{lc center is cpp for big enough A}, and \autoref{lemma compatible ideals closed under sums intersection}.
\end{proof}

\begin{remark}
Based on the positive characteristic result \cite{SchwedeCentersOfFPurity}, we expect this to hold even when $R$ is not quasi-Gorenstein. A first step could be to generalize this to the $\Q$-Gorenstein case with index not divisible by $p$ as in \cite[Corollary 5.11]{BMPSTWWPerfectoidPure}.
\end{remark}

\section{Normality}

In this section, we show that the conductor ideal is uniformly perfectoid compatible, implying that the presence of compatible ideals detects (the failure of) normality. We use this to deduce various properties of perfectoid pure rings.

\begin{lemma}\label{lemma perfectoidization same ideal}
Let $R$ and  $A$ be as in \autoref{notation rainfty} and $R^N$ be the normalization of $R$. Let $\fracc$ be the conductor ideal of $R$ \ie the largest ideal of $R$ that is also an ideal of $R^N$. Then, $\cafty$ is an ideal both of $\rafty$ and $(R^N\tensor_A A_\infty)\perfd\eqqcolon\ranfty$.
\end{lemma}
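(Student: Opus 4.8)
The plan is to exploit the fact that the conductor $\fracc$, by definition, is simultaneously an ideal of $R$ and of $R^N$, so that it sits inside both $R$ and $R^N$ "compatibly" with the map $R \to R^N$. After base change to $A_\infty$ and perfectoidization, the claim is really a statement about kernels of two maps: the perfectoidized ideal $\cafty = (\fracc\, \rafty)\perfd$ should be exactly the kernel of $\rafty \to (R/\fracc \tensor_A A_\infty)\perfd$, and since $R/\fracc = R^N/\fracc$ (as the conductor is the annihilator of $R^N/R$ and both quotients agree), this same object should also be a perfectoid ideal of $\ranfty$. First I would set up the commutative diagram of $A_\infty$-algebras obtained by applying $- \tensor_A A_\infty$ to the square
\[
\xymatrix{
R \ar[r] \ar[d] & R^N \ar[d] \\
R/\fracc \ar[r]^{\sim} & R^N/\fracc,
}
\]
and then perfectoidize everything using \autoref{definition perfectoidization} and the universal property, noting that perfectoidization of the bottom row is an isomorphism since $R/\fracc \cong R^N/\fracc$.

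**Key steps.** The argument proceeds as follows. (1) Since $\fracc$ is an ideal of $R^N$, the map $R^N \to R^N/\fracc$ is defined, and tensoring with $A_\infty$ over $A$ and perfectoidizing gives a surjection $\ranfty \to (R^N/\fracc \tensor_A A_\infty)\perfd$; let $\frad$ denote its kernel, which is a perfectoid ideal of $\ranfty$ by \autoref{kernel of map of perfd rings}. (2) Similarly, $\cafty = (\fracc\, \rafty)\perfd = \ker(\rafty \to (R/\fracc \tensor_A A_\infty)\perfd)$ is a perfectoid ideal of $\rafty$, again by \autoref{definition perfectoidization} and \autoref{kernel of map of perfd rings}. (3) The natural map $\rafty \to \ranfty$ (induced from $R \to R^N$) fits into a commutative square with the two quotient maps, and since $R/\fracc \to R^N/\fracc$ is an isomorphism, so are their base changes and perfectoidizations; hence the two target rings $(R/\fracc \tensor_A A_\infty)\perfd$ and $(R^N/\fracc \tensor_A A_\infty)\perfd$ are canonically identified. (4) Chasing the diagram, $\cafty$ maps into $\frad$ under $\rafty \to \ranfty$, and conversely every element of $\frad$ that lies in the image of $\rafty$ comes from $\cafty$; the point is that $\cafty$ and $\frad$ are "the same ideal" in the sense of the lemma's statement — $\cafty \ranfty = \frad$ and $\frad \cap \rafty = \cafty$ (or more precisely $\cafty = \frad$ as subsets once we track the natural maps carefully), so $\cafty$ is indeed an ideal of $\ranfty$.

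**Main obstacle.** The subtle point — and where I expect to spend the most care — is step (3)–(4): one must be careful about what "the same ideal" means, since $\rafty$ and $\ranfty$ are genuinely different rings and $\rafty \to \ranfty$ need not be injective a priori. The cleanest route is probably to show that $\cafty$, viewed inside $\rafty$, has the property that its extension to $\ranfty$ equals $\frad$, and that $\frad$ pulls back to $\cafty$; this requires knowing that $R^N \tensor_A A_\infty \to R^N/\fracc \tensor_A A_\infty$ and $R \tensor_A A_\infty \to R/\fracc \tensor_A A_\infty$ have "the same" kernel in a suitable sense, which follows from right-exactness of tensor product applied to $0 \to \fracc \to R \to R/\fracc \to 0$ and $0 \to \fracc \to R^N \to R^N/\fracc \to 0$ — note the left-hand term $\fracc$ is literally identical in both. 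Then perfectoidization of the resulting surjections, together with \autoref{intersection of perfectoid ideals is perfectoid} or the arc-descent machinery invoked earlier, should force $\cafty$ and $\frad$ to coincide. A slightly delicate auxiliary point is that $\fracc \tensor_A A_\infty$ need not inject into $R \tensor_A A_\infty$, but since we only care about images and perfectoidized kernels this should not cause real trouble.
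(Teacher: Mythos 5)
There is a genuine error at the heart of your step (3): the map $R/\fracc \to R^N/\fracc$ is \emph{not} an isomorphism unless $R$ is already normal. Since $\fracc$ is an ideal of $R^N$ contained in $R$, this map is injective, but its cokernel is $R^N/(R+\fracc) = R^N/R$, which is nonzero precisely when $R \neq R^N$. (Concretely, for $R = k\llbracket t^2,t^3\rrbracket \subset R^N = k\llbracket t\rrbracket$ one has $\fracc = (t^2,t^3)R^N$, so $R/\fracc = k$ while $R^N/\fracc = k[t]/(t^2)$.) Consequently the two target rings $((R/\fracc)\tensor_A A_\infty)\perfd$ and $((R^N/\fracc)\tensor_A A_\infty)\perfd$ cannot be ``canonically identified,'' and your step (4) collapses: the diagram chase only shows that the image of $\cafty$ in $\ranfty$ lands \emph{inside} $\frad = (\fracc\,\ranfty)\perfd$, not that it equals $\frad$ or that $\cafty \to \frad$ is a bijection. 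Without that, you have not shown that the image of $\cafty$ is closed under multiplication by $\ranfty$, which is the whole content of the lemma.

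The missing ingredient — which you brush against in your ``main obstacle'' paragraph but never actually deploy — is that the conductor square
\[
\xymatrix{
R \ar[r] \ar[d] & R^N \ar[d] \\
R/\fracc \ar[r] & R^N/\fracc
}
\]
is a \emph{pullback} (Milnor) square of rings: this is elementary, because $x \in R^N$ congruent mod $\fracc$ to some $r \in R$ satisfies $x - r \in \fracc \subset R$. The paper's proof then shows that this pullback property survives $-\tensor_A A_\infty$ followed by perfectoidization, by first checking that $R\tensor_A A_\infty$, $\fracc\tensor_A A_\infty$, and $R^N\tensor_A A_\infty$ are derived $p$-complete (they are finitely presented over $A_\infty$, using that $R$ is excellent hence N-1) and then invoking \cite[Corollary 8.12]{BhattScholzePrisms}. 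Once one has the perfectoidized pullback square with vertical maps $\rafty \to \rafty/\cafty$ and $\ranfty \to \ranfty/(\fracc\,\ranfty)\perfd$, the purely formal fact that in a fiber product of rings the kernel of one projection maps bijectively onto the kernel of the parallel arrow yields $\cafty \xrightarrow{\ \sim\ } (\fracc\,\ranfty)\perfd$, and the latter is an ideal of $\ranfty$. Your right-exactness observation about $\fracc\tensor_A A_\infty$ being ``literally identical'' on both sides is a reasonable heuristic for why one should expect this, but it does not by itself control what happens after perfectoidization; the Milnor-square descent result is what makes the argument go through.
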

\begin{proof}
We first show that $R\tensor_A A_\infty, \fracc\tensor_A A_\infty$, and $R^N\tensor_A A_\infty$ are all derived $p$-complete $A_\infty$-modules. Note that $R$, $\fracc$, and $R^N$ are all finitely presented $A$-modules since $R$ is excellent hence N-1 and Noetherian. This implies that $R\tensor_A A_\infty, R^N\tensor_A A_\infty, $ and $\fracc\tensor_A A_\infty$  are all finitely presented $A_\infty$-modules. Since a finitely presented module over a derived $p$-complete ring is derived $p$-complete (see, e.g., \cite[Corollary 6.3.2]{KedlayaPrismaticNotes}), we are done.
Now, by \cite[Corollary 8.12]{BhattScholzePrisms}, we have a pullback diagram 
\[
\xymatrix{
\rafty \ar[d] \ar[r] & \ranfty \ar[d]\\
((R/\fracc)\tensor_A A_\infty)\perfd\ar[r]& \left(\left(R^N/\fracc\right)\tensor_A A_\infty \right)\perfd 
}
\]
which we claim implies that $\cafty$ is both an ideal of $\rafty$ and $R^{N,A}_\infty$. Indeed, the diagram can be rewritten as 
\[
\xymatrix{
\rafty \ar[d] \ar[r] & \ranfty \ar[d]^\pi\\
\rafty/\cafty \ar[r]_i & (R^N/\fracc)^A_\infty.
}
\]
Since this is a pullback diagram, we have that 
\[
\rafty=\left\{(x,y)\in \rafty/\cafty \times \ranfty \mid i(x)=\pi(y) \right\}
\]
where the map $\rafty\to \rafty/\cafty$ is the projection onto the first coordinate.
In particular, 
\[\cafty=\left\{(0,y)\in \rafty/\cafty \times \ranfty \mid \pi(y)=0 \right\}.\] Then, the image of $\cafty$ in $\ranfty$ is exactly the kernel of the quotient $\ranfty\to (R^N/\fracc)^A_\infty$, as desired.
\end{proof}

\begin{proposition}[{\cf \cite[Exercise 1.2.4(E)]{BrionKumarFrobeniusSplitting}}]\label{conductor ideal is compatible}
Let $\rphi$ be a pair and $\fracc\subset R$ be the conductor ideal. Then, $\fracc$ is $\phi$-compatible
for all maps $\phi\in \Hom_R(\rafty,R)$. In particular, if there are no non-trivial $\phi$-compatible ideals, $R$ is normal. 
\end{proposition}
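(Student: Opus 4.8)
The plan is to mimic the classical Frobenius-splitting argument for conductors (\cf\ \cite[Exercise 1.2.4(E)]{BrionKumarFrobeniusSplitting}), transported to the perfectoid setting via \autoref{lemma perfectoidization same ideal}. Recall that in characteristic $p$ the key point is that any $R$-linear map $R^{1/p^e}\to R$ automatically carries $R^N$ into $R^N$ (because $R^N$ is the integral closure and such a map is ``almost'' $R^N$-linear after dividing by a conductor element), hence carries $\fracc$-multiples into $\fracc$. Here we want the analogous statement that a map $\phi\colon\rafty\to R$ sends $\cafty=(\fracc\rafty)\perfd$ into $\fracc$.

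First I would set up the diagram from \autoref{lemma perfectoidization same ideal}: writing $\ranfty\coloneqq (R^N\tensor_A A_\infty)\perfd$, that lemma tells us $\cafty$ is simultaneously an ideal of $\rafty$ and of $\ranfty$, and in fact $\cafty = \ker(\rafty\to\rafty/\cafty)$ with $\rafty/\cafty$ a subring of $\ranfty/(\fracc\ranfty)\perfd$. Next, the heart of the argument: I claim any $\phi\in\Hom_R(\rafty,R)$ extends to a map $\Phi\in\Hom_{R^N}(\ranfty,R^N)$ after multiplying by the conductor, in the following precise sense. Since $R^N$ is module-finite over $R$ (excellence/N-1, already invoked in \autoref{lemma perfectoidization same ideal}), and $\rafty\to\ranfty$ is a map of perfectoid rings, one can use that $\fracc R^N\subset R$ to see that $\phi$, precomposed with multiplication by a conductor element $c\in\fracc$, factors through a map out of $\ranfty$; more robustly, I would argue via Matlis duality / the injective-hull criterion of \autoref{uniformly compatible injective hull}, showing that for $x\in\cafty$ and $y\in\rafty$ the relevant composition lands in $\fracc$ because the image of $\cafty$ in $\ranfty$ is an ideal there and $\fracc\cdot R^N\subseteq\fracc$ inside $R$.

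Concretely, the cleanest route is: given $x\in\cafty$, write (after reducing as in \autoref{B compatible can assume has roots and is complete} and \autoref{perfectoidization is sum of perfectoidizations}) $x$ as a $p$-adic limit of roots of conductor elements, so it suffices by \autoref{p adic closure of comp ideal} to treat $x$ a $p^e$-th root of some $c\in\fracc$. Then $x\rafty\subseteq\cafty\subseteq\ranfty$, and for any $r\in R^N$ we have $cr\in\fracc$, hence $c^{1/p^e}\cdot r^{1/p^e}$ is (a root of) an element of $\fracc$, showing $x$ multiplies $\ranfty$-roots of $R^N$ back into $\cafty$; applying $\phi$ and using that $\phi$ restricted to $\rafty/\cafty\hookrightarrow\ranfty/(\fracc\ranfty)\perfd$ is the reduction of a genuine map, one concludes $\phi(x\rafty)\subseteq\fracc$. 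This gives $\phi(\cafty)\subseteq\fracc$, i.e.\ $\fracc$ is $\phi$-compatible; uniform perfectoid compatibility then follows from \autoref{uniformly compatible doesn't depend on the choice of A}. Finally, if $R$ had no nontrivial $\phi$-compatible ideal then $\fracc\in\{0,R\}$; but $\fracc=0$ forces $R=R^N$ already, and $\fracc=R$ also says $R=R^N$, so $R$ is normal.

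The main obstacle I anticipate is the precise formulation and proof of the ``$\phi$ extends over $R^N$ up to a conductor element'' step: in the perfectoid world one does not literally have $\Hom_{R^N}(\ranfty,R^N)\to\Hom_R(\rafty,R)$ surjective, so I expect to have to phrase everything through the pullback square of \autoref{lemma perfectoidization same ideal} and the injective-hull criterion \autoref{uniformly compatible injective hull}, checking compatibility one element $x\in\cafty$ at a time after the reductions of \autoref{B compatible can assume has roots and is complete}. Everything else — the reduction to roots of conductor elements, the $p$-adic closure bookkeeping, and the final normality deduction — should be routine given the machinery already assembled in the paper.
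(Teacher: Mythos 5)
Your setup is right---\autoref{lemma perfectoidization same ideal} and the fact that $\cafty$ is an ideal of $\ranfty$ are exactly the inputs---but the crucial step is missing, and the place where you try to supply it is circular. What must be shown is that $\phi(x)\in\fracc$, i.e.\ $s\,\phi(x)\in R$ for every $s\in R^N$ and $x\in\cafty$; nothing in your reduction to $p^e$-th roots of conductor elements addresses this, and the phrase ``$\phi$ restricted to $\rafty/\cafty\hookrightarrow\ranfty/(\fracc\ranfty)\perfd$ is the reduction of a genuine map'' presupposes precisely that $\phi$ descends modulo $\cafty$ and $\fracc$, which is the compatibility being proven. Likewise, the ``$\phi$ extends over $R^N$ up to a conductor element'' step you flag as the main obstacle is essentially \autoref{purity extends to normalization}, which the paper deduces \emph{from} this proposition, so it cannot be taken as input.

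The missing idea is elementary and needs neither roots, nor the injective-hull criterion, nor \autoref{B compatible can assume has roots and is complete}: let $W$ be the set of nonzerodivisors of $R$ and set $\phi_W\coloneqq\phi\tensor_R\id_{W\inv R}$. This map is $W\inv R$-linear, hence $R^N$-linear because $R^N\subset W\inv R$, and for $s\in R^N$, $x\in\cafty$ one gets $s\phi(x)=\phi_W(sx)=\phi(sx)\in\phi(\cafty)\subset R$, using that $sx\in\cafty$ by \autoref{lemma perfectoidization same ideal}. Thus $\phi(\cafty)$ is an ideal of $R$ stable under multiplication by $R^N$, hence contained in $\fracc$ by maximality of the conductor. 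Separately, your closing deduction is off: ``$\fracc=0$ forces $R=R^N$'' is not a valid inference (normality gives $\fracc=R$, not $\fracc=0$); the correct point is that $\fracc=\Ann_R(R^N/R)$ contains a nonzerodivisor since $R\to R^N$ is generically an isomorphism, so $\fracc\neq 0$ and the dichotomy collapses to $\fracc=R$.
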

\begin{proof}
By \autoref{lemma perfectoidization same ideal}, if $s\in R^N$ then $s\cafty\subset \cafty$ where we abuse notation and think about all these elements as part of $R^N_\infty$. Then, if $W$ is the set of nonzero divisors of $R$ and 
\[
\phi_W\coloneqq \phi\tensor_R \id_{W\inv R}\colon \rafty\tensor_R W\inv R\to R\tensor_R W\inv R,
\]
seeing $R$ as a subset of $W\inv R\cong R\tensor_R W\inv R$ via $r\to r\tensor_R 1$ gives
\[
s\phi(\cafty)=s \phi_W(\cafty)= \phi_W(s \cafty) \subset \phi_W(\cafty)= \phi(\cafty)
\]
so $\phi(\cafty)\subset \fracc$ hence $\fracc$ is compatible. Now, since $\fracc=\Ann_R(R^N/R)$, it is generically $K$ hence nonzero. In particular, if $R$ has no nontrivial compatible ideals, $\fracc=R$ so $R$ is normal.
\end{proof}
\begin{remark}\label{conductor uniformly compatible}
Note that this is not dependent on the choice of $\phi$ (nor on the choice of $A\in \sA$) so the conductor is uniformly perfectoid compatible. In particular, if $R$ has no uniformly perfectoid compatible ideals then it is normal. 
\end{remark}

\begin{remark}
It is well known that 
\[\fracc=\Im\left(\Hom_R\left(R^N,R\right)\xrightarrow{\phi\mapsto \phi(1)} R\right).\]
This is a specific case of a trace ideal, which are known to be compatible in positive characteristic (see for instance \cite[Lemma 2.2]{PolstraSchwedeCompatibleIdealsGorensteinRings}). It is a natural question to ask if this is also true in mixed characteristic. That is, if $R\to S$ is a finite extension of complete local Noetherian rings, is 
\[\Im\left(\Hom_R(S,R)\xrightarrow{\phi\mapsto \phi(1)} R\right)\] a uniformly perfectoid compatible ideal of $R$? Unfortunately, it is not true in this generality. Indeed, let $R\coloneqq \Z_p\llbracket x,y,z\rrbracket/x^3+y^3+z^3$ with $p\equiv 1 \mod 3$. This is a perfectoid pure ring by \cite[Example 7.3]{BMPSTWWPerfectoidPure} since going modulo $p$ is the $(x,y,z)$-adic completion of the cone over an ordinary elliptic curve. Then, all the uniformly perfectoid compatible ideals have to be radical by \autoref{compatible ideals are radical}. Let $S=R$ with map $R\to S$ induced by multiplication by $p^n$ on the elliptic curve. 
By \cite[Example 4.14]{BMPSTWWMinimalModelProgramMixedCharacteristic}, the image of the trace map is the ideal $(p^n,x,y,z)$ which is not radical and therefore not compatible. In fact, even its radical, $(p,x,y,z)$, is not uniformly perfectoid compatible. Indeed, since $R/p$ is $F$-pure, the perfectoid pure threshold of $(R,\Div(p))$ is $1$ by \cite[Remark 2.10]{YoshikawaComputationMethodPerfectoidPurity}. In particular, there is a perfectoid $R$-algebra $B$ such that the map $R\to B$, $1\mapsto p$ is pure so $p$ is not in any uniformly perfectoid compatible ideal. 
\end{remark}

\begin{corollary}[{\cf \cite[Proposition 7.11]{SchwedeCentersOfFPurity},
\cite[Exercise 1.2.E(4)]{BrionKumarFrobeniusSplitting}}]\label{purity extends to normalization}
Let $\phi\colon \rafty\to R$ be an $R$-linear map and $R^N$ be the normalization of $R$. If $R$ is reduced, $\phi$ has a unique extension $\phi^N\colon \ranfty \to R^N$. In particular, $(R^N,\phi^N)$ is perfectoid pure if so is $(R,\phi)$.
\end{corollary}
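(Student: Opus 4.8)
The plan is to mimic the positive-characteristic argument of \cite[Proposition 7.11]{SchwedeCentersOfFPurity}: extend $\phi$ over the total ring of fractions and use $\phi$-compatibility of $\fracc$ to see that the extension carries $\ranfty$ into $R^N$. Let $W$ be the multiplicative set of nonzerodivisors of $R$; as in the proof of \autoref{conductor ideal is compatible}, $\fracc$ contains a nonzerodivisor $c$ and becomes the unit ideal after inverting $W$, so $R \subseteq R^N \subseteq W^{-1}R$, $R^N$ is the integral closure of $R$ in $W^{-1}R$, and since $\fracc$ is an ideal of $R^N$ the determinant trick applied to the faithful finitely generated module $\fracc$ gives $R^N = (\fracc :_{W^{-1}R} \fracc)$. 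Note a nonzerodivisor of $R$ remains a nonzerodivisor of $R^N$, as $R^N$ is normal (no embedded primes) and its associated primes contract to minimal primes of $R$. Write $q\colon \rafty \to \ranfty$ for the natural map. By \autoref{lemma perfectoidization same ideal}, $\cafty = (\fracc\rafty)\perfd$ maps onto the ideal $(\fracc\ranfty)\perfd = \ker\big(\ranfty \to (\ranfty/\fracc\ranfty)\perfd\big)$ of $\ranfty$, and the pullback square there identifies $\ker q$ with $\ker\big((\rafty/\cafty) \to (\ranfty/(\fracc\ranfty)\perfd)\big)$; since $A \to A_\infty$ is flat, one has $\rafty/\cafty \cong ((R/\fracc)\tensor_A A_\infty)\perfd$ and $\ranfty/(\fracc\ranfty)\perfd \cong ((R^N/\fracc)\tensor_A A_\infty)\perfd$, which receive the injection $R/\fracc \tensor_A A_\infty \hookrightarrow R^N/\fracc \tensor_A A_\infty$ whose cokernel is killed by $c$, and from this one should verify that $\ker q$ is annihilated by a power of $c$ (equivalently, $q$ becomes an isomorphism after inverting $W$).

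Now construct $\phi^N$. For $\eta \in \ranfty$ we have $c\eta \in \fracc\ranfty \subseteq (\fracc\ranfty)\perfd = q(\cafty)$, so $c\eta = q(\xi)$ for some $\xi \in \cafty$; by \autoref{conductor ideal is compatible}, $\phi(\xi) \in \fracc$, and we set $\phi^N(\eta) := c^{-1}\phi(\xi) \in W^{-1}R$. This is independent of $\xi$: if $q(\xi) = q(\xi')$ then $\xi - \xi' \in \ker q$ is killed by a power of $c$, so $\phi(\xi-\xi')$ is killed by a nonzerodivisor, hence $\phi(\xi) = \phi(\xi')$; one checks similarly it is independent of $c$. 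By construction $\phi^N$ is $R$-linear and extends $\phi$ (for $\eta = q(\rho)$ take $\xi = c\rho$, so $\phi^N(q(\rho)) = c^{-1}\phi(c\rho) = \phi(\rho)$). To see $\phi^N$ has image in $R^N$, fix $d \in \fracc$; writing $d\eta = q(\xi'')$ with $\xi'' \in \cafty$ we get $q(d\xi) = c\,q(\xi'') = q(c\xi'')$, so $d\xi - c\xi'' \in \ker q$, whence $\phi(d\xi) = c\,\phi(\xi'')$ and $d\,\phi^N(\eta) = c^{-1}\phi(d\xi) = \phi(\xi'') \in \fracc$; as $d$ was arbitrary, $\phi^N(\eta) \in (\fracc :_{W^{-1}R} \fracc) = R^N$. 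Uniqueness is immediate: two extensions differ by an $R$-linear map $\ranfty \to R^N$ vanishing on $q(\rafty) \supseteq \fracc\ranfty$, hence with image annihilated by $\fracc$, hence zero since $\fracc$ contains a nonzerodivisor of $R^N$.

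Finally, if $(R,\phi)$ is perfectoid pure then $\phi$ splits $R \to \rafty$, and since $\phi^N$ extends $\phi$ we get $\phi^N(1_{\ranfty}) = \phi^N(q(1_{\rafty})) = \phi(1_{\rafty}) = 1$, so $\phi^N$ splits $R^N \to \ranfty$ and $(R^N,\phi^N)$ is perfectoid pure. The main obstacle is precisely the bookkeeping flagged at the end of the first paragraph: identifying $\rafty/\cafty$ and $\ranfty/(\fracc\ranfty)\perfd$ with perfectoidizations over $A_\infty$ and deducing that $\ker q$ is annihilated by a nonzerodivisor of $R$; granting that, everything else is a routine transcription of the characteristic-$p$ proof, and one could alternatively avoid $\ker q$ entirely by running the whole argument after inverting $W$, where $\rafty$ and $\ranfty$ have a common localization.
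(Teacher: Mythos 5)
Your construction coincides with the paper's: both define $\phi^N(\eta)$ as $\phi(c\eta)/c$ for a nonzerodivisor $c\in\fracc$, use the $\phi$-compatibility of the conductor (\autoref{conductor ideal is compatible}) to see the values land in $K=W^{-1}R$, and then use the determinant trick --- in your formulation $R^N=(\fracc:_{W^{-1}R}\fracc)$, in the paper's the chain $\fracc(\phi^N(x))^n\subset\fracc$ --- to conclude the image lies in $R^N$; the uniqueness arguments are also the same. The genuine gap is the well-definedness step you yourself flag as ``the main obstacle'': you reduce it to showing that $\ker q$ is annihilated by a power of $c$, and you do not prove this (nor does your proposed alternative of inverting $W$ avoid it, since knowing that $W^{-1}\rafty\to W^{-1}\ranfty$ is an isomorphism is exactly the same unproved assertion).

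This detour is unnecessary, and the tool needed is already in the pullback square of \autoref{lemma perfectoidization same ideal} that you quote. In a pullback square of modules, the top horizontal map restricts to an isomorphism from the kernel of the left vertical map onto the kernel of the right vertical map: injectivity because an element of $\cafty\cap\ker q$ maps to zero in both $\ranfty$ and $\rafty/\cafty$ and hence vanishes by the pullback property, surjectivity because for $b\in(\fracc\ranfty)\perfd$ the pair $(b,0)$ lies in the fibre product. Thus $q$ carries $\cafty$ \emph{isomorphically} onto $(\fracc\ranfty)\perfd$ --- this is precisely what the lemma means by ``$\cafty$ is an ideal of both $\rafty$ and $\ranfty$'' --- so the $\xi\in\cafty$ with $q(\xi)=c\eta$ is unique and $\phi^N$ is well defined with no control of $\ker q$ on all of $\rafty$. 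You extracted the wrong kernel identification from the square ($\ker q\cong\ker(\rafty/\cafty\to\ranfty/(\fracc\ranfty)\perfd)$ is true but irrelevant here). One smaller omission: you only assert $R$-linearity of $\phi^N$, whereas for $(R^N,\phi^N)$ to be a pair one needs $R^N$-linearity; the paper verifies this by passing to $\phi_W=\phi\tensor_R\id_{W^{-1}R}$, and the same computation works verbatim in your setup.
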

\begin{proof}
Let $a\in \fracc$ be a nonzerodivisor and $K$ be the total ring of fraction of $R$.
We define $\phi^N\colon \ranfty \to K$ as
$\phi^N
(x)\coloneqq \phi(xa)/a$. We first show that this is an $R^N$-linear map. Let $W$ be the set of nonzero divisors of $R$, $\phi_W\coloneqq \phi\tensor_R \id_{W\inv R}$, and $s,\ x\in R^N$. Then, 
\[
s \phi^N(x)=s\frac{\phi(xa)}{a}=s\frac{\phi_W(xa)}{a}=\frac{\phi_W(xas)}{a}=\frac{\phi(xsa)}{a}=\phi^N(sx).
\]
Moreover, for $x,y\in \ranfty$, 
\[
\phi^N(x+y)=\frac{\phi(xa+ya)}{a}=\frac{\phi(xa)+\phi(ya)}{a}=\frac{\phi(xa)}{a}+\frac{\phi(ya)}{a}=\phi^N(x)+\phi^N(y).
\]

We want to show that the image of $\phi^N$ lands in $R^N$. For any $c\in \fracc$, 
\[c\phi^N(x)=\phi^N(cx)\in \fracc\subset R.\] More generally, 
\[
c\left(\phi^N(x)\right)^n=c\phi^N(x)\left(\phi^N(x)\right)^{n-1}\subset \fracc \left(\phi^N(x)\right)^{n-1}
\] 
which implies $\fracc\left(\phi^N(x)\right)^n\subset \fracc\left(\phi^N(x)\right)^{n-1}$, and so by induction $\fracc\left(\phi^N(x)\right)^n\subset \fracc\subset R$. If $R$ is a domain then by \cite[Proposition 2.4.8]{HunekeSwansonIntegralClosure}, $\phi^N(x)$ is integral over $R$, as desired. Else, let $\p_1,\ldots,\p_n$ be the minimal primes of $R$. By \autoref{uniform compatible closed under ass primes}, each $\p_i$ is uniformly compatible so by \autoref{compatible equivalent to comm diag quotient}, $\phi$ descends to a map $\phi_i\colon (R/\p_i)^A_\infty\to R/\p_i$.
Since $a$ was chosen to not be a zero divisor, it is not in any of the $\p_i$s. Moreover, for any $x\in (R/\p_i)^N$, using the fact that 
\[R^N=(R/\p_1)^N\times \cdots \times (R/\p_n)^N\]
(see \eg \cite[Corollary 2.1.13]{HunekeSwansonIntegralClosure}),
we see that $ax\in R/\p_i$. In particular, the above reasoning can be used to extend $\phi_i$ to a map $\phi_i^N(R/\p_i)^{N,A}_\infty \to (R/\p_i)^N$.
By the proof of \autoref{intersection of perfectoid ideals is perfectoid}, we have that $\ranfty=\prod_i (R/\p_i)^A_\infty$. We claim that the extension we are looking for is $\psi\coloneqq \prod_i \phi^N_i$. Since $R$ is reduced, we have an inclusion $R\into R/\p_i\times \cdots \times R/\p_n$. 
For $r\in \rafty$, 
\[
\psi(r)=\prod_i \phi^N_i(r)=\prod_i \phi_i(ar)/a=\prod_i \phi(r) \mod{\p_i},
\]
so $\psi(r)$ is precisely the image of $\phi(r)$ in $\prod_i R/\p_i$, as wanted.
It remains to show the uniqueness of such map. Suppose $\phi^N_1$ and $\phi^N_2$ are both extensions of $\phi$ to $\ranfty$. Then, for any $c\in \fracc$ a nonzerodivisor, 
\[
c\phi^N_1(x)=\phi^N_1(xc)=\phi(xc)=\phi^N_2(xc)=c\phi^N_2(x)
\]
and since $c$ is a nonzerodivisor, we must have $\phi^N_1(x)=\phi^N_2(x)$. In particular, the definition of $\phi^N$ does not depend on the chosen $a\in \fracc$.
\end{proof}

We can also use the compatibility of the conductor ideal to show that perfectoid pure rings are (WN1). When $R$ has char $p>0$ and is $F$-split, this was shown by Schwede and Zhang in \cite{SchwedeZhangBertiniTheoremsForFSings}. The mixed characteristic proof follows their strategy.

\begin{definition}[{\cite{CuminoManaresiSingularitiesWeaklyNormal}}] 
Let $(R,\fram)$ be a reduced local weakly normal ring. We say that $(R,\fram)$ is (WN1) if the normalization morphism $R\to R^N$ is unramified in codimension $1$. That is, for every prime ideal $\q$ of height $1$ in $R^N$, and $\p=\q\cap R$, $\p R^N_\q=\q R^N_\q$ and $R_\p/\p R_\p\subset R^N_\q/\q R^N_\q$ is separable.
\end{definition}

\begin{proposition}[{\cf \cite[Theorem 7.3]{SchwedeZhangBertiniTheoremsForFSings}}]
Let $\rphi$ be a pair and suppose that $\phi$ is surjective. Then, $R$ is \emph{(WN1)}. In particular, if $R$ is a perfectoid pure complete local Noetherian ring, then it is \emph{(WN1)}.
\end{proposition}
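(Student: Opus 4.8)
The plan is to follow \cite[Theorem 7.3]{SchwedeZhangBertiniTheoremsForFSings} closely, the only genuinely new inputs being that perfectoid purity forces weak normality (\autoref{perfectoid injective rings are reduced and WN}) and that minimal primes are compatible. First note that a pair $\rphi$ with $\phi$ surjective already forces $R$ to be perfectoid pure (compose $\phi$ with multiplication by a preimage of $1$), so the second assertion follows from the first; conversely a perfectoid pure complete local ring admits a surjective $\phi\in\Hom_R(\rafty,R)$, so it suffices to treat a pair with $\phi$ surjective. By \autoref{perfectoid injective rings are reduced and WN}, $R$ is then reduced and weakly normal, in particular seminormal.

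Next I would reduce to the case where $R$ is a complete local domain. The minimal primes $\p_1,\dots,\p_n$ of $R$ are $\phi$-compatible (\autoref{ass primes of ring are uniformly compatible}, \autoref{compatible ideals of pairs closed under ass primes}), so $\phi$ descends to a surjection $\bar\phi$ onto each $R/\p_i$ (\autoref{compatible equivalent to comm diag quotient}), exhibiting $(R/\p_i,\bar\phi)$ as a pair with surjective map; and since $R$ is reduced, $R^N=\prod_i (R/\p_i)^N$. A height-one prime $\q$ of $R^N$ lies in a single factor $(R/\p_i)^N$, and the (WN1) condition at $\q$ (with $\p=\q\cap R$) is exactly the (WN1) condition for $R/\p_i$ at the image of $\q$; so we may assume $R$ is a complete local domain, $S\coloneqq R^N$ its normalization (a complete local normal domain, module-finite over $R$), with conductor $\fracc$, which is nonzero and --- because $R$ is seminormal --- a radical ideal of both $R$ and $S$.

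Now fix a height-one prime $\q\subset S$ and set $\p\coloneqq\q\cap R$; then $S_\q$ is a DVR. If $\fracc\not\subseteq\p$ then $\fracc R_\p=R_\p$, so $R_\p=S_\p$ is a DVR with unique maximal ideal, and both conditions defining (WN1) are trivial at $\q$. If $\fracc\subseteq\p$, then $\fracc S_\q$ is a nonzero radical ideal of the DVR $S_\q$, hence $\fracc S_\q=\q S_\q$; since $\fracc\subseteq\p\subseteq\q$ this gives $\q S_\q=\fracc S_\q\subseteq\p S_\q\subseteq\q S_\q$, i.e. $\p S_\q=\q S_\q$, the unramifiedness of the ideal. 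For the separability of $S_\q/\q S_\q$ over $R_\p/\p R_\p$ I would argue: $R_\p$ is a weakly normal one-dimensional local domain with normalization $S_\p$, its conductor $\fracc R_\p$ is a proper nonzero radical ideal hence equals $\fram_{R_\p}$, and after identifying $R_\p$ with the preimage of $R_\p/\fram_{R_\p}$ in $S_\q$ (once one checks $\fram_{R_\p}$ maps onto $\q S_\q=\fracc S_\q$ and $R_\p/\fracc R_\p\hookrightarrow S_\q/\fracc S_\q$), an inseparable element of the residue extension would lift to a weakly subintegral element of $S_\q$ over $R_\p$, contradicting weak normality; if $\charact\kappa(\p)=0$ the separability is automatic. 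As $\q$ was arbitrary, $R$ is (WN1).

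I expect the main obstacle to be this last separability step: making precise the bookkeeping between the ideals $\fracc R_\p$, $\fracc S_\q$, $\fram_{R_\p}$, $\fram_{S_\q}$ (so that ``$R_\p$ is the preimage of its residue field'') and invoking exactly the right form of weak normality in the sense of \cite{CuminoManaresiSingularitiesWeaklyNormal}. If this is awkward to carry out directly, the alternative is to pass to the $\p$-adic completion $\widehat{R_\p}$, whose minimal primes are compatible and whose quotients by them are one-dimensional complete local perfectoid pure domains, where the analysis reduces to the elementary statement that a weakly normal one-dimensional complete local domain is unramified over its (DVR) normalization; the remaining steps are then routine and parallel to the $F$-split case.
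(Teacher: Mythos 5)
Your reductions (to a one-dimensional local situation, the radicality of the conductor in both $R$ and $R^N$ via \autoref{compatible ideals are radical}, \autoref{conductor ideal is compatible} and \autoref{purity extends to normalization}, and the deduction $\p S_\q=\q S_\q$) all match the paper's argument. The gap is the separability step, and it is a genuine one: weak normality of $R_\p$ does \emph{not} rule out an inseparable residue field extension $R_\p/\p R_\p\subset S_\q/\q S_\q$. Your lifting argument conflates ``inseparable'' with ``purely inseparable'': a weakly subintegral extension requires \emph{purely} inseparable residue field extensions, and an inseparable element of $S_\q/\q S_\q$ need not generate (or sit inside) any nontrivial purely inseparable subextension of $R_\p/\p R_\p$. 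Concretely, let $k=\bF_p(a,b)$, let $L=k[\theta]/(\theta^{p^2}+a\theta^p+b)$ (an inseparable extension of $k$ containing no $\lambda\notin k$ with $\lambda^p\in k$), and let $R=k+xL\llbracket x\rrbracket\subset S=L\llbracket x\rrbracket$. Then $R$ is a one-dimensional weakly normal complete local domain whose normalization $S$ is a DVR with inseparable residue field extension, so $R$ is weakly normal but not (WN1). This also defeats your fallback: it is false that a weakly normal one-dimensional complete local domain is unramified over its normalization. Indeed (WN1) is strictly stronger than weak normality --- that gap is the entire content of the proposition, both here and in Schwede--Zhang.

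What the paper uses instead, and what your proof is missing, is the purity itself: by \autoref{purity extends to normalization} the surjective $\phi$ extends to $\phi^N\colon\ranfty\to R^N$ compatibly with the conductor $\fracc$, and since $p\in\fracc=\fram$ in the relevant case, reducing modulo $\fracc$ turns the perfectoid algebras into perfections and produces a surjective, hence nonzero, $R/\fracc$-linear map $(S/\fran)^{\unp}\to S/\fran$. The existence of a nonzero such map forces $R/\fracc\to S/\fran$ to be separable by \cite[Example 5.1]{SchwedeTuckerTestIdealFiniteMaps}. (In the example above no such map exists, which is exactly why that $R$ is not perfectoid pure.) So the separability must be extracted from the splitting $\phi$ and its extension to the normalization, not from weak normality.
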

\begin{proof}
By \autoref{perfectoid injective rings are reduced and WN}, we only need to show that $R\to R^N$ is unramified in codimension $1$.
Localization commutes with normalization so we may assume without loss of generality that $(R,\fram)$ is local of dimension $1$ and so $R^N$ is semilocal of dimension $1$. Note that $R$ (and therefore $R^N$) may now have characteristic $0$. Let $(S,\fran)$ be the localization of $R^N$ at one of its maximal ideals (which has to lie over $\fram$). Since $R$ is perfectoid pure, the conductor is radical in $R$ and $R^N$ by \autoref{compatible ideals are radical}, \autoref{conductor ideal is compatible}, and \autoref{purity extends to normalization}. In particular, $\fracc=\fram$ and $\fracc S=\fram S$ is also radical and therefore must be equal to $\fran$. It remains to show that $R/\fram\to S/\fran$ is separable.
If $R/\fram$ has characteristic $0$, there is nothing to show so assume it has characteristic $p>0$ \ie $p\in \fram=\fracc$. We have a commutative diagram
\[
\xymatrix{
\ranfty/\cafty  \ar[r] & R^N/\fracc \\
\rafty/\cafty \ar[u]\ar[r]& R/\fracc \ar[u] 
}
\]
and since $p\in \fracc$, this can be rewritten as
\[
\xymatrix{
(R^N/\fracc)\perf  \ar[r] & R^N/\fracc \\
(R/\fracc)\perf   \ar[u]\ar[r]& R/\fracc \ar[u]. 
}
\]
Notice that $(R/\fracc)^\unp\subset(R^N/\fracc)^\unp $ so restricting the maps from the bottom left to $(R/\fracc)^\unp$ and localizing $(R^N/\fracc)^\unp\to R^N/\fracc$ at $\fran\cap R^N$
gives the following diagram

\[
\xymatrix{
(S/\fran)^\unp \ar[r] & S/\fran\\
(R^N/\fracc)^\unp  \ar[r]\ar[u] & R^N/\fracc \ar[u]\\
(R/\fracc)^\unp  \ar[u]\ar[r]& R/\fracc . \ar[u] 
}
\]
The horizontal maps are surjective hence nonzero which implies that $R/\fracc\to S/\fran$ is separable by \cite[Example 5.1]{SchwedeTuckerTestIdealFiniteMaps}. 
\end{proof}

\section{Splitting Prime}
In this section, we give an explicit description of the largest uniformly perfectoid compatible ideal of a ring $R$ and show that it detects the perfectoid purity of $R$, analogously to the splitting prime of Aberbach and Enescu \cite{AberbachEnescuStructureOfFPure}. We also generalize the idea to get a compatible core of an ideal, analogous to the Cartier core of \cite{BadillaCespedesFInvariantsSRRings,BrosowskyCartierCoreMap,CarvajalFayolleTameRamificationCFPS}.

\subsection{The positive characteristic case}
We first start by expressing the positive characteristic splitting prime in terms of the perfection of $R$.

\begin{definition}[{\cite{AberbachEnescuStructureOfFPure}}]
Let $(R,\fram)$ be a local ring of char $p>0$. Suppose further that $R$ is $F$-finite and let $\phi\in \Hom_R\left(R^\unpe,R\right)$. The splitting prime of $\rphi$ is 
\[
\upbeta\rphi=\bigcap_{n>0}\left\{r\in R\mid \phi^n\left(r^{1/p^{en}}R^{1/p^{en}}\right)\subset \fram\right\}
\]
where $\phi^n$ is defined as the composition
\[
R^{1/p^{en}}\xrightarrow[]{\phi^{1/p^{e(n-1)}}}R^{1/p^{e(n-1)}}\xrightarrow[]{\phi^{1/p^{e(n-2)}}}R^{1/p^{e(n-2)}}\xrightarrow[]{\phi^{1/p^{e(n-3)}}}\cdots \xrightarrow[]{\phi} R.
\]
%Expressing that in terms of the perfection, we get 
\end{definition}
In the special case $\phi(1)=1$, we are able to express $\upbeta\rphi$ in terms of $R\perf$. Indeed,% $\rphi$ be a pair with $\phi$ a Frobenius splitting. 
%We can rewrite 
\[
\upbeta\rphi = \left\{r\in R \Biggm| \bigcup_{n>0}\phi^n\bigg(r^{1/p^{en}}R^{1/p^{en}}\bigg)\subset \fram\right\}.
\]
Moreover, since 
\[
(r)\perf=\bigcup_{n>0} r^{1/p^{en}}R^{1/p^{en}}, 
\]
letting $\psi$ be the map from the proof of \autoref{lemma splitting from perfection} constructed only from $\phi$, we have 
\[
\upbeta\rphi=\left\{r\in R\mid \psi((r)\perf)\subset \fram\right\}.
\]

\subsection{The mixed characteristic case} 
Let $\rphi$ be a pair. The above discussion would lead us to try to define the splitting prime of $\rphi$ as
\[
\upbeta\rphi\coloneqq \left\{r\in R\mid \phi\left((r)_\infty^A\right)\subset \fram\right\},
\]
at least when $\phi(1)=1$.
Unfortunately, it is not clear to the author whether such an ideal is $\phi$-compatible. This brings us to our actual definition. 

\begin{definition}\label{definition splitting prime}
Let $\rphi$ be a pair. Let $\upbeta_0\rphi\coloneqq \fram$ and
\[
\upbeta_i\rphi\coloneqq \left\{r\in R\mid \phi\left((r)^A_\infty\right)\subset\upbeta_{i-1}\right\}
\]
for $i>0$. We then define the splitting prime $\upbeta\rphi$ as
\[
\upbeta\rphi\coloneqq \bigcap_{i>0}\upbeta_i\rphi.
\]
\end{definition}

\begin{remark}
When $\phi$ is surjective, we can show that $\upbeta_i\rphi\supset\upbeta_{i+1}\rphi$: let $r\notin \upbeta_i\rphi$ and $x\in \rafty$ such that $\phi(x)=1$. Then, $r=\phi(rx)\in \phi\left((r)_\infty^A\right)$ so $r$ is not in $\upbeta_{i+1}\rphi$, which shows the desired inclusion. Although it is not clear whether this inclusion is strict or not, it explains why we take the intersection over all the $\upbeta_i$s. Importantly, this inclusion is not strict in positive characteristic. In particular, this definition agrees with the positive characteristic one when $\phi$ is surjective.
\end{remark}

\begin{proposition}\label{basic ppties splitting prime}
With notation as in \autoref{definition splitting prime}, $\upbeta\rphi\neq R$ if and only if $\phi$ is surjective and, in that case, $\upbeta\rphi$ is the largest $\phi$-compatible ideal of $R$.
\end{proposition}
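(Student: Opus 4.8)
The plan is to establish the two directions separately, and for the "only if" direction to prove the more precise statement that $\upbeta\rphi$ is $\phi$-compatible and is the largest $\phi$-compatible ideal, from which $\upbeta\rphi \neq R$ follows since $R$ is never $\phi$-compatible (as $\frab \subsetneq R$ is required in the compatibility definitions, or simply because $\upbeta_1\rphi$ is a proper ideal containing $\fram$, being a proper ideal by construction). First I would dispose of the easy direction: if $\phi$ is not surjective, then $\phi(\rafty) \subset \fram$ since $\phi(\rafty)$ is an ideal of the local ring $R$ not equal to $R$; but then for every $r \in R$ we have $\phi((r)^A_\infty) \subset \phi(\rafty) \subset \fram = \upbeta_0\rphi$, so inductively $\upbeta_i\rphi = R$ for all $i$ and hence $\upbeta\rphi = R$. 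Conversely, if $\phi$ is surjective, the remark preceding the proposition shows $\upbeta_{i}\rphi \supset \upbeta_{i+1}\rphi$, and I will show $\upbeta\rphi \neq R$ as part of showing it is the largest $\phi$-compatible ideal.

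For the main content — that $\upbeta\rphi$ is $\phi$-compatible — the key observation is to unwind what $\phi$-compatibility of $\upbeta_i$ relative to $\upbeta_{i-1}$ says. By \autoref{comp ideal of pair injective hull} (with $\fra = (\upbeta_i)^A_\infty$ and $\frab = \upbeta_{i-1}$, once I check $\upbeta_i$ is indeed a proper ideal so the proposition applies), the condition $\phi((\upbeta_i)^A_\infty) \subset \upbeta_{i-1}$ is what needs proving, and by definition $\upbeta_i$ consists exactly of those $r$ with $\phi((r)^A_\infty) \subset \upbeta_{i-1}$. The subtlety is that $(\upbeta_i)^A_\infty = (\upbeta_i \rafty)\perfd$ is not literally $\sum_{r \in \upbeta_i} (r)^A_\infty$; however, after passing to a larger $A$ (legitimate by \autoref{uniformly compatible doesn't depend on the choice of A} and \autoref{compatible with rafty iff compatible with rbfty}, which show $\phi$-compatibility is insensitive to enlarging $A$ by power-series variables mapping to generators of $\upbeta_i$) I may assume $\rafty$ contains a compatible system of $p$-power roots of a generating set $h_1,\dots,h_s$ of $\upbeta_i$. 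Then \autoref{perfectoidization is sum of perfectoidizations} gives $(\upbeta_i)^A_\infty = \big(\sum_j \sum_e (z_{j,e})\big)^-$ where $z_{j,e}^{p^e} = h_j$, and by \autoref{p adic closure of comp ideal} it suffices to check $\phi$ sends each $(z_{j,e})$ into $\upbeta_{i-1}$. Now $z_{j,e}^{p^e} = h_j \in \upbeta_i$, so $h_j$ has a $p^e$-th root $z_{j,e}$ in $\rafty$; I want to conclude $\phi((z_{j,e})\rafty) \subset \upbeta_{i-1}$. Here I would use that $\upbeta_{i-1}$ is itself $\phi$-compatible (induction on $i$), is radical (by \autoref{compatible ideals are radical}, applicable since $\phi$ is surjective so $R$ is perfectoid pure and compatible ideals are radical — or argue directly), and then: for $y \in \rafty$, $\phi(yz_{j,e})^{p^e} = \phi(\dots)$ — this needs the multiplicativity-type argument, so more carefully, I'd mimic the proof of \autoref{uniform compatible closed under ass primes}: since $(y z_{j,e})^{p^e} = y^{p^e} h_j \in \upbeta_i^A_\infty \cdot (\text{stuff})$...

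Actually the cleanest route is the one used in \autoref{uniform compatible closed under ass primes} and \autoref{not pfd pure p not in P implies compatible}: to check $\upbeta_i \subset \upbeta_i$ is preserved, note $\upbeta\rphi = \bigcap_i \upbeta_i\rphi$, and since each $\upbeta_i$ is a proper ideal with $\phi((\upbeta_i)^A_\infty) \subset \upbeta_{i-1}$ once we know $(\upbeta_i)^A_\infty \subset$ the $p$-adic closure of $\sum_{r\in\upbeta_i}(r)^A_\infty$ (which follows from \autoref{perfectoidization is sum of perfectoidizations} and \autoref{p adic closure of comp ideal} after enlarging $A$), we get the nested chain $\phi((\upbeta_i)^A_\infty) \subset \upbeta_{i-1}$. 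Taking the intersection and using \autoref{intersection of perfectoid ideals is perfectoid} (so $(\upbeta\rphi)^A_\infty \subset \bigcap_i (\upbeta_i)^A_\infty$), we get
\[
\phi\big((\upbeta\rphi)^A_\infty\big) \subset \phi\Big(\bigcap_i (\upbeta_i)^A_\infty\Big) \subset \bigcap_i \phi\big((\upbeta_i)^A_\infty\big) \subset \bigcap_i \upbeta_{i-1}\rphi = \upbeta\rphi,
\]
so $\upbeta\rphi$ is $\phi$-compatible. The inequality $\upbeta\rphi \neq R$ then holds because $\upbeta\rphi \subset \upbeta_1\rphi \subset \fram \subsetneq R$... wait, $\upbeta_1 \subset \fram$? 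We have $\upbeta_1 = \{r : \phi((r)^A_\infty) \subset \fram\}$ and $1 \in \upbeta_1$ would mean $\phi(\rafty) \subset \fram$, contradicting surjectivity; so $\upbeta_1 \subsetneq R$, hence $\upbeta\rphi \subsetneq R$. For maximality: if $\fra$ is any $\phi$-compatible proper ideal, then $\fra \subset \fram = \upbeta_0$, and by induction if $\fra \subset \upbeta_{i-1}$ then for $r \in \fra$, $\phi((r)^A_\infty) \subset \phi(\fra^A_\infty) \subset \fra \subset \upbeta_{i-1}$, so $r \in \upbeta_i$, giving $\fra \subset \upbeta_i$ for all $i$, hence $\fra \subset \upbeta\rphi$.

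The main obstacle I anticipate is the passage from "$\phi$ sends each principal ideal $(r)^A_\infty$ for $r \in \upbeta_i$ into $\upbeta_{i-1}$" to "$\phi$ sends the full perfectoid ideal $(\upbeta_i)^A_\infty$ into $\upbeta_{i-1}$": this requires the reduction (via \autoref{uniformly compatible doesn't depend on the choice of A}, \autoref{compatible with rafty iff compatible with rbfty}) to an $A$ over which $\rafty$ has enough $p$-power roots, then \autoref{perfectoidization is sum of perfectoidizations} to decompose $(\upbeta_i \rafty)\perfd$ as a ($p$-adically closed) sum of the $(\sqrt{r})^-$, and \autoref{p adic closure of comp ideal} to reduce checking containment modulo $p$-adic closure — and one must be slightly careful that enlarging $A$ does not change $\upbeta_i\rphi$ itself (it does not, since $\upbeta_i$ is defined purely in terms of the compatibility relation which \autoref{compatible with rafty iff compatible with rbfty} shows is stable under such enlargement, applied inductively). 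Everything else is a routine induction plus the already-established closure properties of perfectoid ideals under intersection.
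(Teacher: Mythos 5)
Your proof is essentially the paper's: the chain of containments establishing $\phi$-compatibility of $\upbeta\rphi$ (via \autoref{intersection of perfectoid ideals is perfectoid}), the induction showing any proper $\phi$-compatible ideal lies in every $\upbeta_i\rphi$, and the observation that $1\in\upbeta_1\rphi$ exactly when $\phi$ fails to be surjective are precisely the steps of the paper's argument.

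The one place you wander is the verification that $\phi\left((\upbeta_i\rphi)^A_\infty\right)\subset\upbeta_{i-1}\rphi$. The detour you propose --- enlarging $A$ so that $\rafty$ acquires compatible $p$-power roots of generators of $\upbeta_i\rphi$ --- is both unnecessary and the shakiest link in your write-up: \autoref{compatible with rafty iff compatible with rbfty} concerns compatibility with \emph{all} of $\Hom_R(\rafty,R)$, whereas $\upbeta_i\rphi$ is defined relative to the single fixed $\phi$, which has no canonical extension to $\rbfty$; so the claim that ``enlarging $A$ does not change $\upbeta_i\rphi$'' is not what that lemma delivers. Fortunately no roots are needed. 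If $h_1,\dots,h_s$ generate $\upbeta_i\rphi$, then $\sum_j (h_j)^A_\infty$ is a perfectoid ideal of $\rafty$ by \autoref{finite sum of perfectoid ideals is perfectoid} and contains $\upbeta_i\rphi\cdot\rafty$, hence contains $(\upbeta_i\rphi)^A_\infty$, since the perfectoidization of an ideal is the smallest perfectoid ideal containing it; this is exactly the first equality of \autoref{perfectoidization is sum of perfectoidizations}, whose hypothesis on $p$-power roots is used only for the second equality there. Since each $h_j\in\upbeta_i\rphi$, the definition of $\upbeta_i\rphi$ gives $\phi\left((h_j)^A_\infty\right)\subset\upbeta_{i-1}\rphi$, and additivity of $\phi$ yields $\phi\left((\upbeta_i\rphi)^A_\infty\right)\subset\sum_j\phi\left((h_j)^A_\infty\right)\subset\upbeta_{i-1}\rphi$. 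With that step justified directly, the rest of your argument goes through verbatim and coincides with the paper's.
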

\begin{proof}
We first show that $\upbeta\rphi$ is $\phi$-compatible:
\begin{align*}
\phi\left((\upbeta\rphi)^A_\infty\right)&=\phi\left(\left(\bigcap_{i>0} \upbeta_i\rphi\right)^A_\infty\right)\\
&\subset \phi\left(\bigcap_{i>0} \left(\upbeta_i\rphi\right)^A_\infty\right) \\
&\subset \bigcap_{i>0} \phi\left( \left(\upbeta_i\rphi\right)^A_\infty\right) \\
&\subset \bigcap_{i\geq 0} \upbeta_i\rphi \\
&\subset \upbeta\rphi
\end{align*}
where the first containment follows from \autoref{intersection of perfectoid ideals is perfectoid}.
Let $\fra\subsetneq R$ be a $\phi$-compatible ideal and suppose that $\fra\subset\upbeta_i\rphi$. Then,
\[
\phi\left(\fra^A_\infty\right)\subset \fra\subset \upbeta_i\rphi
\]
so $\fra\subset\upbeta_{i+1}$. Since $\fra\subset\fram=\upbeta_0\rphi$, we must have 
\[
\fra\subset \bigcap_{i>0}\upbeta_i\rphi= \upbeta\rphi
\]
so $\upbeta\rphi$ is indeed the largest $\phi$-compatible ideal. It remains to show that $\upbeta\rphi\neq R$ if and only if $\phi$ is surjective. Notice that $\upbeta\rphi\neq R$ if and only if $\upbeta_1\rphi\neq R$. The backwards direction is clear so suppose that $\upbeta_1\rphi=R$. Then, we must have $\upbeta_2\rphi=R$ hence $\upbeta_3\rphi=R$ and so on. Now, $\phi$ is surjective if and only if $\phi\left((1)^A_\infty\right)=R$ if and only if $1\notin \upbeta_1\rphi$ if and only if $\upbeta\rphi\neq R$.
\end{proof}

This construction indeed defines a prime. 

\begin{corollary}\label{splitting prime is prime}
With notation as in \autoref{definition splitting prime}, if $\phi$ is surjective, then $\upbeta\rphi$ is a prime ideal. In particular, it is the largest perfectoid pure center of $\rphi$ and $R/\upbeta\rphi$ is a normal domain.
\end{corollary}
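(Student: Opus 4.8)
The plan is to obtain primality directly from the structural results on $\phi$-compatible ideals proved above, and then to read off the two ``in particular'' assertions. Since $\phi$ is surjective, \autoref{basic ppties splitting prime} already gives that $\upbeta\rphi\neq R$ and that $\upbeta\rphi$ is the largest $\phi$-compatible ideal of $R$; in particular every proper $\phi$-compatible ideal of $R$ is contained in $\upbeta\rphi$. By \autoref{compatible ideals are radical} (again using surjectivity of $\phi$) the ideal $\upbeta\rphi$ is radical, so $\upbeta\rphi=\q_1\cap\cdots\cap\q_r$ where $\q_1,\dots,\q_r$ are its minimal primes. By \autoref{compatible ideals of pairs closed under ass primes} each $\q_i$ is $\phi$-compatible, and each $\q_i$ is a proper ideal, so $\q_i\subseteq\upbeta\rphi$; as also $\upbeta\rphi\subseteq\q_i$ (a minimal prime contains the ideal), we get $\q_i=\upbeta\rphi$ for every $i$. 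Hence $\upbeta\rphi$ has a unique minimal prime and, being radical, coincides with it, so $\upbeta\rphi$ is prime.

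For the first consequence: a center of perfectoid purity of $\rphi$ is, by definition, a prime ideal of $R$ that is $\phi$-compatible. We have just shown $\upbeta\rphi$ is such a prime, and by \autoref{basic ppties splitting prime} it contains every (proper) $\phi$-compatible ideal of $R$; in particular it contains every center of perfectoid purity of $\rphi$, so it is the largest one.

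For the last consequence, $R/\upbeta\rphi$ is a domain because $\upbeta\rphi$ is prime, and it remains to prove normality. By \autoref{compatible equivalent to comm diag quotient} the map $\phi$ descends to $\bar\phi\colon(R/\upbeta\rphi)^A_\infty\to R/\upbeta\rphi$, which is again surjective, and there is a bijection between the $\phi$-compatible ideals of $R$ containing $\upbeta\rphi$ and the $\bar\phi$-compatible ideals of $R/\upbeta\rphi$. Since $\upbeta\rphi$ is the largest proper $\phi$-compatible ideal of $R$, the only proper $\bar\phi$-compatible ideal of $R/\upbeta\rphi$ is $(0)$. Now $R/\upbeta\rphi$ is again a ring of the kind fixed in \autoref{notation rainfty} — a complete Noetherian local ring, module-finite over the same $A\in\sA$, with residue field and Cohen subring unchanged since $\upbeta\rphi\subseteq\fram$ — so $(R/\upbeta\rphi,\bar\phi)$ is a pair and \autoref{conductor ideal is compatible} applies: the conductor of $R/\upbeta\rphi$ is $\bar\phi$-compatible. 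Being a nonzero ideal of the excellent domain $R/\upbeta\rphi$ (whose normalization is module-finite), it is not $(0)$, hence it equals $R/\upbeta\rphi$; that is, $R/\upbeta\rphi$ is normal.

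The one point that takes a moment to verify is that $(R/\upbeta\rphi,\bar\phi)$ really is a pair to which \autoref{conductor ideal is compatible} applies — equivalently, that $A$ stays admissible for $R/\upbeta\rphi$ and that $(R/\upbeta\rphi)^A_\infty=\rafty/(\upbeta\rphi\,\rafty)\perfd$ is the perfectoid algebra attached to $R/\upbeta\rphi$ in \autoref{notation rainfty}; both follow from $\upbeta\rphi\subseteq\fram$ together with the compatibility of perfectoidization with the relevant quotients used in \autoref{compatible equivalent to comm diag quotient}. Everything else is a direct appeal to the results on compatible ideals proved above, so I do not expect a genuine obstacle here.
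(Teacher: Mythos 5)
Your proof is correct, and for the normality statement it follows the paper's route exactly (descend $\phi$ to $\bar\phi$ on $R/\upbeta\rphi$ via \autoref{compatible equivalent to comm diag quotient}, note the quotient pair has no nontrivial compatible ideals, and invoke \autoref{conductor ideal is compatible}). Where you diverge is in how primality is obtained. The paper gets it for free at the end: $R/\upbeta\rphi$ is a normal \emph{local} ring, hence a domain, hence $\upbeta\rphi$ is prime — so primality there is a corollary of the conductor argument. You instead prove primality first and independently, using \autoref{compatible ideals are radical} plus \autoref{compatible ideals of pairs closed under ass primes} and the maximality of $\upbeta\rphi$ among proper $\phi$-compatible ideals to force every minimal prime of $\upbeta\rphi$ to coincide with $\upbeta\rphi$. (In fact you do not even need radicality for this: a single minimal prime over $\upbeta\rphi$ being contained in $\upbeta\rphi$ already forces equality and hence primality.) Your version is slightly longer but more robust, since primality does not depend on the conductor machinery; the paper's is shorter because it extracts both conclusions from one application of \autoref{conductor ideal is compatible}. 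Your closing worry about whether $(R/\upbeta\rphi,\bar\phi)$ is a legitimate pair is reasonable but already absorbed by the paper's conventions: the identification $(R/\fra)^A_\infty=\rafty/\fra^A_\infty$ is exactly what is used in \autoref{compatible equivalent to comm diag quotient}, and $A$ remains admissible for the quotient since $\upbeta\rphi\subseteq\fram$ leaves the residue field and the Cohen subring unchanged.
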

\begin{proof}
By \autoref{compatible equivalent to comm diag quotient}, the pair $(R/\upbeta\rphi, \bar{\phi})$ has no $\bar{\phi}$-compatible ideals. By \autoref{conductor ideal is compatible} $R/\upbeta\rphi$ must be a normal local ring so it is a domain.  
\end{proof}

\begin{definition}\label{definition uniform splitting prime}
Let $R$ and $A\in\sA$ be as in \autoref{notation rainfty}.
We define the splitting prime of $R$ as 
\[
\upbeta(R)=\bigcap_{\phi\in \Hom_R\left(\rafty,R\right)}\upbeta\rphi
\]
and write $\upbeta$ when there is no confusion on the ring. 
\end{definition}

\begin{proposition}\label{basic properties uniform splitting prime}
With notation as in \autoref{definition uniform splitting prime}, $\upbeta(R)\neq R$ if and only if $R$ is perfectoid pure, in which case it is the largest uniformly perfectoid compatible ideal of $R$. Moreover, $R/\upbeta$ has no uniformly perfectoid compatible ideal and is a normal domain.
\end{proposition}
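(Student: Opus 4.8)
The argument splits into the biconditional and then, under the hypothesis that $R$ is perfectoid pure, the maximality statement together with the assertions about $R/\upbeta$. For the biconditional, since $R$ is complete, $R\to\rafty$ is pure if and only if it is split, i.e.\ if and only if there is a surjective $\phi_0\in\Hom_R(\rafty,R)$. If such a $\phi_0$ exists, then $\upbeta(R,\phi_0)\subsetneq R$ by \autoref{basic ppties splitting prime}, so $\upbeta(R)\subseteq\upbeta(R,\phi_0)\subsetneq R$; conversely, if $R$ is not perfectoid pure then no $\phi$ is surjective, so $\upbeta(R,\phi)=R$ for every $\phi$ by \autoref{basic ppties splitting prime}, whence $\upbeta(R)=R$. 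From now on assume $R$ is perfectoid pure and fix a surjective $\phi_0$ with $\phi_0(x_0)=1$.

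The maximality statement has two halves. First, any proper uniformly perfectoid compatible ideal $\fra$ lies in $\upbeta(R)$: since $R$ is local, $\fra\subseteq\fram=\upbeta_0(R,\phi)$ for every surjective $\phi$, and if $\fra\subseteq\upbeta_i(R,\phi)$ then $\phi((\fra\rafty)\perfd)\subseteq\fra\subseteq\upbeta_i(R,\phi)$ forces $\fra\subseteq\upbeta_{i+1}(R,\phi)$; hence $\fra\subseteq\upbeta(R,\phi)$ for all surjective $\phi$, while trivially $\fra\subseteq R=\upbeta(R,\phi)$ for the rest, so $\fra\subseteq\upbeta(R)$. Second, $\upbeta(R)$ is itself uniformly perfectoid compatible. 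For this I would first note that for any ideal $\frab$ the set $M_\frab\coloneqq\{\psi\in\Hom_R(\rafty,R)\mid\psi((\frab\rafty)\perfd)\subseteq\frab\}$ is an $R$-submodule of $\Hom_R(\rafty,R)$ — immediate from $R$-linearity of $\psi$ and the fact that $(\frab\rafty)\perfd$ is an ideal of $\rafty$ — and that every $\psi\in\Hom_R(\rafty,R)$ is either surjective (when $\psi(x_0)\notin\fram$, so $\psi(x_0)$ is a unit) or else a difference of two surjective maps, $\psi=(\psi+\phi_0)-\phi_0$, since then $(\psi+\phi_0)(x_0)=\psi(x_0)+1$ is a unit. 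Consequently it suffices to prove $\upbeta(R)\in M_\phi$, i.e.\ $\phi((\upbeta(R)\rafty)\perfd)\subseteq\upbeta(R)$, for every surjective $\phi$.

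To see this, observe that $\upbeta(R)=\bigcap_{\psi\ \mathrm{surjective}}\upbeta(R,\psi)$ is an intersection of primes (\autoref{splitting prime is prime}), hence radical, and by Noetherianity its minimal primes are finitely many of the $\upbeta(R,\psi)$, so $\upbeta(R)=\upbeta(R,\psi_1)\cap\cdots\cap\upbeta(R,\psi_r)$ with each $\psi_l$ surjective. Fixing a surjective $\phi$, it is then enough to show $\phi((\upbeta(R)\rafty)\perfd)\subseteq\upbeta(R,\psi_l)$ for each $l$; unwinding $\upbeta(R,\psi_l)=\bigcap_i\upbeta_i(R,\psi_l)$, one checks by induction on $i$ that $\phi((\upbeta(R)\rafty)\perfd)\subseteq\upbeta_i(R,\psi_l)$, using that $\upbeta(R)\subseteq\upbeta_j(R,\psi_l)$ for all $j$, that finite sums and intersections of perfectoid ideals behave well (\autoref{perfectoidization is sum of perfectoidizations}, \autoref{p adic closure of comp ideal}), and that the chain $\upbeta_0\supseteq\upbeta_1\supseteq\cdots$ stabilizes because each $\upbeta_i(R,\psi_l)$ is radical — for which one reduces to the case where the relevant elements admit $p$-th power roots in $\rafty$ via the root-adjunction and change-of-$A$ results (\autoref{compatible with rafty iff compatible with rbfty}, \autoref{uniformly perfd compatible does not depend on choice of embedding}). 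I expect this step — passing from ``$\phi((\upbeta(R)\rafty)\perfd)\subseteq\upbeta(R,\phi)$'', which comes essentially for free, to containment in \emph{every} minimal prime $\upbeta(R,\psi_l)$ at once — to be the main obstacle, and the place where the bookkeeping of the earlier sections is really needed. Granting it, $\upbeta(R)$ is uniformly perfectoid compatible, proper by the biconditional, and contains every proper uniformly perfectoid compatible ideal, hence is the largest one.

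Finally, by the uniform analogue of \autoref{compatible equivalent to comm diag quotient}, the uniformly perfectoid compatible ideals of $R/\upbeta$ correspond to the uniformly perfectoid compatible ideals of $R$ containing $\upbeta=\upbeta(R)$; maximality of $\upbeta(R)$ leaves only $\upbeta(R)$ and $R$ among those, so $R/\upbeta$ has no proper nonzero uniformly perfectoid compatible ideal. Applying \autoref{conductor ideal is compatible} and \autoref{conductor uniformly compatible} to $R/\upbeta$ (again a complete Noetherian local ring of residue characteristic $p$), its conductor ideal is uniformly perfectoid compatible, hence equals $R/\upbeta$, so $R/\upbeta$ is normal; a normal Noetherian local ring is a domain, so $R/\upbeta$ is a normal domain (in particular $\upbeta(R)$ is prime).
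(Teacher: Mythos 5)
Your biconditional and the containment ``every proper uniformly perfectoid compatible ideal lies in $\upbeta(R)$'' are correct and are exactly what \autoref{basic ppties splitting prime} delivers; the reduction to surjective $\phi$ via the submodule $M_\frab$ and the difference-of-surjectives trick is also sound. The genuine gap is precisely where you flag it: the proof that $\upbeta(R)$ is itself uniformly perfectoid compatible. Two specific steps fail. First, you assert that $\upbeta(R)=\bigcap_{\psi\ \mathrm{surj}}\upbeta(R,\psi)$ has its minimal primes among the $\upbeta(R,\psi)$ ``by Noetherianity''; this is false for an infinite intersection of primes (e.g.\ $\bigcap_{a\in k}(x-ay)=(0)$ in $k[x,y]$ for $k$ infinite), and no finiteness of the set $\{\upbeta(R,\psi)\}$ is available at this stage --- \autoref{finiteness of phi compatible ideals} bounds the $\phi$-compatible ideals of a \emph{fixed} surjective $\phi$, and each $\upbeta(R,\psi)$ is only known to be $\psi$-compatible. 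Second, even granting a reduction to finitely many $\psi_l$, the induction ``$\phi((\upbeta(R)\rafty)\perfd)\subseteq\upbeta_i(R,\psi_l)$'' is not actually carried out: the step from $i$ to $i+1$ requires $\psi_l\bigl((s)^A_\infty\bigr)\subseteq\upbeta_i(R,\psi_l)$ for $s=\phi(x)$ with $x\in\upbeta(R)^A_\infty$, and none of the cited lemmas let you relate $(\phi(x))^A_\infty$ back to $x$ --- this is exactly the ``maps out of $\rafty$ do not compose'' obstruction that the author raises when motivating the iterative definition \autoref{definition splitting prime}. So your argument establishes $N\subseteq\upbeta(R)$, where $N$ is the largest uniformly perfectoid compatible ideal (which exists and is proper by \autoref{lemma compatible ideals closed under sums intersection} together with \autoref{basic ppties splitting prime}), but not the reverse containment.

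For comparison, the paper's own proof is a one-line citation of \autoref{basic ppties splitting prime}, \autoref{splitting prime is prime}, and \autoref{uniformly compatible doesn't depend on the choice of A}, and does not spell this point out either; those results directly give every assertion of the proposition with $\upbeta(R)$ replaced by the sum $N$ of all proper uniformly perfectoid compatible ideals (uniformly compatible by closure under sums, proper because it sits inside each $\upbeta(R,\phi)$, maximal by construction, with $R/N$ normal via the conductor argument). The content still to be supplied --- and where your write-up stops at ``granting it'' --- is the identification $\upbeta(R)=N$, i.e.\ that the intersection $\bigcap_\phi\upbeta\rphi$ is compatible with maps other than the one used to build each term. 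One clean repair is to run the iteration of \autoref{definition splitting prime} over all $\phi$ simultaneously, setting $\upbeta_i\coloneqq\{r\mid\phi((r)^A_\infty)\subseteq\upbeta_{i-1}\text{ for all }\phi\}$, for which the single-$\phi$ compatibility argument goes through verbatim; but this yields an a priori smaller ideal than the $\upbeta(R)$ of \autoref{definition uniform splitting prime}, so it proves a (correct) variant of the statement rather than the statement as written.
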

\begin{proof}
It all readily follows from \autoref{basic ppties splitting prime}, \autoref{splitting prime is prime}, and \autoref{uniformly compatible doesn't depend on the choice of A}.
\end{proof}

\subsection{Compatible cores and a test ideal}\label{section compatible cores}
We can can generalize the idea of splitting prime to find the largest compatible ideal contained inside a given ideal, similar to the Cartier core construction of \cite{BadillaCespedesFInvariantsSRRings,BrosowskyCartierCoreMap,CarvajalFayolleTameRamificationCFPS}.

\begin{definition}
Let $\rphi$ be a pair and $\fra\subset R$ be any ideal. We define $\upbeta_\fra\rphi$, the compatible core of $\fra$ as 
\[
\upbeta_\fra\rphi=\bigcap_{i>0}\upbeta_{\fra,i}\rphi
\]
where $\upbeta_{\fra,0}\rphi=\fra$ and 
\[
\upbeta_{\fra,i}\rphi=\left\{r\in R\mid \phi\left((r)^A_\infty\right)\subset \upbeta_{\fra,i-1}\rphi\right\}
\]
for $i>0$.
\end{definition}

\begin{proposition}[{\cf \cite[Corollary 3.14]{BrosowskyCartierCoreMap}, \cite[Proposition 4.5]{CarvajalFayolleTameRamificationCFPS}}]\label{basic ppties compatible core}
Let $\rphi$ be a pair and $\fra\subset R$ be a radical ideal. 
If for all minimal primes $\p$ of $\fra$, $\Im(\phi)\nsubset \p$, then 
$\upbeta_\fra\rphi$ is the largest $\phi$-compatible ideal contained in $\fra$. 
\end{proposition}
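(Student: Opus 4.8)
The plan is to mirror the proof of \autoref{basic ppties splitting prime} almost verbatim; the only genuinely new ingredient is the inclusion $\upbeta_\fra\rphi\subset\fra$, and that is where both hypotheses on $\fra$ and on $\phi$ are used. First I would record, exactly as for \autoref{definition splitting prime}, that each $\upbeta_{\fra,i}\rphi$ is an ideal of $R$ and that $\upbeta_\fra\rphi=\bigcap_{i>0}\upbeta_{\fra,i}\rphi$ is (trivially) contained in $\upbeta_{\fra,1}\rphi$. Then there are three things to establish: that $\upbeta_\fra\rphi\subset\fra$; that $\upbeta_\fra\rphi$ is $\phi$-compatible; and that it contains every $\phi$-compatible ideal which is contained in $\fra$.

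The heart of the matter is the inclusion $\upbeta_{\fra,1}\rphi\subset\fra$. Given $r\in R$ with $\phi\bigl((r)^A_\infty\bigr)\subset\fra$, I would use that $\fra$, being radical in a Noetherian ring, is the intersection of its finitely many minimal primes, and show that $r\in\p$ for each such $\p$. Fixing $\p$, the hypothesis $\Im(\phi)\nsubset\p$ furnishes $x\in\rafty$ with $\phi(x)\notin\p$; since $(r)\rafty\subset\bigl((r)\rafty\bigr)\perfd=(r)^A_\infty$ we have $rx\in(r)^A_\infty$, hence $r\phi(x)=\phi(rx)\in\fra\subset\p$, and as $\p$ is prime with $\phi(x)\notin\p$ this forces $r\in\p$. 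Letting $\p$ vary gives $r\in\fra$, so $\upbeta_\fra\rphi\subset\upbeta_{\fra,1}\rphi\subset\fra$.

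Given this, $\phi$-compatibility follows formally, as in \autoref{basic ppties splitting prime}: since $\upbeta_\fra\rphi\subset\upbeta_{\fra,j}\rphi$ for all $j$, the containment $(\upbeta_\fra\rphi)^A_\infty\subset\bigcap_{j>0}(\upbeta_{\fra,j}\rphi)^A_\infty$ holds (via \autoref{intersection of perfectoid ideals is perfectoid}), so
\[
\phi\bigl((\upbeta_\fra\rphi)^A_\infty\bigr)\subset\bigcap_{j>0}\phi\bigl((\upbeta_{\fra,j}\rphi)^A_\infty\bigr)\subset\bigcap_{j>0}\upbeta_{\fra,j-1}\rphi=\fra\cap\upbeta_\fra\rphi=\upbeta_\fra\rphi,
\]
the last equality by the previous paragraph. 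For maximality, given a $\phi$-compatible ideal $\frb\subset\fra=\upbeta_{\fra,0}\rphi$, I would induct on $i$: if $\frb\subset\upbeta_{\fra,i-1}\rphi$ then for $r\in\frb$ functoriality of perfectoidization (applied to $\rafty/(r)\rafty\twoheadrightarrow\rafty/\frb\rafty$) gives $(r)^A_\infty\subset\frb^A_\infty$, whence $\phi\bigl((r)^A_\infty\bigr)\subset\phi\bigl(\frb^A_\infty\bigr)\subset\frb\subset\upbeta_{\fra,i-1}\rphi$, i.e. $r\in\upbeta_{\fra,i}\rphi$; thus $\frb\subset\bigcap_{i>0}\upbeta_{\fra,i}\rphi=\upbeta_\fra\rphi$.

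I expect the main obstacle to be the step $\upbeta_{\fra,1}\rphi\subset\fra$. A priori the passage from $\fra$ to $\upbeta_{\fra,1}\rphi$ (a kind of $\phi$-preimage) can enlarge $\fra$; what forces it back inside is precisely the radicality of $\fra$, allowing a reduction to its minimal primes, together with $\Im(\phi)\nsubset\p$, which supplies for each such prime $\p$ an element $x$ with $\phi(x)\notin\p$. The remaining two points are routine bookkeeping paralleling \autoref{basic ppties splitting prime}.
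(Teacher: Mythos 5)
Your proof is correct and follows the same route as the paper, whose proof of this proposition is literally the one-line remark that the argument is the same as for \autoref{basic ppties splitting prime}. You have in addition spelled out the one genuinely new step that the paper leaves implicit, namely that radicality of $\fra$ together with $\Im(\phi)\nsubset\p$ for each minimal prime forces $\upbeta_{\fra,1}\rphi\subset\fra$ (the analogue, for the splitting prime, of the properness discussion at the end of that proof), and this step is handled correctly.
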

\begin{proof}
The proof is the same as \autoref{basic ppties splitting prime}.
\end{proof}

\begin{proposition}[{\cf \cite[Theorem 3.3]{BrosowskyCartierCoreMap}, \cite[Proposition 4.14]{CarvajalFayolleTameRamificationCFPS}}]\label{compatible core as a map}
Let $\rphi$ be a pair. If $\sU\coloneqq\Spec R\setminus V(\Im\phi)$ and $\p\in \sU$, then $\upbeta_\p\rphi$ is a prime ideal and the map $\upbeta_{\rphi}\colon \sU \to \sU$ given by 
\[
\upbeta_{\rphi}\colon \p \mapsto \upbeta_\p\rphi
\]
is continuous in the Zariski topology.
\end{proposition}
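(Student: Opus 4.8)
The plan is to establish the three things bundled into the statement: that $\upbeta_\p\rphi$ actually lies in $\sU$ (so that $\upbeta_{\rphi}$ is well defined as a self-map of $\sU$), that it is a prime ideal, and that the resulting map is continuous.

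First I would unpack what $\upbeta_\p\rphi$ is. For $\p\in\sU$ the ideal $\fra=\p$ is radical, its only minimal prime is $\p$, and $\Im\phi\nsubset\p$ by the definition of $\sU$; so \autoref{basic ppties compatible core} applies and gives that $\upbeta_\p\rphi$ is the largest $\phi$-compatible ideal contained in $\p$. In particular $\upbeta_\p\rphi\subseteq\p\subsetneq R$, so $\Im\phi\nsubset\upbeta_\p\rphi$ and $\upbeta_\p\rphi\in\sU$. This already shows $\upbeta_{\rphi}$ maps $\sU$ to $\sU$.

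For primality I would argue via minimal primes rather than by localizing. Since $\upbeta_\p\rphi$ is a proper $\phi$-compatible ideal contained in $\p$, the prime $\p$ contains some minimal prime $\q$ of $\upbeta_\p\rphi$. By \autoref{compatible ideals of pairs closed under ass primes}, $\q$ is itself $\phi$-compatible, and since $\q\subseteq\p$, the maximality in \autoref{basic ppties compatible core} forces $\q\subseteq\upbeta_\p\rphi$; combined with $\upbeta_\p\rphi\subseteq\q$ this yields $\upbeta_\p\rphi=\q$, which is prime. (An alternative is to pass to $\widehat{R_\p}$ and invoke \autoref{splitting prime is prime} for the now-surjective induced map, but that route requires matching $\upbeta_\p\rphi$ with a splitting prime after localization and completion — exactly the kind of perfectoid bookkeeping, and the $p\in\p$ versus $p\notin\p$ case split, that the minimal-prime argument sidesteps.) Once primality is known, $R/\upbeta_\p\rphi$ is a normal domain just as in \autoref{splitting prime is prime}.

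For continuity the device I would introduce is the \emph{$\phi$-compatible hull}: for an ideal $\frab\subseteq R$ put $\frab^{\dagger}\coloneqq\bigcap\{\fra : \frab\subseteq\fra,\ \fra\text{ is }\phi\text{-compatible}\}$, which is $\phi$-compatible and contains $\frab$ by \autoref{lemma compatible ideals closed under sums intersection}, hence is the smallest $\phi$-compatible ideal containing $\frab$. For $\p\in\sU$ I claim $\frab\subseteq\upbeta_\p\rphi$ iff $\frab^{\dagger}\subseteq\p$: if $\frab\subseteq\upbeta_\p\rphi$ then minimality of $\frab^{\dagger}$ gives $\frab^{\dagger}\subseteq\upbeta_\p\rphi\subseteq\p$, and conversely if $\frab^{\dagger}\subseteq\p$ then $\frab^{\dagger}$ is a $\phi$-compatible ideal inside $\p$, so $\frab\subseteq\frab^{\dagger}\subseteq\upbeta_\p\rphi$ by the maximality in \autoref{basic ppties compatible core}. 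Therefore, for every ideal $\frab$,
\[
\upbeta_{\rphi}^{-1}\bigl(V(\frab)\cap\sU\bigr)=\{\p\in\sU : \frab\subseteq\upbeta_\p\rphi\}=\{\p\in\sU : \frab^{\dagger}\subseteq\p\}=V(\frab^{\dagger})\cap\sU,
\]
which is closed in $\sU$; since every closed subset of $\sU$ has the form $V(\frab)\cap\sU$, continuity follows. The one point that genuinely needs the earlier theory here is the existence of $\frab^{\dagger}$, i.e.\ the closure of $\phi$-compatibility under arbitrary intersections; granting \autoref{basic ppties compatible core}, \autoref{compatible ideals of pairs closed under ass primes}, and \autoref{lemma compatible ideals closed under sums intersection}, the rest of the proof is formal, so I expect no serious obstacle beyond being careful that the localize-and-complete alternative for primality is not needed.
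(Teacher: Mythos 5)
Your proof is correct and follows essentially the same route as the paper: primality is obtained by noting that $\p$ contains a minimal prime of $\upbeta_\p\rphi$, which is $\phi$-compatible and hence absorbed into $\upbeta_\p\rphi$ by the maximality in \autoref{basic ppties compatible core}, and continuity is obtained by identifying $\upbeta_{\rphi}^{-1}(V(\frab)\cap\sU)$ with $V(\mathfrak{c})\cap\sU$ for a $\phi$-compatible ideal $\mathfrak{c}\supseteq\frab$. The only (cosmetic) difference is that you take $\mathfrak{c}=\frab^{\dagger}$, the smallest $\phi$-compatible ideal containing $\frab$, whereas the paper takes the intersection of the ideals $\upbeta_{\rphi}(\p)$ over the fiber; both rest on the closure of $\phi$-compatible ideals under arbitrary intersections from \autoref{lemma compatible ideals closed under sums intersection}.
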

\begin{proof}
Since $\p\in \sU$, $\upbeta_\p\rphi$ is radical. Indeed, $\sqrt{\upbeta_\p\rphi}\subset \p$ and is $\phi$-compatible by \autoref{lemma compatible ideals closed under sums intersection} and \autoref{compatible ideals of pairs closed under ass primes}. Now, all the the minimal primes of $\upbeta_\p\rphi$ are also compatible and at least one of them must be contained in $\p$. By \autoref{basic ppties compatible core}, this implies that $\upbeta_\p\rphi$ must be prime. To show that it is continuous in the Zariski topology, we follow the proof of \cite[Theorem 3.23]{BrosowskyCartierCoreMap}. Let $\fra\subset R$ be an ideal. We show that the inverse image of $V\coloneqq V(\fra)\cap \sU$ under $\upbeta_{\rphi}$ is also closed. In fact, we claim 
\[
\upbeta_{\rphi}\inv(V)= V(\frab)\cap \sU
\]
where
\[
\frab\coloneqq \bigcap_{\p\in \sU ,\upbeta_\p\rphi\in V} \upbeta_{\rphi}(\p).
\]
Indeed, if $\p\in \upbeta_{\rphi}\inv(V)$, $\upbeta_\p\rphi\subset \p$ and since $\frab\subset \upbeta_\p\rphi$, $\frab\subset \p$. On the other hand, since $\fra\subset\frab$, if $\p\in V(\frab)\cap \sU$ then $\fra\subset\frab\subset \upbeta_\p\rphi\subset \p$ so $\p\in V(\fra)\cap \sU$.
\end{proof}

Not only is this map continuous but we can actually describe the fibers explicitly when $\phi$ is surjective. However, we first need to define one more object.
Let $\rphi$ be a pair with $\phi$ surjective. By \autoref{finiteness of phi compatible ideals}, there are only finitely many $\phi$-compatible ideals. In particular, if $\p$ is a compatible prime, there are only finitely many $\phi$-compatible ideals not contained in that prime. The intersection of all these is therefore a nonzero ideal which is the smallest $\phi$-compatible ideal not contained in $\p$. This leads us to our next definition.

\begin{definition}[{\cf \cite{TakagiHigherDimensionalAdjoint, SmolkinSubadditivity}}]
Let $\rphi$ be a pair with $\phi$ surjective. Let $\p$ be a compatible prime of $\rphi$. We define $\uptau_\p\rphi$ to be the smallest $\phi$-compatible ideal not contained in $\p$ and call it the test ideal along $\p$. If $\p=0$, we write $\uptau\rphi$ and call it the test ideal of $\rphi$. If $R$ is not a domain, then we let $\uptau\rphi$ be the smallest $\phi$-compatible ideal not contained in any of the minimal primes of $R$.
\end{definition}

\begin{proposition}[{\cf \cite[Proposition 4.20]{CarvajalFayolleTameRamificationCFPS}}]\label{beta is locally closed}
Let $\rphi$ be a pair with $\phi$ surjective and let $\p\in \Spec R$ be a $\phi$-compatible prime. Then, $\upbeta\inv_{\rphi}(\p)=V(\uptau_\p\rphi)^\mathsf{C}\cap V(\p)$.
\end{proposition}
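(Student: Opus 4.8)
The plan is to read off the equality directly from the two extremal descriptions already in hand: by \autoref{basic ppties compatible core} and \autoref{compatible core as a map}, for $\q\in\sU$ the ideal $\upbeta_\q\rphi$ is the largest $\phi$-compatible ideal contained in $\q$ (and it is prime), while $\uptau_\p\rphi$ is by definition the smallest $\phi$-compatible ideal not contained in $\p$. Before starting I would record the two properties of $\uptau_\p\rphi$ that will be used: it is itself $\phi$-compatible, since by \autoref{finiteness of phi compatible ideals} it is a \emph{finite} intersection of $\phi$-compatible ideals and such intersections are $\phi$-compatible by \autoref{lemma compatible ideals closed under sums intersection}; and it is not contained in $\p$, since a finite intersection of ideals, none of which lies in the prime $\p$, cannot lie in $\p$ (pass to the product). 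Since $\upbeta^{-1}_{\rphi}(\p)\subset\sU$ by construction, all the sets appearing in the displayed equality are understood inside $\sU$.

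For the inclusion $\subset$, I would take $\q\in\sU$ with $\upbeta_\q\rphi=\p$. Then $\p=\upbeta_\q\rphi\subset\q$, so $\q\in V(\p)$; and if we had $\uptau_\p\rphi\subset\q$, then $\uptau_\p\rphi$ would be a $\phi$-compatible ideal contained in $\q$, whence $\uptau_\p\rphi\subset\upbeta_\q\rphi=\p$, contradicting $\uptau_\p\rphi\nsubset\p$. Thus $\q\in V(\uptau_\p\rphi)^{\mathsf C}$. For the reverse inclusion, take $\q\in\sU$ with $\p\subset\q$ and $\uptau_\p\rphi\nsubset\q$. Since $\p$ is $\phi$-compatible and $\p\subset\q$, maximality of $\upbeta_\q\rphi$ among $\phi$-compatible ideals inside $\q$ gives $\p\subset\upbeta_\q\rphi$. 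If this containment were strict then $\upbeta_\q\rphi\nsubset\p$, so $\upbeta_\q\rphi$ is a $\phi$-compatible ideal not contained in $\p$, and minimality of $\uptau_\p\rphi$ forces $\uptau_\p\rphi\subset\upbeta_\q\rphi\subset\q$, contradicting the choice of $\q$. Hence $\upbeta_\q\rphi=\p$, i.e. $\q\in\upbeta^{-1}_{\rphi}(\p)$.

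I do not expect a genuine obstacle here: the statement is a formal consequence of the ``largest ideal below'' / ``smallest ideal not below'' characterizations, and the only points requiring any care — that $\uptau_\p\rphi$ is well defined, $\phi$-compatible, and strictly outside $\p$, and that $\upbeta_\q\rphi\subset\q$ is prime for $\q\in\sU$ — are precisely what \autoref{finiteness of phi compatible ideals}, \autoref{lemma compatible ideals closed under sums intersection}, \autoref{basic ppties compatible core}, and \autoref{compatible core as a map} provide.
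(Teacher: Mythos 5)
Your proof is correct and follows essentially the same argument as the paper: both directions are obtained by playing the maximality of $\upbeta_\q\rphi$ among $\phi$-compatible ideals inside $\q$ against the minimality of $\uptau_\p\rphi$ among $\phi$-compatible ideals not contained in $\p$. The only difference is that you spell out why $\uptau_\p\rphi$ is itself $\phi$-compatible and not contained in $\p$, which the paper establishes in the discussion preceding its definition rather than in the proof.
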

\begin{proof}
Let $\q\in \upbeta\inv_{\rphi}(\p)$. If $\uptau_\p\rphi\subset \q$, then by \autoref{basic ppties compatible core}, 
\[\uptau_\p\rphi\subset \upbeta_\q\rphi=\upbeta_\p\rphi,\]
a contradiction. On the other hand, if $\q\in V(\uptau_\p\rphi)^\mathsf{C} \cap V(\p)$ then $\p\subset \upbeta_\q\rphi$. If this were a strict inequality, we would have $\upbeta_\q\rphi\supset \uptau_\p\rphi$, a contradiction.
\end{proof}

As usual, this can also be done for uniformly perfectoid compatible ideals.

\begin{definition}
Let $R$ be as in \autoref{notation rainfty} and fix $A\in \sA$. Let $\fra\subset R$ be any ideal. We define $\upbeta_\fra(R)$, the compatible core of $\fra$ as 
\[
\upbeta_\fra(R)=\bigcap_{\phi\in \Hom_R\left(\rafty,R\right)} 
\upbeta_\fra\rphi
\]
\end{definition}

\begin{proposition}[{\cf \cite[Corollary 3.14]{BrosowskyCartierCoreMap}}]
Let $R$ be as in \autoref{notation rainfty} and fix $A\in \sA$. Let $\fra\subset R$ be a radical ideal. 
If for all minimal primes $\p$ of $\fra$ and $\phi\in \Hom_R\left(\rafty,R\right)$, $\Im(\phi)\nsubset \p$, then  
$\upbeta_\fra(R)$ is the largest $\phi$-compatible ideal contained in $\fra$. 
\end{proposition}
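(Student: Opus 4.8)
We show $\upbeta_\fra(R)$ is the largest uniformly perfectoid compatible ideal contained in $\fra$ (that is the content of the displayed statement, in which ``$\phi$-compatible'' should be read as ``uniformly perfectoid compatible'', matching the definition of $\upbeta_\fra(R)$ as an intersection over all $\phi\in\Hom_R(\rafty,R)$). The plan is to mimic the proof of \autoref{basic properties uniform splitting prime}, combining the fixed-$\phi$ statement \autoref{basic ppties compatible core} with \autoref{uniformly compatible doesn't depend on the choice of A}.

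First I would dispatch the two easy inclusions. Since $\fra$ is radical and $\Im\phi$ is not contained in any minimal prime of $\fra$, \autoref{basic ppties compatible core} says $\upbeta_\fra\rphi$ is the largest $\phi$-compatible ideal contained in $\fra$; in particular each $\upbeta_\fra\rphi\subseteq\fra$, so $\upbeta_\fra(R)=\bigcap_\phi\upbeta_\fra\rphi\subseteq\fra$. Conversely, if $\frb\subseteq\fra$ is uniformly perfectoid compatible then, by \autoref{uniformly compatible doesn't depend on the choice of A}, $\frb$ is $\phi$-compatible for every $\phi\in\Hom_R(\rafty,R)$, hence $\frb\subseteq\upbeta_\fra\rphi$ for all such $\phi$, hence $\frb\subseteq\upbeta_\fra(R)$; thus $\upbeta_\fra(R)$ contains every uniformly perfectoid compatible ideal contained in $\fra$.

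The remaining --- and genuinely delicate --- point is that $\upbeta_\fra(R)$ is itself uniformly perfectoid compatible. By \autoref{uniformly compatible doesn't depend on the choice of A} it suffices to prove $\upbeta_\fra(R)$ is $\psi$-compatible for every $\psi\in\Hom_R(\rafty,R)$, and for this I would reproduce the telescoping computation from the proof of \autoref{basic ppties splitting prime}. The two algebraic inputs are that perfectoidization commutes with finite intersections (\autoref{intersection of perfectoid ideals is perfectoid}) and that $\big((r_1,\dots,r_m)\rafty\big)\perfd=\sum_{j}\big((r_j)\rafty\big)\perfd$ for a finitely generated ideal, by \autoref{finite sum of perfectoid ideals is perfectoid} together with \autoref{kernel of map of perfd rings}. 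It is convenient to route the bookkeeping through the simultaneous iteration $\nu_0=\fra$ and $\nu_i=\{r\in R\mid \phi((r)^A_\infty)\subseteq\nu_{i-1}\text{ for every }\phi\in\Hom_R(\rafty,R)\}$: the hypothesis on $\Im\phi$ forces $\nu_1\subseteq\fra$ as in \autoref{basic ppties compatible core}, so the $\nu_i$ descend, and they stabilize since $R$ is Noetherian; for the stabilized value $\nu_\infty=\nu_N=\nu_{N+1}$ the two facts above yield $\psi((\nu_\infty)^A_\infty)\subseteq\nu_N=\nu_\infty$ for all $\psi$, so $\nu_\infty$ is uniformly perfectoid compatible, and the induction of the previous paragraph shows it is the largest such ideal inside $\fra$; one then identifies $\nu_\infty=\upbeta_\fra(R)$. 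The hard part is precisely this last identification: the $\upbeta_\fra\rphi$ are individually only $\phi$-compatible, so it is not formal that their intersection over all $\phi$ stays $\psi$-compatible for a single $\psi$, and this is the subtlety already handled in \autoref{basic properties uniform splitting prime}, to which I would appeal.
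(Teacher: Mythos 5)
Your treatment of the two easy inclusions is fine, and your auxiliary iteration $\nu_i$ (intersecting over all $\phi$ at each stage) is a sound way to manufacture \emph{a} largest uniformly perfectoid compatible ideal inside $\fra$: the telescoping computation of \autoref{basic ppties splitting prime} applied to $\bigcap_i \nu_i$ does show that this intersection is $\psi$-compatible for every $\psi$, and maximality follows by the induction you give. One correction, though: descending chains of ideals do \emph{not} stabilize in a Noetherian ring --- that is the Artinian condition --- so you cannot set $\nu_\infty=\nu_N=\nu_{N+1}$; this is harmless only because the argument of \autoref{basic ppties splitting prime} works directly with the infinite intersection, so you should phrase it that way.

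The genuine gap is exactly where you flag it, and your appeal to \autoref{basic properties uniform splitting prime} does not close it. The proposition asserts a property of the specific ideal $\upbeta_\fra(R)=\bigcap_{\phi}\upbeta_\fra\rphi$, and the crux is that this intersection of ideals, each only known to be compatible with its \emph{own} $\phi$, is $\psi$-compatible for each fixed $\psi$. Maximality of each $\upbeta_\fra\rphi$ gives only $\nu_\infty\subseteq\upbeta_\fra(R)$; the reverse inclusion would follow from uniform compatibility of $\upbeta_\fra(R)$, which is what is being proved, so the identification $\nu_\infty=\upbeta_\fra(R)$ is circular as you set it up, and the direct estimate $\psi\bigl((\upbeta_\fra(R))^A_\infty\bigr)\subseteq\psi\bigl((\upbeta_\fra\rphi)^A_\infty\bigr)$ cannot be pushed into $\upbeta_\fra\rphi$ when $\phi\neq\psi$. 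Moreover, \autoref{basic properties uniform splitting prime} concerns only the case $\fra=\fram$ and is itself proved in the paper by a one-line citation, so it supplies no argument you can transplant to general radical $\fra$. For what it is worth, the paper's own proof of the present proposition is equally terse (``This follows from \autoref{basic ppties compatible core}''), so the step you could not supply is precisely the step the paper leaves implicit; but as a standalone argument your proposal establishes the existence and maximality of $\nu_\infty$ while leaving the asserted property of $\upbeta_\fra(R)$ unproved.
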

\begin{proof}
This follows from \autoref{basic ppties compatible core}.
\end{proof}

\begin{proposition}[{\cf \cite[Theorem 3.3]{BrosowskyCartierCoreMap}}]
Let $R$ be as in \autoref{notation rainfty} and fix $A\in \sA$. Let 
\[
\sU\coloneqq \bigcup_{\phi\in \Hom_R\left(\rafty,R\right)} \Spec R\setminus V(\Im\phi)
\]
and $\p\in \sU$. 
Then, $\upbeta_\p(R)$ is a prime ideal and the map $\upbeta_{R}\colon \sU \to \sU$ given by 
\[
\upbeta_{R}\colon \p \mapsto \upbeta_\p(R)
\]
is continuous in the Zariski topology.
\end{proposition}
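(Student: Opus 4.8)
The plan is to follow the proof of \autoref{compatible core as a map}, replacing every statement about $\phi$-compatible ideals by its uniform analogue. First note that if $\Im\phi\subseteq\p$ then $\upbeta_\p\rphi=R$: by definition $\upbeta_{\p,1}\rphi=\{r\mid\phi((r)^A_\infty)\subseteq\p\}=R$ because $\phi((r)^A_\infty)\subseteq\Im\phi\subseteq\p$, and then $\upbeta_{\p,i}\rphi=R$ for all $i$ by induction. Consequently
\[
\upbeta_\p(R)=\bigcap_{\phi\,:\,\Im\phi\,\nsubseteq\,\p}\upbeta_\p\rphi ,
\]
a \emph{nonempty} intersection since $\p\in\sU$, in which every term is, by \autoref{compatible core as a map}, a $\phi$-compatible prime contained in $\p$. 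Hence $\upbeta_\p(R)$ is radical and $\upbeta_\p(R)\subseteq\p$; moreover, choosing $\phi_0$ with $\Im\phi_0\nsubseteq\p$ we get $\Im\phi_0\nsubseteq\upbeta_\p(R)$, so $\upbeta_\p(R)\in\sU$ and $\upbeta_R$ indeed maps $\sU$ to $\sU$.

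I would then show $\upbeta_\p(R)$ is the largest uniformly perfectoid compatible ideal contained in $\p$. If $\fra\subseteq\p$ is uniformly perfectoid compatible, then for each $\phi$ it is $\phi$-compatible and contained in $\p$, hence $\fra\subseteq\upbeta_\p\rphi$ --- by \autoref{basic ppties compatible core} when $\Im\phi\nsubseteq\p$, trivially otherwise --- so $\fra\subseteq\upbeta_\p(R)$. For the converse one must check that $\upbeta_\p(R)$ is itself uniformly perfectoid compatible: setting $\gamma_i\coloneqq\bigcap_\phi\upbeta_{\p,i}\rphi$ (so $\gamma_0=\p$ and $\upbeta_\p(R)=\bigcap_{i>0}\gamma_i$), one runs the computation from the proof of \autoref{basic ppties splitting prime} level by level, using \autoref{intersection of perfectoid ideals is perfectoid} to pull $(-)^A_\infty$ through $\bigcap_{i>0}\gamma_i$, and \autoref{perfectoidization is sum of perfectoidizations} together with \autoref{p adic closure of comp ideal} to reduce the compatibility of each $\gamma_i$ to the defining property of its generators --- after harmlessly enlarging $A$ so that the needed $p$-power roots are present, which changes nothing by \autoref{uniformly compatible doesn't depend on the choice of A}. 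This gives $\psi\bigl((\upbeta_\p(R))^A_\infty\bigr)\subseteq\upbeta_\p(R)$ for every $\psi\in\Hom_R(\rafty,R)$, as wanted.

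Granting this, primeness is exactly as in \autoref{compatible core as a map}: $\upbeta_\p(R)$ is radical and uniformly perfectoid compatible, so by \autoref{uniform compatible closed under ass primes} its minimal primes $\q_1,\dots,\q_r$ are uniformly perfectoid compatible; as $\upbeta_\p(R)\subseteq\p$ and $\p$ is prime, some $\q_j$ lies in $\p$, and then maximality of $\upbeta_\p(R)$ forces $\q_j=\upbeta_\p(R)$. Continuity is then a verbatim copy of the corresponding part of \autoref{compatible core as a map}: given an ideal $\fra\subseteq R$ and $V\coloneqq V(\fra)\cap\sU$, put $\frab\coloneqq\bigcap_{\p\in\sU,\ \upbeta_\p(R)\in V}\upbeta_R(\p)$, which is uniformly perfectoid compatible (an intersection of uniformly perfectoid compatible ideals, by \autoref{lemma compatible ideals closed under sums intersection}) and contains $\fra$; if $\p\in\upbeta_R^{-1}(V)$ then $\p$ is one of the defining indices of $\frab$, so $\frab\subseteq\upbeta_\p(R)\subseteq\p$, while if $\p\in V(\frab)\cap\sU$ then $\frab$ is a uniformly perfectoid compatible ideal inside $\p$, so $\frab\subseteq\upbeta_\p(R)$ by maximality, whence $\fra\subseteq\frab\subseteq\upbeta_\p(R)\subseteq\p$ and $\upbeta_\p(R)\in V$. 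Thus $\upbeta_R^{-1}(V)=V(\frab)\cap\sU$ is closed in $\sU$.

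The real obstacle is the uniform compatibility of $\upbeta_\p(R)$ asserted in the second paragraph: $\upbeta_\p(R)$ is formed by intersecting the cores $\upbeta_\p\rphi$ over \emph{all} $\phi$, while the recursion defining each $\upbeta_{\p,i}\rphi$ only controls that one $\phi$, so one has to ensure that intersecting over $\phi$ does not spoil the self-map property of the recursion --- and it is precisely here that the identities for perfectoidizations of sums and intersections of ideals, together with the independence of the auxiliary ring $A$, are needed. Everything else is a routine transcription of the $\phi$-compatible case.
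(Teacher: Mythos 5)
Your overall architecture is the same as the paper's: transcribe the proof of \autoref{compatible core as a map} to the uniform setting, with both primality and continuity resting on the claim that $\upbeta_\p(R)$ is the largest uniformly perfectoid compatible ideal contained in $\p$. The reductions you make (discarding the $\phi$ with $\Im\phi\subseteq\p$, radicality and $\upbeta_\p(R)\subseteq\p$, the preimage computation for continuity) are all correct. The paper obtains the key maximality claim from the proposition immediately preceding this one (the uniform analogue of \autoref{basic ppties compatible core}); you instead try to prove it, and it is precisely there that your argument has a gap.

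The problem is the level-by-level recursion for $\gamma_i\coloneqq\bigcap_\phi\upbeta_{\p,i}\rphi$. To make the computation of \autoref{basic ppties splitting prime} go through you need $\psi\bigl((\gamma_i)^A_\infty\bigr)\subseteq\gamma_{i-1}$ for every $\psi$. But $r\in\gamma_i$ only tells you that $\phi\bigl((r)^A_\infty\bigr)\subseteq\upbeta_{\p,i-1}\rphi$ for each $\phi$ \emph{separately}; applying this with $\phi=\psi$ yields $\psi\bigl((r)^A_\infty\bigr)\subseteq\upbeta_{\p,i-1}(R,\psi)$, which is in general strictly larger than $\gamma_{i-1}=\bigcap_\phi\upbeta_{\p,i-1}\rphi$. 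Already at $i=2$ one would need $\psi\bigl((r)^A_\infty\bigr)\subseteq\upbeta_{\p,1}\rphi$ for all $\phi\neq\psi$, and nothing in the definitions provides this. The tools you invoke (\autoref{perfectoidization is sum of perfectoidizations}, \autoref{p adic closure of comp ideal}, \autoref{uniformly compatible doesn't depend on the choice of A}) only take care of passing from a single element to a finitely generated ideal and of $p$-adic closures; they do not address the mixing of different $\phi$'s, which is the actual difficulty you yourself flag in your last paragraph. So the recursion closes onto $\upbeta_\p(R,\psi)$ rather than onto $\upbeta_\p(R)$, and the uniform compatibility of $\upbeta_\p(R)$ is not established. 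What your first paragraph does give is that the sum of all uniformly perfectoid compatible ideals contained in $\p$ is itself uniformly perfectoid compatible, prime, and contained in $\upbeta_\p(R)$; the missing direction is that $\upbeta_\p(R)$ is no larger. You should either supply a genuine argument for that containment or cite the paper's preceding proposition, which is exactly this statement.
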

\begin{proof}
Same as \autoref{compatible core as a map}
\end{proof}

\begin{definition}[{\cf \cite{TakagiHigherDimensionalAdjoint, SmolkinSubadditivity}}]
Let $R$ be as in \autoref{notation rainfty} and assume further that it is perfectoid pure. Let $\p$ be a uniformly perfectoid compatible prime of $R$. We define $\uptau_\p(R)$ to be the smallest uniformly perfectoid compatible ideal not contained in $\p$ and call it the test ideal along $\p$. If $\p=0$, we write $\uptau(R)$ and call it the test ideal of $R$. If $R$ is not a domain, then we let $\uptau(R)$ be the smallest uniformly perfectoid compatible ideal not contained in any of the minimal primes of $R$.
\end{definition}

\begin{proposition}[{\cf \cite[Proposition 4.20]{CarvajalFayolleTameRamificationCFPS}}]\label{fibers of beta are tau}
Let $R$ be as in \autoref{notation rainfty} and assume further that it is perfectoid pure. Let $\p$ be a uniformly perfectoid compatible prime of $R$. Then, $\upbeta\inv_{R}(\p)=V(\uptau_\p(R))^\mathsf{C}\cap V(\p)$.
\end{proposition}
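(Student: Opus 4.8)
The plan is to run the argument of \autoref{beta is locally closed} verbatim, replacing the pair $\rphi$ by the ring $R$ and invoking the uniform analogs of the ingredients that proof uses. First I would record those ingredients. Since $R$ is perfectoid pure, \autoref{finiteness of uniformly compatible ideals} gives finitely many uniformly perfectoid compatible ideals and \autoref{lemma compatible ideals closed under sums intersection} shows they are closed under intersection, so $\uptau_\p(R)$ is a well-defined uniformly perfectoid compatible ideal with $\uptau_\p(R)\nsubset\p$. Perfectoid purity also supplies a surjective $\phi\in\Hom_R(\rafty,R)$, so $\Im\phi=R$ and the set $\sU$ occurring in the preceding proposition on the map $\upbeta_R$ is all of $\Spec R$; thus $\upbeta_R$ is defined at every point of $V(\p)$. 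By the preceding proposition on compatible cores (which for each individual $\phi$ is \autoref{basic ppties compatible core}), $\upbeta_\q(R)$ is the largest uniformly perfectoid compatible ideal contained in $\q$; note that the maps $\phi$ with $\Im\phi\subset\q$ contribute the unit ideal to the intersection $\bigcap_\phi\upbeta_\q\rphi$ defining $\upbeta_\q(R)$ and may simply be discarded, so in particular $\upbeta_\q(R)\subset\q$. Finally, $\p$ itself is uniformly perfectoid compatible, hence $\upbeta_\p(R)=\p$.

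With this dictionary the two inclusions are formal. For ``$\subseteq$'': if $\q\in\upbeta\inv_R(\p)$ then $\p=\upbeta_\q(R)\subset\q$, so $\q\in V(\p)$; and if $\uptau_\p(R)\subset\q$ held, then $\uptau_\p(R)$ would be a uniformly perfectoid compatible ideal inside $\q$, hence inside the largest such ideal $\upbeta_\q(R)=\p$, contradicting $\uptau_\p(R)\nsubset\p$. So $\q\in V(\uptau_\p(R))^{\mathsf{C}}\cap V(\p)$. For ``$\supseteq$'': if $\q\in V(\uptau_\p(R))^{\mathsf{C}}\cap V(\p)$ then $\p\subset\q$, so $\p=\upbeta_\p(R)\subset\upbeta_\q(R)$; were this inclusion strict, $\upbeta_\q(R)$ would be uniformly perfectoid compatible and not contained in $\p$, forcing $\uptau_\p(R)\subset\upbeta_\q(R)\subset\q$ and contradicting $\uptau_\p(R)\nsubset\q$. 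Hence $\upbeta_\q(R)=\p$, \ie $\q\in\upbeta\inv_R(\p)$.

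I do not expect a genuine obstacle: once the objects are in place the proof is purely set-theoretic, just as for \autoref{beta is locally closed}. The only points needing a little care are the well-definedness of $\uptau_\p(R)$, which is exactly where perfectoid purity is used (through the finiteness of uniformly perfectoid compatible ideals and their closure under intersection), and the observation that in $\upbeta_\q(R)=\bigcap_\phi\upbeta_\q\rphi$ the maps $\phi$ with $\Im\phi\subset\q$ are irrelevant, so that $\upbeta_\q(R)$ is indeed contained in $\q$ and coincides with the largest uniformly perfectoid compatible ideal of $R$ contained in $\q$.
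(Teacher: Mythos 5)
Your proof is correct and is essentially the paper's: the paper's proof of this proposition literally reads ``Same as \autoref{beta is locally closed}'', and you carry out exactly that transfer, verifying the uniform analogs of each ingredient (existence of $\uptau_\p(R)$ via \autoref{finiteness of uniformly compatible ideals}, $\sU=\Spec R$ from a surjective splitting, $\upbeta_\p(R)=\p$, and the two formal inclusions). The one substantive fact you invoke --- that $\upbeta_\q(R)$ is the largest uniformly perfectoid compatible ideal contained in $\q$ --- is exactly the preceding proposition of the paper, so nothing is missing.
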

\begin{proof}
    Same as \autoref{beta is locally closed}.
\end{proof}

\begin{lemma}\label{trace does not depend on choice of A}
Let $A\subset R$ be a Noether normalization and let $h,g\in A$ be arbitrary. Let $\lambda_{g,A}(R)\coloneqq \sum_{\phi\in \Hom_R\left(\rafty,R\right)}\phi\left((g)^A_\infty\right)$. Let $B\coloneqq A\llbracket y\rrbracket$ and make $R$ and $A$ into $B$-algebras by sending $y$ to $h$. Then, $\lambda_{g,A}(R)=\lambda_{g,B}(R)\coloneqq \sum_{\phi\in \Hom_R\left(\rbfty,R\right)}\phi\left((g)^A_\infty\right)$
\end{lemma}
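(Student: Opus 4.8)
The plan is to reduce the whole statement to the surjectivity of a restriction map on $R$-linear functionals, which in turn follows from results already at hand. First I would note that, by \autoref{rafty to rbfty is pure when adding roots}, the natural ring homomorphism $\iota\colon\rafty\to\rbfty$ is $p$-completely faithfully flat. Since $R$ is $\fram$-adically complete, \autoref{pcomp fflat implies pure} then gives that the precomposition map
\[
\Hom_R(\rbfty,R)\longrightarrow\Hom_R(\rafty,R),\qquad \psi\longmapsto\psi\circ\iota,
\]
is surjective. Along the way I would also record that $B=C_k\llbracket x_1,\ldots,x_d,y\rrbracket$ indeed belongs to $\sA$ (here $A=C_k\llbracket x_1,\ldots,x_d\rrbracket$): the ring $R$ is module-finite over $B$ via $B\to A\to R$, $y\mapsto h$, and $C_k$ in $B$ maps to $C_k$ in $R$, so that $\rbfty$ and $\lambda_{g,B}(R)$ are defined in the first place.

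Next I would carry out the bookkeeping. For $\psi\in\Hom_R(\rbfty,R)$, the symbol $\psi\left((g)^A_\infty\right)$ appearing in the definition of $\lambda_{g,B}(R)$ means $\psi$ applied to the image of the ideal $(g)^A_\infty\subset\rafty$ under $\iota$; since $(g)^A_\infty$ is in particular an $R$-submodule of $\rafty$ and $\psi\circ\iota$ is $R$-linear, this image ideal is exactly $(\psi\circ\iota)\left((g)^A_\infty\right)$. Therefore
\[
\lambda_{g,B}(R)=\sum_{\psi\in\Hom_R(\rbfty,R)}(\psi\circ\iota)\left((g)^A_\infty\right),
\]
and by the surjectivity from the first step the functionals $\psi\circ\iota$ range over all of $\Hom_R(\rafty,R)$ as $\psi$ ranges over $\Hom_R(\rbfty,R)$. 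Hence the right-hand side is literally $\sum_{\phi\in\Hom_R(\rafty,R)}\phi\left((g)^A_\infty\right)=\lambda_{g,A}(R)$, which proves the lemma.

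I do not expect a genuine obstacle here: the mathematical content is entirely contained in the statement that $\iota$ is $p$-completely faithfully flat (\autoref{rafty to rbfty is pure when adding roots}) together with the resulting surjectivity of $\Hom_R(\rbfty,R)\to\Hom_R(\rafty,R)$ via Matlis duality (\autoref{pcomp fflat implies pure}), and the remainder is a reindexing of one and the same sum of ideals. The only point requiring mild care is the interpretation of $(g)^A_\infty$ once pushed into $\rbfty$; the very same argument — now also using that multiplication by any $b\in\rbfty$ followed by an $R$-linear $\psi$ is again $R$-linear, together with $(g)^B_\infty=\left((g)^A_\infty\rbfty\right)^{-}$ from \autoref{perfectoid ideal under pcflat extension of pfd rings} and \autoref{p adic closure of comp ideal} — shows that one may equally replace $(g)^A_\infty$ by $(g)^B_\infty$ in the definition of $\lambda_{g,B}(R)$ without changing it, so the lemma is robust to that reading as well.
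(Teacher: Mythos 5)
Your proof is correct and uses essentially the same ingredients as the paper's: the surjectivity of $\Hom_R(\rbfty,R)\to\Hom_R(\rafty,R)$ coming from \autoref{rafty to rbfty is pure when adding roots} and \autoref{pcomp fflat implies pure}, and (for the $(g)^B_\infty$ reading, which is the one the paper actually intends despite the superscript in the statement) the precomposition-with-multiplication trick combined with \autoref{perfectoid ideal under pcflat extension of pfd rings} and \autoref{p adic closure of comp ideal}. The only difference is presentational — you reindex a single sum where the paper proves two inclusions — so the arguments coincide.
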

\begin{proof}
By \autoref{rafty to rbfty is pure when adding roots}, we have a surjection $\Hom_R(\rbfty,R)\to \Hom_R(\rafty,R)$ and the inclusion $\subset$ follows. For the other inclusion, fix $\psi\in \Hom_R(\rbfty,R)$, $x\in (g)^A_\infty$ and $z\in \rbfty$. Let $\phi\colon \rafty\to R$ be the composition of the maps $\times z\colon \rafty\to \rbfty$ and $\psi$. Then, $\psi(xz)=\phi(x)\in \lambda_{g,A}$. This implies that $\psi((g)^A_\infty \rbfty)\subset \lambda_{g,A}$. By \autoref{p adic closure of comp ideal} and \autoref{perfectoid ideal under pcflat extension of pfd rings} we get $\psi((g)^B_\infty)\subset \lambda_{g,A}$ as desired.
\end{proof}

\begin{remark}
The hope would be that $\uptau(R)$ is equal to other mixed characteristic test ideals (see \cite{MaSchwedePerfectoidTestIdeal,HaconLamarcheSchwedeGlobalGenTestIdeal,BMPSTWWMinimalModelProgramMixedCharacteristic,BMPSTWWDTestIdealPerturbation,MurayamaUniformBoundsSymbolicPowers,RobinsonBCMTestIdealToric,SatoTakagiDeformationFPureFreg,PerezRGCharacteristicFreeTestIdeals}). In this generality, this is far beyond the scope of this paper. However, if $R$ is normal $\Q$-Gorenstein and $A\subset R$ is a Noether normalization, by \cite[Lemma 5.1.6]{CaiLeeMaSchwedeTuckerPerfectoidSignature}, we are are able to describe the BCM-test ideal $\tau_{\rafty}(R)$ as 
\[
\tau_{\rafty}(R)=\sum_{\phi\in \Hom_R\left(\rafty,R\right)}\phi\left((g)^A_\infty\right)
\]
where $g\in A$ is such that $A[g\inv]\to R[g\inv]$ is étale. 
It is not clear to the author if such an ideal is compatible. Interestingly, by \autoref{trace does not depend on choice of A}, for a fixed $g\in R$, the ideal $\sum_{\phi\in \Hom_R\left(\rafty,R\right)}\phi\left((g)^A_\infty\right)$ does not depend on the choice of $A\in \sA$ which is hinting at it being uniformly perfectoid compatible. 
Let $\uptau_1(R,A)\coloneqq \tau_{\rafty}(R)$, 
\[\uptau_i(R,A)\coloneqq \sum_{\phi\in \Hom_R\left(\rafty,R\right)}\phi\left(\uptau_{i-1}(R,A)^A_\infty\right)\]
for $i>1$ and 
\[
J\coloneqq \sum_{i,A} \uptau_i(R,A)
\]
then $J$ is a nonzero uniformly perfectoid compatible ideal. It is equal to $\uptau(R)$ if and only if there is $x\in \uptau(R)$ with $A[x\inv]\to R[x\inv]$ étale. Since for any $x\in \uptau(R)$, $xg$ has that property, we see that $J=\uptau(R)$. In particular, $\tau_{\rafty}(R)\subset \uptau(R)$ and a normal $\Q$-Gorenstein BCM-regular ring has no uniformly perfectoid compatible ideals. If $R$ is perfectoid pure, \autoref{multiplier ideal uniformly compatible} gives us that after inverting $p$ we have containments
\begin{equation}\label{testtestmult}
\tau_{\rafty}[1/p]\subset \uptau(R)[1/p]\subset \sJ(R[1/p]),
\end{equation}
where $\sJ(R[1/p])$ is the multiplier ideal of $R[1/p]$.
When $R$ furthermore satisfies the hypotheses of \cite[Theorem B]{BMPSTWWDTestIdealPerturbation} (that is, $R$ is a flat Gorenstein domain and the completion of a ring of finite type over a DVR), the first and third ideals of \autoref{testtestmult} are equal and therefore our test ideal also agrees with them after inverting $p$.
\end{remark}

\section{Behavior Under Étale Morphisms}\label{section behavior étale morphisms}

In this section, we show that compatible ideals behave well under étale morphisms. This relies heavily on the almost purity theorem of Bhatt--Scholze, which we now state. Other versions of the almost purity theorem can be found in \cite{FaltingsAlmostEtaleExtensions,ScholzePerfectoidSpaces,KedlayaLiuRelativeHodgeTheory,AndreLemmeAbhyankarPerfectoide}.

\begin{theorem}\cite[Theorem 10.9]{BhattScholzePrisms}
Let $R$ be a perfectoid ring and $\fra$ a finitely generated ideal of $R$. Let $S$ be a finitely presented finite $R$-algebra such that $\Spec S\to \Spec R$ is finite étale outside $V(\fra)$. Then, $S_{\perfdd}$ is discrete and a perfectoid ring and the map $S\to S_{\perfdd}$ is an isomorphism away from $V(\fra)$. In particular, a finite étale cover of a perfectoid ring is perfectoid. 
\end{theorem}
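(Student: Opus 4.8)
This statement is quoted verbatim as \cite[Theorem 10.9]{BhattScholzePrisms}, so in the text it is used as a black box; the following is only a sketch of how its proof is organized. The plan is: (i) deduce the last sentence from the main assertion; (ii) recall why $S_{\perfdd}$ is a discrete perfectoid ring; (iii) isolate the almost-purity input that forces $S\to S_{\perfdd}$ to be an isomorphism away from $V(\fra)$.

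For (i): take $\fra=R$. Then $V(\fra)=\emptyset$, so the hypothesis reads ``$\Spec S\to\Spec R$ is finite étale'' and the conclusion reads ``$S\to S_{\perfdd}$ is an isomorphism on all of $\Spec R$''; hence $S\cong S_{\perfdd}$ is perfectoid. So it suffices to prove the statements about $S_{\perfdd}$. For (ii): since $R$ is perfectoid it is classically $p$-complete, and $S$ is module-finite over $R$, so $S$ is $p$-complete; thus $S_{\perfdd}$ is defined. The general existence theory \cite[Theorem 10.11]{BhattScholzePrisms} produces it as a (derived-)perfectoid object, computed by $v$-sheafification; the real content under the present hypotheses is that it is \emph{discrete} (no higher homotopy) with bounded $p$-power torsion. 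One gets this by sandwiching $S_{\perfdd}$, up to a discrepancy annihilated by a power of $\fra$, between $S$ and the integral closure of $S$ in $S[1/p]$, using that $R$ itself is integrally closed in $R[1/p]$ modulo bounded torsion (a basic property of perfectoid rings); finiteness/excellence keeps all these modules finite.

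Step (iii) is the heart of the theorem and the step I would expect to be the real obstacle. After passing to appropriate rational (and completed) localizations — compatibly with perfectoidization — one reduces to the assertion that a finite \emph{étale} algebra $T$ over a perfectoid ring $R'$ is itself perfectoid, and hence equal to its own perfectoidization. This is almost purity proper. The cleanest route is via tilting: finite étale $R'$-algebras are equivalent to finite étale $(R')^\flat$-algebras, a statement which reduces by $v$-descent to the case where $R'$ is a perfectoid valuation ring, where it is classical (Faltings, Scholze, Kedlaya--Liu). Now $(R')^\flat$ is perfect, so any finite étale $(R')^\flat$-algebra is again perfect (Frobenius is an isomorphism on the base and étale morphisms are stable under Frobenius base change), i.e.\ a perfectoid $\bF_p$-algebra; untilting gives the desired perfectoid $R'$-algebra, and the tilting dictionary identifies it with $T_{\perfdd}=T$.

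To pass from ``étale everywhere'' to a general finitely generated $\fra$ one must absorb the ramification along $V(\fra)$: this is exactly where André's perfectoid Abhyankar lemma (\cite[Théorème 2.5.1]{AndreConjectureFacteurDirect}, \cite[Theorem 7.14]{BhattScholzePrisms}) enters — extract enough $p$-power roots of the elements cutting out $V(\fra)$ to render the cover finite étale over a faithfully flat perfectoid extension, apply the previous paragraph there, and descend the resulting perfectoidness. Combining this with the boundedness from step (ii) yields that $S_{\perfdd}$ is discrete and perfectoid and that $S\to S_{\perfdd}$ is an isomorphism precisely over $\Spec R\setminus V(\fra)$.
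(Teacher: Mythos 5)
The paper gives no proof of this statement: it is imported verbatim as \cite[Theorem 10.9]{BhattScholzePrisms} and used purely as a black box, exactly as you note, so there is nothing internal to compare against. Your deduction of the final sentence from the main assertion (taking $\fra=R$, so $V(\fra)=\emptyset$ and the map $S\to S_{\perfdd}$ is an isomorphism everywhere) is the standard and correct one, and your outline of the external proof is a reasonable account of why the theorem holds. The only caveat worth recording is that your step (iii) describes the classical route to almost purity (tilting finite \etale{} algebras and reducing by $v$-descent to perfectoid valuation rings, as in Faltings, Scholze, and Kedlaya--Liu), whereas Bhatt--Scholze's own argument for Theorem 10.9 deliberately avoids that route: they deduce it from arc-descent for derived perfectoidization together with Andr\'e's lemma, reducing to absolutely integrally closed perfectoid rings where the assertion becomes elementary. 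Since the present paper takes the theorem on faith either way, this difference is immaterial here.
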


Now, let $\rphi$ be as in \autoref{notation rainfty} and $(S,\fran)$ be an $R$-algebra such that $R\to S$ is finite étale. By stability under base change, $\rafty \tensor_{R} S $ is finite étale over $\rafty \tensor_{R} R \cong \rafty$, in particular it is perfectoid. In fact, it is isomorphic to $S^A_\infty$ by the universal properties of perfectoidization and tensor product. This gives us the following setting. 

\begin{setting}\label{setting étale morphism}
Let $(R,\fram)$ be a complete local Noetherian ring. Let $(S,\fran)$ be an $R$-algebra such that $R\to S$ is finite étale. By \cite[Lemma 4.6, Lemma 4.15]{BMPSTWWPerfectoidPure}, $R$ is perfectoid pure if and only if $S$ is. Take any unramified regular local ring $A$ such that $R$ (and therefore $S$) is a module finite $A$-algebra.
Fix $\phi\in \Hom_R(\rafty,R)$.
Let $\psi\coloneqq \phi \tensor_{R} S\colon S^A_\infty\to S$ so $\psi$ is an extension of $\phi$ to $\safty$.
We have a commutative diagram 
\begin{equation}\label{comm diag extension of pairs}  
\xymatrix{
\safty  \ar[r]^\psi & S \\
\rafty\ar[r]^\phi \ar[u]& R \ar[u]. 
}
\end{equation}
where the vertical arrows are inclusions. The data $\rphi\to \spsi$ as above is what we now call an étale morphism of pairs. 
\end{setting}

\begin{lemma}\label{lemma extension of perfectoid ideal étale case}
Let $R\to S$ be as in \autoref{setting étale morphism} and $\fra\subset R$ be an ideal. If $\fra^{A,R}_\infty$ is the perfectoidization of $\fra$ in $\rafty$, then $\fra^{A,R}_\infty\tensor_R S\eqqcolon\fra^{A,S}_\infty$ is the perfectoidization of $\fra$ in $\safty$.
\end{lemma}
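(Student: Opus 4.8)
The plan is to reduce the statement to \autoref{perfectoid ideal under pcflat extension of pfd rings} applied to the finite étale map $\rafty\to\safty$. First I would record the structural facts already available: by the discussion preceding \autoref{setting étale morphism} there is a canonical isomorphism $\safty\cong\rafty\tensor_R S$, and $\rafty\to\safty$ is finite étale between perfectoid rings, in particular flat (hence $p$-completely flat). Flatness also makes the object in the statement meaningful as an ideal: the natural map $\fra^{A,R}_\infty\tensor_R S=\fra^{A,R}_\infty\tensor_{\rafty}\safty\to\safty$ is injective, with image the ideal $\fra^{A,R}_\infty\safty$. So the assertion becomes the equality of ideals $(\fra\safty)\perfd=\fra^{A,R}_\infty\safty$ inside $\safty$.

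Next I would invoke the known base-change formula. Since $R$ is Noetherian, write $\fra=(h_1,\ldots,h_r)$; then $\fra\rafty$ is a finitely generated ideal of $\rafty$ and $(\fra\rafty)\safty=\fra\safty$. Applying \autoref{perfectoid ideal under pcflat extension of pfd rings} to the $p$-completely flat map $\rafty\to\safty$ of perfectoid rings and the ideal $\fra\rafty$ yields $(\fra\safty)\perfd=\big((\fra\rafty)\perfd\,\safty\big)^-=\big(\fra^{A,R}_\infty\,\safty\big)^-$, where $(-)^-$ denotes $p$-adic closure in $\safty$.

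It therefore remains to show that $\fra^{A,R}_\infty\safty$ is already $p$-adically closed in $\safty$, and this is the only real point. For this, note $\fra^{A,R}_\infty=(\fra\rafty)\perfd$ is a perfectoid ideal, so $\rafty/\fra^{A,R}_\infty$ is a perfectoid ring (it equals $(\rafty/\fra\rafty)\perfd$, as $\rafty/\fra\rafty$ is semiperfectoid and hence surjects onto its perfectoidization by \cite[Theorem 7.4]{BhattScholzePrisms}). Base changing the finite étale map $\rafty\to\safty$ along $\rafty\to\rafty/\fra^{A,R}_\infty$ shows $\safty/\fra^{A,R}_\infty\safty\cong(\rafty/\fra^{A,R}_\infty)\tensor_{\rafty}\safty$ is finite étale over a perfectoid ring, hence perfectoid by the almost purity theorem \cite[Theorem 10.9]{BhattScholzePrisms}; in particular it is $p$-adically separated, so $\bigcap_n(\fra^{A,R}_\infty\safty+p^n\safty)=\fra^{A,R}_\infty\safty$ and the $p$-adic closure in the previous step is redundant. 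Combining, $(\fra\safty)\perfd=\fra^{A,R}_\infty\safty=\fra^{A,R}_\infty\tensor_R S$, as claimed. (Alternatively, one could bypass \autoref{perfectoid ideal under pcflat extension of pfd rings} entirely and argue directly that perfectoidization commutes with the finite étale base change $\rafty\to\safty$, via the universal property of perfectoidization together with \cite[Theorem 10.9]{BhattScholzePrisms}, and then use flatness to commute the kernel defining $\fra^{A,R}_\infty$ past $\tensor_{\rafty}\safty$; this gives the same conclusion.)
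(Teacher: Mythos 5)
Your proof is correct and rests on the same key fact as the paper's: the quotient $\safty/\fra^{A,R}_\infty\safty \cong (\rafty/\fra^{A,R}_\infty)\tensor_R S$ is finite \'etale over the perfectoid ring $\rafty/\fra^{A,R}_\infty$, hence perfectoid by almost purity, so $\fra^{A,R}_\infty\tensor_R S$ is a perfectoid ideal of $\safty$. The paper's proof stops there (leaving implicit the two containments identifying this perfectoid ideal with $(\fra\safty)\perfd$), whereas you route through \autoref{perfectoid ideal under pcflat extension of pfd rings} and then observe that the $p$-adic closure is redundant; this is a harmless detour that actually makes the equality with the perfectoidization more explicit than the paper does.
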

\begin{proof}
Note that $\rafty/\fra^{A,R}_\infty\tensor_R S$ is finite étale over $\rafty/\fra^{A,R}_\infty\tensor_R R= R/\fra^{A,R}_\infty$ so is itself perfectoid. In particular, $\fra^{A,R}_\infty\tensor_R S$ is a perfectoid ideal of $\safty$. 
\end{proof}

\begin{proposition}\label{behavior beta étale morphims}
Let $\rphi\to\spsi$ be as in \autoref{setting étale morphism} and assume further that $\phi$ is surjective. Then, $\upbeta\rphi=\upbeta\spsi\cap R$ and $\upbeta\spsi=\upbeta\rphi S$. More generally, if $\fra$ is any ideal of $S$, then $\upbeta_{\fra\cap R} \rphi=\upbeta_\fra \spsi \cap R$.
Moreover, for all radical ideals $\frab\subset R$, $\upbeta_\frab\rphi S=\sqrt{\upbeta_\frab\rphi S}=\upbeta_{\sqrt{\frab S}}\spsi=\upbeta_{\frab S}\spsi$.
\end{proposition}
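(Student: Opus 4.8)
The plan is to reduce the whole statement to the characterization of the compatible core as a largest compatible ideal (\autoref{basic ppties compatible core}) together with two transfer lemmas for compatibility along $R\to S$, and to handle the single genuinely non-formal point by passing to a Galois closure.

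\textbf{Transfer lemmas.} First I would record that, in the situation of \autoref{setting étale morphism} with $\psi=\phi\tensor_R S$: (i) if $\fra\subset R$ is $\phi$-compatible then $\fra S\subset S$ is $\psi$-compatible, and (ii) if $\frb\subset S$ is $\psi$-compatible then $\frb\cap R\subset R$ is $\phi$-compatible. Both follow from \autoref{lemma extension of perfectoid ideal étale case}, which identifies the perfectoidization $(\fra S)^A_\infty$ of $\fra$ in $\safty$ with $\fra^A_\infty\tensor_R S$. For (i): since $S$ is finite free over $R$, every element of $\fra^A_\infty\tensor_R S=(\fra S)^A_\infty$ is a finite sum $\sum_j y_j\tensor s_j$ with $y_j\in\fra^A_\infty$, and $\psi(\sum_j y_j\tensor s_j)=\sum_j\phi(y_j)s_j\in\fra S$. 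For (ii): monotonicity of perfectoidization and $(\frb\cap R)S\subset\frb$ give $(\frb\cap R)^A_\infty\subset(\frb\cap R)^A_\infty\tensor_R S\subset(\frb\safty)\perfd$, and since $\phi$ is the restriction of $\psi$ to $\rafty$ we get $\phi((\frb\cap R)^A_\infty)\subset\psi\big((\frb\safty)\perfd\big)\cap R\subset\frb\cap R$. The same two lemmas apply to any finite étale map of complete local rings equipped with a pair, in particular to the maps $R\to S'$ and $S\to S'$ below, with $\psi$ replaced by the corresponding base change of $\phi$ (resp.\ of $\psi$). I also record: $\psi$ (and each such base change) is surjective because $\phi$ is; $R\to S$, $S\to S'$, $R\to S'$ are faithfully flat, being finite flat and local; $\fram S=\fran$ because $R\to S$ is unramified and $S$ is local; and $\sqrt{\frab S}\cap R=\frab$ for any radical $\frab\subset R$, by faithful flatness.

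\textbf{Formal part.} Given (i), (ii) and \autoref{basic ppties compatible core}, the identity $\upbeta_{\fra\cap R}\rphi=\upbeta_\fra\spsi\cap R$ for radical $\fra\subset S$ is immediate: $\upbeta_{\fra\cap R}\rphi$ is the largest $\phi$-compatible ideal inside $\fra\cap R$ and $\upbeta_\fra\spsi$ the largest $\psi$-compatible ideal inside $\fra$ ($\phi,\psi$ surjective); (i) gives $\upbeta_{\fra\cap R}\rphi\, S\subset\fra$, hence $\upbeta_{\fra\cap R}\rphi\, S\subset\upbeta_\fra\spsi$, hence $\upbeta_{\fra\cap R}\rphi\subset\upbeta_\fra\spsi\cap R$; and (ii) shows $\upbeta_\fra\spsi\cap R$ is $\phi$-compatible and $\subset\fra\cap R$, hence $\subset\upbeta_{\fra\cap R}\rphi$. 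Taking $\fra=\fran$ (so $\fra\cap R=\fram$) and unwinding the definitions gives $\upbeta\rphi=\upbeta\spsi\cap R$. For radical $\frab\subset R$: $\upbeta_\frab\rphi\, S$ is $\psi$-compatible by (i), hence radical by \autoref{compatible ideals are radical}, so $\upbeta_\frab\rphi\, S=\sqrt{\upbeta_\frab\rphi\, S}$; and easy inductions on the level of the core (using that $\upbeta_\frab\rphi\, S$ is $\psi$-compatible and $\frab S\subset\sqrt{\frab S}$) give $\upbeta_\frab\rphi\, S\subset\upbeta_{\frab S}\spsi\subset\upbeta_{\sqrt{\frab S}}\spsi$. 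Thus everything reduces to the inclusion $\upbeta_{\sqrt{\frab S}}\spsi\subset\upbeta_\frab\rphi\, S$; granting it, the chain collapses to equalities, and the case $\frab=\fram$ (where $\frab S=\fran$) yields $\upbeta\spsi=\upbeta\rphi\, S$.

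\textbf{The descent step and main obstacle.} This last inclusion is where I expect the real work, and where the Galois/almost-purity input matters. Since $R$ is Henselian, $S$ embeds into a finite étale, local $R$-algebra $S'$ (the Galois closure of $S/R$, corresponding to the Galois closure of the residue field extension) carrying a faithful action of a finite group $G$ with $(S')^G=R$, with $R\to S'$ being $G$-Galois and $S=(S')^H$, $S\to S'$ being $H$-Galois for a subgroup $H\le G$; write $\psi'=\phi\tensor_R S'$, which is $G$-equivariant and surjective. Because $\psi'$ is $G$-equivariant and perfectoidization is functorial, $G$ permutes the $\psi'$-compatible ideals of $S'$, so the largest $\psi'$-compatible ideal inside the $G$-stable radical ideal $\sqrt{\frab S'}$, namely $\upbeta_{\sqrt{\frab S'}}(S',\psi')$, is itself $G$-stable. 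Classical Galois descent then gives $\upbeta_{\sqrt{\frab S'}}(S',\psi')=\big(\upbeta_{\sqrt{\frab S'}}(S',\psi')\cap R\big)S'$ (concretely $x=\sum_i a_i\Tr_{S'/R}(b_i x)$ for Galois coordinates $\sum_i a_i g(b_i)=\delta_{g,1}$, with each $\Tr_{S'/R}(b_i x)$ lying in the $G$-stable ideal); combined with transfer lemma (ii) for $R\to S'$ and $\sqrt{\frab S'}\cap R=\frab$, we get $\upbeta_{\sqrt{\frab S'}}(S',\psi')\cap R\subset\upbeta_\frab\rphi$, hence $\upbeta_{\sqrt{\frab S'}}(S',\psi')\subset\upbeta_\frab\rphi\, S'$. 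Finally $\upbeta_{\sqrt{\frab S}}\spsi$ is $\psi$-compatible, so by transfer lemma (i) for $S\to S'$ the ideal $\upbeta_{\sqrt{\frab S}}\spsi\cdot S'$ is $\psi'$-compatible and contained in $\sqrt{\frab S'}$, hence inside $\upbeta_{\sqrt{\frab S'}}(S',\psi')\subset\upbeta_\frab\rphi\, S'=(\upbeta_\frab\rphi\, S)S'$; since $S\to S'$ is faithfully flat and $\upbeta_{\sqrt{\frab S}}\spsi$ and $\upbeta_\frab\rphi\, S$ are ideals of $S$, contracting gives $\upbeta_{\sqrt{\frab S}}\spsi\subset\upbeta_\frab\rphi\, S$. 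The obstacle is exactly this descent inclusion: an individual $\psi$-compatible ideal of $S$ need not be extended from $R$ (equivalently need not be $G$-stable over the Galois closure), so it is essential to use that the \emph{largest} compatible ideal inside a $G$-stable ideal is automatically $G$-stable; one should also verify the minor bookkeeping point that a single $A\in\sA$ works simultaneously for $R$, $S$ and $S'$, so that $\rafty\subset\safty\subset(S')^A_\infty$ are all formed with the same $A$.
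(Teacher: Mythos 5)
Your proof is correct, but it takes a genuinely different route through the hardest step. For the identity $\upbeta_{\fra\cap R}\rphi=\upbeta_\fra\spsi\cap R$ the paper runs an induction on the levels $\upbeta_{(\fra\cap R),i}\rphi$ using \autoref{lemma extension of perfectoid ideal étale case}, whereas you package the same input into your two transfer lemmas and conclude formally from the characterization of the compatible core as the largest compatible ideal inside a radical ideal (\autoref{basic ppties compatible core}); that difference is cosmetic. The real divergence is the inclusion $\upbeta_{\sqrt{\frab S}}\spsi\subset\upbeta_\frab\rphi\,S$. The paper reduces to $\frab=\p$ prime, writes both sides as intersections of primes, and for each $\q\supset\upbeta_\p\rphi S$ produces, via going-down and going-up, primes $\q'\subset\q''$ lying over $\upbeta_\p\rphi$ and $\p$ respectively; the first part of the proposition together with primality of the cores $\upbeta_{\q'}\spsi$ and $\upbeta_{\q''}\spsi$ (\autoref{compatible core as a map}) and incomparability then force $\upbeta_{\q''}\spsi=\upbeta_{\q'}\spsi\subset\q'\subset\q$. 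You instead pass to the Galois closure $S'$ of $S/R$ (available since $R$ is complete, hence Henselian), note that $\psi'=\phi\tensor_R S'$ is $G$-equivariant so that $G$ permutes the $\psi'$-compatible ideals, deduce that the largest $\psi'$-compatible ideal inside the $G$-stable ideal $\sqrt{\frab S'}$ is itself $G$-stable, and descend it via Galois coordinates. Both arguments are complete. The paper's stays entirely inside $S$ and leans on the primality of compatible cores of primes plus standard lying-over results; yours isolates the conceptual point (only the \emph{largest} compatible ideal inside an extended ideal is guaranteed to be extended) at the cost of the extra Galois-closure setup, including the checks you correctly flag: functoriality of perfectoidization under the $G$-action, surjectivity of the base-changed splittings, and that the transfer lemmas apply to $S\to S'$ with the same $A$.
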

\begin{proof}
The inclusion $\upbeta_{\fra\cap R}\rphi \supset \upbeta_\fra\spsi\cap R $ follows directly from \autoref{comm diag extension of pairs} since the contraction of a $\psi$-compatible ideal must be $\phi$-compatible. For ``$\subset$", we proceed by induction. Let $r\in R$.  
By \autoref{lemma extension of perfectoid ideal étale case}, $(r)^{A,R}_\infty \tensor_{R} S$ is the perfectoidization of $(r)$ in $\safty$. If $r\in \upbeta_{(\fra\cap R),1}\rphi$ for an ideal $\fra$ of $S$.
Then, 
\[
    \psi\left(\left(r\right)^{A,R}_\infty \tensor_{R} S\right) =\phi\left( \left(r\right)^{A,R}_\infty\right) \tensor_{R} S \subset (\fra \cap R) \tensor_{R} S \subset \fra.
\]
This implies that $r\in \upbeta_{\fra,1}\spsi$ hence
$\upbeta_{(\fra\cap R),1}\rphi \subset \upbeta_{\fra,1}\spsi$. Suppose that we know that $\upbeta_{(\fra\cap R),i}\rphi \subset \upbeta_{\fra,i}\spsi$ and let $r\in \upbeta_{(\fra\cap R), i+1}\rphi$. Then, 
\[
    \psi\left(\left(r\right)^{A,R}_\infty \tensor_{R} S\right) =\phi\left( \left(r\right)^{A,R}_\infty\right) \tensor_{R} S \subset \upbeta_{(\fra\cap R),i}\rphi \tensor_R S \subset \upbeta_{\fra,i}\spsi.
\]
This implies that $r\in \upbeta_{\fra, i+1}\spsi$ hence
$\upbeta_{(\fra\cap R),i+1}\rphi \subset \upbeta_{\fra,i+1}\spsi$. 
The implication to the last statement follows as in \cite[Definition-Proposition 5.5]{CarvajalFayolleTameRamificationCFPS} but we write it here for the sake of completeness. The first and third equalities follow from the fact that an extension of a radical ideal under an étale morphism is radical. We therefore only need to show the middle equality.
Let $\frab\subset R$ be a radical ideal. We have 
\[ 
\upbeta_{\sqrt{\frab S}}\spsi=\bigcap_{\frab\subset \q\cap R}\upbeta_\q\spsi
\]
and 
\[
\sqrt{\upbeta_\frab\rphi S}=\bigcap_{\q\in \mathcal{C}} \q
\]
where $\mathcal{C}$ is the set of primes $\q\in \Spec S$ with $\upbeta_{\frab\rphi}\subset \q\cap R$. Therefore, to show that $\sqrt{\upbeta_\frab\rphi S}\subset \upbeta_{\sqrt{\frab S}}\rphi$, it suffices to show that $\frab\subset \q\cap R$ for some $\q\in \Spec S$ implies $\upbeta_{\frab}\rphi\subset \upbeta_\q\spsi \cap R$. But $\frab\subset \q\cap R$ implies \[\upbeta_\frab\rphi\subset\upbeta_{\q\cap R}\rphi=\upbeta_{\q}\cap R.\] which is what we wanted. For the other containment, it suffices to show this in case $\frab=\p$ is a prime since the $\upbeta$ construction commutes with taking intersections and we are working in a flat extension. Then, let $\q\in \Spec S$ such that $\upbeta_\p\rphi\subset \q\cap R$. We want to show that $\upbeta_{\sqrt{\pS}}\spsi \subset \q$. By going-down, there is $\q'\subset \q$ lying over $\upbeta_\p$ and by going up there is $\q''\supset \q'$ lying over $\p$ so that $\sqrt{\ps}\subset \q''$. We have 
\begin{align*}
\upbeta_{\q''}\spsi\cap R=\upbeta_{\q''\cap R}\rphi=\upbeta_\p\rphi=\upbeta_{\upbeta_\p\rphi}\rphi&=\upbeta_{\q'\cap R}\rphi\\
&=\upbeta_{\q'}\spsi\cap R.
\end{align*}
Since $\q'\subset \q''$, $\upbeta_{\q'}\spsi\subset\upbeta_{\q''}\spsi$ so by incomparability these must be equal.
Using now that $\sqrt{\pS}\subset \q''$, we get $\upbeta_{\sqrt{\pS}}\spsi\subset \q'\subset \q$, which is what we wanted. 
\end{proof}

\begin{corollary}\label{test ideal of pairs under étale morphisms}
Let $\rphi\to\spsi$ be as in \autoref{setting étale morphism} and assume further that they are domains. Let $\uptau\rphi, \uptau\spsi$ be the respective nonzero smallest compatible ideals. If $\phi$ is surjective, then $\uptau\rphi=\uptau\spsi\cap R$ and $\uptau\rphi S=\uptau\spsi$.
\end{corollary}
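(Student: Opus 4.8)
The plan is to reduce everything to \autoref{behavior beta étale morphims} together with the fact that $R\to S$ is faithfully flat (indeed $\Spec S\to\Spec R$ is a finite flat cover of the connected scheme $\Spec R$, hence surjective), and to note that $\psi$ is surjective, being $\phi\tensor_R S$. Since $\phi$ is surjective, $\uptau\rphi$ is by definition the smallest nonzero $\phi$-compatible ideal; it is radical by \autoref{compatible ideals are radical}, so \autoref{basic ppties compatible core} gives $\upbeta_{\uptau\rphi}\rphi=\uptau\rphi$. Symmetrically, $\uptau\spsi$ is the smallest nonzero $\psi$-compatible ideal and is radical.

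First I would prove $\uptau\spsi\subset\uptau\rphi S$. Applying the last assertion of \autoref{behavior beta étale morphims} with the radical ideal $\frab=\uptau\rphi$ gives
\[
\uptau\rphi S=\upbeta_{\uptau\rphi}\rphi S=\upbeta_{\uptau\rphi S}\spsi .
\]
The right-hand side is a compatible core, hence a $\psi$-compatible ideal (\autoref{basic ppties compatible core} applied to $\spsi$), and it is nonzero because $\uptau\rphi\neq 0$ and $R\to S$ is faithfully flat. As $\uptau\spsi$ is the smallest nonzero $\psi$-compatible ideal, $\uptau\spsi\subset\uptau\rphi S$. Next I would prove $\uptau\rphi\subset\uptau\spsi\cap R$. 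By the commutative diagram \autoref{comm diag extension of pairs}, the contraction to $R$ of any $\psi$-compatible ideal of $S$ is $\phi$-compatible, so $\uptau\spsi\cap R$ is $\phi$-compatible; it is nonzero, since choosing $0\neq x\in\uptau\spsi$ and an integral equation $x^n+a_{n-1}x^{n-1}+\cdots+a_1x+a_0=0$ of $x$ over $R$ of minimal degree (available as $R\to S$ is finite) yields $0\neq a_0\in xS\cap R\subset\uptau\spsi\cap R$, the nonvanishing of $a_0$ following from $S$ being a domain and the minimality of $n$. Since $\uptau\rphi$ is the smallest nonzero $\phi$-compatible ideal, $\uptau\rphi\subset\uptau\spsi\cap R$.

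Finally, contracting $\uptau\spsi\subset\uptau\rphi S$ and using $\uptau\rphi S\cap R=\uptau\rphi$ (faithful flatness of $R\to S$) gives $\uptau\spsi\cap R\subset\uptau\rphi$; combined with the previous step this yields $\uptau\spsi\cap R=\uptau\rphi$. Extending back to $S$ then gives $\uptau\rphi S=(\uptau\spsi\cap R)S\subset\uptau\spsi$, which together with $\uptau\spsi\subset\uptau\rphi S$ forces $\uptau\rphi S=\uptau\spsi$. The only mildly delicate point is the nonvanishing of the contraction $\uptau\spsi\cap R$, which is handled by the minimal integral equation above; everything else is formal once \autoref{behavior beta étale morphims} is available, so I do not anticipate a genuine obstacle.
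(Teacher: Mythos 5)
Your proof is correct, and for the equality $\uptau\rphi S=\uptau\spsi$ it runs along the same lines as the paper: \autoref{behavior beta étale morphims} with $\frab=\uptau\rphi$ shows $\uptau\rphi S=\upbeta_{\uptau\rphi S}\spsi$ is a nonzero $\psi$-compatible ideal, hence contains $\uptau\spsi$, while the reverse inclusion comes from extending $\uptau\rphi\subset\uptau\spsi\cap R$. Where you diverge is the contraction statement $\uptau\spsi\cap R\subset\uptau\rphi$: the paper deduces it topologically, showing $\upbeta_R^{-1}(0_R)\subset\upbeta_S^{-1}(0_S)\cap R$ and invoking the fiber description $\upbeta^{-1}_{\rphi}(\p)=V(\uptau_\p\rphi)^{\mathsf{C}}\cap V(\p)$ of \autoref{beta is locally closed}, whereas you simply contract the already-established inclusion $\uptau\spsi\subset\uptau\rphi S$ and use $\uptau\rphi S\cap R=\uptau\rphi$, which holds because the finite étale local homomorphism $R\to S$ is faithfully flat. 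Your route is more elementary and avoids \autoref{beta is locally closed} entirely. You are also more careful than the paper on one point it leaves implicit: that $\uptau\spsi\cap R\neq 0$, which is needed before minimality of $\uptau\rphi$ can be invoked; your argument via a minimal-degree integral equation (using that $S$ is a domain finite over $R$) settles this cleanly.
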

\begin{proof}
$\uptau\spsi\cap R$ is compatible so it contains $\uptau\rphi$. Extending to $\spsi$ gives 
\[
\uptau\rphi S\subset (\uptau\spsi\cap R) S \subset \uptau\spsi
\]
but since these are all compatible ideals, we must have 
\[
\uptau\rphi S\supset \uptau\spsi
\]
so this is an equality. 

To show that $\uptau\rphi=\uptau\spsi\cap R$, note that $\upbeta_R\inv(0_R)\subset \upbeta_S\inv(0_S)\cap R$. Indeed, let $\q\in \Spec S$ be such that $\q\cap R= \p $ with $\upbeta_\p\rphi=0$. By \autoref{behavior beta étale morphims}, 
$\upbeta_\q\spsi\cap R=\upbeta_{\q\cap R}\rphi=0$ so $\upbeta_\q\spsi=0$.
Then, by \autoref{beta is locally closed}, $V(\uptau\rphi)\subset V(\uptau\spsi)\cap R$ which implies $\uptau\spsi\cap R \subset \uptau\rphi$. Since the other inclusion is automatic, $\uptau\spsi\cap R = \uptau\rphi$. 
\end{proof}

As usual, these also hold for uniformly perfectoid compatible ideals. 
\begin{proposition}
Let $R\to S$ be as in \autoref{setting étale morphism} and assume $R$ (equivalently $S$) is perfectoid pure. Let $\frab\subset S$ be a radical ideal. Then, $\upbeta_\frab(S)\cap R=\upbeta_{\frab\cap R}(R)$. Moreover, if $\fra\subset R$ is a radical ideal then, $\upbeta_\fra(R)S=\upbeta_{\fra S}(S)$. 
\end{proposition}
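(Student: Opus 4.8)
The plan is to reduce both equalities to the pair-level statement \autoref{behavior beta étale morphims} by intersecting over $\Hom_R(\rafty,R)$, respectively $\Hom_S(\safty,S)$. Fix once and for all an $A$ as in \autoref{setting étale morphism}, so $\safty = \rafty\tensor_R S$ and, by \autoref{lemma extension of perfectoid ideal étale case}, $\fra^{A,S}_\infty = \fra^{A,R}_\infty\tensor_R S$ for every ideal $\fra$ of $R$. I would record three preliminary facts. First, since $S$ is finite projective over $R$, the natural map $S\tensor_R\Hom_R(\rafty,R)\to \Hom_S(\safty,S)$, $s\tensor\phi\mapsto s\cdot(\phi\tensor_R S)$, is an isomorphism; hence if $\psi = \sum_k s_k(\phi_k\tensor_R S)$ and $\frab\subset S$ satisfies $(\phi_k\tensor_R S)(\frab^{A,S}_\infty)\subset\frab$ for all $k$, then $\psi(\frab^{A,S}_\infty)\subset\sum_k s_k\frab\subset\frab$, so ``$\psi$-compatible for all $\psi$'' is the same as ``$(\phi\tensor_R S)$-compatible for all $\phi$''. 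Second, arguing as in \autoref{basic ppties splitting prime} and \autoref{basic properties uniform splitting prime} (reducing to surjective maps, using that $R$ --- hence $S$ --- is perfectoid pure), the ideals $\upbeta_\frab(S)$ and $\upbeta_\fra(R)$ are themselves uniformly perfectoid compatible, and $\upbeta_\fra(R)\subset\fra$ when $\fra$ is radical. Third, using the first fact together with the monotonicity of perfectoidization under inclusions of ideals, if $J\subset R$ is uniformly perfectoid compatible then so is $JS\subset S$, and if $K\subset S$ is then so is $K\cap R\subset R$.

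For the first equality I would apply \autoref{behavior beta étale morphims} to the étale morphism of pairs $\rphi\to(S,\phi\tensor_R S)$ --- the proof of which, for this part, does not use surjectivity of $\phi$ --- to get $\upbeta_\frab(S,\phi\tensor_R S)\cap R = \upbeta_{\frab\cap R}\rphi$ for every $\phi$, and then intersect:
\[
\upbeta_{\frab\cap R}(R) = \bigcap_\phi\left(\upbeta_\frab(S,\phi\tensor_R S)\cap R\right) = \left(\bigcap_\phi\upbeta_\frab(S,\phi\tensor_R S)\right)\cap R \;\supseteq\; \left(\bigcap_\psi\upbeta_\frab(S,\psi)\right)\cap R = \upbeta_\frab(S)\cap R.
\]
For the reverse inclusion, take $x\in\upbeta_{\frab\cap R}(R)$ and a $\psi = \sum_k s_k(\phi_k\tensor_R S)$; I would show $\upbeta_{\frab\cap R}(R)S\subset\upbeta_{\frab,i}(S,\psi)$ by induction on $i$. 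The case $i=0$ is $\upbeta_{\frab\cap R}(R)S\subset(\frab\cap R)S\subset\frab$. For the step, since $\upbeta_{\frab,i+1}(S,\psi)$ is an ideal it suffices to treat $x\in\upbeta_{\frab\cap R}(R)$; then $(x)^{A,S}_\infty = (x)^{A,R}_\infty S$, so $\psi\bigl((x)^{A,S}_\infty\bigr) = \sum_k s_k\,\phi_k\bigl((x)^{A,R}_\infty\bigr)S$, and because $\upbeta_{\frab\cap R}(R)$ is uniformly perfectoid compatible it is $\phi_k$-compatible, so $\phi_k\bigl((x)^{A,R}_\infty\bigr)\subset\upbeta_{\frab\cap R}(R)$, whence $\psi\bigl((x)^{A,S}_\infty\bigr)\subset\upbeta_{\frab\cap R}(R)S\subset\upbeta_{\frab,i}(S,\psi)$. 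Thus $x\in\upbeta_\frab(S,\psi)$ for all $\psi$, i.e. $x\in\upbeta_\frab(S)$.

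For the ``moreover'' statement ($\fra\subset R$ radical, so $\fra S$ radical): ``$\subset$'' holds because $\upbeta_\fra(R)S$ is a uniformly perfectoid compatible ideal of $S$ contained in $\fra S$, and the same induction as above (with $\fra S$ in place of $\frab$) gives $\upbeta_\fra(R)S\subset\upbeta_{\fra S}(S,\psi)$ for every $\psi$, hence $\upbeta_\fra(R)S\subset\upbeta_{\fra S}(S)$. For ``$\supset$'' I would apply the first equality with $\frab = \fra S$, using $\fra S\cap R = \fra$ (faithful flatness), to obtain $\upbeta_{\fra S}(S)\cap R = \upbeta_\fra(R)$; since $\fra S$ is defined over $R$, a faithfully flat descent argument (as in the proof of \autoref{behavior beta étale morphims} and \cite[Definition-Proposition 5.5]{CarvajalFayolleTameRamificationCFPS}) then shows $\upbeta_{\fra S}(S)$ equals the extension of its contraction, so $\upbeta_{\fra S}(S) = \upbeta_\fra(R)S$. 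I expect the main obstacle to be the bookkeeping packaged into the second preliminary fact --- that the uniformly perfectoid compatible cores really are uniformly perfectoid compatible, so that their $\phi$-compatibility may be invoked freely inside the inductions --- together with the descent step needed for ``$\supset$''; both are routine given the earlier machinery but must be carried out with care.
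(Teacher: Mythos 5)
Your argument is correct in substance and rests on the same engine as the paper's proof, namely the identification $\Hom_S(\safty,S)\cong \Hom_R(\rafty,R)\tensor_R S$ coming from $S$ being finite projective over $R$, together with \autoref{lemma extension of perfectoid ideal étale case} and an induction on the levels $\upbeta_{\cdot,i}$. The differences are organizational. For the inclusion $\upbeta_\frab(S)\cap R\subset \upbeta_{\frab\cap R}(R)$ you intersect the pair-level identity of \autoref{behavior beta étale morphims} over all $\phi$, which forces you to apply that proposition to non-surjective $\phi$; your remark that the relevant half of its proof never uses surjectivity is accurate (both containments there are level-by-level inductions from the commutative square), but it does require the reader to reopen that proof, whereas the paper instead verifies directly that $\upbeta_\frab(S)\cap R$ is uniformly perfectoid compatible and invokes maximality of the core. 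For the reverse inclusion, the paper matches levels on both sides ($\upbeta_{\fra,i}(R)\subset\upbeta_{\frab,i}(S)\cap R$) using criterion (c) of \autoref{uniformly compatible injective hull} and tensoring the zero composition with $S$ --- which is exactly your decomposition $\psi=\sum_k s_k(\phi_k\tensor_R S)$ in disguise --- while you instead push the full limit object $\upbeta_{\frab\cap R}(R)$ through every level on the $S$-side. Your version is arguably cleaner but leans on the preliminary fact that $\upbeta_{\frab\cap R}(R)$ is itself uniformly perfectoid compatible and contained in $\frab\cap R$; this is true (and needs perfectoid purity to produce a surjective splitting, exactly as in \autoref{basic ppties splitting prime}), but note that the paper's level-by-level induction deliberately avoids needing it, and the uniform analogue of \autoref{basic ppties compatible core} is stated with a hypothesis on $\Im(\phi)$ that you should check is satisfied in the form you use it. Finally, for ``$\supset$'' in the extension statement you defer to ``faithfully flat descent''; the actual content there is the prime-by-prime going-up/going-down/incomparability argument of \autoref{behavior beta étale morphims} (descent alone does not show that the core of an extended ideal is extended), but since you cite that proof and the paper itself only says the claim ``follows as in'' that proposition, your level of detail matches the paper's.
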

\begin{proof}
Take any unramified regular local ring $A$ such that $R$ (and therefore $S$) is a module finite $A$-algebra. Let $\frab\subset S$ be a radical ideal and let $\fra\coloneqq \frab\cap R$. Then, $\upbeta_\frab(S) \cap R$ is a uniformly perfectoid compatible ideal of $R$: if $\phi\in \Hom_R(\rafty,R)$ then $\phi\tensor_R S\in \Hom_S(\safty,S)$ and 
\[
\phi\left(\left(\upbeta_\frab(S)\cap R\right)^A_\infty\right)\subset \left(\phi\left(\left(\upbeta_\frab(S)\cap R\right)^A_\infty\right)\tensor_R S\right)\cap R \subset \upbeta_\frab(S)\cap R
\]
This shows $\upbeta_\frab(S)\cap R\subset \upbeta_\fra(R)$. For the other inclusion, we proceed by induction. Let $r\in \upbeta_{\fra,1}(R)$. Then, for any $x\in (r)^{A, R}_\infty$ the composition
\[
\Hom_R\left(\rafty,R\right)\xlongrightarrow{\Hom_R(\times x, R)}\Hom_R\left(\rafty,R\right) \longrightarrow \Hom_R(R,R)\cong R \to R/\fra
    \]
is zero. Tensoring with $S$ and using the fact that $\fra S\subset \frab$, we get that the composition 
\[
\Hom_S\left(\safty,S\right)\xlongrightarrow{\Hom_S(\times x, S)}\Hom_S\left(\safty,S\right) \longrightarrow \Hom_S(S,S)\cong S \to S/\frab
    \]
is zero. Precomposing with multiplication by any element of $\safty$ on $\safty$ would keep the composition $0$ so for any $\psi\in \Hom_S(\safty,S)$, $\psi((rS)^A_\infty)\subset \frab$ hence $r\in \upbeta_{\frab,1} (S) \cap R$. Now, suppose that we know that $\upbeta_{\fra,i}(R)\subset \upbeta_{\frab,i} (S)\cap R$ and let $r\in \upbeta_{\fra,i+1}(R)$ and $x \in (r)^{A,R}_\infty$.
The composition
\[
\Hom_R\left(\rafty,R\right)\xlongrightarrow{\Hom_R(\times x, R)}\Hom_R\left(\rafty,R\right) \longrightarrow \Hom_R(R,R)\cong R \to R/\upbeta_{\fra,i}(R)
    \]
is zero so tensoring with $S$ and using the fact that $\upbeta_{\fra,i}(R) S\subset \upbeta_{\frab,i}(S)$, we get that the composition 
\[
\Hom_S\left(\safty,S\right)\xlongrightarrow{\Hom_S(\times x, S)}\Hom_S\left(\safty,S\right) \longrightarrow \Hom_S(S,S)\cong S \to S/\upbeta_{\frab,i}(S)
\]
is zero. Precomposing with multiplication by any element of $\safty$ on $\safty$ would keep the composition $0$. Then, for any $\psi\in \Hom_S(\safty,S)$, $\psi((rS)^A_\infty)\subset \upbeta_{\frab,i}(S)$ hence $r\in \upbeta_{\frab,i+1} (S) \cap R$. This shows $\upbeta_\fra(R)=\upbeta_\frab(S)\cap R$.
The implication about $\upbeta$ under extensions follows as in \autoref{behavior beta étale morphims}.
\end{proof}

\begin{corollary}
Let $R\to S$ be as in \autoref{setting étale morphism} and assume further that they are domains. Let $\uptau(R)$ and $\uptau(S)$ be the respective nonzero smallest compatible ideals. If $R$ (equivalently $S$) is perfectoid pure, then $\uptau(R)=\uptau(S)\cap R$ and $\uptau(R) S=\uptau(S)$.
\end{corollary}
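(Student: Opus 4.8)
The plan is to follow the pattern of \autoref{test ideal of pairs under étale morphisms}, using in place of \autoref{behavior beta étale morphims} its uniformly perfectoid compatible counterpart (the proposition immediately preceding this corollary) together with the facts that $\uptau(R)$ and $\uptau(S)$ are radical uniformly perfectoid compatible ideals. Two transfer statements are needed. First, $\uptau(S)\cap R$ is uniformly perfectoid compatible: since $\uptau(S)$ is radical and, being uniformly perfectoid compatible, equals $\upbeta_{\uptau(S)}(S)$, the preceding proposition gives $\uptau(S)\cap R=\upbeta_{\uptau(S)}(S)\cap R=\upbeta_{\uptau(S)\cap R}(R)$, which is uniformly perfectoid compatible; moreover $\uptau(S)\cap R\neq 0$, because $R\to S$ is an integral extension of domains, so the (nonzero) constant term of a minimal monic equation of a nonzero element of $\uptau(S)$ lies in $\uptau(S)\cap R$. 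Second, $\uptau(R)S$ is uniformly perfectoid compatible: similarly $\uptau(R)S=\upbeta_{\uptau(R)}(R)S=\upbeta_{\uptau(R)S}(S)$ by the preceding proposition, and it is nonzero since $R\to S$, being a finite étale algebra over the local ring $R$, is faithfully flat, whence $\uptau(R)S\cap R=\uptau(R)\neq 0$.

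Granting these, I would argue as follows. Because $\uptau(S)\cap R$ is a nonzero uniformly perfectoid compatible ideal of $R$, minimality of $\uptau(R)$ gives $\uptau(R)\subset\uptau(S)\cap R$; extending to $S$ yields
\[
\uptau(R)S\subset(\uptau(S)\cap R)S\subset\uptau(S).
\]
On the other hand $\uptau(R)S$ is a nonzero uniformly perfectoid compatible ideal of $S$, so minimality of $\uptau(S)$ forces $\uptau(S)\subset\uptau(R)S$; hence $\uptau(R)S=\uptau(S)$. Contracting this equality and using faithful flatness of $R\to S$ (so that $IS\cap R=I$ for every ideal $I$ of $R$) gives $\uptau(S)\cap R=\uptau(R)S\cap R=\uptau(R)$, which completes the proof. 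Alternatively, the identity $\uptau(S)\cap R=\uptau(R)$ can be obtained exactly as in \autoref{test ideal of pairs under étale morphisms}, by combining the relation $\upbeta_\q(S)\cap R=\upbeta_{\q\cap R}(R)$ with the description of the fibres of $\upbeta_R$ and $\upbeta_S$ furnished by \autoref{fibers of beta are tau}.

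I do not expect any serious obstacle: the substance is already contained in the preceding proposition and in \autoref{fibers of beta are tau}. The only points requiring a line of justification are the non-vanishing of $\uptau(S)\cap R$ and of $\uptau(R)S$, and the bookkeeping that contraction and extension along $R\to S$ carry uniformly perfectoid compatible ideals to uniformly perfectoid compatible ideals in a way that is compatible with the $\upbeta$-construction — precisely what the preceding proposition provides, so that the minimality defining $\uptau$ may be invoked on both sides.
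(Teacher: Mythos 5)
Your proof is correct and follows essentially the same route as the paper, which simply runs the argument of \autoref{test ideal of pairs under étale morphisms} with ``uniformly perfectoid compatible'' in place of ``$\phi$-compatible'', using the preceding proposition as the transfer mechanism; you are also right to make explicit the nonvanishing of $\uptau(S)\cap R$ and $\uptau(R)S$, which the paper leaves tacit. The one genuine (minor) divergence is your primary derivation of $\uptau(S)\cap R=\uptau(R)$ by contracting the equality $\uptau(R)S=\uptau(S)$ via faithful flatness ($IS\cap R=I$), which is a clean shortcut replacing the paper's detour through $\upbeta^{-1}(0)$ and \autoref{fibers of beta are tau} -- a route you correctly note as the alternative.
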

\begin{proof}
    Same as \autoref{test ideal of pairs under étale morphisms}.
\end{proof}

\bibliographystyle{skalpha}
\bibliography{MainBib}

\end{document}